\tikzset{
  commutative diagrams/.cd,
  arrow style=tikz
  }
\DeclareMathAlphabet{\mathcalligra}{T1}{calligra}{m}{n}
\DeclareMathAlphabet{\mathpzc}{OT1}{pzc}{m}{it}
\newtheorem{theoremABC}{Theorem}
\newtheorem{theorem}{Theorem}[section]
\newtheorem{corollary}[theorem]{Corollary}
\newtheorem{lemma}[theorem]{Lemma}
\newtheorem{proposition}[theorem]{Proposition}
\theoremstyle{definition}
\newtheorem{definition}[theorem]{Definition}
\newtheorem{remark}[theorem]{Remark}
\theoremstyle{remark}
\newtheorem*{notation}{Notation}
\newcommand{\B}{{\mathbb{B}}}
\newcommand{\N}{{\mathbb{N}}}
\newcommand{\R}{{\mathbb{R}}}
\renewcommand{\SS}{{\mathbb{S}}}
\newcommand{\Z}{{\mathbb{Z}}}
\newcommand{\Aa}{{\mathcal{A}}}   
\newcommand{\Bb}{{\mathcal{B}}}
\newcommand{\Cc}{{\mathcal{C}}}   
\newcommand{\Ee}{{\mathcal{E}}}
\newcommand{\Ff}{{\mathcal{F}}}
\newcommand{\Gg}{{\mathcal{G}}}   
\newcommand{\Ii}{{\mathcal{I}}}
\newcommand{\Ll}{{\mathcal{L}}}   
\newcommand{\Pp}{{\mathcal{P}}}
\newcommand{\Ss}{{\mathcal{S}}}
\newcommand{\Tt}{{\mathcal{T}}}
\newcommand{\Uu}{{\mathcal{U}}}
\newcommand{\Vv}{{\mathcal{V}}}
\newcommand{\Ww}{{\mathcal{W}}}
\newcommand{\coker}{{\rm coker\, }}  
\newcommand{\id}{{\rm id}}                
\newcommand{\Id}{{\rm Id}}
\newcommand{\supp}{{\rm supp\, }}         
\newcommand{\INDEX}{\mathop{\mathrm{index}}}     
\newcommand{\cgraph}[1]{\Gamma_{\kern-.5ex{}#1}}     
\renewcommand{\Im}{{\rm Im}}       
\newcommand{\I}{{\rm I}}               
\newcommand{\CAP}{\mathop{\cap}}           
\newcommand{\norm}{{\rm norm}}
\newcommand{\eps}{{\varepsilon}}
\renewcommand{\O}{{\rm O}}     
\newcommand{\inner}[2]{\langle #1, #2\rangle}   
\newcommand{\INNER}[2]{\left\langle #1, #2\right\rangle}
\def\Nablatop#1{\nabla^{#1}\kern-.5ex{}}
\def\NABLA#1{{\mathop{\nabla\kern-.5ex\lower1ex\hbox{$#1$}}}}
\def\Nabla#1{\nabla\kern-.5ex{}_{#1}}
\def\Tabla#1{\Tilde\nabla\kern-.5ex{}_{#1}}
\def\Babla#1{\widebar\nabla\kern-.5ex{}_{#1}}
\def\abs#1{\mathopen|#1\mathclose|}   
\def\Abs#1{\left|#1\right|}            
\def\norm#1{\mathopen\|#1\mathclose\|}
\def\Norm#1{\left\|#1\right\|}
\renewcommand{\Tilde}{\widetilde}
\newcommand{\p}{{\partial}}
\newcommand{\INTO}{\hookrightarrow}              
\newlength\eqshift
\renewcommand\theequation{\thesection.\arabic{equation}}
\let\savetheequation\theequation
\renewcommand*\env@matrix[1][\arraystretch]{%
  \edef\arraystretch{#1}%
  \hskip -\arraycolsep
  \let\@ifnextchar\new@ifnextchar
  \array{*\c@MaxMatrixCols c}}
\let\save@mathaccent\mathaccent
\newcommand*\if@single[3]{%
  \setbox0\hbox{${\mathaccent"0362{#1}}^H$}%
  \setbox2\hbox{${\mathaccent"0362{\kern0pt#1}}^H$}%
  \ifdim\ht0=\ht2 #3\else #2\fi
  }
\newcommand*\rel@kern[1]{\kern#1\dimexpr\macc@kerna}
\newcommand*\widebar[1]{\@ifnextchar^{{\wide@bar{#1}{0}}}{\wide@bar{#1}{1}}}
\newcommand*\wide@bar[2]{\if@single{#1}{\wide@bar@{#1}{#2}{1}}{\wide@bar@{#1}{#2}{2}}}
\newcommand*\wide@bar@[3]{%
  \begingroup
  \def\mathaccent##1##2{%
    \let\mathaccent\save@mathaccent
    \if#32 \let\macc@nucleus\first@char \fi
    \setbox\z@\hbox{$\macc@style{\macc@nucleus}_{}$}%
    \setbox\tw@\hbox{$\macc@style{\macc@nucleus}{}_{}$}%
    \dimen@\wd\tw@
    \advance\dimen@-\wd\z@
    \divide\dimen@ 3
    \@tempdima\wd\tw@
    \advance\@tempdima-\scriptspace
    \divide\@tempdima 10
    \advance\dimen@-\@tempdima
    \ifdim\dimen@>\z@ \dimen@0pt\fi
    \rel@kern{0.6}\kern-\dimen@
    \if#31
      \overline{\rel@kern{-0.6}\kern\dimen@\macc@nucleus\rel@kern{0.4}\kern\dimen@}%
      \advance\dimen@0.4\dimexpr\macc@kerna
      \let\final@kern#2%
      \ifdim\dimen@<\z@ \let\final@kern1\fi
      \if\final@kern1 \kern-\dimen@\fi
    \else
      \overline{\rel@kern{-0.6}\kern\dimen@#1}%
    \fi
  }%
  \macc@depth\@ne
  \let\math@bgroup\@empty \let\math@egroup\macc@set@skewchar
  \mathsurround\z@ \frozen@everymath{\mathgroup\macc@group\relax}%
  \macc@set@skewchar\relax
  \let\mathaccentV\macc@nested@a
  \if#31
    \macc@nested@a\relax111{#1}%
  \else
    \def\gobble@till@marker##1\endmarker{}%
    \futurelet\first@char\gobble@till@marker#1\endmarker
    \ifcat\noexpand\first@char A\else
      \def\first@char{}%
    \fi
    \macc@nested@a\relax111{\first@char}%
  \fi
  \endgroup
}
\def\Xint#1{\mathchoice
   {\XXint\displaystyle\textstyle{#1}}%
   {\XXint\textstyle\scriptstyle{#1}}%
   {\XXint\scriptstyle\scriptscriptstyle{#1}}%
   {\XXint\scriptscriptstyle\scriptscriptstyle{#1}}%
   \!\int}
\def\XXint#1#2#3{{\setbox0=\hbox{$#1{#2#3}{\int}$}
     \vcenter{\hbox{$#2#3$}}\kern-.5\wd0}}
\def\dashint{\Xint-}
\long\def\symbolfootnote[#1]#2{\begingroup%
\def\thefootnote{\fnsymbol{footnote}}\footnote[#1]{#2}\endgroup}
\tikzset{
  symbol/.style={
    draw=none,
    every to/.append style={
      edge node={node [sloped, allow upside down, auto=false]{$#1$}}}
  }
}
\newcommand{\VD}{D}          
\begin{document}
\sloppy

\author{\quad Urs Frauenfelder \quad \qquad\qquad
             Joa Weber\footnote{
  Email: urs.frauenfelder@math.uni-augsburg.de
  \hfill
  joa@unicamp.br
  }
    \\
    Universit\"at Augsburg \qquad\qquad
    UNICAMP
}

\title{Hilbert manifold structures on path spaces}
\date{\today}

\maketitle 

\begin{abstract}
In Floer theory one has to deal with two-level manifolds
like for instance the space of $W^{2,2}$ loops and the space of
$W^{1,2}$ loops. Gradient flow lines in Floer theory are then trajectories
in a two-level manifold. Inspired by our endeavor to find a general
setup to construct Floer homology we therefore address in this paper
the question if the space of paths on a two-level manifold has itself
the structure of a Hilbert manifold.
In view of the two topologies on a two-level manifold
it is unclear how to define the exponential map on a general two-level
manifold.
We therefore study a different approach how to define charts on path
spaces of two-level manifolds.
To make this approach work we need an additional structure on a
two-level manifold which we refer to as \emph{tameness}.
We introduce the notion of tame maps and show that the composition of
tame is tame again.
Therefore it makes sense to introduce the notion of a tame two-level
manifold.
The main result of this paper shows that the path spaces on tame
two-level manifolds have the structure of a Hilbert manifold.
\end{abstract}

\tableofcontents

\newpage 
\section{Introduction}

Suppose that $H_2\subset H_1$ are Hilbert spaces
with a dense and compact inclusion of $H_2$ into $H_1$.
The fact that a dense and compact inclusion exists immediately implies
that both Hilbert spaces are separable, see~\cite[Cor.\,A.5]{Frauenfelder:2024c}.
It follows that there are two cases, either $H_1=H_2$ is finite dimensional,
or $H_1\not=H_2$ and both Hilbert spaces are infinite dimensional and separable.

By considering $C^2$ diffeomorphisms between open subsets
of $H_1$ which restrict to $C^2$ diffeomorphisms
between the intersection of these subsets with $H_2$
we obtain charts of $C^2$ \textbf{two-level manifolds} 
$X=X_1\supset X_2$.

For two points $x_-$ and $x_+$ we consider
$W^{1,2}(\R,H_1)\cap L^2(\R,H_2)$-paths from $x_-$ to $x_+$.
By $\Pp_{x_-x_+}$ we denote the set of such paths.
Over this space of paths there is a bundle, called the
\textbf{weak tangent bundle}, namely
$$
   \Ee\to\Pp
   ,\qquad
   \Ee=\Ee_{x_-x_+}
   ,\quad
   \Pp=\Pp_{x_-x_+} ,
$$
where the fiber over a path $x\in\Pp$
consists of $L^2(\R,H_1)$-vector fields along $x$.

In this article we are interested in the question
whether $\Pp$ and $\Ee$ have the structure of a $C^1$ Hilbert manifold.
We do not require that the tangent bundle $T\Pp$ is a
$C^1$ Hilbert manifold, but only its $L^2$ completion $\Ee$.
We don't know if this is true for general two-level manifolds.
However, we introduce a condition for $C^2$ diffeomorphisms
called \emph{tameness}. We then show that the composition of two tame
$C^2$ diffeomorphisms is tame as well (Theorem~\ref{thm:tame}).
Therefore by considering
atlases all whose transition functions are tame
we obtain the notion of a \emph{tame two-level manifold}.

Under these assumption we are able to show that the
space of paths $\Pp$ and its $L^2$ completion $\Ee$ of its tangent bundle
are $C^1$ Hilbert manifolds. Namely, the main result of this article
is the following theorem.

\begin{theoremABC}\label{thm:A}
Let $X$ be a tame two-level manifold.
Then for every pair of points $x_\pm\in X$ the space of paths $\Pp_{x_-x_+}$ 
and the $L^2$ completion $\Ee_{x_-x_+}$ of its tangent bundle
have the structure of $C^1$ Hilbert manifolds.
\end{theoremABC}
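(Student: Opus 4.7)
The plan is to build explicit charts on $\Pp_{x_-x_+}$ by unrolling each path through a finite sequence of tame coordinate charts on $X$ along a chosen reference path, and then to establish $C^1$ compatibility of these charts from the tameness hypothesis, with Theorem~\ref{thm:tame} taking care of refinements and composite transitions. Since every $x \in \Pp_{x_-x_+}$ is $W^{1,2}$, hence continuous, and approaches $x_\pm$ in $H_1$ at the two ends of $\R$, only finitely many tame charts are needed to cover the image of any given reference path $x_0$: one picks a partition $-\infty = t_0 < t_1 < \cdots < t_N = +\infty$ and tame charts $\phi_i \colon U_i \to V_i \subset H_1$ with $x_0([t_{i-1}, t_i]) \subset U_i$, where $U_1$ is centered at $x_-$ and $U_N$ at $x_+$.

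The chart domain is then the open neighborhood of $x_0$ in $\Pp_{x_-x_+}$ consisting of all paths $x$ with $x([t_{i-1}, t_i]) \subset U_i$ for every $i$, openness being a consequence of continuity of paths and of the endpoint evaluations. The chart map sends such an $x$ to the tuple $(\phi_i \circ x|_{[t_{i-1}, t_i]})_{i=1}^N$, which lives in the closed affine subspace of the direct sum of $W^{1,2} \cap L^2$-paths on each interval cut out by the matching conditions $\phi_i^{-1}((\phi_i \circ x)(t_i)) = \phi_{i+1}^{-1}((\phi_{i+1} \circ x)(t_i))$ at interior break points and by the fixed asymptotic values $\phi_1(x_-)$ at $-\infty$ and $\phi_N(x_+)$ at $+\infty$. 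This subspace is naturally a Hilbert space and serves as the model space for the chart.

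The core step is verifying $C^1$ compatibility of two such charts built from different data $(x_0, \{U_i\}, \{t_i\})$. By passing to a common refinement of the partitions and intersecting chart domains on $X$, this reduces to showing that, for any tame $C^2$ transition map $\psi = \phi_j \circ \phi_i^{-1}$ on $X$, the induced composition operator $x \mapsto \psi \circ x$ is a $C^1$ diffeomorphism of open subsets of the $W^{1,2}(\R, H_1) \cap L^2(\R, H_2)$-path space. This is precisely the content that the notion of tameness is designed to deliver, and Theorem~\ref{thm:tame} guarantees that intermediate chart changes compose correctly so that the refinement procedure is internally consistent and independent of the chosen reference path.

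For $\Ee_{x_-x_+}$ one runs the analogous scheme on sections: represent a section $\xi$ of the weak tangent bundle along $x$ by the tuple $(d\phi_i(x)\xi|_{[t_{i-1}, t_i]})_i$, living in a closed linear subspace of $L^2$-sections over each interval with the linearized matching conditions, and observe that chart transitions act by the pointwise operator $d\psi$ on such sections. The main obstacle throughout will be this two-level $\Omega$-lemma, i.e.\ converting the pointwise tameness of $\psi$ and $d\psi$ into \emph{continuous} and \emph{continuously differentiable} action on paths and $L^2$-sections under the mixed $W^{1,2}(\R, H_1) \cap L^2(\R, H_2)$ norm. Controlling both norms simultaneously under nonlinear composition is exactly the estimate that the definition of tameness must furnish, and this is where the analytic work of the proof will concentrate.
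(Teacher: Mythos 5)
Your overall skeleton — cover a reference path by finitely many tame charts, build a path-space chart from that cover, reduce transition maps to composition operators, and use tameness to upgrade pointwise $C^2$ control to a $C^1$ ``two-level $\Omega$-lemma'' on $W^{1,2}(\R,H_1)\cap L^2(\R,H_2)$ — correctly identifies the two pillars of the paper's proof (Theorem~\ref{thm:tame} for internal consistency of the atlas and Theorem~\ref{thm:B}/its parametrized version for regularity of composition operators). However, your proposed model space is not what you claim it to be, and this is not a cosmetic problem.

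You assert that the tuples $(\phi_i\circ x|_{[t_{i-1},t_i]})_i$ subject to the matching conditions $\phi_i^{-1}((\phi_i\circ x)(t_i))=\phi_{i+1}^{-1}((\phi_{i+1}\circ x)(t_i))$ at the break points form a \emph{closed affine subspace} of a direct sum of $W^{1,2}\cap L^2$ spaces. They do not: the matching condition says $u_{i+1}(t_i)=\phi_{i+1}\bigl(\phi_i^{-1}(u_i(t_i))\bigr)$, which is a genuinely nonlinear constraint whenever the chart transitions $\phi_{i+1}\circ\phi_i^{-1}$ are nonlinear. So the constraint set is (at best) a nonlinear submanifold of the direct sum, not a Hilbert space, and using it as a chart target is circular — you would first have to endow \emph{it} with a Hilbert manifold structure, which is a problem of the same difficulty and in the same two-level setting as the original one. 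The usual fix in the finite-dimensional exponential-map setting (treat the matching map as a submersion, or parametrize by broken geodesics) is unavailable here precisely because, as the introduction stresses, there is no workable exponential map on a two-level manifold. In the same vein, when you refine a partition or change the cover, the transition map is not merely a tuple of composition operators: the cut points move, so the decomposition of the domain changes, and your reduction ``to a single composition operator $x\mapsto\psi\circ x$'' does not account for this re-gluing.

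The paper resolves exactly this difficulty by a different device. It works with \emph{basic paths} (paths that reach $x_\mp$ at finite times $\mp T$) and, rather than a tuple model with matching constraints, defines a \emph{single} chart domain $\Uu^x$ inside $W^{1,2}(\R,H_1)\cap L^2(\R,H_2)$ and a local parametrization~(\ref{eq:loc-par-form-4}) that interpolates between consecutive chart expressions on overlap regions $[t_j^-,t_j]$ via convex interpolation with cutoff functions $\beta^j$; see~(\ref{eq:chi_j-4})--(\ref{eq:def-S-4}). This eliminates the matching conditions entirely, so the model space is an honest open subset of the fixed Hilbert space. The price is that one must prove the interpolation maps $\Ss_s^j$ \emph{and their inverses} are (parametrized) tame — this is Step~3 in the proof of Theorem~\ref{thm:Psi-diffeo-4}, a lengthy argument built around Lemma~\ref{le:Fredholm}, and it is \emph{here} that the compact inclusion $H_2\INTO H_1$ is indispensable (it furnishes the semi-Fredholm structure needed to invert $d\Ss_s^j|_v$ on the $H_2$ level). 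Your proposal never invokes the compactness of the inclusion, which is a reliable sign that a key obstruction has gone unnoticed: compactness appears nowhere in your plan even though the paper makes clear that without it the result is open. In short, the idea of charts from finite covers plus tameness for regularity is right, but the gluing mechanism is the hard part of this theorem, and the affine-subspace shortcut does not work.
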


The main technical result to prove Theorem~\ref{thm:A}
is the following theorem on Hilbert space valued Sobolev spaces.
We abbreviate
\begin{equation}\label{eq:notation-spaces}
   W^{1,2}_{H_1}:=W^{1,2}(\R,H_1),\qquad
   L^2_{H_i}:=L^2(\R,H_i).
\end{equation}

\begin{theoremABC}\label{thm:B}
Suppose that $\phi\colon H_1\to H_1$ is tame.
Then the map 
\begin{equation*}
\begin{split}
   T\Phi
   \colon
   (W^{1,2}_{H_1}\cap L^2_{H_2})\times L^2_{H_1}
   &\to
   (W^{1,2}_{H_1}\cap L^2_{H_2})\times L^2_{H_1}
   \\
   (\xi,\eta)
   &\mapsto
   \left(\phi\circ\xi,d\phi|_\xi \eta \right)
\end{split}
\end{equation*}
is well defined and continuously differentiable.
\end{theoremABC}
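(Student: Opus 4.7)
The plan is to establish the two assertions in turn: first well-definedness, and then Fréchet differentiability with continuous derivative. Throughout, tameness is the wedge that lets estimates naturally formulated on $H_1$ survive when we insist on $L^2_{H_2}$ control.

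First I would verify that $T\Phi$ lands in the correct target space. For the first component $\phi\circ\xi$, the membership in $W^{1,2}_{H_1}$ is classical once a few inputs are in place: since $\xi\in W^{1,2}(\R,H_1)$ is continuous into $H_1$ and decays at infinity (by the one-dimensional Sobolev embedding $W^{1,2}(\R,H_1)\hookrightarrow C_0(\R,H_1)$), its image sits in a bounded subset of $H_1$, tameness supplies a uniform bound on $\|d\phi|_{\xi(t)}\|_{\Ll(H_1,H_1)}$ along the curve, and the chain rule $(\phi\circ\xi)'=d\phi|_\xi\,\xi'$ together with the mean value inequality turn this into $L^2_{H_1}$ control of both $\phi\circ\xi$ and its derivative. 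The $L^2_{H_2}$ part is where tameness genuinely enters: one expects the condition to provide a continuous function $g$ and an estimate of the shape $\|\phi(x)\|_{H_2}\le g(\|x\|_{H_1})\,\|x\|_{H_2}$, which after integration in $t$ and exploiting the $L^\infty$ bound on $\|\xi(t)\|_{H_1}$ yields $\phi\circ\xi\in L^2_{H_2}$. For the second component, the same uniform $H_1$-bound on $\|d\phi|_{\xi(t)}\|$ gives $d\phi|_\xi\,\eta\in L^2_{H_1}$ directly.

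Next I would compute the candidate Fréchet derivative at $(\xi_0,\eta_0)$ by formal differentiation: in direction $(h,k)$ it should be
\[
   DT\Phi(\xi_0,\eta_0)(h,k)
   =\bigl(d\phi|_{\xi_0}\,h,\; d^2\phi|_{\xi_0}(h,\eta_0)+d\phi|_{\xi_0}\,k\bigr),
\]
and then confirm this via the Taylor remainders
\[
   \phi(\xi_0+h)-\phi(\xi_0)-d\phi|_{\xi_0}h
   =\int_0^1\!(d\phi|_{\xi_0+sh}-d\phi|_{\xi_0})\,h\;ds,
\]
together with the analogous identity for $d\phi|_{\xi_0+h}-d\phi|_{\xi_0}$ expressed via $d^2\phi$. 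These remainders have to be controlled in the $W^{1,2}_{H_1}\cap L^2_{H_2}$ norm and the $L^2_{H_1}$ norm respectively; the uniform continuity of $d\phi$ and $d^2\phi$ on bounded sets of $H_1$ handles the $H_1$-side, while the tameness hypothesis is again the key input for the $H_2$-side.

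Finally, continuity of $(\xi_0,\eta_0)\mapsto DT\Phi(\xi_0,\eta_0)$ would follow by combining pointwise convergence of $d\phi|_{\xi_n(t)}$ and $d^2\phi|_{\xi_n(t)}$, granted by the $C^2$ hypothesis on $\phi$, with a dominated convergence argument in operator norm, the domination again being furnished by tameness on bounded sets. The main obstacle is clearly the $L^2_{H_2}$ control of $\phi\circ\xi$ and of the Taylor remainders, since $d\phi$ is naturally only a map on $H_1$ and one must not lose a scale when passing to the $H_2$ level; this is precisely the technical core that the composition-of-tame-maps result (Theorem~\ref{thm:tame}) is designed to underwrite. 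Once this obstacle is cleared, the rest of the argument reduces to bookkeeping with standard dominated-convergence estimates.
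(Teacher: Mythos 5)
Your outline follows essentially the same route as the paper (Theorems~\ref{thm:B-Phi} and~\ref{thm:B-TPhi}, each organized exactly into well-definedness, candidate derivative, verification, and continuity), and you correctly identify tameness as the device that transports estimates from the $H_1$ scale to the $H_2$ scale. Two points, however, need to be tightened before the sketch becomes a proof.

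First, you invoke ``uniform continuity of $d\phi$ and $d^2\phi$ on bounded sets of $H_1$.'' In an infinite-dimensional Hilbert space a $C^2$ map is \emph{not} automatically uniformly continuous on bounded sets, so this step as written fails. What rescues the argument is that $\xi\in W^{1,2}_{H_1}$ is continuous and decays at $\pm\infty$, hence its image together with $0$ is a \emph{compact} subset of $H_1$; $d\phi$ and $d^2\phi$ are uniformly continuous on a neighborhood of that compact set, and this is the form in which the paper uses $C^2$ regularity throughout Steps~3 and~4. Second, for continuity of the derivative you propose ``a dominated convergence argument in operator norm.'' Dominated convergence produces convergence for each fixed direction $\eta$, but operator-norm continuity requires a bound that is uniform over $\interleave\eta\interleave_{1,2}\le 1$. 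The paper's substitute is the two-scale difference estimate of Lemma~\ref{le:tame-3},
$\Abs{(d\phi|_{y_1}-d\phi|_{y_0})\eta}_2\le\kappa\bigl(\abs{y_1-y_0}_1\abs{\eta}_2+\abs{y_1-y_0}_2\abs{\eta}_1\bigr)+\tfrac{\kappa}{2}(\abs{y_1}_2+\abs{y_0}_2)\abs{y_1-y_0}_1\abs{\eta}_1$,
which is a consequence of tameness and is homogeneous in $\eta$; that homogeneity is what upgrades pointwise control to the operator norm. You gesture at tameness ``furnishing the domination,'' but without this explicit inequality the continuity step does not close. Finally, it is worth stating out loud — you respect it implicitly in your choice of target space — that the second component $d\phi|_\xi\eta$ is only required to land in $L^2_{H_1}$, not in $W^{1,2}_{H_1}\cap L^2_{H_2}$; demanding the stronger membership would require an extra $s$-derivative of $d^2\phi|_{\xi_s}(\hat\xi_s,\eta_s)$, i.e.\ a $C^3$ map, which tameness does not grant. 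This is the reason the bundle is ``weak,'' and it is what makes a $C^1$ result for $T\Phi$ possible under a $C^2$ hypothesis on $\phi$.
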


In order to prove Theorem~\ref{thm:A}
we also will need a parametrized version of Theorem~\ref{thm:B}
in which $\phi$ is additionally allowed to depend on time.

Different from other approaches to manifold structures on mapping
spaces between finite dimensional manifolds,
see e.g.~\cite{Eliasson:1967a,schwarz:1993a},
we do not use an exponential map in the infinite dimensional case
since such seems difficult to incorporate
in the case where one has several levels of manifolds.

\begin{remark}[Dense inclusion, not necessarily compact]\mbox{}
\begin{itemize}\setlength\itemsep{0ex} 
\item[a)]
If we only assume that $H_1$ and $H_2$ are Hilbert spaces
with a dense inclusion of $H_2$ into $H_1$, not necessarily compact,
then we do not know if Theorem~\ref{thm:A} holds true.
\item[b)]
However, for the notion of tameness we do not need that the inclusion
is compact and Theorem~\ref{thm:B} continues to hold in this more
general case.
\item[c)]
For Hilbert manifolds a complementary approach to our proof
could probably be carried out with the help of the exponential map.
The exponential map for $C^3$ Hilbert manifolds
is studied in Lang's book~\cite[IV \S3]{lang:2001a}.
\end{itemize}
\end{remark}

\medskip\noindent
{\bf Acknowledgements.}
UF~acknowledges support by DFG grant
FR~2637/4-1.

\subsection{Outlook}

The Hilbert manifold structure of the space of maps $\Pp$
and the bundle $\Ee$ play an important role in Floer theory.
There one considers a functional $\Aa$ on special types of two-level
manifolds $X$ and is interested in the gradient flow equation
$$
   \Ff(u):=\p_s u+\Nabla{}\Aa(u)=0
$$
between critical points $x_-,x_+\in X_2$.
The gradient is unregularized in the sense of Floer~\cite{floer:1988c}.
Namely, if $x$ is in $X_2$, then $\Nabla{} \Aa(x)$ lies only in $T_x X_1$,
not in $T_x X_2$. We can interpret Floer gradient flow lines as the
zeroes of the section in the bundle
\begin{equation*}
\begin{tikzcd}[row sep=normal]
\Ee\arrow[d]
\\
\Pp
\arrow[u, bend right=50,"\Ff"']
\end{tikzcd}
\end{equation*}
If $\Ee$ and $\Pp$ have the structure of $C^1$ manifolds
we can look at the differential of the section at the point $u\in\Pp$,
in symbols
$$
   d\Ff_u\colon T_u \Pp \to T_{\Ff(u)}\Ee .
$$
If $u$ is a zero of the section, i.e. a gradient flow line,
we have a canonical splitting
$$
   T_u\Ee=T_u\Pp\oplus \Ee_u .
$$
We denote by $\pi_u\colon T_u\Ee\to\Ee_u$ the projection along $T_u\Pp$.
Hence we obtain the vertical differential
$$
   \VD\Ff_u:=\pi_u\circ d\Ff_u\colon T_u\Pp\to \Ee_u .
$$
A crucial property in a Floer theory is that this vertical
differential is a Fredholm operator.
In view of our endeavor to construct abstract Floer homologies
which can also be applied to quite general Hamiltonian delay
equations, as outlined in our 
articles~\cite{Frauenfelder:2024c,Frauenfelder:2025a},
the property that the space of paths and the $L^2$ bundle are both
Hilbert manifolds \underline{of class $C^1$} seems to be crucial
and this is the reason for the current study.

\subsection{Outline}

This text is organized as follows.

\smallskip
In Section~\ref{sec:tameness}  we introduce the notion of
\emph{tame maps}.
The main result about tame maps is that the composition is again tame.
This the content of Theorem~\ref{thm:tame}.
Thanks to this result one can introduce the notion of
\emph{tame manifolds}. Namely, these are manifolds all of whose
transitions maps are tame.
An example of a tame manifold is the loop space of a smooth
finite dimensional manifold.
We also introduce the notion of \emph{parametrized tameness}
which will be used later to show that the path spaces
of tame manifolds are Hilbert manifolds.

\smallskip
In Section~\ref{sec:differentiability} we prove a purely analytical
result. Namely, we show in Theorem~\ref{thm:B-Phi}
that composing a class of Hilbert space valued
Sobolev maps with a tame map gives rise to a $C^1$ map.
This result, in particular, implies
that for constant paths in a tame manifold
the change of chart gives rise to a $C^1$ diffeomorphism.
To extend this result to non-constant paths
we describe as well a parametrized version of Theorem~\ref{thm:B-Phi},
namely Theorem~\ref{thm:B-Phi-parametrized}.
\\
We further show in this section that the \underline{weak} differentials
of the above maps are themselves $C^1$ maps
-- which would not hold for the differentials.
This then leads to the proof of Theorem~\ref{thm:B}
and its parametrized version Theorem~\ref{thm:B-TPhi-parametrized}.

\smallskip
In Section~\ref{sec:coordinate-charts}
we prove that the space of paths of a tame manifold,
as well as its weak tangent bundle, are Hilbert manifolds.
For these purposes we construct charts
for the space of paths of a tame manifold
and show that their transition maps are $C^1$ diffeomorphisms.
In case that the path can be covered by one single chart in the tame
manifold the proof of this is a rather straightforward application of
the parametrized version of Theorem~\ref{thm:B}.
In the case that the path cannot be covered by a single chart
the argument gets quite subtle and in particular
uses the compact inclusion of $H_2$ into $H_1$.
Since it is not clear how to implement an exponential map
on a two-level manifold,
our construction is based on an interpolation procedure.
The intricate part is to show that this interpolation procedure
preserves tameness.

\smallskip
In Appendix~\ref{sec:Hmf-of-paths} we provide the reader with
a self-contained construction of Hilbert space valued Sobolev spaces
used in this work.
The (automatic) fact that our Hilbert spaces are separable simplifies 
various parts of this construction.
In particular, we give a detailed proof of
Pettis' Theorem in the separable case.
We also show how Hilbert space valued Sobolev maps
can be approximated by differentiable Hilbert space valued maps.

\smallskip
In Appendix~\ref{sec:IFT} we recall a quantitative version of the
implicit function theorem which plays an important role in the
interpolation procedure to produce our charts.

\smallskip
In Appendix~\ref{sec:Hmf-of-paths} we recall that for
finite dimensional manifolds the spaces of paths can be endowed
with the structure of Hilbert manifolds using the exponential map.
This is to illustrate the difference between exponential map and
our construction based on interpolation.

\begin{notation}
We often write $\xi_s$ instead of $\xi(s)$ and $df|_x$ instead of
$df(x)$ for better readability, avoiding excessive use of parentheses.
For Hilbert space norms we often write $\abs{\cdot}$
for distinction of the norm $\norm{\cdot}$ in mapping spaces.
\end{notation}

\section{Tameness}
\label{sec:tameness}

Assume that $H_2\subset H_1$ are two separable Hilbert spaces
with a dense inclusion of $H_2$ into $H_1$. 
To avoid annoying constants we assume that
$\abs{\cdot}_1\le \abs{\cdot}_2$.
We abbreviate the norms in $H_1$ and $H_2$ by
\begin{equation}\label{eq:12}
   \abs{\cdot}_1:=\norm{\cdot}_{H_1}
   ,\qquad
   \abs{\cdot}_2:=\norm{\cdot}_{H_2}
   ,\qquad
   \abs{\cdot}_1\le \abs{\cdot}_2 .
\end{equation}

\subsection{Tameness (unparametrized)}
\label{sec:tameness-unparametrized}

Let $U_1$ be an open subset
of $H_1$ and let $U_2:=U_1\cap H_2$ be its part in $H_2$.

\begin{definition}\label{def:tame}
A $C^2$ function $\phi\colon U_1\to H_1$ is called
\textbf{tame}, or more precisely \textbf{\boldmath $(H_1,H_2)$-tame},
if if it satisfies two conditions.
(i)~Its restriction to $U_2$ takes values in $H_2$
and is $C^2$ as a map $\phi|_{U_2} \colon U_2 \to H_2$.
(ii)~For every $x\in U_1$ there exists an $H_1$-open neighborhood
$W_x\subset U_1$ of $x$ and a constant $\kappa=\kappa(x)>0$
such that for every $y\in W_x\cap H_2$ and all $\xi,\eta\in H_2$
we have the estimate
\begin{equation}\label{eq:tame}
   \Abs{d^2 \phi|_y(\xi,\eta)}_2 
   \le \kappa\bigl(\abs{\xi}_1 \abs{\eta}_2+\abs{\xi}_2\abs{\eta}_1
   +\abs{y}_2 \abs{\xi}_1 \abs{\eta}_1\bigr) .
\end{equation}
\end{definition}

\begin{remark}[Vector space]\label{rem:tame-v.sp.}
Assume that $\phi,\psi\colon U_1\to H_1$ are both tame.
Then their sum is tame as well.
Indeed for $x\in U_1$ take the intersection of the two neighborhoods
and the sum of their constants.
\end{remark}

\begin{remark}[Constant scales $H_2=H_1=:H$ are tame]
\label{rem:tame}
We claim that in this special case
every $C^2$-function $\phi\colon U\to H$ is automatically tame.
To see this pick $x\in U$. Since $\phi$ is two times continuously
differentiable there exists an open neighborhood $W_x$ of $x$ in $U$
such that for every $y\in W_x$ the following holds
$$
   \Norm{d^2\phi|_y}_{\Ll(H,H;H)}
   \le \Norm{d^2\phi|_x}_{\Ll(H,H;H)}+1 .
$$
For $y\in W_x$ and $\xi,\eta\in H$ we estimate
\begin{equation*}
\begin{split}
   \Abs{d^2\phi|_y(\xi,\eta)}
   &\le \Norm{d^2\phi|_y}_{\Ll(H,H;H)}\abs{\xi}\abs{\eta}\\
   &\le\underbrace{\left(\Norm{d^2\phi|_x}_{\Ll(H,H;H)}+1\right)}_{=:\kappa}
   \abs{\xi}\abs{\eta}\\
   &\le \kappa
   \left(\abs{\xi}\abs{\eta} +\abs{\xi}\abs{\eta}
     +\abs{y}\abs{\xi}\abs{\eta}\right)
\end{split}
\end{equation*}
and this confirms the taming condition~(\ref{eq:tame}).
\end{remark}

\begin{remark}
In the definition of tame the condition that the restriction
$\phi|_{U_2}\colon U_2\to H_2$ is of class $C^2$ is not used,
neither in Theorem~\ref{thm:tame}, nor in Theorem~\ref{thm:B}.
In both theorems one gets away with $C^1$.
\end{remark}

The next theorem tells that tameness is invariant under
coordinate change.

\begin{theorem}\label{thm:tame}
Given open subsets $U_1,V_1\subset H_1$,
consider $(H_1,H_2)$-tame maps
$$
   \phi\colon U_1\to V_1
   ,\qquad
   \psi\colon V_1\to H_1 .
$$
Then the composition $\psi\circ\phi\colon U_1\to H_1$ is
$(H_1,H_2)$-tame as well.
\end{theorem}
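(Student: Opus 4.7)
The plan is to verify the two tameness conditions for $\psi\circ\phi$. Condition~(i) is essentially automatic: since $\phi|_{U_2}\colon U_2\to V_2$ and $\psi|_{V_2}\colon V_2\to H_2$ are both $C^2$ at the $H_2$-level, their composition $(\psi\circ\phi)|_{U_2}\colon U_2\to H_2$ is $C^2$. The substance of the theorem lies in the quadratic estimate~\eqref{eq:tame} for $d^2(\psi\circ\phi)$. To set up, fix $x\in U_1$, put $x':=\phi(x)$, and take the tameness neighborhoods $W_x\subset U_1$ and $W_{x'}'\subset V_1$ with constants $\kappa_\phi,\kappa_\psi$. Shrink $W_x$ so that it is $H_1$-bounded and $\phi(W_x)\subset W_{x'}'$, which is possible by $H_1$-continuity of $\phi$. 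By density of $H_2\subset H_1$, fix a base point $y_0\in W_x\cap H_2$ and set $z_0:=\phi(y_0)\in W_{x'}'\cap H_2$, which lies in $H_2$ by condition~(i) for $\phi$.

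The core technical step is to upgrade the pointwise information on $d^2\phi$ (resp.~$d^2\psi$) to linear estimates on $d\phi$ (resp.~$d\psi$) by integrating along the segment from the base point and applying~\eqref{eq:tame} at each point. I would establish, for $y\in W_x\cap H_2$ and $\xi\in H_2$, the auxiliary bounds
\begin{equation*}
   \Abs{d\phi|_y\xi}_1\le C\Abs{\xi}_1,
   \qquad
   \Abs{d\phi|_y\xi}_2\le C\Abs{\xi}_2+C(1+\Abs{y}_2)\Abs{\xi}_1,
   \qquad
   \Abs{\phi(y)}_2\le C(1+\Abs{y}_2),
\end{equation*}
and the analogous bounds for $\psi$ on $W_{x'}'\cap H_2$. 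The first is immediate from $H_1$-continuity of $d\phi$ on the bounded set $W_x$; the remaining two follow from integrating $d^2\phi$ or $d\phi$ from $y_0$ to $y$ and invoking~\eqref{eq:tame}. With these in hand, expand by the chain rule
\begin{equation*}
   d^2(\psi\circ\phi)|_y(\xi,\eta)
   = d^2\psi|_{\phi(y)}\bigl(d\phi|_y\xi,\,d\phi|_y\eta\bigr)
   +d\psi|_{\phi(y)}\bigl(d^2\phi|_y(\xi,\eta)\bigr)
\end{equation*}
and estimate each summand in $H_2$-norm. For the first, apply tameness of $\psi$ to $(d\phi|_y\xi,d\phi|_y\eta)$ and insert the auxiliary bounds; the products collapse neatly into the three permitted monomials modulo absorbing $\Abs{\xi}_1\Abs{\eta}_1$ into $\Abs{y}_2\Abs{\xi}_1\Abs{\eta}_1$. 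For the second, set $v:=d^2\phi|_y(\xi,\eta)$, bound $\abs{v}_2$ by the three permitted monomials using tameness of $\phi$, use $H_1$-continuity of $d^2\phi$ on $W_x$ to get $\abs{v}_1\le C\abs{\xi}_1\abs{\eta}_1$, and then apply the auxiliary estimate for $d\psi$.

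The main obstacle I anticipate is bookkeeping: guaranteeing that no forbidden cross-term such as $\Abs{y}_2\Abs{\xi}_2\Abs{\eta}_1$ survives after multiplication. The scheme works because of two structural points: first, the coefficient on $\Abs{v}_2$ in the auxiliary estimate $\Abs{d\psi|_{\phi(y)}v}_2\le C\Abs{v}_2+C(1+\Abs{y}_2)\Abs{v}_1$ is a pure constant (all $\Abs{y}_2$-growth is absorbed into the $\Abs{v}_1$-coefficient), so that the $\Abs{y}_2$ already present inside $\Abs{v}_2$ is not squared; second, the $H_1$-bound $\Abs{v}_1\le C\Abs{\xi}_1\Abs{\eta}_1$ keeps the extra $\Abs{y}_2$ coming from $d\psi$ attached solely to $\Abs{\xi}_1\Abs{\eta}_1$, matching the third permitted monomial. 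Once this delicacy is respected, combining the bounds for both summands and choosing $\kappa(x)$ to be a suitable polynomial combination of $\kappa_\phi$, $\kappa_\psi$, and the local constants from the auxiliary estimates completes the verification of~\eqref{eq:tame} for $\psi\circ\phi$.
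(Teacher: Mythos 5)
Your proposal is correct and follows essentially the same route as the paper: derive the auxiliary estimates for $\phi(y)$, $d\phi|_y$, and $d^2\phi|_y$ in both $H_1$ and $H_2$ norms (these are exactly the paper's Lemmas~\ref{le:tame-1} and~\ref{le:tame-0}, Corollary~\ref{cor:tame}, and the $H_1$-bounds~\eqref{eq:add-tame} from $C^2$-continuity), then expand $d^2(\psi\circ\phi)$ by the chain rule and substitute. One small bookkeeping slip: the excess $\abs{\xi}_1\abs{\eta}_1$ term (with no $\abs{y}_2$ factor) should be absorbed into $\abs{\xi}_1\abs{\eta}_2$ via the normalization $\abs{\cdot}_1\le\abs{\cdot}_2$ of~\eqref{eq:12}, not into $\abs{y}_2\abs{\xi}_1\abs{\eta}_1$, since $\abs{y}_2$ has no lower bound.
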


The proof needs the following two technical lemmas.

\begin{lemma}\label{le:tame-1}
Assume that $\phi\colon U_1\to H_1$ is tame.
Then for every $x\in U_1$ there exists an $H_1$-open neighborhood
$W_x^\prime\subset U_1$ of $x$ and a constant $\kappa_1>0$
such that for every $y\in W_x^\prime\cap H_2$ and every $\eta\in H_2$
it holds the estimate
\begin{equation*}
\begin{split}
   \Abs{d\phi|_y\eta}_2
   &\le
   \kappa_1\left(\abs{\eta}_2+\abs{y}_2\abs{\eta}_1\right) .
\end{split}
\end{equation*}
\end{lemma}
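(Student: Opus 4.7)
The plan is to derive the first-order estimate from the second-order tameness bound via the fundamental theorem of calculus applied along a segment lying entirely in $H_2$, starting from a cleverly chosen fixed reference point.

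First I would shrink the given tameness neighborhood $W_x$ from Definition~\ref{def:tame} to an $H_1$-open ball $B_r(x)\subset W_x$ of some small radius $r>0$, and by density of $H_2$ in $H_1$ pick once and for all a reference point $z\in B_{r/2}(x)\cap H_2$. I would then set $W_x':=B_{r/2}(x)$. For any $y\in W_x'\cap H_2$ the whole segment $\{z+t(y-z):t\in[0,1]\}$ lies in $B_r(x)\subset W_x$ and in $H_2$, hence in $U_2$. Since by condition~(i) of tameness the restriction $\phi|_{U_2}\colon U_2\to H_2$ is of class $C^2$, the fundamental theorem of calculus yields the identity in $H_2$
\begin{equation*}
   d\phi|_y\,\eta
   \;=\; d\phi|_z\,\eta \;+\; \int_0^1 d^2\phi|_{z+t(y-z)}(y-z,\eta)\,dt .
\end{equation*}

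The first term is bounded by $C_z\abs{\eta}_2$ where $C_z:=\norm{d\phi|_z}_{\Ll(H_2,H_2)}$ is a constant depending only on the base point $z$. For the integrand I would apply the tameness estimate~\eqref{eq:tame}, which gives
\begin{equation*}
   \Bigl|d^2\phi|_{z+t(y-z)}(y-z,\eta)\Bigr|_2
   \le \kappa\bigl(\abs{y-z}_1\abs{\eta}_2
     + \abs{y-z}_2\abs{\eta}_1
     + \abs{z+t(y-z)}_2\abs{y-z}_1\abs{\eta}_1\bigr) .
\end{equation*}
Then I would bound the various factors: $\abs{y-z}_1<r$ since $y,z\in B_{r/2}(x)$; $\abs{y-z}_2\le \abs{y}_2+\abs{z}_2$; and $\abs{z+t(y-z)}_2\le 2\abs{z}_2+\abs{y}_2$. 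Using also the standing assumption $\abs{\eta}_1\le\abs{\eta}_2$ from~\eqref{eq:12} to absorb any stray terms of the form $\abs{z}_2\abs{\eta}_1$ into a term proportional to $\abs{\eta}_2$, the collection of terms produces exactly a bound of the form $\kappa_1(\abs{\eta}_2+\abs{y}_2\abs{\eta}_1)$, with $\kappa_1$ depending on $r$, $\kappa$, $C_z$, and $\abs{z}_2$ but independent of $y$ and $\eta$.

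The only subtle point is that $x$ itself is not assumed to lie in $H_2$, so one cannot simply take $x$ as the base point of the line integral; density of $H_2$ in $H_1$ is what allows us to replace $x$ by a nearby $z\in H_2$ whose $H_2$-norm is then a harmless additive constant. The rest is routine bookkeeping.
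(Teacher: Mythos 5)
Your proposal is correct and follows essentially the same route as the paper's proof: both use density of $H_2$ in $H_1$ to pick an $H_2$-valued base point near $x$, apply the fundamental theorem of calculus along the segment to that point, bound the integrand with the tameness estimate~(\ref{eq:tame}), and absorb the constant terms using $\abs{\cdot}_1\le\abs{\cdot}_2$. The only cosmetic difference is that the paper centers the ball at the $H_2$-point $x_0$ while you center it at $x$ and take the half-radius ball; the bookkeeping is the same.
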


\begin{proof}
For $x\in U_1$ let $W_x$ and $\kappa>0$ be such that
for every $y\in W_x\cap H_2$ the estimate~(\ref{eq:tame}) holds.
Since $H_2$ is dense in $H_1$
we can pick $x_0$ in $W_x\cap H_2$ and $\delta>0$ such that
the radius-$\delta$ ball in $H_1$ about $x_0$ is a subset of $W_x$
containing~$x$, in symbols $x\in B^1_\delta(x_0)\subset W_x$.
Pick $y\in B^1_\delta(x_0)\cap H_2$ and $\eta\in H_2$, then
by the fundamental theorem of calculus and monotonicity of the
integral we obtain
\begin{equation*}
\begin{split}
   &\Abs{d\phi|_y\eta}_2
\\
   &\le 
   \int_0^1\Abs{\tfrac{d}{dt} d\phi|_{x_0+t(y-x_0)}\eta}_2\, dt
   +\Abs{d\phi|_{x_0}\eta}_2
\\
   &=
   \int_0^1\Abs{d^2\phi|_{x_0+t(y-x_0)}(y-x_0,\eta)}_2\, dt
   +\Abs{d\phi|_{x_0}\eta}_2
\\
   &\stackrel{3}{\le}
   \kappa\int_0^1\Bigl(\abs{y-x_0}_1\abs{\eta}_2
   +\abs{y-x_0}_2\abs{\eta}_1
   +\underbrace{\abs{x_0+t(y-x_0)}_2}_{\le\abs{x_0}_2+t\abs{y-x_0}_2}
      \abs{y-x_0}_1\abs{\eta}_1
   \Bigr) dt
   \\
   &\quad+\Norm{d\phi|_{x_0}}_{\Ll(H_2)}\abs{\eta}_2
\\
   &\stackrel{4}{\le}
   \kappa\Bigl(\delta\abs{\eta}_2+\abs{y-x_0}_2\abs{\eta}_1
   +\delta\abs{x_0}_2\abs{\eta}_1+\tfrac12 \delta \abs{y-x_0}_2\abs{\eta}_1
   \Bigr)
   \\
   &\quad+\Norm{d\phi|_{x_0}}_{\Ll(H_2)}\abs{\eta}_2
\\
   &\stackrel{5}{\le}
   \kappa\Bigl(\delta\abs{\eta}_2
   +\abs{y}_2\abs{\eta}_1 +\abs{x_0}_2\abs{\eta}_1
   +\tfrac{3}{2}\delta \abs{x_0}_2\abs{\eta}_1
   +\tfrac12\delta \abs{y}_2\abs{\eta}_1
   \Bigr)
   \\
   &\quad+\Norm{d\phi|_{x_0}}_{\Ll(H_2)}\abs{\eta}_2
\\
   &\stackrel{6}{\le}
   \kappa\Bigl(\delta\abs{\eta}_2
   +\abs{y}_2\abs{\eta}_1
   +\abs{x_0}_2\abs{\eta}_2
   +\tfrac{3}{2}\delta \abs{x_0}_2\abs{\eta}_2
   +\tfrac12\delta \abs{y}_2\abs{\eta}_1
   \Bigr)
   \\
   &\quad+\Norm{d\phi|_{x_0}}_{\Ll(H_2)}\abs{\eta}_2
\\
   &\stackrel{7}{\le}
   \kappa_1\left(\abs{\eta}_2+\abs{y}_2\abs{\eta}_1\right) .
\end{split}
\end{equation*}
Step~3 is by tameness~(\ref{eq:tame}),
step~4 by integration and since $y\in B^1_\delta(x_0)$.
Step~5 is the triangle inequality and
step~6 the constant-avoiding assumption
$\abs{\cdot}_1\le \abs{\cdot}_2$.
In step 7 we used the constant defined by
$$
   \kappa_1:=\max\Bigl\{\kappa \left(\delta+(1+\tfrac{3}{2}\delta)\abs{x_0}_2\right)
   +\Norm{d\phi|_{x_0}}_{\Ll(H_2)}, \kappa\left(1+\tfrac12\delta\right)\Bigr\}.
$$
Hence Lemma~\ref{le:tame-1} follows with $W_x^\prime=B_\delta^1(x_0)$.
\end{proof}

\begin{lemma}\label{le:tame-0}
Assume that $\phi\colon U_1\to H_1$ is tame.
Then for every $x\in U_1$ there exists an $H_1$-open neighborhood
$W_x^{\prime\prime}\subset U_1$ of $x$ and a constant $\kappa_0>0$
such that for every $y\in W_x^{\prime\prime}\cap H_2$
it holds the estimate
\begin{equation*}
\begin{split}
   \abs{\phi(y)}_2
   &\le
   \kappa_0\left(1+\abs{y}_2\right) .
\end{split}
\end{equation*}
\end{lemma}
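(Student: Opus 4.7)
The plan is to mimic the proof of Lemma~\ref{le:tame-1} one order down: having controlled the first differential $d\phi|_y\eta$ in the $H_2$-norm in terms of $\abs{\eta}_2$ and $\abs{y}_2\abs{\eta}_1$, we now integrate once more to control $\phi$ itself. Concretely, using the density of $H_2$ in $H_1$, pick $x_0\in W_x^\prime\cap H_2$ (where $W_x^\prime$ is the neighborhood provided by Lemma~\ref{le:tame-1}) and $\delta>0$ such that $x\in B^1_\delta(x_0)\subset W_x^\prime$. We set $W_x^{\prime\prime}:=B^1_\delta(x_0)$.

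For $y\in W_x^{\prime\prime}\cap H_2$, the straight segment $t\mapsto x_0+t(y-x_0)$ lies in $W_x^\prime\cap H_2$, so the fundamental theorem of calculus applied in $H_2$ (valid because $\phi|_{U_2}$ is $C^2$ into $H_2$) gives
\begin{equation*}
   \phi(y)-\phi(x_0)
   =\int_0^1 d\phi|_{x_0+t(y-x_0)}(y-x_0)\,dt .
\end{equation*}
Taking $\abs{\cdot}_2$, moving it inside the integral, and applying the bound of Lemma~\ref{le:tame-1} with $\eta=y-x_0$ at the point $x_0+t(y-x_0)$ yields
\begin{equation*}
   \abs{\phi(y)}_2
   \le \abs{\phi(x_0)}_2
   +\kappa_1\int_0^1\Bigl(\abs{y-x_0}_2
   +\abs{x_0+t(y-x_0)}_2\,\abs{y-x_0}_1\Bigr)dt .
\end{equation*}

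The remaining work is routine bookkeeping along the lines of steps~4--7 in the proof of Lemma~\ref{le:tame-1}. Namely, $\abs{x_0+t(y-x_0)}_2\le\abs{x_0}_2+\abs{y-x_0}_2\le 2\abs{x_0}_2+\abs{y}_2$ by the triangle inequality, and $\abs{y-x_0}_1\le\delta$ since $y\in B^1_\delta(x_0)$; also $\abs{y-x_0}_2\le\abs{x_0}_2+\abs{y}_2$. Substituting these bounds produces an estimate of the form
\begin{equation*}
   \abs{\phi(y)}_2
   \le \abs{\phi(x_0)}_2
   +\kappa_1\bigl(\abs{x_0}_2+\abs{y}_2
   +\delta\,(2\abs{x_0}_2+\abs{y}_2)\bigr) ,
\end{equation*}
which has exactly the desired shape $\kappa_0(1+\abs{y}_2)$ once one sets, e.g.,
\begin{equation*}
   \kappa_0:=\max\bigl\{\abs{\phi(x_0)}_2
   +\kappa_1(1+2\delta)\abs{x_0}_2,\;\kappa_1(1+\delta)\bigr\} .
\end{equation*}

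There is no real obstacle here; the only subtlety is making sure one may integrate in $H_2$ along the segment, which is legitimate because $W_x^\prime\cap H_2$ is $H_2$-open (as $W_x^\prime$ is $H_1$-open and the inclusion $H_2\hookrightarrow H_1$ is continuous) and contains the segment, and because tameness already supplies the $C^1$ regularity of $\phi|_{U_2}$ into $H_2$ needed to apply Lemma~\ref{le:tame-1}. Everything else is a triangle-inequality exercise with the inequality $\abs{\cdot}_1\le\abs{\cdot}_2$.
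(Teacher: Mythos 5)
Your proof is correct and takes essentially the same route as the paper: choose $x_0\in W_x^\prime\cap H_2$ with $x\in B^1_\delta(x_0)\subset W_x^\prime$, apply the fundamental theorem of calculus along the segment, invoke Lemma~\ref{le:tame-1}, and finish with the triangle inequality and $\abs{y-x_0}_1\le\delta$. The only cosmetic difference is that you discard the factor $t$ in $\abs{x_0}_2+t\abs{y-x_0}_2$ before integrating, which yields slightly larger constants ($2\delta$ and $\delta$ in place of the paper's $\tfrac{3}{2}\delta$ and $\tfrac{1}{2}\delta$) but changes nothing of substance.
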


\begin{proof}
For $x\in U_1$ let $W_x^\prime$ and $\kappa_1>0$ be as in
Lemma~\ref{le:tame-1}.
Since $H_2$ is dense in $H_1$ we can pick $x_0$ in $W_x^\prime\cap H_2$
and $\delta>0$ such that
the radius-$\delta$ ball in $H_1$ about $x_0$ is a subset of $W_x^\prime$
containing~$x$, in symbols $x\in B^1_\delta(x_0)\subset W_x^\prime$.
Pick $y\in B^1_\delta(x_0)\cap H_2$, then
by the fundamental theorem of calculus we calculate
\begin{equation}\label{eq:kjhghgfc6hh}
\begin{split}
   &\abs{\phi(y)}_2
\\
   &\le 
   \abs{\phi(x_0)}_2
   +\int_0^1\Abs{\tfrac{d}{dt} \phi(x_0+t(y-x_0))}_2\, dt
\\
   &=
   \abs{\phi(x_0)}_2
   +\int_0^1\Abs{d\phi|_{x_0+t(y-x_0)}(y-x_0)}_2\, dt
\\
   &\stackrel{3}{\le}
   \abs{\phi(x_0)}_2
   +\kappa_1\int_0^1\Bigl(\abs{y-x_0}_2
   +\underbrace{\abs{x_0+t(y-x_0)}_2}_{\le\abs{x_0}_2+t\abs{y-x_0}_2}
      \abs{y-x_0}_1
   \Bigr) dt
\\
   &\stackrel{4}{\le}
   \abs{\phi(x_0)}_2
   +\kappa_1\Bigl(1+\tfrac12\abs{y-x_0}_1\Bigr)
   \underbrace{\abs{y-x_0}_2}_{\le \abs{y}_2+\abs{x_0}_2}
   +\kappa_1\abs{x_0}_2\abs{y-x_0}_1
\\
   &\stackrel{5}{\le}
   \abs{\phi(x_0)}_2
   +\kappa_1\abs{x_0}_2\left(1+\tfrac{3}{2}\delta\right)
   +\kappa_1\left(1+\tfrac{1}{2}\delta\right)\abs{y}_2
   .
\end{split}
\end{equation}
Step~3 is by Lemma~\ref{le:tame-1}, step 4 by integration,
step 5 by $\abs{y-x_0}_1 \le\delta$. Define
$$
   \kappa_0
   :=
   \max\Bigl\{\abs{\phi(x_0)}_2
   +\kappa_1\abs{x_0}_2\left(1+\tfrac{3}{2}\delta\right)
   ,
   \kappa_1\left(1+\tfrac12\delta\right)\Bigr\} .
$$
Hence Lemma~\ref{le:tame-0} follows with
$W_x^{\prime\prime}=B_\delta^1(x_0)$.
\end{proof}

We summarize the two lemmas and~(\ref{eq:tame}) in the following corollary.

\begin{corollary}\label{cor:tame}
Assume that $\phi\colon U_1\to H_1$ is tame.
Then for every $x\in U_1$ there exists an $H_1$-open neighborhood
$W_x\subset U_1$ of $x$ and a constant $\kappa>0$
such that for every $y\in W_x\cap H_2$
and all $\xi,\eta\in H_2$
there are the three estimates
\begin{equation}\label{eq:cor-tame}
\begin{split}
   \Abs{\phi(y)}_2
   &\le
   \kappa\left(1+\abs{y}_2\right)
\\
   \Abs{d\phi|_y\eta}_2
   &\le
   \kappa\left(\abs{\eta}_2+\abs{y}_2\abs{\eta}_1\right)
\\
   \Abs{d^2 \phi|_y(\xi,\eta)}_2 
   &\le \kappa\bigl(\abs{\xi}_1 \abs{\eta}_2+\abs{\xi}_2\abs{\eta}_1
   +\abs{y}_2 \abs{\xi}_1 \abs{\eta}_1\bigr)
   .
\end{split}
\end{equation}
\end{corollary}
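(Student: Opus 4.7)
The plan is essentially to combine the conclusions of Lemma~\ref{le:tame-0}, Lemma~\ref{le:tame-1}, and the defining tameness inequality~(\ref{eq:tame}) by intersecting their neighborhoods and taking a common constant. Each of these three statements produces, for a given $x\in U_1$, a neighborhood and a constant that controls one of the three quantities in~(\ref{eq:cor-tame}). Since there is nothing forcing these neighborhoods or constants to agree, the only step is to harmonize them.

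Concretely, I would fix $x\in U_1$ and let $W_x^{\prime\prime}\subset U_1$ with constant $\kappa_0>0$ be as provided by Lemma~\ref{le:tame-0}, so that the first estimate in~(\ref{eq:cor-tame}) holds on $W_x^{\prime\prime}\cap H_2$. Let $W_x^\prime\subset U_1$ with constant $\kappa_1>0$ be as provided by Lemma~\ref{le:tame-1}, giving the second estimate on $W_x^\prime\cap H_2$. Finally, by Definition~\ref{def:tame}, there is an $H_1$-open neighborhood, call it $W_x^{\prime\prime\prime}\subset U_1$, with constant $\kappa^\prime>0$, on which the third estimate (the tameness bound on $d^2\phi$) holds on $W_x^{\prime\prime\prime}\cap H_2$.

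Now I would set
\[
   W_x := W_x^\prime \cap W_x^{\prime\prime} \cap W_x^{\prime\prime\prime}
   \quad\text{and}\quad
   \kappa := \max\{\kappa_0,\kappa_1,\kappa^\prime\}.
\]
The set $W_x$ is open in $H_1$ as a finite intersection of $H_1$-open sets and still contains $x$. For any $y\in W_x\cap H_2$ and any $\xi,\eta\in H_2$, all three estimates are in force simultaneously on the chosen neighborhoods, and replacing the individual constants by the larger $\kappa$ only weakens the inequalities, so the three bounds of~(\ref{eq:cor-tame}) hold as stated.

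There is no genuine obstacle here: the argument is a bookkeeping step, and its only purpose is to package the three separate estimates into a single statement with uniform neighborhood and constant. The heavy lifting has already been done in Lemmas~\ref{le:tame-0} and~\ref{le:tame-1} (which together unfold the local content of tameness from the $d^2\phi$ bound via the fundamental theorem of calculus), so the corollary follows immediately.
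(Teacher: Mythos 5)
Your proposal is correct and is exactly the argument the paper intends: the text introduces the corollary with ``We summarize the two lemmas and~(\ref{eq:tame})'', i.e.\ one intersects the neighborhoods from Lemma~\ref{le:tame-0}, Lemma~\ref{le:tame-1} and Definition~\ref{def:tame} and takes the maximum of the three constants. The only point worth noting is that the neighborhoods produced by the two lemmas are balls containing $x$ but not centered at $x$, which is harmless since the corollary only asks for an open neighborhood of $x$.
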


\begin{proof}[Proof of Theorem~\ref{thm:tame}]
Assume that $x\in U_1$.
Concerning $\phi$ pick an open neighborhood $W_x$ of $x$ in $U_1$
such that all three estimates in Corollary~\ref{cor:tame} hold true.
Since $\phi\colon U_1\to V_1$ is $C^2$ we can additionally assume
that for every $y\in W_x$ and all $\xi,\eta\in H_1$
the following three estimates hold true as well
\begin{equation}\label{eq:add-tame}
\begin{split}
   \Abs{\phi(y)}_1
   &\le
   \kappa
\\
   \Abs{d\phi|_y\eta}_1
   &\le
   \kappa\abs{\eta}_1
\\
   \Abs{d^2 \phi|_y(\xi,\eta)}_1
   &\le \kappa \abs{\xi}_1 \abs{\eta}_1 .
\end{split}
\end{equation}
Concerning $\psi\colon V_1\to H_1$
pick an open neighborhood $W_{\phi(x)}$ of $\phi(x)$
in $V_1$ such that, maybe after enlarging the constant $\kappa$,
the six estimates hold as well for $\phi$ replaced by $\psi$.
Maybe after shrinking the open neighborhood $W_x$ we can additionally
assume that $W_x\subset \phi^{-1}(W_{\phi(x)})$.

Using that $d(\psi\circ\phi)|_y\xi=d\psi|_{\phi(y)} d\phi|_y \xi$ we calculate
\begin{equation*}
\begin{split}
   &\Abs{d^2(\psi\circ\phi)|_y(\xi,\eta)}_2
\\
   &\le\Abs{
   d^2\psi|_{\phi(y)}\left(d\phi|_y\xi,d\phi|_y\eta\right)}_2
   +\Abs{d\psi|_{\phi(y)} d^2\phi|_y (\xi,\eta)}_2
\\
   &\stackrel{2}{\le}\kappa\left(
      \Abs{d\phi|_y\xi}_1 \Abs{d\phi|_y\eta}_2
      +\Abs{d\phi|_y\xi}_2\Abs{d\phi|_y\eta}_1
      +\abs{\phi(y)}_2 \Abs{d\phi|_y\xi}_1 \Abs{d\phi|_y\eta}_1
   \right)
   \\
   &\quad
   +\kappa\left(
   \Abs{d^2\phi|_y (\xi,\eta)}_2+\abs{\phi(y)}_2\Abs{d^2\phi|_y (\xi,\eta)}_1
   \right)
\\
   &\stackrel{3}{\le}
   \kappa
   \Bigl(
   \kappa\abs{\xi}_1\kappa\left(\abs{\eta}_2+\abs{y}_2\abs{\eta}_1\right)
   +\kappa\abs{\eta}_1\kappa\left(\abs{\xi}_2+\abs{y}_2\abs{\xi}_1\right)
   \Bigr)
   \\
   &\quad+
   \kappa (1+\abs{y}_2)\kappa^2\abs{\xi}_1\abs{\eta}_1
   +\kappa^2
   \bigl(\abs{\xi}_1 \abs{\eta}_2+\abs{\xi}_2\abs{\eta}_1
   +\abs{y}_2 \abs{\xi}_1 \abs{\eta}_1\bigr)
   \\
   &\quad+\kappa (1+\abs{y}_2) \kappa \abs{\xi}_1\abs{\eta}_1
\\
   &=
   (\kappa^3+\kappa^2)\abs{\xi}_1\abs{\eta}_2
   +(\kappa^3+\kappa^2)\abs{\xi}_2\abs{\eta}_1
   \\
   &\quad
   +\underbrace{(3\kappa^3+2\kappa^2)}_{=:\kappa^\prime}
   \abs{y}_2 \abs{\xi}_1\abs{\eta}_1
   +(\kappa^3+\kappa^2)\abs{\xi}_1\underbrace{\abs{\eta}_1}_{\le \abs{\eta}_2}
\\
   &\le 
   \kappa^\prime \bigl(\abs{\xi}_1 \abs{\eta}_2+\abs{\xi}_2\abs{\eta}_1
   +\abs{y}_2 \abs{\xi}_1 \abs{\eta}_1\bigr) .
\end{split}
\end{equation*}
In step 2 we used~(\ref{eq:cor-tame}) for $\psi$, namely for summand one
the third estimate and for summand two the second estimate.
In step 3 we used~(\ref{eq:cor-tame}) and~(\ref{eq:add-tame}) for $\phi$.
This finishes the proof of Theorem~\ref{thm:tame}.
\end{proof}

For later reference we state the following lemma.

\begin{lemma}[Differences]\label{le:tame-3}
Assume that $\phi\colon U_1\to H_1$ is tame.
Then for every $x\in U_1$ there exists an $H_1$-open neighborhood
$W_x\subset U_1$ of $x$ and a constant $\kappa>0$
such that for all $y_0,y_1\in W_x\cap H_2$ and $\eta\in H_2$
there is the estimate
\begin{equation}\label{eq:tame-3}
\begin{split}
   &\Abs{\left( d\phi|_{y_1}-d\phi|_{y_0}\right)\eta}_2\\
   &\le\kappa\left(\abs{y_1-y_0}_1\abs{\eta}_2+\abs{y_1-y_0}_2\abs{\eta}_1\right)
   +\tfrac{\kappa}{2}\left(\abs{y_1}_2+\abs{y_0}_2\right)
      \abs{y_1-y_0}_1\abs{\eta}_1.
\end{split}
\end{equation}
\end{lemma}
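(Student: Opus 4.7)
The plan is to reduce the difference $d\phi|_{y_1} - d\phi|_{y_0}$ to a line integral of the second derivative, then apply the tameness bound~\eqref{eq:tame}. First I choose the neighborhood $W_x$ to be convex: namely, pick $x_0 \in H_2$ close to $x$ together with $\delta>0$ such that the $H_1$-ball $B^1_\delta(x_0)$ is contained in the neighborhood furnished by Definition~\ref{def:tame} (with constant $\kappa$ as in~\eqref{eq:tame}), and set $W_x := B^1_\delta(x_0)$. Since $H_2$ is a linear subspace of $H_1$, for any $y_0,y_1\in W_x\cap H_2$ the entire segment $y_t := y_0 + t(y_1-y_0)$, $t\in[0,1]$, lies in $W_x\cap H_2$.

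Next I apply the fundamental theorem of calculus to the map $t\mapsto d\phi|_{y_t}\eta$, which gives
\begin{equation*}
  \left( d\phi|_{y_1}-d\phi|_{y_0}\right)\eta
  = \int_0^1 d^2\phi|_{y_t}(y_1-y_0,\eta)\, dt.
\end{equation*}
Taking the $H_2$ norm under the integral and invoking the tameness bound~\eqref{eq:tame} at each $y_t$, I obtain
\begin{equation*}
\begin{split}
  \Abs{\left(d\phi|_{y_1}-d\phi|_{y_0}\right)\eta}_2
  &\le \kappa\int_0^1\bigl(\abs{y_1-y_0}_1\abs{\eta}_2
       +\abs{y_1-y_0}_2\abs{\eta}_1\bigr)\,dt\\
  &\quad +\kappa\int_0^1\abs{y_t}_2\,dt\cdot \abs{y_1-y_0}_1\abs{\eta}_1.
\end{split}
\end{equation*}
The first two terms are independent of $t$, so they integrate trivially. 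For the last term I use the triangle inequality $\abs{y_t}_2\le (1-t)\abs{y_0}_2+t\abs{y_1}_2$, which yields $\int_0^1\abs{y_t}_2\,dt\le \tfrac12(\abs{y_0}_2+\abs{y_1}_2)$. Assembling these estimates gives exactly the inequality~\eqref{eq:tame-3}.

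There is no significant obstacle here beyond ensuring convexity of $W_x$: the argument is a direct repetition of the integration technique already used in the proof of Lemma~\ref{le:tame-1}, simply applied between two points $y_0,y_1$ instead of $x_0$ and $y$. The only subtlety is the bookkeeping of the $\abs{y_t}_2$ factor, handled by the symmetric bound $\tfrac12(\abs{y_0}_2+\abs{y_1}_2)$ which is precisely the form appearing in the statement.
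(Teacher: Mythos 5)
Your proposal is correct and follows the same route as the paper: apply the fundamental theorem of calculus to $t\mapsto d\phi|_{y_t}\eta$ along the segment from $y_0$ to $y_1$, invoke the pointwise tameness estimate~\eqref{eq:tame} under the integral, and then integrate the $t$-dependent factor $\abs{y_t}_2$ to get the symmetric $\tfrac12(\abs{y_0}_2+\abs{y_1}_2)$ term. The only cosmetic difference is that you explicitly build $W_x$ as a ball $B^1_\delta(x_0)$ (with $x_0\in H_2$, a condition that is actually not needed here), whereas the paper simply remarks that $W_x$ may be assumed convex without loss of generality.
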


\begin{proof}
For $x\in U_1$ let $W_x$ and $\kappa$ be as in Definition~\ref{def:tame}.
Without loss of generality we can additionally assume that $W_x$ is convex.
Hence we estimate using the fundamental theorem of calculus
and~(\ref{eq:tame}) in~step~3 to get
\begin{equation*}
\begin{split}
   &\Abs{\left( d\phi|_{y_1}-d\phi|_{y_0}\right)\eta}_2
\\
   &=\biggl|\int_0^1 \tfrac{d}{dt} d\phi|_{ty_1+(1-t)y_0} \eta \, dt\biggr|_2
\\
   &\le\int_0^1\Bigl|d^2\phi|_{ty_1+(1-t)y_0}\left(y_1-y_0,\eta\right)\Bigr|_2 dt
\\
   &\stackrel{3}{\le}\kappa\int_0^1
   \left(\abs{y_1-y_0}_1\abs{\eta}_2+\abs{y_1-y_0}_2\abs{\eta}_1\right)
   +\left(t\abs{y_1}_2+(1-t)\abs{y_0}_2\right)
   \abs{y_1-y_0}_1\abs{\eta}_1\, dt .
\end{split}
\end{equation*}
Integration then concludes the proof.
\end{proof}

\subsection{Tame two-level manifolds}
\label{sec:two-level-mfs}

\begin{definition}\label{def:two-level-manifold}
Let $H_2\subset H_1$ be separable Hilbert spaces
with a dense inclusion of $H_2$ into $H_1$.
Let $X=X_1$ be a $C^2$ Hilbert manifold modeled on~$H_1$.

\begin{itemize}\setlength\itemsep{0ex} 
\item[a)]
  A \textbf{tame two-level structure on $X_1$}, more precisely a \textbf{tame
  \boldmath$(H_1,H_2)$-two-level structure on $X_1$},
  is a $C^2$ atlas $\Aa=\{\psi\colon H_1\supset U_1\to V_1\subset X_1\}$
  of $X_1$ with the property that all transition maps
  $$
     \phi:=\tilde\psi^{-1}\circ\psi\colon
     H_1\supset \psi^{-1}(V_1\cap \tilde V_1)
     \to\tilde\psi^{-1}(V_1\cap \tilde V_1) \subset H_1
  $$
  are $(H_1,H_2)$-tame. 
  We refer to such an atlas as a \textbf{tame two-level atlas}.
\item[b)]
  For an open subset $U_1\subset H_1$ let $U_2=U_1\cap H_2$ be its part
  in $H_2$.
  If $\Aa$ is a tame two-level atlas of $X_1$
we define a subset $X_2\subset X_1$ by
$$
   X_2
   :=\bigcup_{\psi\in\Aa} \psi(U_2) .
$$
Then $X_2$ is itself a $C^2$ Hilbert manifold
with $C^2$ atlas $\Aa_2=\{\psi|_{U_2}\mid\psi \in\Aa\}$
and $X_2$ is dense in $X_1$.
\item[c)]
The pair $(X_1,X_2)$ is referred to as a
\textbf{tame two-level manifold}, more precisely as a 
\textbf{tame \boldmath$(H_1,H_2)$-two-level manifold}.
\end{itemize}
\end{definition}

\begin{remark}[Maximal tame two-level atlas]\label{rem:max-atlas}
Two tame two-level atlases are \textbf{compatible} if their union is itself a
tame two-level atlas.
Compatibility is an equivalence relation.
That compatibility is reflexive and symmetric is obvious.
That compatibility is transitive follows from Theorem~\ref{thm:tame}
which tells us that a composition of tame maps is tame,
together with the fact that tameness is a local condition
(the restriction of a tame map to an open subset of its domain is itself tame).

Since compatibility is an equivalence relation,
for each tame two-level manifold there exists a unique
\textbf{maximal} tame two-level atlas obtained by taking the union of all tame
two-level atlases compatible to a given one.
\end{remark}

\subsection{Parametrized tameness}\label{sec:tameness-parametrized}

Let $\O_1$ be an open subset of $\R\times H_1$
and let $O_2$ be its part in $\R\times H_2$.

\begin{definition}\label{def:tame-parametrized}
A $C^2$ map $\phi\colon O_1\to H_1$ is called
\textbf{parametrized tame}, if it satisfies two conditions.
Firstly, its restriction to $O_2=O_1\cap(\R\times H_2)$ takes values
in $H_2$ and is $C^2$ as a map $\phi|_{O_2} \colon O_2 \to H_2$.
Secondly, for every point $(s,x)\in O_1$ there exists an open neighborhood
$W_{s,x}$ of $(s,x)$ in $O_1$ and a constant $\kappa_{s,x}>0$ such that for
every $(t,y)\in W_{s,x}\cap (\R\times H_2)$ it holds the estimate
\begin{equation}\label{eq:tame-parametrized}
   \Abs{d^2 \phi_t|_y(\xi,\eta)}_2 
   \le \kappa_{s,x}\bigl(\abs{\xi}_1 \abs{\eta}_2+\abs{\xi}_2\abs{\eta}_1
   +\abs{y}_2 \abs{\xi}_1 \abs{\eta}_1\bigr) .
\end{equation}
\end{definition}

\begin{definition}\label{def:tame-asy-const}
The \textbf{\boldmath$s$-slices} of $O_1$ and $O_2$ are defined by
\begin{equation*}
\begin{split}
   U_1^s
   &:=\{x\in H_1\mid (s,x)\in O_1\}
   =O_1\cap (\{s\}\times H_1) ,
\\
   U_2^s
   &:=\{x\in H_2\mid (s,x)\in O_2\}
   =O_2\cap (\{s\}\times H_2) 
   =U_1^s\cap H_2 .
\end{split}
\end{equation*}
A parametrized tame map is called \textbf{asymptotically constant}
if there exists $T>0$ such that, firstly,
$\phi_s:=\phi(s,\cdot)\colon U_1^s\to H_1$
is independent of $s$ and, secondly, $\phi_s(0)=0$,
both whenever $\abs{s}\ge T$.
\end{definition}

\subsection{Loop space is a tame two-level manifold}
\label{sec:tameness-loop-space}

The loop space $X=\Lambda \R$ of $M=\R$ has the form of a Hilbert space pair
$$
   X_1:=H_1:=W^{1,2}(\SS^1,\R)
   ,\qquad
   X_2:=H_2:=W^{2,2}(\SS^1,\R) .
$$
A $C^\infty$ diffeomorphism $\varphi\colon\R\to\R$
induces the loop space \textbf{transition map}
$$
   \phi\colon U_1:=H_1\to H_1
   ,\quad
   x\mapsto [t\mapsto \varphi(x(t))].
$$

\begin{lemma}\label{le:loop-space-tame}
The induced loop space transition map $\phi$ is tame.
\end{lemma}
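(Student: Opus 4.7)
The plan is to verify the two conditions of Definition~\ref{def:tame}. For (i), the fact that $\phi$ sends $H_2=W^{2,2}(\SS^1,\R)$ to itself smoothly is a standard $\Omega$-lemma style computation: for $x\in H_2$, direct differentiation gives $(\phi(x))''=\varphi''(x)(x')^2+\varphi'(x)x''$, and since $W^{1,2}(\SS^1)\hookrightarrow L^\infty$ and $W^{2,2}(\SS^1)\hookrightarrow C^1$, both summands are controlled in $L^2$ by polynomial expressions in $|x|_2$. Higher regularity of the composition operator in $x$ follows from the fact that $\varphi\in C^\infty$ and the same Sobolev embeddings bound all remainder terms arising from Taylor expansion; in fact $\phi|_{H_2}\colon H_2\to H_2$ is $C^\infty$.

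For (ii), I would first compute the derivatives explicitly:
\begin{equation*}
   d\phi|_y\eta(t)=\varphi'(y(t))\eta(t),\qquad
   d^2\phi|_y(\xi,\eta)(t)=\varphi''(y(t))\xi(t)\eta(t)=:g(t).
\end{equation*}
Next I would choose a uniform bound on $\varphi$ and its derivatives on the relevant range: set $W_x:=B^{H_1}_1(x)$, so that by $W^{1,2}\hookrightarrow L^\infty$ there is a radius $R=R(|x|_1)$ with $\norm{y}_{L^\infty}\le R$ for all $y\in W_x$, and put $M:=\sup_{|s|\le R}\,\max_{k=2,3,4}|\varphi^{(k)}(s)|$.

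The core step is then to estimate $\norm{g}_{W^{2,2}}$ for $y\in W_x\cap H_2$. Differentiating by the product and chain rules,
\begin{equation*}
\begin{split}
   g''=\;&\varphi^{(4)}(y)(y')^2\xi\eta
   +\varphi'''(y)y''\xi\eta
   +2\varphi'''(y)y'\xi'\eta
   +2\varphi'''(y)y'\xi\eta'
\\
   &+\varphi''(y)\xi''\eta
   +2\varphi''(y)\xi'\eta'
   +\varphi''(y)\xi\eta'',
\end{split}
\end{equation*}
and similarly for $g'$ and $g$. Each summand is then handled by pairing $L^\infty$- and $L^2$-norms, using the embeddings $\norm{\zeta}_{L^\infty}\le C|\zeta|_1$ and $\norm{\zeta'}_{L^\infty}\le C|\zeta|_2$ valid in dimension one, together with $\norm{\zeta'}_{L^2}\le|\zeta|_1$ and $\norm{\zeta''}_{L^2}\le|\zeta|_2$. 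For instance, the term with $y''$ contributes $M|y|_2\norm{\xi}_\infty\norm{\eta}_\infty\lesssim|y|_2|\xi|_1|\eta|_1$; the term $\varphi''(y)\xi\eta''$ contributes $\lesssim|\xi|_1|\eta|_2$; the mixed term $\varphi''(y)\xi'\eta'$ is handled by putting $\xi'$ in $L^\infty$, giving $\lesssim|\xi|_2|\eta|_1$.

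The main technical nuisance is the term $\varphi^{(4)}(y)(y')^2\xi\eta$, which at first glance has the wrong homogeneity in $y$. Here I would use the bootstrap $\norm{y'}_{L^\infty}\le C|y|_2$ (Sobolev in $1$D) to write $\norm{(y')^2}_{L^2}\le\norm{y'}_{L^\infty}\norm{y'}_{L^2}\le C|y|_2|y|_1$; since $|y|_1$ is uniformly bounded for $y\in W_x$ (by $|x|_1+1$), this absorbs into a constant $\kappa=\kappa(x)$ and yields a bound of the form $\kappa|y|_2|\xi|_1|\eta|_1$. Collecting all seven terms of $g''$, the three of $g'$, and the single term of $g$ produces exactly the right-hand side of~\eqref{eq:tame} with a constant depending only on $x$ (through $R$, $M$, and $|x|_1$), which completes the verification of tameness.
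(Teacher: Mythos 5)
Your proof is correct and follows essentially the same route as the paper: direct computation of the pointwise derivatives of $d^2\phi|_y(\xi,\eta)$, the $1$D Sobolev embeddings $W^{1,2}(\SS^1,\R)\hookrightarrow C^0$ and $W^{2,2}(\SS^1,\R)\hookrightarrow C^1$, and a term-by-term $L^2$ estimate of the second $t$-derivative, yielding the tameness inequality~(\ref{eq:tame}). The only presentational difference is that you make explicit how the quartic term $\varphi^{(4)}(y)(\dot y)^2\xi\eta$ is controlled by absorbing the $H_1$-bounded factor $\abs{y}_1$ into the constant $\kappa(x)$, a point the paper leaves implicit in asserting that $c_y$ depends continuously on $\abs{y}_1=\norm{y}_{1,2}$.
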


\begin{proof}
Condition~(i).
By assumption $\varphi\colon \R\to\R$ is $C^\infty$.
For $x\in H_1$ and $y\in H_2$ and abbreviating $x_t:=x(t)$ we get
$$
   \left(\tfrac{d}{dt} \phi(x)\right)_t
   =\varphi^\prime(x_t) \dot x_t
   ,\qquad
   \left(\tfrac{d^2}{dt^2} \phi(y)\right)_t
   =\varphi^{\prime\prime}(y_t) \dot y_t \dot y_t
   +\varphi^\prime(y_t) \ddot y_t
$$
for every $t\in\SS^1$.
This shows that $\phi$ on $H_1$ takes values in $H_1$
and on $H_2$ it takes values in $H_2$.
For $x,\xi,\eta\in H_1$ we calculate
\begin{equation*}
\begin{split}
   \left(d\phi|_x\xi\right)_t
   :=\left(\left.\tfrac{d}{d\eps}\right|_{\eps=0}
   \phi(x+\eps\xi)\right)_t
   =\left.\tfrac{d}{d\eps}\right|_{\eps=0} \phi(x_t+\eps\xi_t)
   =\varphi^\prime|_{x_t}\xi_t
\end{split}
\end{equation*}
whenever $t\in\SS^1$ and similarly
$$
   \left(d^2\phi|_x(\xi,\eta)\right)_t
   =\varphi^{\prime\prime}(x_t)\xi_t\eta_t .
$$
At $x\in H_1$ and $y\in H_2$
the first two derivatives with respect to time $t\in\SS^1$ are
\begin{equation*}
\begin{split}
   \tfrac{d}{dt}\left(d^2\phi|_x(\xi,\eta)\right)
   &=\varphi^{\prime\prime\prime}(x)\dot x\xi\eta
   +\varphi^{\prime\prime}(x)\dot\xi\eta
   +\varphi^{\prime\prime}(x)\xi\dot \eta
\\
   \tfrac{d^2}{dt^2}\left(d^2\phi|_y(\xi,\eta)\right)
   &=\varphi^{\prime\prime\prime\prime}(y)\dot y^2\xi\eta
   +\varphi^{\prime\prime\prime}(y)\ddot y\xi\eta
   +2\varphi^{\prime\prime\prime}(y)\dot y\dot\xi\eta
   +2\varphi^{\prime\prime\prime}(y)\dot y\xi\dot\eta
   \\
   &\quad
   +2\varphi^{\prime\prime}(y)\dot\xi\dot\eta
   +\varphi^{\prime\prime}(y)\ddot\xi\eta
   +\varphi^{\prime\prime}(y)\xi\ddot\eta .
\end{split}
\end{equation*}
The first identity holds for all $\xi,\eta\in H_1$
and the second for all $\xi,\eta\in H_2$.
Using the Sobolev embedding $W^{1,2}(\SS^1,\R)\INTO C^0(\SS^1,\R)$
with constant $1$ shows that both right hand sides are in $L^2(\SS^1,\R)$.
Thus $d^2\phi|_x(\xi,\eta)\in H_1$ and $d^2\phi|_y(\xi,\eta)\in H_2$.
Moreover, these formulae show that both maps
$$
   d^2\phi\colon H_1\to\Ll(H_1,H_1;H_1)
   ,\qquad
   d^2\phi\colon H_2\to\Ll(H_2,H_2;H_2) ,
$$
are continuous.
\\
Condition~(ii).
By the Sobolev embedding $W^{1,2}(\SS^1,\R)\INTO C^0(\SS^1,\R)$
with constant $1$ we have for each $y\in H_2$ the estimate
\begin{equation*}
\begin{split}
   \norm{d^2\phi|_y(\xi,\eta)}_{2,2}^2
   &=\norm{\varphi^{\prime\prime}(y)\xi\eta}_{2}^2
   +\norm{\tfrac{d}{dt}\varphi^{\prime\prime}(y)\xi\eta}_{2}^2
   +\norm{\tfrac{d^2}{dt^2}\varphi^{\prime\prime}(y)\xi\eta}_{2}^2
\\
   &\le c_y\Bigl(
   \norm{\xi}_{2,2} \norm{\eta}_{1,2}
   +\norm{\xi}_{1,2} \norm{\eta}_{2,2}
   +\norm{y}_{2,2}\norm{\xi}_{1,2} \norm{\eta}_{1,2}
   \Bigr)
\end{split}
\end{equation*}
for some constant $c_y$ which depends continuously on
$\abs{y}_1=\norm{y}_{1,2}$.
\\
This proves Lemma~\ref{le:loop-space-tame}.
\end{proof}

Lemma~\ref{le:loop-space-tame} shows that the loop space
is a tame two-level manifold.
Strictly speaking we only showed this for $M=\R$ to simply notation.
The same argument should, however, work for general smooth finite
dimensional manifolds $M$.

\newpage
\section[Differentiability in Hilbert space valued Sobolev spaces]{Differentiability in Hilbert space valued\newline Sobolev spaces}
\label{sec:differentiability}

In this section we assume that $H_1$ and $H_2$ are separable Hilbert
spaces together with a dense incusion of $H_2$ in $H_1$.
In Section~\ref{sec:differentiability} we do not need that the
inclusion is compact.
For $i=1,2$ we abbreviate
$$
   W^{1,2}_{H_1}:=W^{1,2}(\R,H_1),\qquad
   L^2_{H_i}:=L^2(\R,H_i) .
$$
The intersection
$$
   W^{1,2}_{H_1}\cap L^2_{H_2}
$$
is itself a Hilbert space with inner product
$$
   \langle\langle\langle
   \xi,\eta
   \rangle \rangle \rangle
   :=\int_{-\infty}^\infty \INNER{\dot\xi(s)}{\dot\eta(s)}_{H_1} ds
   +\int_{-\infty}^\infty \INNER{\xi(s)}{\eta(s)}_{H_2} ds .
$$
For $\xi,\eta\in \Ll(W^{1,2}_{H_1}\cap L^2_{H_2})$ we denote
\begin{equation}\label{eq:1,2-triple}
   \interleave \eta\interleave _{1,2}^2
   :=\norm{\eta}_{L^2_{H_2}}^2 +\norm{\dot\eta}_{L^2_{H_1}}^2
   ,\qquad
   \norm{\eta}_{1,2}^2= \norm{\eta}_{L^2_{H_1}}^2 +\norm{\dot\eta}_{L^2_{H_1}}^2.
\end{equation}

\boldmath
\subsection{Theorem~\ref{thm:B}}
\label{sec:Thm.B}
\unboldmath

\boldmath
\subsubsection[Base component]{Base component $\Phi$}
\label{sec:Thm.B-Phi}
\unboldmath

\begin{theorem}\label{thm:B-Phi}
Let $U_1\subset H_1$ be open
and $\phi\colon U_1\to H_1$ be tame.
Assume in addition that $0\in U_1$ and $\phi(0)=0$.
With $W^{1,2}_{H_1}$ and $L^2_{H_i}$ as 
in~(\ref{eq:notation-spaces}) the map
\begin{equation*}
\begin{split}
   \Phi
   \colon
   W^{1,2}_{H_1}\cap L^2_{H_2}\supset\Uu
   &\to
   W^{1,2}_{H_1}\cap L^2_{H_2}
   \\
   \xi
   &\mapsto
   [s\mapsto \phi\circ\xi_s] 
\end{split}
\end{equation*}
is well defined and continuously differentiable where
\begin{equation}\label{eq:Uu}
   \Uu:=\{\xi\in W^{1,2}_{H_1}\cap L^2_{H_2}
   \mid \text{$\xi_s\in U_1$, $\forall s\in\R$} \} .
\end{equation}
\end{theorem}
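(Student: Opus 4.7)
The plan is to verify in sequence that $\Uu$ is open, that $\Phi$ lands in $W^{1,2}_{H_1}\cap L^2_{H_2}$, that its candidate differential is bounded linear, that this candidate really is the Fr\'echet derivative, and finally that $\xi\mapsto d\Phi|_\xi$ is continuous in the operator norm. The unifying device throughout is that any $\xi\in\Uu$ has precompact image in $H_1$, so finitely many neighborhoods from Corollary~\ref{cor:tame} cover it and give a single constant $\kappa=\kappa(\xi)$ valid along every nearby path.

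First I would note that $W^{1,2}_{H_1}\INTO C^0(\R,H_1)$ continuously with $\xi_s\to 0$ as $\abs{s}\to\infty$, so for any $\xi\in\Uu$ the set $K_\xi:=\xi(\R)\cup\{0\}$ is compact in $U_1$, and every sufficiently small $W^{1,2}_{H_1}\cap L^2_{H_2}$-perturbation of $\xi$ keeps its image in a fixed $H_1$-tube around $K_\xi$ still contained in $U_1$. This simultaneously gives openness of $\Uu$ and uniform control of $\phi$, $d\phi$, $d^2\phi$ along $\xi$ via Corollary~\ref{cor:tame}. Well-definedness of $\Phi(\xi)$ in $L^2_{H_2}$ then follows from the first estimate of Corollary~\ref{cor:tame}, using $\phi(0)=0$ to turn the constant $1$ on the right-hand side into something integrable outside a compact interval. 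The weak $s$-derivative $\partial_s \Phi(\xi)_s = d\phi|_{\xi_s}\dot\xi_s$ lies in $L^2_{H_1}$ because $\phi\in C^2(U_1,H_1)$ has $\norm{d\phi|_y}_{\Ll(H_1,H_1)}$ uniformly bounded on the compact $K_\xi$.

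The natural candidate derivative is $(d\Phi|_\xi\,\eta)(s):=d\phi|_{\xi_s}\eta_s$. Its $L^2_{H_2}$-boundedness comes from the second line of Corollary~\ref{cor:tame}, pulling the pointwise $\abs{\eta_s}_1$ out of the integral via the Sobolev embedding $\norm{\eta}_{C^0_{H_1}}\le c\norm{\eta}_{W^{1,2}_{H_1}}$; its $s$-derivative $d^2\phi|_{\xi_s}(\dot\xi_s,\eta_s)+d\phi|_{\xi_s}\dot\eta_s$ is bounded in $L^2_{H_1}$ by the $C^2(U_1,H_1)$-bounds of $\phi$ on $K_\xi$. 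Thus $d\Phi|_\xi$ is a bounded linear endomorphism of $W^{1,2}_{H_1}\cap L^2_{H_2}$.

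To confirm Fr\'echet differentiability I would use the Taylor identity
\begin{equation*}
   \phi(\xi_s+\eta_s)-\phi(\xi_s)-d\phi|_{\xi_s}\eta_s
   =\int_0^1\bigl(d\phi|_{\xi_s+t\eta_s}-d\phi|_{\xi_s}\bigr)\eta_s\,dt
\end{equation*}
and estimate the integrand at both levels with Lemma~\ref{le:tame-3}: its mixed terms $\abs{y_1-y_0}_1\abs{\eta}_2$ and $\abs{y_1-y_0}_2\abs{\eta}_1$ are exactly those needed so that one factor can be sent to the Sobolev $\sup$-norm and the rest paired under an $L^2$-integral, yielding a remainder of size $o(\interleave\eta\interleave_{1,2})$. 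Continuity $\xi'\mapsto d\Phi|_{\xi'}$ is argued by the same lemma applied to $(d\phi|_{\xi'_s}-d\phi|_{\xi_s})\eta_s$. The main obstacle is the delicate bookkeeping between the two levels: one must consistently expend one $H_1$-factor against the Sobolev embedding while keeping an $H_2$-factor under the $L^2$-integral, and the estimates of Corollary~\ref{cor:tame} and Lemma~\ref{le:tame-3} are engineered so that exactly this pairing is available at each step.
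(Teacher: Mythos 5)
Your proposal follows the same scheme as the paper's proof: openness of $\Uu$ and a compactness argument via the Sobolev embedding to obtain a single uniform constant $\kappa(\xi)$ from Corollary~\ref{cor:tame}; well-definedness from the zeroth-order tameness estimate together with $\phi(0)=0$; the pointwise candidate $(d\Phi|_\xi\eta)(s)=d\phi|_{\xi_s}\eta_s$, bounded on $L^2_{H_2}$ via the first-order tameness estimate and on the $s$-derivative level via $C^2$-bounds of $\phi$ along the compact image; the Taylor-integral remainder; and continuity of $d\Phi$. The central idea -- that the tameness estimates are calibrated so that one $H_1$-factor goes to the $\sup$-norm while an $H_2$-factor stays under the $L^2$-integral -- is stated correctly and is the heart of the matter.

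The one place the outline is underspecified is the $L^2_{H_1}$-component of the $\interleave\cdot\interleave_{1,2}$-norm, i.e.\ the $s$-derivative of the remainder in Step~3 (and the analogous terms in Step~4). You attribute its control to Lemma~\ref{le:tame-3}, but after differentiating the Taylor remainder in $s$ one obtains terms of the form
\begin{equation*}
   \int_0^1\bigl(d^2\phi|_{\xi_s+t\eta_s}-d^2\phi|_{\xi_s}\bigr)(\dot\xi_s,\eta_s)\,dt
   \qquad\text{and}\qquad
   \bigl(d\phi|_{\xi_s+\eta_s}-d\phi|_{\xi_s}\bigr)\dot\eta_s ,
\end{equation*}
and Lemma~\ref{le:tame-3} cannot be applied to either: it bounds the $H_2$-norm of a difference applied to a vector in $H_2$, whereas here $\dot\eta_s$ lives only in $H_1$ and the norm that must be controlled is $\abs{\cdot}_1$. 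The paper instead uses plain uniform continuity of $d\phi$ and $d^2\phi$ in the $H_1$-operator norm along the compact image of $\xi$; only the base ($L^2_{H_2}$) component of the remainder needs the full strength of Lemma~\ref{le:tame-3}. The same caveat applies to your final sentence on continuity of $\xi\mapsto d\Phi|_\xi$, where the terms involving $\dot\eta_s$ and $\dot\xi_s-\dot{\tilde\xi}_s$ are handled by continuity of $d\phi,d^2\phi$ on $H_1$, not by Lemma~\ref{le:tame-3}. This is a routine fix and does not affect the validity of the overall strategy, but it should be made explicit since it is precisely where the two-level bookkeeping changes character.
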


\begin{proof}
The proof is in four steps.
We abbreviate
\begin{equation}\label{eq:abbreviate-1}
   \abs{\cdot}_1:=\norm{\cdot}_{H_1}
   ,\quad
   \abs{\cdot}_2:=\norm{\cdot}_{H_2}
   ,\quad
   \xi_s:=\xi(s)
   ,\quad
   \dot\xi:=\tfrac{d}{ds}\xi.
\end{equation}

\medskip
\noindent
\textbf{Step~1 (Well defined).}

\begin{proof}
By~(\ref{thm:Sob-estimate}) there is an embedding
\begin{equation}\label{eq:Kreuter}
   W^{1,2}_{H_1}:=W^{1,2}(\R,H_1)\INTO C^0(\R,H_1)
   ,\qquad
   \norm{v}_\infty\le \norm{v}_{1,2}
\end{equation}
with constant $1$
and this implies that $\Uu$ is open in $W^{1,2}_{H_1}\cap L^2_{H_2}$.
There exists an $H_1$-open neighborhood $W_0$ of $0$ in $U_1$
and a constant $\kappa_\infty>0$ such that
\begin{equation}\label{eq:phi(y)}
   \abs{\phi(y)}_2\le\kappa_\infty \abs{y}_2
\end{equation}
for every $y\in W_0$.
To see this observe that, since $\phi(0)=0$,
we can in the proof of Lemma~\ref{le:tame-0} choose
$x_0=0$ and then the estimate follows from~(\ref{eq:kjhghgfc6hh}).

\smallskip
\noindent
\textsc{Step~A.}
Let $\xi\in \Uu$, then $\Phi(\xi)\in L^2_{H_2}:=L^2(\R,H_2)\subset L^2(\R,H_1)$.

\smallskip
\noindent
$\bullet$ Asymptotic part:
To see this we claim that there exists $T=T(\xi)>0$
such that $\xi_s\in W_0$ whenever $\abs{s}\ge T$.
Since $W_0$ is an $H_1$-open neighborhood of $0$
there exists a $\delta>0$ such that the closed $\delta$-ball about
$0$ is contained in $W_0$.
Since $\xi\in W^{1,2}(\R,H_1)$ there exists $T>0$ such that
the restrictions satisfy
$\norm{\xi|_{(-\infty,T)}}_{1,2}\le \delta$ and
$\norm{\xi|_{(T,\infty)}}_{1,2}\le \delta$.
Hence, by Remark~\ref{rem:Sob-estimate}, we have
$\norm{\xi|_{(-\infty,T)}}_{\infty}\le \delta$ and
$\norm{\xi|_{(T,\infty)}}_{\infty}\le \delta$
and therefore $\xi_s\in W_0$ whenever $\abs{s}\ge T$.

\noindent
$\bullet$ Compact part $[-T,T]$: Since $\phi$ is tame,
for every $s\in[-T,T]$ there exists, by Lemma~\ref{le:tame-0},
an $H_1$-open neighborhood $W_s$
of $\xi_s:=\xi_s$ in $U_1$ and a constant $\kappa_s>0$ such that
for every $y\in W_s\cap H_2$ it holds
$\abs{\phi(y)}_2\le \kappa_s(1+\abs{y}_2)$.
The family $\{W_s\}_{s\in[-T,T]}$ is an open cover of the image $\xi([-T,T])$.
Since $\xi$ is in $W^{1,2}(\R,H_1)$ it is in particular a continuous
map $\xi\colon[-T,T]\to H_1$ and therefore this image is compact
in $H_1$.
Therefore there exist finitely many times $s_1,\dots,s_N\in [-T,T]$
such that the image $\xi([-T,T])$ already lies in the finite union
$\cup_{k=1}^N W_{s_k}$.

\noindent
$\bullet$ Let $\kappa:=\max\{\kappa_{s_1},\dots,\kappa_{s_N},\kappa_\infty\}$.
Then we have the estimate
$$
   \abs{\phi(\xi_s)}_2\le \kappa (1+\abs{\xi_s}_2)
   ,\quad
   \forall s\in[-T,T].
$$
In the asymptotic case $\abs{s}>T$, by~(\ref{eq:phi(y)}), we have the estimate
$$
   \abs{\phi(\xi_s)}_2\le \kappa \abs{\xi_s}_2
   ,\quad
   \forall \abs{s}>T .
$$
Using these two displayed estimates we obtain
\begin{equation*}
\begin{split}
   \norm{\Phi(\xi)}_{L^2_{H_2}}^2
   &=\int_{-\infty}^\infty \abs{\phi(\xi_s)}_2^2 \, ds
\\
   &=\int_{-\infty}^{-T} \abs{\phi(\xi_s)}_2^2 \, ds
   +
   \int_{-T}^T \abs{\phi(\xi_s)}_2^2 \, ds
   +
   \int_T^\infty \abs{\phi(\xi_s)}_2^2 \, ds
\\
   &\le\kappa^2 \int_{-\infty}^{-T} \abs{\xi_s}_2^2 \, ds
   +
   \kappa^2 \int_{-T}^T
   \underbrace{(1+\abs{\xi_s}_2)^2}_{\le 2(1+\abs{\xi_s}_2^2)} \, ds
   +
   \kappa^2\int_T^\infty \abs{\xi_s}_2^2 \, ds
\\
   &\le 2\kappa^2 \int_{-\infty}^\infty \abs{\xi_s}_2^2 \, ds
   +4\kappa^2 T   
\\
   &=2\kappa^2\norm{\xi}_{L^2_{H_2}}^2 +4\kappa^2 T <\infty.
\end{split}
\end{equation*}
This proves Step~A.

\smallskip
\noindent
\textsc{Step~B.}
Let $\xi\in \Uu$, then $\frac{d}{ds}\Phi(\xi)
=\frac{d}{ds} [s\mapsto \phi\circ\xi_s]\in L^2_{H_1}:=L^2(\R,H_1)$.

\smallskip
\noindent
$\bullet$
By definition of $\Phi$ we have
$\frac{d}{ds}(\Phi(\xi))_s=d\phi|_{\xi_s} \frac{d}{ds}\xi_s$.
As explained in Step~A (asymptotic part),
in view of Remark~\ref{rem:Sob-estimate}
it holds
$
   \lim_{s\to\mp\infty} \xi_s=0\in H_1
$.
Moreover, since $\xi\in W^{1,2}_{H_1}$, it is continuous as a map $\R\to H_1$
and therefore the image of $\xi$ in $H_1$ is compact.
Hence, since $\phi$ is continuously differentiable, there exists a constant
$\kappa$ such that $\norm{d\phi|_{\xi_s}}_{\Ll(H_1)}\le \kappa$
for every $s\in\R$.
Therefore $\norm{\frac{d}{ds}\Phi(\xi)}_{L^2_{H_1}}
\le\kappa\norm{\frac{d}{ds}\xi}_{L^2_{H_1}}$.
This finishes the proof of Step~B and Step~1.
\end{proof}

\smallskip
In the following three steps 2--4
we show that $\Phi$ is differentiable.

\medskip
\noindent
\textbf{Step~2 (Candidate).}
Given $\xi\in\Uu$, a natural candidate for the derivative is
$$
   \left(d\Phi|_\xi\hat\xi\right)(s)
   =d\phi|_{\xi_s}\hat\xi_s
$$
whenever $s\in\R$ and $\hat\xi\in W^{1,2}_{H_1}\cap L^2_{H_2}$.
The map $d\Phi|_\xi$ lies in $\Ll(W^{1,2}_{H_1}\cap L^2_{H_2})$.

\begin{proof} The proof has two steps A and B.

\smallskip
\noindent
\textsc{Step~A.}
We show that $d\Phi|_\xi\hat\xi$ lies in
$ W^{1,2}_{H_1}\cap L^2_{H_2}$.
To see this we first show that $d\Phi|_\xi\hat\xi\in L^2_{H_2}$.
Since the image of $\xi$ in $H_1$ is compact
and $\phi$ is tame, by the second inequality in~(\ref{eq:cor-tame}),
there exists $\kappa>0$ such that
$$
   \abs{d\phi|_{\xi_s}\eta}_2
   \le
   \kappa\left(\abs{\eta}_2+\abs{\xi_s}_2\abs{\eta}_1\right)
$$
for all $s\in\R$ and $\eta\in H_2$.
Use this estimate to obtain step 2 in what follows
\begin{equation}\label{eq:Phi-1-NEW}
\begin{split}
   \norm{d\Phi|_\xi\hat\xi}_{L^2_{H_2}}^2
   &=\int_\R \abs{d\phi|_{\xi_s}\hat\xi_s}_2^2 ds\\
   &\stackrel{2}{\le} \kappa^2 \int_\R \left(\abs{\hat\xi_s}_2
   +\abs{\xi_s}_2\abs{\hat\xi_s}_1\right)^2 ds\\
   &\le 2\kappa^2 \int_\R \left(\abs{\hat\xi_s}_2^2
   +\abs{\xi_s}_2^2\abs{\hat\xi_s}_1^2\right) ds\\
   &\le2\kappa^2\norm{\hat \xi}_{L^2_{H_2}}^2
   +2\kappa^2\norm{\hat\xi}_{L^\infty_{H_1}}^2
   \int_\R\abs{\xi_s}_2^2\, ds\\
   &\stackrel{5}{\le} \kappa^2\underline{\norm{\hat \xi}_{L^2_{H_2}}^2}
   +2\kappa^2 \underline{\norm{\hat\xi}_{W^{1,2}_{H_1}}^2}
   \norm{\xi}_{L^2_{H_2}}^2\\
   &\le\kappa^2\Bigl(1+2\norm{\xi}_{W^{1,2}_{H_1}\cap L^2_{H_2}}^2\Bigr)
   \underline{\norm{\hat \xi}_{W^{1,2}_{H_1}\cap L^2_{H_2}}^2} .
\end{split}
\end{equation}
In step 5 we used the embedding Theorem~\ref{thm:Sob-estimate}.

\smallskip
\noindent
\textsc{Step~B.}
Next we show that $\tfrac{d}{ds} d\Phi_\xi\hat\xi\in L^2_{H_1}$. To see
this we calculate
\begin{equation}\label{eq:S2-B}
\begin{split}
   \int_\R \abs{\tfrac{d}{ds}(d\phi|_{\xi_s}\hat\xi_s)}_1^2 ds
   =\int_\R \abs{d^2\phi|_{\xi_s}(\hat\xi_s,\dot\xi_s)
   +d\phi|_{\xi_s} \dot{\hat\xi}_s}_1^2ds .
\end{split}
\end{equation}
Term 1. As $d^2\phi$ is continuous and $\xi_s$
asymptotically converges to zero, there is a constant $c=c(\xi)$ such
that $\max\{\norm{d^2\phi|_{\xi_s}}_{\Ll(H_1,H_1;H_1)},
\norm{d\phi|_{\xi_s}}_{\Ll(H_1)}\}\le c$ for every $s\in\R$.
By~(\ref{eq:Sob-estimate}) we have
$\norm{\hat \xi}_{L^\infty_{H_1}}\le \norm{\hat\xi}_{W^{1,2}_{H_1}}$.
So we get
\begin{equation*}
\begin{split}
   \int_\R
   \abs{d^2\phi|_{\xi_s}(\hat\xi_s,\dot\xi_s)}_1^2ds
   &\le c^2
   \norm{\dot\xi}_{L^2_{H_1}}^2 \norm{\hat\xi}_{W^{1,2}_{H_1}}^2 .
\end{split}
\end{equation*}
Term 2. We have
$
   \int_\R\abs{d\phi|_{\xi_s}\dot{\hat\xi}_s}_1^2ds
   \le c^2 \norm{\dot{\hat{\xi}}}_{L^2_{H_1}}^2
   \le c^2 \norm{\hat{\xi}}_{W^{1,2}_{H_1}}^2
$.
Together we get
\begin{equation}\label{eq:Phi-2-NEW}
\begin{split}
   \norm{\tfrac{d}{ds}  d\Phi|_\xi\hat\xi}_{L^2_{H_1}}
   \le c \sqrt{2}\sqrt{1+\norm{\dot\xi}_{L^2_{H_1}}^2}
   \norm{\hat{\xi}}_{W^{1,2}_{H_1}} .
\end{split}
\end{equation}
By~(\ref{eq:Phi-1-NEW}) and~(\ref{eq:Phi-2-NEW}) there is a constant
$C_\xi$ such that
$$
   \norm{d\Phi|_\xi\hat\xi}_{W^{1,2}_{H_1}\cap L^2_{H_2}}
   \le C_\xi \norm{\hat\xi}_{W^{1,2}_{H_1}\cap L^2_{H_2}} .
$$
Hence the operator norm is bounded by
$$
   \norm{d\Phi|_\xi}_{\Ll(W^{1,2}_{H_1}\cap L^2_{H_2})}
   \le C_\xi .
$$
Thus the candidate is well defined and this proves Step~2.
\end{proof}

\medskip
\noindent
\textbf{Step~3 (Candidate from Step~2 is derivative of \boldmath$\Phi$).}

\begin{proof}
To $\xi,\eta\in \Ll(W^{1,2}_{H_1}\cap L^2_{H_2})$
applies the triple norm~\ref{eq:1,2-triple}.
Note that, in view of~(\ref{eq:12}), we have
\begin{equation}\label{eq:1,2-triple-est}
\boxed{
   \norm{\eta}_{1,2}
   \le \interleave \eta\interleave _{1,2} .
}
\end{equation}
To see that our candidate actually is the derivative we show that the
limit
\begin{equation}\label{eq:S3-Phi}
\begin{split}
   &\lim_{h\to 0}\sup_{\interleave \eta\interleave _{1,2}\le 1}
   \frac{\interleave \Phi(\xi+h\eta)-\Phi(\xi)-h\,d\phi|_\xi\eta\interleave _{1,2}^2}{h^2}
\\
   &=\lim_{h\to 0}\sup_{\interleave \eta\interleave _{1,2}\le 1}
   \frac{\int_\R\Abs{
   \phi (\xi_s+h\eta_s)-\phi (\xi_s)
   -h\,d\phi|_{\xi_s} \eta_s}_2^2ds}{h^2}
   \\
   &\quad+\lim_{h\to 0}\sup_{\interleave \eta\interleave _{1,2}\le 1}
   \frac{\int_\R\Abs{\tfrac{d}{ds}\left(\phi (\xi_s+h\eta_s)
   -\phi (\xi_s)-h\,d\phi|_{\xi_s} \eta_s\right)}_1^2ds}{h^2}
\end{split}
\end{equation}
exists and vanishes. We treat each of the two summands separately.

\smallskip\noindent
\emph{Summand 1.}
To see that summand 1 vanishes we use the fundamental
theorem of calculus to write it in the form
\begin{equation}\label{eq:hbjhbhj576567-NEW}
\begin{split}
   &
   \sup_{\interleave \eta\interleave _{1,2}\le 1}\tfrac{1}{h^2}\int_\R
   \biggl|\int_0^1\biggl(\underbrace{\tfrac{d}{dt}\phi(\xi_s+th\eta_s)}
      _{=d\phi|_{\xi_s+th\eta_s} h\eta_s}
   -h\, d\phi|_{\xi_s}\eta_s\biggr) dt\biggr|_2^2 ds
\\
   &=
   \sup_{\interleave \eta\interleave _{1,2}\le 1}
   \int_\R
   \biggl|\int_0^1\biggl(d\phi|_{\xi_s+th\eta_s}
   -d\phi|_{\xi_s}\biggr)\eta_s dt\biggr|_2^2 ds
\\
   &\le
   \sup_{\interleave \eta\interleave _{1,2}\le 1}
   \int_\R\left(
   \int_0^1\Abs{\left(d\phi|_{\xi_s+th\eta_s}
   -d\phi|_{\xi_s}\right)\eta_s}_2 dt\right)^2 ds .
\end{split}
\end{equation}
%
By Lemma~\ref{le:tame-3} for every $s\in\R$ there exists
an $H_1$-open neighborhood $W_s$ of $\xi_s$ in $H_1$
and a constant $\kappa_s>0$ such that for all
$y_0,y_1\in W_s$ estimate~(\ref{eq:tame-3}) holds with $\kappa$
replaced by $\kappa_s$.
Since the image of $\xi$ in $H_1$ is compact
we can find a uniform constant $\kappa$ independent of $s$ such that
estimate~(\ref{eq:tame-3}) holds.
%
Since $W^{1,2}_{H_1}$ embeds in $L^\infty_{H_1}$
with constant $1$, see~(\ref{eq:Sob-estimate}),
there exists $h_0\in(0,1]$ such that $\xi_s+h\eta_s$ is element of $W_s$
for every $\eta$ with $\norm{\eta}_{1,2}\le h_0$. We estimate
\begin{equation*}
\begin{split}
   &
   \sup_{\interleave \eta\interleave _{1,2}\le 1}
   \int_\R\left(
   \int_0^1\Abs{\left(d\phi|_{\xi_s+th\eta_s}
   -d\phi|_{\xi_s}\right)\eta_s}_2 dt\right)^2 ds
\\
   &\stackrel{1}{\le} 
   \sup_{\interleave \eta\interleave _{1,2}\le 1}
   \int_\R\left(
   \int_0^1
   2\kappa th\abs{\eta_s}_2\abs{\eta_s}_1
   +\tfrac{\kappa}{2}\left(\abs{\xi_s+th\eta_s}_2+\abs{\xi_s}_2\right)
   th\abs{\eta_s}_1^2
   \, dt\right)^2 ds
\\
   &\stackrel{2}{\le} 
   \sup_{\interleave \eta\interleave _{1,2}\le 1}
   \int_\R\left(
   \int_0^1
   2\kappa th\abs{\eta_s}_2\abs{\eta_s}_1
   +\kappa th\abs{\xi_s}_2\abs{\eta_s}_1^2
   +\tfrac{\kappa}{2} t^2h^2 \abs{\eta_s}_2\abs{\eta_s}_1^2
   \, dt\right)^2 ds
\\
   &\stackrel{3}{\le} 
   \sup_{\interleave \eta\interleave _{1,2}\le 1}
   \int_\R\left(
   \kappa h\abs{\eta_s}_2\abs{\eta_s}_1
   +\kappa \tfrac12 h\abs{\xi_s}_2\abs{\eta_s}_1^2
   +\tfrac{\kappa}{6} h^2 \abs{\eta_s}_2\abs{\eta_s}_1^2
   \right)^2 ds
\\
   &\stackrel{4}{\le} 
   \sup_{\interleave \eta\interleave _{1,2}\le 1}
   h^2\kappa^2\max\{\norm{\eta}_{L^\infty_{H_1}}^2, \norm{\eta}_{L^\infty_{H_1}}^4\}
   \int_\R\underbrace{\left(
   \abs{\eta_s}_2
   +\abs{\xi_s}_2
   +\abs{\eta_s}_2
   \right)^2}_{\le 8 \abs{\eta_s}_2^2+2 \abs{\xi_s}_2} ds
\\
   &\stackrel{5}{\le} 
   \sup_{\interleave \eta\interleave _{1,2}\le 1}
   h^2\kappa^2\max\{\norm{\eta}_{1,2}^2, \norm{\eta}_{1,2}^4\}
   \left(
      8\norm{\eta}_{L^2_{H_2}}^2+2\norm{\xi}_{L^2_{H_2}}
   \right)
\\
   &\stackrel{6}{\le} 
   h^2\kappa^2
   \left(
      8+2\norm{\xi}_{L^2_{H_2}}
   \right)
   \longrightarrow 0\text{, as $h\to 0$.}
\end{split}
\end{equation*}
Step 1 is by~(\ref{eq:tame-3}.
Step 2 is by the triangle inequality.
Step 3 is by integrating $t$.
Step 4 pulls out the supremum norm of $\abs{\eta_s}_1$
and uses that $h\ge 1$.
Step 5 is by the Sobolev estimate
$\norm{\eta}_{L^\infty_{H_1}}\le \norm{\eta}_{1,2}$ from~(\ref{eq:Sob-estimate}).
Step 6 is by~(\ref{eq:1,2-triple-est}).

The previous estimate, which starts at~(\ref{eq:hbjhbhj576567-NEW}),
tells that summand 1 is zero.

\smallskip\noindent
\emph{Summand 2.}
To see that summand 2 vanishes, we abbreviate and compute
\begin{equation*}
\begin{split}
   G(s):
   &=\tfrac{d}{ds}\left(\phi (\xi_s+h\eta_s)
   -\phi (\xi_s)-h\, d\phi|_{\xi_s} \eta_s\right)
\\
   &=
   d\phi|_{\xi_s+h\eta_s}(\dot\xi_s+h\dot\eta_s)
   \\
   &\quad-d\phi|_{\xi_s} \dot\xi_s
   \\
   &\quad
   -h\, d^2\phi|_{\xi_s}(\dot\xi_s,\eta_s)
   -h\, d\phi|_{\xi_s} \dot\eta_s
\\
   &=
   d\phi|_{\xi_s+h\eta_s}\dot\xi_s
   -d\phi|_{\xi_s} \dot\xi_s
   -h\, d^2\phi|_{\xi_s}(\dot\xi_s,\eta_s)
   \\
   &\quad +d\phi|_{\xi_s+h\eta_s} h\dot\eta_s -h\, d\phi|_{\xi_s} \dot\eta_s
\\
   &=
   h\int_0^1\left(d^2\phi|_{\xi_s+th\eta_s}-d^2\phi|_{\xi_s}\right)
   (\dot\xi_s,\eta_s)\, dt
   \\
   &\quad+h \left(d\phi|_{\xi_s+h\eta_s} -d\phi|_{\xi_s}
   \right) \dot \eta_s
\end{split}
\end{equation*}
for every $s\in\R$.
Square this identity and integrate to obtain
\begin{equation*}
\begin{split}
   &\sup_{\interleave \eta\interleave _{1,2}\le 1}\tfrac{1}{h^2}\int_\R\abs{G(s)}_{1}^2 ds
   \\
   &\le
   \sup_{\interleave \eta\interleave _{1,2}\le 1}
   \int_\R
     4\biggl|\int_0^1\left(d^2\phi|_{\xi_s+th\eta_s}
     -d^2\phi|_{\xi_s}\right)
   (\dot\xi_s,\eta_s)\, dt \biggr|^2_{1} ds
   \\
   &\quad+\sup_{\interleave \eta\interleave _{1,2}\le 1}
   \int_\R 4\biggl|\left(d\phi|_{\xi_s+h\eta_s} -d\phi|_{\xi_s}
   \right) \dot \eta_s \biggr|^2_{1} ds .
\end{split}
\end{equation*}
There are two terms in the sum.
\\
Term 1.
We first claim that for every $\eps>0$ there exists $h_0=h_0(\eps)>0$
such that whenever $h\in[0,h_0]$, then
\begin{equation}\label{eq:hvgvh49}
\begin{split}
   \norm{d\phi|_{\xi_s+th\eta_s}
   -d\phi|_{\xi_s}}_{\Ll(H_1)}
   &<\eps ,
\\
   \norm{d^2\phi|_{\xi_s+th\eta_s}
   -d^2\phi|_{\xi_s}}_{\Ll(H_1,H_1;H_1)}
   &<\eps .
\end{split}
\end{equation}
To see this note that by~(\ref{eq:Sob-estimate})
and~(\ref{eq:1,2-triple-est}) we have that
$$
\boxed{
   \abs{\eta(s)}_1
   \le\norm{\eta}_{L^\infty_{H_1}}
   \le\norm{\eta}_{1,2}
   \le\interleave \eta\interleave _{1,2}
   \le 1
}
$$
whenever $s\in\R$.
Hence the claim follows since $\phi$ is $C^2$
on $H_1$, by Definition~\ref{def:tame} of tameness.
Now suppose that $h\le h_0$, then we estimate term 1 by
\begin{equation*}
\begin{split}
   &\sup_{\interleave \eta\interleave _{1,2}\le 1}
   \int_\R
     4\biggl|\int_0^1\left(d^2\phi|_{\xi_s+th\eta_s}
     -d^2\phi|_{\xi_s}\right)
   (\dot\xi_s,\eta_s)\, dt \biggr|^2_{1} ds
\\
   &\le \sup_{\interleave \eta\interleave _{1,2}\le 1}
   \int_\R
     4\left(\int_0^1\Abs{\left(d^2\phi|_{\xi_s+th\eta_s}
     -d^2\phi|_{\xi_s}\right)
   (\dot\xi_s,\eta_s)}_{1} dt \right)^2 ds
\\
   &\le \sup_{\interleave \eta\interleave _{1,2}\le 1}
   \int_\R
     4\left(\int_0^1\norm{d^2\phi|_{\xi_s+th\eta_s}
     -d^2\phi|_{\xi_s}}_{\Ll(H_1,H_1;H_1)}\cdot
   \abs{\dot\xi_s}_{1}\abs{\eta_s}_{1}
   dt \right)^2 ds
\\
   &\stackrel{3}{\le} 4\eps^2
   \int_\R \left(\int_0^1 \abs{\dot\xi_s}_{1} dt\right)^2  ds
\\
   &\le 4\eps^2\norm{\xi}_{1,2}^2 .
\end{split}
\end{equation*}
In step 3 we used that $\abs{\eta_s}_1\le 1$, as shown after~(\ref{eq:hvgvh49}),
and we also used estimate two in~(\ref{eq:hvgvh49}).
Since $\eps>0$ was arbitrary, the limit of term 1, as $h\to 0$, is zero.
\\
Term 2. We estimate
\begin{equation*}
\begin{split}
   &\sup_{\interleave \eta\interleave _{1,2}\le 1}
   \int_\R 4\Abs{\left(d\phi|_{\xi_s+h\eta_s} -d\phi|_{\xi_s}
   \right) \dot \eta_s}^2_{1} ds
\\
   &\le4\sup_{\interleave \eta\interleave _{1,2}\le 1}
   \int_\R \norm{d\phi|_{\xi_s+h\eta_s}
     -d\phi|_{\xi_s}}_{\Ll(H_1)}^2
   \abs{\dot\eta_s}^2_{1} ds
\\
   &\le 4\eps^2 \sup_{\interleave \eta\interleave _{1,2}\le 1}
   \underbrace{\int_\R \abs{\dot\eta_s}^2_{1}
     ds}_{=\norm{\dot\eta}_{L^2_{H_1}}^2\le\interleave \eta\interleave _{1,2}^2\le 1}
\\
   &\le 4\eps^2 .
\end{split}
\end{equation*}
Since $\eps>0$ was arbitrary, the limit of term 2, as $h\to 0$, is
zero as well.

We proved that summand 2 vanishes as well.
Therefore our candidate is the derivative
and this concludes the proof of Step~3.
\end{proof}

\medskip
\noindent
\textbf{Step~4 (Differential is continuous).}
The differential
$$
   d\Phi\colon \Uu\to \Ll(W^{1,2}_{H_1}\cap L^2_{H_2})
   ,\quad
   \xi\mapsto d\Phi|_{\xi}
$$
is continuous.

\begin{proof}
Recall $\interleave \cdot\interleave _{1,2}$ in~(\ref{eq:1,2-triple}).
For $\xi, \tilde\xi \in \Uu$ and 
$\eta\in W^{1,2}_{H_1}\cap L^2_{H_2}$ we estimate
\begin{equation}\label{eq:S4-Phi}
\begin{split}
   &\interleave d\Phi|_\xi \eta-d\Phi|_{\tilde \xi} \eta\interleave _{1,2}^2\\
   &=\int_\R \Abs{\bigl(d\phi|_{\xi_s}-d\phi|_{\tilde\xi_s}\bigr)\eta_s}^2_{2} ds
   +\int_\R
   \Abs{\tfrac{d}{ds}\bigl[\bigl(d\phi|_{\xi_s}-d\phi|_{\tilde\xi_s}\bigr)\eta_s\bigr]}^2_{1} ds
\\
   &\le\int_\R \Abs{\bigl(d\phi|_{\xi_s}-d\phi|_{\tilde\xi_s}\bigr)\eta_s}^2_{2} ds
   \\
   &\quad+\int_\R\Abs{\bigl(
   d^2\phi|_{\xi_s}\dot\xi_s
{\color{gray}\;
   -\;d^2\phi|_{\tilde\xi_s}\dot\xi_s
   +d^2\phi|_{\tilde\xi_s}\dot\xi_s
}
   -d^2\phi|_{\tilde\xi_s}\dot{\tilde\xi}_s\bigr)
   \eta_s}^2_{1} ds
   \\
   &\quad+\int_\R\abs{\bigl(d\phi|_{\xi_s}-d\phi|_{\tilde\xi_s}\bigr)
   \dot\eta_s}^2_{1} ds
\\
   &\le\int_\R \Abs{\bigl(d\phi|_{\xi_s}-d\phi|_{\tilde\xi_s}\bigr)\eta_s}^2_{2} ds
   \\
   &\quad+\int_\R 2\Abs{\bigl(
   d^2\phi|_{\xi_s}\dot\xi_s
   -d^2\phi|_{\tilde\xi_s}\dot\xi_s
   \bigr)
   \eta_s}^2_{1} ds
   \\
   &\quad+\int_\R 2\Abs{
   d^2\phi|_{\tilde\xi_s}
   \bigl(\dot\xi_s-\dot{\tilde\xi}_s,\eta_s\bigr)}^2_{1} ds
   \\
   &\quad+\int_\R \Abs{\bigl(d\phi|_{\xi_s}-d\phi|_{\tilde\xi_s}\bigr)
   \dot\eta_s}^2_{1} ds .
\end{split}
\end{equation}
\underline{Term 1.}
By Lemma~\ref{le:tame-3} for every $s\in\R$ there exists
an $H_1$-open neighborhood $W_s$ of $\xi_s$ in $H_1$
and a constant $\kappa_s>0$ such that for all
$y_0,y_1\in W_s$ estimate~(\ref{eq:tame-3}) holds with $\kappa$
replaced by $\kappa_s$.
Since the image of $\xi$ in $H_1$ is compact
we can find a uniform constant $\kappa$ independent of $s$ such that
estimate~(\ref{eq:tame-3}) holds.
\\
If $\tilde\xi$ lies in a sufficiently small
$\interleave \cdot\interleave _{1,2}$-neighborhood around $\xi$, we can assume that
$\tilde \xi_s\in W_s$ for every $s\in\R$.
We use this in inequality 1 in the estimate
\begin{equation}\label{eq:Term-1-xx-NEW}
\begin{split}
   &\int_\R \Abs{\bigl(d\phi|_{\xi_s}
   -d\phi|_{\tilde\xi_s}\bigr)\eta_s}^2_{2} ds
\\
   &\stackrel{1}{\le} \kappa^2\int_\R
   \biggl(
   \abs{\xi_s-\tilde\xi_s}_1\abs{\eta_s}_2
   +\abs{\xi_s-\tilde\xi_s}_2\abs{\eta_s}_1
   +\tfrac12\underbrace{\bigl(\abs{\xi_s}_2+\abs{\tilde\xi_s}_2\bigr)}
      _{\le2\abs{\xi_s}_2+\abs{\xi_s-\tilde\xi_s}_2}
   \abs{\xi_s-\tilde\xi_s}_1\abs{\eta_s}_1
   \biggr)^2 ds
\\
   &\stackrel{2}{\le} 4\kappa^2\int_\R
   \biggl(
   \abs{\xi_s-\tilde\xi_s}_1^2\abs{\eta_s}_2^2
   +\abs{\xi_s-\tilde\xi_s}_2^2\abs{\eta_s}_1^2
   +\left(\abs{\xi_s}_2^2+\abs{\xi_s-\tilde\xi_s}_2^2\right)
   \abs{\xi_s-\tilde\xi_s}_1^2\abs{\eta_s}_1^2
   \biggr) ds
\\
   &\stackrel{3}{\le} 4\kappa^2
   \left(
   \norm{\xi-\tilde\xi}_\infty^2\norm{\eta}_{L^2_{H_2}}^2
   +\norm{\eta}_\infty^2\norm{\xi-\tilde\xi}_{L^2_{H_2}}^2\right)
   \\
   &\quad+4\kappa^2
   \left(\norm{\xi}_{L^2_{H_2}}^2+\norm{\xi-\tilde\xi}_{L^2_{H_2}}^2\right)
      \norm{\xi-\tilde\xi}_\infty^2 \norm{\eta}_\infty^2
\\
   &\le4\kappa^2 \interleave \xi-\tilde\xi\interleave _{1,2}^2
   \left(2+\interleave \xi\interleave _{1,2}^2+\interleave \xi-\tilde\xi\interleave _{1,2}^2\right)
   \interleave \eta\interleave _{1,2}^2 .
\end{split}
\end{equation}
In step 2 we used the inequality $(a+b+c+d)^2\le 4(a^2+b^2+c^2+d^2)$.
In step 3 we pulled out $L^\infty$ norms
which in step 4 we estimated by the $\interleave \cdot\interleave _{1,2}$
norms~(\ref{eq:1,2-triple}) with constant $1$.
\\
\underline{Term 2.}
Since tame maps are $C^2$ on level one, the second differential
$d^2\phi$ is continuous.
Hence, given $\eps>0$, there exists $\delta>0$ such that
$$
   \norm{\tilde\xi-\xi}_{1,2}\le\delta
   \quad\Rightarrow\quad
   \norm{d^2\phi|_{\xi_s}-d^2\phi|_{\tilde\xi_s}}_{\Ll(H_1,H_1;H_1)}
  \le\eps .
$$
In particular, for every $\tilde\xi$ in the $W^{1,2}$ $\delta$-ball
around $\xi$ it holds that
\begin{equation*}
\begin{split}
   \int_\R 2\Abs{\bigl(
   d^2\phi|_{\xi_s}\dot\xi_s
   -d^2\phi|_{\tilde\xi_s}\dot\xi_s
   \bigr)
   \eta_s}_{1}^2ds
   &\le2\eps^2
   \Norm{\abs{\dot\xi}_{1}\cdot\abs{\eta}_{1}}_{L^2_{H_1}}^2
\\
   &\le
   2\eps^2\interleave \xi\interleave _{1,2}^2\interleave \eta\interleave _{1,2}^2 .
\end{split}
\end{equation*}
\underline{Term 3.}
By continuity of $d^2\phi$ there exists a constant $c>0$, only
depending on $\xi$ but not $\tilde\xi$, such that
$$
   \norm{\tilde\xi-\xi}_{1,2}\le\delta
   \quad\Rightarrow\quad
   \norm{d^2\phi|_{\tilde\xi_s}}_{\Ll(H_1,H_1;H_1)}
  \le c .
$$
Hence we estimate term 3, similarly as term 2, by
\begin{equation*}
\begin{split}
   \int_\R 2\abs{
   d^2\phi|_{\tilde\xi_s}
   \bigl(\dot\xi_s-\dot{\tilde\xi}_s,\eta_s\bigr)}_{1}^2ds
   &\le 2c^2
   \Norm{\abs{\dot\xi-\dot{\tilde\xi}}_{1}\cdot\abs{\eta}_{1}}_{L^2_{H_1}}^2\\
   &\le 2c^2
   \interleave \dot\xi-\dot{\tilde\xi}\interleave _{1,2}^2\cdot
   \interleave \eta\interleave _{1,2}^2 .
\end{split}
\end{equation*}
\underline{Term 4.}
Given $\eps>0$, by continuity of the map $d\phi$,
there exists $\delta>0$ with
$$
   \norm{\tilde\xi-\xi}_{1,2}\le\delta
   \quad\Rightarrow\quad
   \norm{d\phi|_{\xi_s}-d\phi|_{\tilde\xi_s}}_{\Ll(H_1)}
  \le\eps .
$$
In particular, for every $\tilde\xi$ in the $W^{1,2}$ $\delta$-ball
around $\xi$ it holds that
$$
   \int_\R \abs{\bigl(d\phi|_{\xi_s}
   -d\phi|_{\tilde\xi_s}\bigr)\dot\eta_s}_{1}^2 ds
   \le\eps^2\int_\R \abs{\dot{\eta}_s}_{1}^2 \, ds
   \le\eps^2\interleave \eta\interleave _{1,2}^2 .
$$

\medskip\noindent
\underline{Conclusion.}
The term by term analysis above shows that
for every $\eps>0$ there exists a $\delta>0$ such that
\begin{equation}\label{eq:S4-concl}
\begin{split}
   &\lim_{\interleave \tilde\xi-\xi\interleave _{1,2}\to 0}
   \norm{d\Phi|_\xi-d\Phi|_{\tilde \xi} }_{\Ll(W^{1,2}_{H_1}\cap L^2_{H_2})}^2
\\
   &=\lim_{\interleave \tilde\xi-\xi\interleave _{1,2}\to 0}\sup_{\interleave \eta\interleave _{1,2}\le 1}
   \interleave d\Phi|_\xi \eta-d\Phi|_{\tilde \xi} \eta\interleave _{1,2}^2
\\
   &=0 .
\end{split}
\end{equation}
This proves Step~4.
\end{proof}

The proof of
Theorem~\ref{thm:B-Phi}
is complete.
\end{proof}

\boldmath
\subsubsection[Weak tangent map]{Weak tangent map $T\Phi$}
\label{sec:Thm.B-TPhi}
\unboldmath

\begin{theorem}\label{thm:B-TPhi}
Let $U_1\subset H_1$ be open and $\phi\colon U_1\to H_1$ be tame.
Assume in addition that $0\in U_1$ and $\phi(0)=0$. With
$W^{1,2}_{H_1}$ and $L^2_{H_i}$ as in~(\ref{eq:notation-spaces}) and
$\Uu$ as in~(\ref{eq:Uu}) the weak tangent map
\begin{equation*}
\begin{split}
   T\Phi
   \colon
   \Uu\times L^2_{H_1}
   &\to
   (W^{1,2}_{H_1}\cap L^2_{H_2})\times L^2_{H_1}
   \\
   (\xi,\eta)
   &\mapsto
   \left(\phi\circ\xi, d\phi|_\xi \eta\right) =\left(\Phi(\xi), d\Phi|_\xi\eta \right)
\end{split}
\end{equation*}
is well defined and continuously differentiable.
\end{theorem}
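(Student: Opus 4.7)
The first coordinate of $T\Phi$ is precisely the map $\Phi$ already handled by Theorem~\ref{thm:B-Phi}, so the plan is to concentrate on the second coordinate, i.e.\ the map
\[
   \Psi\colon\Uu\times L^2_{H_1}\to L^2_{H_1},
   \qquad
   (\xi,\eta)\mapsto\bigl[s\mapsto d\phi|_{\xi_s}\eta_s\bigr].
\]
A crucial simplification compared to Theorem~\ref{thm:B-Phi} is that both the input $\eta$ and the output of $\Psi$ live in $L^2_{H_1}$, so only the $C^2$ regularity of $\phi$ on level $H_1$ will be needed; the sharper tameness estimates of Corollary~\ref{cor:tame} involving $H_2$ norms play no role in the $\Psi$-part. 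The proof of Theorem~\ref{thm:B-TPhi} will therefore follow the same four-step template used in Theorem~\ref{thm:B-Phi} (well defined, candidate, differentiability, continuity of the differential), with each step reducing to a strictly easier $H_1$-version.

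To verify well-definedness I would use that $\xi\in\Uu$ is continuous $\R\to H_1$ with $\xi_s\to 0$ at infinity by~(\ref{eq:Kreuter}), so its image $\xi(\R)$ is compact in $H_1$. Continuity of $d\phi$ then yields a constant $c=c(\xi)$ with $\|d\phi|_{\xi_s}\|_{\Ll(H_1)}\le c$ uniformly in $s$, giving $\|\Psi(\xi,\eta)\|_{L^2_{H_1}}\le c\,\|\eta\|_{L^2_{H_1}}$. The natural candidate for the total derivative is
\[
   dT\Phi|_{(\xi,\eta)}(\hat\xi,\hat\eta)
   =\bigl(d\Phi|_\xi\hat\xi,\; s\mapsto d^2\phi|_{\xi_s}(\hat\xi_s,\eta_s)+d\phi|_{\xi_s}\hat\eta_s\bigr).
\]
Its boundedness would then be immediate: the first coordinate by Theorem~\ref{thm:B-Phi}; for the second, uniform boundedness of $\|d^2\phi|_{\xi_s}\|$ on the compact image combined with the Sobolev embedding $\|\hat\xi\|_{L^\infty_{H_1}}\le\|\hat\xi\|_{W^{1,2}_{H_1}}$ from~(\ref{eq:Kreuter}) controls $s\mapsto d^2\phi|_{\xi_s}(\hat\xi_s,\eta_s)$ in $L^2_{H_1}$ by $c\,\|\hat\xi\|_{W^{1,2}_{H_1}}\|\eta\|_{L^2_{H_1}}$, while the $d\phi|_{\xi_s}\hat\eta_s$ term is handled as in the well-definedness step.

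To check that the candidate really is the derivative, I would rewrite the remainder of the second coordinate as
\[
   \bigl[d\phi|_{\xi_s+\hat\xi_s}-d\phi|_{\xi_s}-d^2\phi|_{\xi_s}\hat\xi_s\bigr]\eta_s
   \;+\;\bigl[d\phi|_{\xi_s+\hat\xi_s}-d\phi|_{\xi_s}\bigr]\hat\eta_s,
\]
apply the fundamental theorem of calculus to $d\phi$ along the segment from $\xi_s$ to $\xi_s+\hat\xi_s$ (which stays inside a fixed compact subset of $H_1$ on which $d^2\phi$ is uniformly continuous), and take $L^2_{H_1}$-norms. The upshot is a bound of the form $\eps(\hat\xi)\cdot\bigl(\|\eta\|_{L^2_{H_1}}+\|\hat\eta\|_{L^2_{H_1}}\bigr)\cdot\|\hat\xi\|_{W^{1,2}_{H_1}}$ with $\eps(\hat\xi)\to 0$ as $\|\hat\xi\|_{W^{1,2}_{H_1}}\to 0$, which is $o\bigl(\|(\hat\xi,\hat\eta)\|\bigr)$. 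Structurally this mirrors the ``summand 2'' argument of Step~3 in the proof of Theorem~\ref{thm:B-Phi}, but in a weaker norm, so no new ideas are required.

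The step I expect to be the main obstacle is continuity of the differential, where the combination of two arguments $\xi$ and $\eta$ in $\Psi$ forces a careful splitting. I would write
\[
   d\Psi|_{(\tilde\xi,\tilde\eta)}(\hat\xi,\hat\eta)-d\Psi|_{(\xi,\eta)}(\hat\xi,\hat\eta)
   =\bigl[d^2\phi|_{\tilde\xi}-d^2\phi|_\xi\bigr](\hat\xi,\tilde\eta)
   +d^2\phi|_\xi(\hat\xi,\tilde\eta-\eta)
   +\bigl[d\phi|_{\tilde\xi}-d\phi|_\xi\bigr]\hat\eta.
\]
The delicate summand is the first: it is linear in $\tilde\eta$, which need not be small, and controlling it will require showing that $\|d^2\phi|_{\tilde\xi_s}-d^2\phi|_{\xi_s}\|_{\Ll(H_1,H_1;H_1)}$ is uniformly small once $\tilde\xi$ is $W^{1,2}_{H_1}$-close to $\xi$, using uniform continuity of $d^2\phi$ on a compact neighbourhood of the image of $\xi$. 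The second summand is then controlled by $\|\tilde\eta-\eta\|_{L^2_{H_1}}$ together with the uniform bound on $d^2\phi$, and the third by continuity of $d\phi$ on the compact image; together these steps parallel Step~4 of the proof of Theorem~\ref{thm:B-Phi}. Combined with the $C^1$-property of $\Phi$, this yields continuous differentiability of $T\Phi$.
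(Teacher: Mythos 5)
Your proposal is correct and follows essentially the same route as the paper: reduce to the second component $(\xi,\eta)\mapsto d\phi|_\xi\eta$, observe that since source and target of that component sit in $L^2_{H_1}$ only the $H_1$-level $C^2$ regularity of $\phi$ (not the tameness estimates) is needed, use the same candidate $d^2\phi|_{\xi}(\hat\xi,\eta)+d\phi|_{\xi}\hat\eta$, and run the same four-step scheme with compactness of $\xi(\R)$ plus uniform continuity of $d\phi$ and $d^2\phi$. Your splitting in the continuity step distributes the cross terms slightly differently from the paper's ($[d^2\phi|_{\tilde\xi}-d^2\phi|_\xi](\hat\xi,\tilde\eta)+d^2\phi|_\xi(\hat\xi,\tilde\eta-\eta)$ versus $[d^2\phi|_{\tilde\xi}-d^2\phi|_\xi](\hat\xi,\eta)+d^2\phi|_{\tilde\xi}(\hat\xi,\tilde\eta-\eta)$), but the two decompositions are algebraically equivalent and estimated identically.
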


\begin{proof}
The first component map $\xi\mapsto\phi\circ\xi$ is of class $C^1$ as
we proved in Theorem~\ref{thm:B-Phi}.
We use the abbreviations from~(\ref{thm:B-Phi}).
The proof is in four steps.

\medskip
\noindent
\textbf{Step~1 (Well defined).}

\begin{proof}
In view of Theorem~\ref{thm:B-Phi}
it suffices to show that the map $s\mapsto d\phi|_{\xi_s}\eta_s$
is in $L^2_{H_1}$.
Since $\xi$ is in $W^{1,2}_{H_1}$, it is continuous (with embedding
constant $1$) and converges asymptotically to zero;
see Section~\ref{sec:Sobolev}.
Therefore $\norm{d\phi|_{\xi_s}}_{\Ll(H_1)}$ is uniformly bounded
by a constant $c_1(\xi)$ depending on $\xi$. Hence the $L^2$ norm of
$d\phi|_{\xi_s}\eta_s$ can be estimated above by this constant
times the $L^2_{H_1}$ norm of $\eta_s$.
\end{proof}

\medskip
\noindent
\textbf{Step~2 (Candidate).}
A natural candidate for the derivative of $T\Phi$ at $(\xi,\eta)$ is
\begin{equation}\label{eq:cand-dTPhi-NEW}
\begin{split}
   d(T\Phi)|_{(\xi,\eta)}\colon
   \left(W^{1,2}_{H_1}\cap L^2_{H_2}\right)\times L^2_{H_1} 
   &\to \left(W^{1,2}_{H_1}\cap L^2_{H_2}\right)\times L^2_{H_1} 
\\
   \left(d(T\Phi)|_{(\xi,\eta)}(\hat\xi,\hat\eta)\right)(s)
   &=\left(d\Phi|_\xi\hat\xi,
   d\left(d\Phi|_\xi\eta\right)(\hat\xi,\hat\eta)\right) (s)
\\
  :&=\left(
   d\phi|_{\xi_s}\hat{\xi}_s,
   d^2\phi|_{\xi_s}(\hat\xi_s,\eta_s)+d\phi|_{\xi_s}\hat{\eta}_s
   \right)
\end{split}
\end{equation}
for all $s\in\R$ and $(\xi,\eta)\in\Uu\times L^2_{H_1}$
and where $d(d\Phi|_\xi\eta)$ means
$d\left[(\xi,\eta)\mapsto d\Phi|_\xi\eta\right]$. The map
$d(T\Phi)|_{(\xi,\eta)}$ is in $\Ll((W^{1,2}_{H_1}\cap L^2_{H_2})\times L^2_{H_1})$.

\begin{proof}
Component one is the subject of Theorem~\ref{thm:B-Phi}.
Concerning component two,
first we show that $s\mapsto d^2\phi|_{\xi_s}(\hat\xi_s,\eta_s)$
is in $L^2_{H_1}$.
As $\xi$ is in $W^{1,2}_{H_1}$, it is continuous and converges
asymptotically to zero.
Therefore, since $\phi$ is $C^2$, there is a constant $c_2(\xi)$ such that
$\norm{d^2\phi|_{\xi_s}}_{\Ll(H_1,H_1;H_1)}\le c_2(\xi)$ for any $s\in\R$.
We get
\begin{equation*}
\begin{split}
   \norm{d^2\phi|_{\xi}(\hat\xi,\eta)}_{L^2_{H_1}}
   &\le c_2(\xi)\norm{\eta}_{L^2(\R,\R^n)}\norm{\hat\xi}_{C^0(\R,\R^n)}\\
   &\le c_2(\xi) \norm{\eta}_{L^2_{H_1}}
   \norm{\hat\xi}_{W^{1,2}(\R,\R^n)} .
\end{split}
\end{equation*}
As explained in Step~1, we further have that
$\norm{d\phi|_{\xi}\hat{\eta}}_{L^2_{H_1}}
\le c_1(\xi) \norm{\hat{\eta}}_{L^2_{H_1}}$.
These estimates together with
Step~2 from Theorem~\ref{thm:B-Phi}
show that $d(T\Phi)|_{(\xi,\eta)}$ is a bounded linear map on
$W^{1,2}_{H_1}\cap L^2_{H_2}$.
This proves Step~2.
\end{proof}

\begin{remark}
To show that $T\Phi$, as a map
$$
   T\Phi\colon
   \underbrace{\Uu\times\Ww}_{T_\Uu\Ww}
   \to \underbrace{\Ww\times\Ww}_{T\Ww}
   ,\qquad
   \Uu\subset\Ww:=(W^{1,2}_{H_1}\cap L^2_{H_2}) ,
$$
is $C^1$ would fail right away in Step~2:
It would require that both maps
$s\mapsto d^2\phi|_{\xi_s}(\hat\xi_s,\eta_s)$
and $s\mapsto \tfrac{d}{ds} d^2\phi|_{\xi_s}(\hat\xi_s,\eta_s)$
are in $L^2_{H_1}$. But the second one would require three
continuous derivatives of $\phi$.
\end{remark}

\medskip
\noindent
\textbf{Step~3 (Candidate from Step~2 is derivative of \boldmath$T\Phi$).}

\begin{proof}
Let $\xi,\hat\xi\in W^{1,2}_{H_1}$
and $\eta,\hat\eta\in L^2_{H_1}$.
Recall that the norm $\interleave \cdot\interleave _{1,2}$ on $W^{1,2}_{H_1}\cap L^2_{H_2}$
is given by~(\ref{eq:1,2-triple}).
To see that our candidate actually is the derivative we show that the limit
\begin{equation*}
\begin{split}
   &\lim_{h\to 0}\sup_{\interleave \hat\xi\interleave _{1,2}^2+\norm{\hat\eta}_{L^2_{H_1}}^2\le 1}
   \frac{\interleave T\Phi(\xi+h\hat\xi,\eta+h\hat\eta)-T\Phi(\xi,\eta)
   -h\,dT\Phi(\xi,\eta)(\hat\xi,\hat\eta)\interleave _{1,2}^2}{h^2}
\end{split}
\end{equation*}
exists and vanishes.
The map $T\Phi=(\Phi,d\Phi)$ has two components.
Step~2 in Theorem~\ref{thm:B-Phi}
shows for the first component that the limit exists and vanishes.
Hence it suffices to show this for the second component, namely
\begin{equation*}
\begin{split}
   &\lim_{h\to 0}\sup_{\interleave \hat\xi\interleave _{1,2}^2+\norm{\hat\eta}_{L^2_{H_1}}^2\le 1}
   \frac{\norm{
   d\phi|_{\xi+h\hat\xi}(\eta+h\hat\eta)
   -d\phi|_\xi\eta
   -h\, d^2\phi|_\xi (\hat\xi,\eta)-h\, d\phi|_\xi\hat\eta}_{L^2_{H_1}}^2
   }{h^2}
\\
   &\le
   \lim_{h\to 0}\sup_{\interleave \hat\xi\interleave _{1,2}^2+\norm{\hat\eta}_{L^2_{H_1}}^2\le 1}
   \frac{\norm{
   d\phi|_{\xi+h\hat\xi}\eta
   -d\phi|_\xi\eta
   -h\, d^2\phi|_\xi (\hat\xi,\eta)}_{L^2_{H_1}}^2
   }{h^2}
   \\
   &\quad +
   \lim_{h\to 0}\sup_{\interleave \hat\xi\interleave _{1,2}^2+\norm{\hat\eta}_{L^2_{H_1}}^2\le 1}
   \frac{h^2 \norm{
   d\phi|_{\xi+h\hat\xi}\hat\eta
   -d\phi|_\xi\hat\eta}_{L^2_{H_1}}^2
   }{h^2}
\end{split}
\end{equation*}
We treat each of the two summands separately.
\\
\emph{Summand 1.}
To see that summand 1 vanishes note the following.
Since $\phi$ is tame it is $C^2$. Hence for any given $\eps>0$ there
exists $h_0>0$ such that
$$
   h\in(0,h_0)
   \quad\Rightarrow\quad
   \norm{d^2\phi|_{\xi_s+th\hat\xi_s}
   -d^2\phi|_{\xi_s}}_{\Ll(H_1,H_1;H_1)}\le\eps .
$$
whenever $\interleave \hat\xi\interleave _{1,2}\le 1$, $s\in\R$, and $t\in[0,1]$.
Pick $h\in(0,h_0)$ and use the fundamental theorem of calculus to estimate
\begin{equation*}
\begin{split}
   &\sup_{\interleave \hat\xi\interleave _{1,2}^2+\norm{\hat\eta}_{L^2_{H_1}}^2\le 1}
   \tfrac{1}{h^2}
   \int_\R \biggl|\int_0^1 \biggl(
   \underbrace{\tfrac{d}{dt} d\phi|_{\xi_s+th\hat\xi_s} \eta_s}
      _{h d^2\phi|_{\xi_s+th\hat\xi_s}(\hat\xi_s,\eta_s)}
   -h\, d^2\phi|_{\xi_s} (\hat\xi_s,\eta_s)
   \biggr) dt \biggl|_{H_1}^2
   ds
\\
   &\le \sup_{\interleave \hat\xi\interleave _{1,2}^2+\norm{\hat\eta}_{L^2_{H_1}}^2\le 1}
   \underbrace{\int_\R\int_0^1 \Abs{
   d^2\phi|_{\xi_s+th\hat\xi_s}(\hat\xi_s,\eta_s)- d^2\phi|_{\xi_s} (\hat\xi_s,\eta_s)
   }_{H_1}^2 dt\, ds}_{\le \eps^2 \interleave \hat\xi\interleave _{1,2}^2\norm{\eta}_{L^2_{H_1}}^2}
\\
   &\le\eps^2 \norm{\eta}_{L^2_{H_1}}^2 .
\end{split}
\end{equation*}
Since $\eps$ was arbitrary the limit, as $h\to 0$, vanishes.
\\
\emph{Summand 2.}
To see that summand 2 vanishes
note that since $\phi\in C^1$
for any given $\eps>0$ there exists $h_0>0$ such that
$\norm{d\phi|_{\xi_s+h\hat\xi_s}-d\phi|_{\xi_s}}_{\Ll(H_1)}\le\eps$
whenever $h\in(0,h_0)$, $\interleave \hat\xi\interleave _{1,2}\le 1$, and $s\in\R$.
Thus
$$
   \sup_{\interleave \hat\xi\interleave _{1,2}^2+\norm{\hat\eta}_{L^2_{H_1}}^2\le 1}
   \norm{
   d\phi|_{\xi+h\hat\xi}\hat\eta
   -d\phi|_\xi\hat\eta}_{L^2_{H_1}}^2
   \le \sup_{\norm{\hat\eta}_{L^2_{H_1}}\le 1} \eps^2
   \norm{\hat\eta}_{L^2_{H_1}}^2
   \le\eps^2 .
$$
Since $\eps$ was arbitrary the limit, as $h\to 0$, vanishes.
This proves Step~3.
\end{proof}

\medskip
\noindent
\textbf{Step~4 (Differential is continuous).}
The differential
\begin{equation*}
\begin{split}
   d(T\Phi)\colon \Uu\times L^2_{H_1}
   &\to  \Ll((W^{1,2}_{H_1}\cap L^2_{H_2})\times L^2_{H_1})
\\
   (\xi,\eta)&\mapsto d(T\Phi)|_{(\xi,\eta)}
\end{split}
\end{equation*}
is continuous.

\begin{proof}
The map $d(T\Phi)$, see~(\ref{eq:cand-dTPhi-NEW}), has two components.
Continuity of the first component
is shown in Step~4 in the proof of
Theorem~\ref{thm:B-Phi}.
Hence it suffices to show continuity of the second component, namely

Pick $(\xi,\eta)\in \Uu\times L^2_{H_1}$.
Let $(v,w)$ and $(\hat\xi,\hat\eta)$ be elements of
$(W^{1,2}_{H_1}\cap L^2_{H_2})\times L^2_{H_1}$.
Since $\phi\in C^2$ the following holds.
Given $\eps>0$, there exists $\delta_\eps>0$, such that
$\interleave v\interleave _{1,2}<\delta_\eps$ implies
$$
   \norm{d\phi|_{\xi_s+v_s}
   -d\phi|_{\xi_s}}_{\Ll(H_1)}
   ,
   \norm{d^2\phi|_{\xi_s+v_s}
   -d^2\phi|_{\xi_s}}_{\Ll(H_1,H_1;H_1)}
   <\eps
$$
whenever $s\in\R$.
Moreover, maybe after shrinking $\delta>0$,
we can assume that there is a constant $c_\xi$ such that
$\norm{d^2\phi|_{\xi_s+v_s}}_{\Ll(H_1,H_1;H_1)}\le c_\xi$.
We assume that
$$
   \interleave v\interleave _{1,2}\le\delta_\eps
   ,\qquad
   \norm{w}_{L^2_{H_1}}<\eps .
$$
We estimate
\begin{equation*}
\begin{split}
   &\norm{
   d^2\phi|_{\xi+v}(\hat\xi,\eta+w)+d\phi|_{\xi+v}\hat{\eta}
   -d^2\phi|_{\xi}(\hat\xi,\eta)-d\phi|_{\xi}\hat{\eta}
   }_{L^2_{H_1}}^2
\\
   &=\int_\R
   \Abs{d^2\phi|_{\xi_s+v_s}(\hat\xi_s,\eta_s+w_s)+d\phi|_{\xi_s+v_s}\hat{\eta}_s
   -d^2\phi|_{\xi_s}(\hat\xi_s,\eta_s)-d\phi|_{\xi_s}\hat{\eta}_s}_{H_1}^2 ds
\\
   &\le2\int_\R 
   \Abs{d^2\phi|_{\xi_s+v_s}(\hat\xi_s,\eta_s)
   -d^2\phi|_{\xi_s}(\hat\xi_s,\eta_s)}_{H_1}^2
   ds
   \\
   &\quad
   +2\int_\R
   2\Abs{d\phi|_{\xi_s+v_s}\hat{\eta}_s-d\phi|_{\xi_s}\hat{\eta}_s}_{H_1}^2
   +2\Abs{d^2\phi|_{\xi_s+v_s} (\hat\xi_s,w_s)}_{H_1}^2
   ds
\\
   &\le
   2\eps^2\interleave \hat\xi\interleave _{1,2}^2\norm{\eta}_{L^2_{H_1}}^2
   +4\eps^2\norm{\hat\eta}_{L^2_{H_1}}^2
   +4c_\xi^2  \interleave \hat\xi\interleave _{1,2}^2 \norm{w}_{L^2_{H_1}}^2
\\
   &\le \eps^2 \left( 2 \norm{\eta}_{L^2_{H_1}}^2
   +4c_\xi^2  +4 \right) 
   \left(\interleave \hat\xi\interleave _{1,2}^2+\norm{\hat\eta}_{L^2_{H_1}}^2\right) .
\end{split}
\end{equation*}
This proves continuity of the map $d(T\Phi)$.
This proves Step~4.
\end{proof}

This concludes the proof of Theorem~\ref{thm:B-TPhi}.
\end{proof}

\boldmath
\subsection{Theorem~\ref{thm:B} parametrized}
\label{sec:ThmB-p}
\unboldmath

Let $\O_1$ be an open subset of $\R\times H_1$
and let $O_2$ be its part in $\R\times H_2$.

\begin{theorem}\label{thm:B-TPhi-parametrized}
Let $\phi\colon O_1\to H_1$ be asymptotically constant parametrized tame.
Let $\Uu:=\{\xi\in W^{1,2}_{H_1}\cap L^2_{H_2}
\mid \text{$(s,\xi_s)\in O_1$, $\forall s\in\R$} \}$.
Then the weak tangent map
\begin{equation}\label{eq:weak-tangent-map}
\begin{split}
   T\Phi
   \colon
   \Uu \times L^2_{H_1}
   &\to
   (W^{1,2}_{H_1}\cap L^2_{H_2})\times L^2_{H_1}
   \\
   (\xi,\eta)
   &\mapsto
   \left(\Phi(\xi),d\Phi|_\xi \eta\right)
\end{split}
\end{equation}
defined by
$$
   \Phi(\xi)(s)
   :=\phi_s(\xi_s)
   ,\qquad
   \left(d\Phi|_\xi \eta\right) (s)
   :=d\phi_s|_{\xi_s} \eta_s ,
$$
is well defined and continuously differentiable
where $\phi_s(x):=\phi(s,x)$.
\end{theorem}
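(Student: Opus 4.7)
The plan is to reduce the parametrized statement to the unparametrized Theorem~\ref{thm:B-TPhi} outside a compact window of times where $\phi$ genuinely depends on $s$, and to handle the compact middle via a finite-cover argument. By Definition~\ref{def:tame-asy-const}, the asymptotic-constancy hypothesis yields $T>0$ and a fixed tame map $\phi_\infty$ with $\phi_\infty(0)=0$ such that $\phi_s=\phi_\infty$ whenever $\abs{s}\ge T$. On the two unbounded half-lines $(-\infty,-T]$ and $[T,\infty)$ the maps $s\mapsto\phi_s(\xi_s)=\phi_\infty(\xi_s)$ and $s\mapsto d\phi_s|_{\xi_s}\eta_s=d\phi_\infty|_{\xi_s}\eta_s$ are controlled directly by Theorem~\ref{thm:B-TPhi}, so all genuinely new content concerns the interval $[-T,T]$.

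On $[-T,T]$ I would exploit compactness: since $\xi\in W^{1,2}_{H_1}\INTO C^0(\R,H_1)$, the graph $\{(s,\xi_s):s\in[-T,T]\}$ is compact in $\R\times H_1$, hence the neighborhoods $W_{s,\xi_s}$ from Definition~\ref{def:tame-parametrized} admit a finite subcover $\{W_{s_k,\xi_{s_k}}\}_{k=1}^N$. Setting $\kappa:=\max_k\kappa_{s_k,\xi_{s_k}}$ produces a parametrized analogue of the tameness estimate~(\ref{eq:tame}), valid uniformly for $(s,y)$ in an $H_1$-tube around the graph. The arguments of Lemmas~\ref{le:tame-1}, \ref{le:tame-0}, \ref{le:tame-3} and Corollary~\ref{cor:tame} then carry over verbatim with $\phi$ replaced pointwise by $\phi_s$, producing the three estimates of~(\ref{eq:cor-tame}) with a single uniform constant.

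Armed with these parametrized estimates the four-step pattern of the proof of Theorem~\ref{thm:B-TPhi} transfers essentially word for word. Step~1 (well-definedness) uses the parametrized Corollary~\ref{cor:tame} pointwise in $s$ on $[-T,T]$ and the unparametrized version outside. Step~2 shows that the candidate
$d(T\Phi)|_{(\xi,\eta)}(\hat\xi,\hat\eta)(s)=\bigl(d\phi_s|_{\xi_s}\hat\xi_s,\ d^2\phi_s|_{\xi_s}(\hat\xi_s,\eta_s)+d\phi_s|_{\xi_s}\hat\eta_s\bigr)$
lands in $(W^{1,2}_{H_1}\cap L^2_{H_2})\times L^2_{H_1}$ via the analogues of~(\ref{eq:Phi-1-NEW})--(\ref{eq:Phi-2-NEW}). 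Step~3 uses the fundamental theorem of calculus as in~(\ref{eq:S3-Phi}), the difference estimate furnished by the parametrized Lemma~\ref{le:tame-3}, and uniform continuity of $d\phi$ and $d^2\phi$ on the compact set $[-T,T]\times K$ for a small $H_1$-neighborhood $K$ of $\xi([-T,T])$. Step~4 (continuity of $d(T\Phi)$) repeats the term-by-term estimate~(\ref{eq:S4-Phi}).

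The main obstacle is ensuring that the uniform constant $\kappa$ supplied by the finite cover survives small $\interleave\cdot\interleave_{1,2}$-perturbations of $\xi$, since this is needed both in Step~3 and in Step~4 to conclude continuity of $d(T\Phi)$ at $(\xi,\eta)$. I would resolve this by choosing the neighborhoods $W_{s_k,\xi_{s_k}}$ slightly larger than strictly necessary: because $W^{1,2}_{H_1}\INTO L^\infty(\R,H_1)$ with constant $1$ by~(\ref{eq:Kreuter}), any $\tilde\xi$ with $\interleave\tilde\xi-\xi\interleave_{1,2}$ sufficiently small has its graph $\{(s,\tilde\xi_s):s\in[-T,T]\}$ still contained in $\bigcup_k W_{s_k,\xi_{s_k}}$, so the same uniform $\kappa$ applies to both $\xi$ and $\tilde\xi$. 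Once this robustness of the finite cover is in place, the four steps run through just as in the unparametrized case.
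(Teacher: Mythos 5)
Your decomposition into asymptotic regions $\{\abs{s}\ge T\}$ plus a compact window $[-T,T]$, with a finite-cover argument on the latter, is the right geometric picture; but the claim that the four-step pattern of Theorem~\ref{thm:B-TPhi} then ``transfers essentially word for word'' misses the genuinely new analytic content of the parametrized case, namely the terms built from $\dot\phi_s:=\partial_s\phi(s,\cdot)$ and $d\dot\phi_s$. These arise whenever the \emph{first} component $\Phi(\xi)(s)=\phi_s(\xi_s)$, which must land in $W^{1,2}_{H_1}\cap L^2_{H_2}$, is differentiated with respect to $s$: in well-definedness one has $\tfrac{d}{ds}\Phi(\xi)(s)=d\phi_s|_{\xi_s}\dot\xi_s+\dot\phi_s(\xi_s)$; in your Step~2 the $s$-derivative of $s\mapsto d\phi_s|_{\xi_s}\hat\xi_s$ produces the extra term $d\dot\phi_s|_{\xi_s}\hat\xi_s$; in Step~3 a new summand $G_2(s)=\dot\phi_s(\xi_s+h\eta_s)-\dot\phi_s(\xi_s)-h\,d\dot\phi_s|_{\xi_s}\eta_s$ appears; and in Step~4 an extra term involving $d\dot\phi_s|_{\xi_s}-d\dot\phi_s|_{\tilde\xi_s}$ appears. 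None of these is controlled by the parametrized tameness estimate~(\ref{eq:tame-parametrized}), which only bounds $d^2\phi_t$, nor by your finite-cover constant $\kappa$, which bounds $\phi_s,\ d\phi_s,\ d^2\phi_s$ but not their $s$-derivatives. They must be tamed instead by the asymptotic-constancy hypothesis ($\dot\phi_s\equiv 0$ for $\abs{s}>T$) together with continuity of $\dot\phi$ and $d\dot\phi$ on the compact tube $[-T,T]\times(\text{neighborhood of }\xi([-T,T]))$, which is precisely the work the paper does in the separate proof of Theorem~\ref{thm:B-Phi-parametrized}. The clean route to Theorem~\ref{thm:B-TPhi-parametrized}, which the paper takes and you should adopt, is to invoke Theorem~\ref{thm:B-Phi-parametrized} for the first component and then observe that the \emph{second} component $(\xi,\eta)\mapsto d\Phi|_\xi\eta$ lands in $L^2_{H_1}$, a target that involves no $s$-derivative and only level-one $H_1$-values, so no $\dot\phi$-terms arise there at all and the unparametrized proof of Theorem~\ref{thm:B-TPhi} runs through unchanged. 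That observation is what your proposal is missing and what makes the actual argument short.
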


\boldmath
\subsubsection[Base component]{Base component $\Phi$}
\label{sec:Thm.B-Phi-parametrized}
\unboldmath

\begin{theorem}\label{thm:B-Phi-parametrized}
The base component $\Phi\colon\Uu\to W^{1,2}_{H_1}\cap L^2_{H_2}$,
$\Phi(\xi)(s):=\phi_s(\xi_s)$,
of the map $T\Phi$ in Theorem~\ref{thm:B-TPhi-parametrized}
is well defined and continuously differentiable.
\end{theorem}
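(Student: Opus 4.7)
The plan is to mirror the four-step structure of the proof of Theorem~\ref{thm:B-Phi}, with the only new ingredient being that all estimates that were previously local in $x$ must now be made local in $(s,x)\in O_1$, and then assembled into global-in-$s$ estimates. The asymptotic constancy assumption is precisely what allows the assembly: for $|s|\ge T$ the map $\phi_s$ equals a fixed tame map $\phi_\infty$ with $\phi_\infty(0)=0$, so the asymptotic regime reduces verbatim to the unparametrized Theorem~\ref{thm:B-Phi}; on the compact regime $|s|\le T$ we use compactness of the graph $\{(s,\xi_s):|s|\le T\}\subset\R\times H_1$ (which follows from the Sobolev embedding $W^{1,2}_{H_1}\INTO C^0(\R,H_1)$ with constant $1$) to extract finitely many parametrized-tame neighborhoods and a single constant $\kappa$ that dominates them all.

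First I would establish parametrized analogs of Lemma~\ref{le:tame-1}, Lemma~\ref{le:tame-0}, Corollary~\ref{cor:tame}, and Lemma~\ref{le:tame-3}: around each $(s_0,x_0)\in O_1$ there is a neighborhood $W_{s_0,x_0}$ and a constant $\kappa_{s_0,x_0}$ such that for every $(t,y)\in W_{s_0,x_0}\cap(\R\times H_2)$ and $\xi,\eta\in H_2$ one has the three estimates of~(\ref{eq:cor-tame}) with $\phi$ replaced by $\phi_t$, and the difference estimate~(\ref{eq:tame-3}) replaced by its pointwise-in-$t$ analog. The proofs are identical to the unparametrized ones, integrating $\tfrac{d}{dr}d\phi_t|_{x_0+r(y-x_0)}\eta$ from $r=0$ to $1$ along $H_1$-segments inside the slice $\{t\}\times U_1^t$, using~(\ref{eq:tame-parametrized}) in place of~(\ref{eq:tame}).

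With these tools in hand, the four steps go through as before. Step~1 (well-definedness): on $|s|\ge T$ use $\phi_s=\phi_\infty$ and Step~1 of Theorem~\ref{thm:B-Phi}; on $[-T,T]$ cover the compact image $\{(s,\xi_s):|s|\le T\}$ by finitely many $W_{s_i,x_i}$ and set $\kappa:=\max\{\kappa_{s_i,x_i},\kappa_\infty\}$. The bound on $\|\tfrac{d}{ds}\Phi(\xi)\|_{L^2_{H_1}}$ is identical, using that $\|d\phi_s|_{\xi_s}\|_{\Ll(H_1)}$ is uniformly bounded by continuity of $(s,x)\mapsto d\phi_s|_x$ on the compact set $[-T,T]\times\xi([-T,T])$ together with the asymptotic identification. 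Step~2 (candidate): define $(d\Phi|_\xi\hat\xi)(s):=d\phi_s|_{\xi_s}\hat\xi_s$ and check the two inequalities~(\ref{eq:Phi-1-NEW}) and~(\ref{eq:Phi-2-NEW}) with the uniform $\kappa$ from Step~1 and the uniform bounds on $d\phi_s$, $d^2\phi_s$. Step~3 (derivative): the two summands in the analog of~(\ref{eq:S3-Phi}) both vanish by the same fundamental-theorem-of-calculus computation. In the first summand we apply the parametrized analog of Lemma~\ref{le:tame-3} pointwise in $s$; in the second summand the time-derivative produces the same three terms, and the $s$-dependence contributes no new terms because we differentiate only in the spatial direction (the parametrized tameness controls $d^2\phi_s$ in the spatial variable, not $\partial_s\phi_s$, but this is fine since $\tfrac{d}{ds}[\phi_s(\xi_s+h\eta_s)-\phi_s(\xi_s)-h\,d\phi_s|_{\xi_s}\eta_s]$ has the $\partial_s\phi_s$ contributions cancelling out in the difference).

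Step~4 (continuity of $d\Phi$) is where I expect the main effort. One repeats the term-by-term analysis of~(\ref{eq:S4-Phi}), with $\tilde\xi$ close to $\xi$ in $\interleave\cdot\interleave_{1,2}$. Term~1 uses the parametrized Lemma~\ref{le:tame-3} with the uniform constant $\kappa$ and the inequality~(\ref{eq:Term-1-xx-NEW}) goes through verbatim. Terms~2,~3,~4 use uniform continuity of $d^2\phi$ and $d\phi$ on a compact neighborhood of $\{(s,\xi_s):s\in\R\}\cup\{\infty\}$ in the one-point compactification of $\R\times H_1$ along $s$; this is where asymptotic constancy is essential, as it ensures the difference $d\phi_s|_{\xi_s}-d\phi_s|_{\tilde\xi_s}$ is controlled uniformly in $s$ by a modulus of continuity that tends to zero as $\tilde\xi\to\xi$ in $L^\infty_{H_1}$. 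The main obstacle is precisely this uniform-in-$s$ control: it requires combining the compactness argument on $[-T,T]$ with the fact that on $|s|\ge T$ the map $\phi_s=\phi_\infty$ is fixed, so uniformity in $s$ reduces to continuity of $d\phi_\infty$ at points of $\xi|_{|s|\ge T}$, which lie in a compact subset of $H_1$ (a neighborhood of $0$). Once this uniform modulus is secured, the final limit~(\ref{eq:S4-concl}) holds and the proof of Theorem~\ref{thm:B-Phi-parametrized} is complete.
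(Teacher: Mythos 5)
There is a genuine gap, and it is exactly the point the paper is careful to stress: the time dependence of $\phi_s$ produces \emph{new} terms involving $\dot\phi_s=\partial_s\phi$ and $d\dot\phi_s$ in every step, and these do not simply disappear. Your plan either overlooks them or, in Step~3, explicitly but incorrectly claims they cancel.

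Concretely: in Step~1 you assert that the bound on $\norm{\tfrac{d}{ds}\Phi(\xi)}_{L^2_{H_1}}$ is ``identical'' to the unparametrized case. It is not. The chain rule now gives
$\tfrac{d}{ds}\bigl(\phi_s(\xi_s)\bigr)=d\phi_s|_{\xi_s}\dot\xi_s+\dot\phi_s(\xi_s)$,
and the second summand is new; the paper controls it using asymptotic constancy ($\dot\phi_s\equiv0$ for $\abs{s}\ge T$) together with continuity of $s\mapsto\dot\phi_s(\xi_s)$ on the compact set $[-T,T]$, giving $\int_\R\abs{\dot\phi_s(\xi_s)}_1^2\,ds\le 2Tc^2$. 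Similarly, Step~2 acquires the extra term $\int_\R\abs{d\dot\phi_s|_{\xi_s}\hat\xi_s}_1^2\,ds$ and Step~4 acquires a fifth term $\int_\R\abs{(d\dot\phi_s|_{\xi_s}-d\dot\phi_s|_{\tilde\xi_s})\eta_s}_1^2\,ds$, neither of which appears in your accounting.

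The most serious slip is in your Step~3, where you write that ``the $\partial_s\phi_s$ contributions cancel out in the difference.'' They do not. Computing
$\tfrac{d}{ds}\bigl[\phi_s(\xi_s+h\eta_s)-\phi_s(\xi_s)-h\,d\phi_s|_{\xi_s}\eta_s\bigr]$
produces the remainder
\begin{equation*}
G_2(s)=\dot\phi_s(\xi_s+h\eta_s)-\dot\phi_s(\xi_s)-h\,d\dot\phi_s|_{\xi_s}\eta_s
=h\int_0^1\bigl(d\dot\phi_s|_{\xi_s+th\eta_s}-d\dot\phi_s|_{\xi_s}\bigr)\eta_s\,dt ,
\end{equation*}
which is not zero; it is merely $o(h)$, and proving that it is $o(h)$ uniformly in $s$ and over $\interleave\eta\interleave_{1,2}\le1$ requires precisely the uniform continuity of $d\dot\phi$ (compactly supported in $s$ by asymptotic constancy) that your plan omits. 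Your compactness-and-covering scheme for obtaining uniform constants on $[-T,T]$ is the right mechanism and agrees with the paper, but without tracking the $\dot\phi_s$, $d\dot\phi_s$ terms through all four steps the argument does not close.
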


\begin{proof}
The proof follows the same scheme as the proof of Theorem~\ref{thm:B-Phi},
just replace $\phi$ by $\phi_s$.
Due to the time dependency of our maps $\phi_s$, some additional terms
arise which we explain how to treat.
\\
We discuss the four steps of the proof of Theorem~\ref{thm:B-Phi}.
We use the abbreviations~(\ref{eq:abbreviate-1}).

\medskip
\noindent
\textbf{Step~1 (Well defined).}

\begin{proof}
The proof consists of two sub-steps A and B.
Namely, one has to show A) that $\Phi(\xi)$
lies in $ L^2_{H_2}\subset L^2_{H_1}$ and B) that its derivative
$\frac{d}{ds}\Phi(\xi)$ lies in $ L^2_{H_1}$.
\\
Step~A. This is precisely the same argument as in Theorem~\ref{thm:B-Phi}
by using that $\phi$ is parametrized tame and therefore asymptotically
constant, in particular zero is mapped to zero asymptotically.
\\
Step~B. Here in
$\frac{d}{ds}(\Phi(\xi))_s=d\phi_s|_{\xi_s} \frac{d}{ds}\xi_s
+\underline{\dot\phi_s(\xi_s)}$
a \underline{new term} arises. But $\dot\phi_s\equiv 0$
whenever $\abs{s}>T$, by parametrized tameness.
Since $\xi$ is in $W^{1,2}_{H_1}$ it is continuous,
and therefore the composed map $s\mapsto \dot\phi_s(\xi_s)$ is
continuous as well.
Since this map vanishes for $\abs{s}\ge T$ it has compact image.
Therefore there exists a constant $c>0$, independent of $s$,
such that $\abs{\dot\phi_s(\xi_s)}_1\le c$.
Hence we estimate
$
   \int_{-\infty}^\infty\abs{\dot\phi_s(\xi_s)}_1^2\, ds
   =\int_{-T}^T\abs{\dot\phi_s(\xi_s)}_1^2\, ds
   \le 2T c^2
$.
The first term in the sum is estimated precisely as in the
unparametrized case Theorem~\ref{thm:B-Phi}.

This finishes the proof of Step~B and Step~1.
\end{proof}

\medskip
\noindent
\textbf{Step~2 (Candidate).}
Given $\xi\in\Uu$, a natural candidate for the derivative is
$$
   \left(d\Phi|_\xi\hat\xi\right)(s)
   =d\phi_s|_{\xi_s}\hat\xi_s
$$
whenever $s\in\R$ and $\hat\xi\in W^{1,2}_{H_1}\cap L^2_{H_2}$.
The map $d\Phi|_\xi$ lies in $\Ll(W^{1,2}_{H_1}\cap L^2_{H_2})$.

\begin{proof}
The proof consists of two sub-steps A and B.
Step~A. This is precisely the same argument as in
Theorem~\ref{thm:B-Phi}.
\\
Step~B. In~(\ref{eq:S2-B}) now arises a third term, namely
$\int_\R\bigl|d\dot\phi_s|_{\xi_s}\hat\xi_s\bigr|_1^2\, ds$.
Since $\phi_s$ is asymptotically constant we have
$d\dot\phi_s\equiv 0$ for $\abs{s}\ge T$.
Since $\xi\colon\R\to H_1$ is continuous there exists a constant $c>0$
such that $\norm{d\dot\phi_s|_{\xi_s}}_{\Ll(H_1)}\le c$.
Therefore $\int_\R\bigl|d\dot\phi_s|_{\xi_s}\hat\xi_s\bigr|_1^2\, ds
\le c^2\int_\R \abs{\hat\xi_s}_1^2\, ds
\le c^2\norm{\hat\xi}_{W^{1,2}_{H_1}\cap L^2_{H_2}}^2$.
Thus in the parametrized case the candidate for the derivative
$d\Phi|_\xi$ still lies in $\Ll(W^{1,2}_{H_1}\cap L^2_{H_2})$.
This proves Step~2.
\end{proof}

\medskip
\noindent
\textbf{Step~3 (Candidate from Step~2 is derivative of \boldmath$\Phi$).}

\begin{proof}
The norm $\interleave \cdot\interleave _{1,2}$
on $W^{1,2}_{H_1}\cap L^2_{H_2}$ is given
by~(\ref{eq:1,2-triple}).

As in~(\ref{eq:S3-Phi}) there are two summands.
In summand 1 there are no $s$-derivatives.
In summand 2 we get the following additional term
in $G(s)$, namely
\begin{equation*}
\begin{split}
   G_2(s):
   &=\dot\phi_s (\xi_s+h\eta_s)
   -\dot\phi_s (\xi_s)
   -h\, d\dot\phi_s|_{\xi_s} \eta_s
\\
   &=\int_0^1 \tfrac{d}{ds} \dot\phi_s(\xi_s+th\eta_s)\, dt
   -h\, d\dot\phi_s|_{\xi_s} \eta_s
\\
   &=h\int_0^1\left(d\dot\phi_s|_{\xi_s+th\eta_s}
   -d\dot\phi_s|_{\xi_s}\right) \eta_s\, dt
\end{split}
\end{equation*}
where we used the fundamental theorem of calculus.
Square this identity and integrate to get
\begin{equation*}
\begin{split}
   &\sup_{\interleave \eta\interleave _{1,2}\le 1}\tfrac{1}{h^2}\int_\R\abs{G_2(s)}_{1}^2 ds
   \\
   &\le
   \sup_{\interleave \eta\interleave _{1,2}\le 1}
   \int_\R \biggl|\left(d\dot\phi_s|_{\xi_s+h\eta_s} -d\dot\phi_s|_{\xi_s}
   \right) \eta_s \biggr|^2_{1} ds .
\end{split}
\end{equation*}
As in~(\ref{eq:hvgvh49}),
for every $\eps>0$ there exists $h_0=h_0(\eps)>0$ such that
\begin{equation}\label{eq:hvgvh49-p}
\begin{split}
   \norm{d\dot\phi_s|_{\xi_s+th\eta_s}
   -d\dot \phi_s|_{\xi_s}}_{\Ll(H_1)}
   &<\eps
\end{split}
\end{equation}
whenever $h\in[0,h_0]$.
Therefore, if $h\in[0,h_0]$, we estimate
\begin{equation*}
\begin{split}
   &\sup_{\interleave \eta\interleave _{1,2}\le 1}
   \int_\R \Abs{\left(d\dot\phi_s|_{\xi_s+h\eta_s} -d\dot\phi_s|_{\xi_s}
   \right) \eta_s}^2_{1} ds
\\
   &\le\sup_{\interleave \eta\interleave _{1,2}\le 1}
   \int_\R \norm{d\dot\phi_s|_{\xi_s+h\eta_s}
     -d\dot\phi_s|_{\xi_s}}_{\Ll(H_1)}^2
   \abs{\eta_s}^2_{1} ds
\\
   &\le \eps^2 \sup_{\interleave \eta\interleave _{1,2}\le 1}
   \int_\R \abs{\eta_s}^2_{1} ds
\\
   &\le \eps^2 .
\end{split}
\end{equation*}
Since $\eps>0$ was arbitrary, the limit of the extra term, as $h\to 0$, is
zero as well.
This proves Step~3.
\end{proof}

\medskip
\noindent
\textbf{Step~4 (Differential is continuous).}
The differential
$$
   d\Phi\colon \Uu\to \Ll(W^{1,2}_{H_1}\cap L^2_{H_2})
   ,\quad
   \xi\mapsto d\Phi|_{\xi}
$$
is continuous.

\begin{proof}
In~(\ref{eq:S4-Phi}) there arises the new term
$
   \int_\R \abs{\bigl(d\dot\phi_s|_{\xi_s}-d\dot\phi_s|_{\tilde\xi_s}\bigr)
   \eta_s}^2_{1} ds
$ which we call term~5.
To estimate term 5 note that by the continuity of the map
$d\dot\phi$, given $\eps>0$ there exists $\delta>0$ such that
$$
   \norm{\tilde\xi-\xi}_{1,2}\le\delta
   \quad\Rightarrow\quad
   \norm{d\dot\phi_s|_{\xi_s}-d\dot\phi_s|_{\tilde\xi_s}}_{\Ll(H_1)}
  \le\eps .
$$
In particular, for every $\tilde\xi$ in the $W^{1,2}$ $\delta$-ball
around $\xi$ it holds that
$$
   \int_\R \abs{\bigl(d\dot\phi_s|_{\xi_s}
   -d\dot\phi_s|_{\tilde\xi_s}\bigr)\eta_s}_{1}^2 ds
   \le\eps^2\int_\R \abs{\eta_s}_{1}^2 \, ds
   \le\eps^2\interleave \eta\interleave _{1,2}^2 .
$$
So the conclusion~(\ref{eq:S4-concl}), hence Step~4,
also holds in the parametrized case.
\end{proof}

This concludes the proof of Theorem~\ref{thm:B-Phi-parametrized}.
\end{proof}

\boldmath
\subsubsection[Weak tangent map]{Weak tangent map $T\Phi$}
\label{sec:weak-tangent}
\unboldmath

\begin{proof}[Proof of Theorem~\ref{thm:B-TPhi-parametrized}]
The first component map $\xi\mapsto\phi\circ\xi$ is of class $C^1$ by
Theorem~\ref{thm:B-Phi-parametrized}.
The second component is a map
$\Uu\times L^2_{H_1}\to L^2_{H_1}$, $(\xi,\eta)\mapsto d\Phi|_\xi\eta$.
Note that the target space $L^2_{H_1}$ does not involve any
$s$-derivative and takes values in level one $H_1$, and not in the
analytically tricky level two $H_2$.

Line-by-line inspection of the proof of Theorem~\ref{thm:B-TPhi}
shows that no new terms arise.
Thus the proof of Theorem~\ref{thm:B-TPhi}
also proves Theorem~\ref{thm:B-TPhi-parametrized}.
\end{proof}

\section{Coordinate charts}\label{sec:coordinate-charts}

Let $H_2\subset H_1$ be separable Hilbert spaces with a
compact dense inclusion of $H_2$ in $H_1$.
Assume that $(X_1,X_2)$ is a tame $(H_1,H_2)$-two-level manifold;
see Section~\ref{sec:two-level-mfs}.
We denote the unique maximal tame two-level atlas by
$$
   \Aa=\{\psi\colon H_1\supset U_1\to V_1\subset X_1\}
$$
see Remark~\ref{rem:max-atlas}.
Suppose further that $x_-,x_+$ are elements of $X_2$.
The space of paths $\Pp_{x_-x_+}$ will be defined
as a subset of the space of continuous paths
\begin{equation}\label{eq:Cc-4}
   \Cc_{x_-x_+}
   :=\left\{ x\in C^0(\R,X_1)\mid
   \lim_{s\to\mp\infty} x(s)=x_\mp \right\} .
\end{equation}

\smallskip
In Section~\ref{sec:path-spaces} 
we endow the space of paths $\Pp_{x_-x_+}$
with the structure of a $C^1$ Hilbert manifold
modeled on the Hilbert space
$W^{1,2}_{H_1}\cap L^2_{H_2}$ where we abbreviate
$$
   W^{1,2}_{H_1}:=W^{1,2}(\R,H_1)
   ,\qquad
   L^2_{H_2}:=L^2(\R,H_2) .
$$
In Subsection~\ref{sec:loc-par-4}
we define local parametrizations for $\Pp_{x_-x_+}$.
In Subsection~\ref{sec:def-path-space}
we provide an atlas for $\Pp_{x_-x_+}$.
In Section~\ref{sec:transition-maps}
we show that the transition maps are $C^1$ diffeomorphisms.

In Section~\ref{sec:weak-tangent bundle}
we define the weak tangent bundle $\Ee_{x_-x_+}\to \Pp_{x_-x_+}$
and show that it is a $C^1$ Hilbert manifold.

\boldmath
\subsection{Path spaces}\label{sec:path-spaces}
\unboldmath

\boldmath
\subsubsection{Local parametrizations}
\label{sec:loc-par-4}
\unboldmath

\boldmath
\subsubsection*{Basic paths and basic coverings}
\unboldmath

\begin{definition}\label{def:basic-path-4}
Given $x_-,x_+\in X_2$, consider a $C^2$ path $x\colon\R\to X_2$ such that
$$
   x(s)=
   \begin{cases}
      x_-&\text{, $s\le -T$,}
      \\
      x_+&\text{, $s\ge T$,}
   \end{cases}
$$
for some $T>0$.
Such paths are called \textbf{basic paths}.
\end{definition}

We will construct local charts for $\Pp_{x_-x_+}$
around basic paths.

\begin{definition}[Basic covering]\label{def:basic-covering-4}
Assume that $x\colon\R\to X_2$ from $x_-$ to $x_+$ is a
basic path.\footnote{
  Note that the closure of the path image $x(\R)\subset X_2$ is compact
  and that a basic path $x$ arrives at its endpoints $x_\mp$ already
  in finite time $\mp T$.
  }
A \textbf{basic covering of \boldmath$x$} is an ordered finite collection
$$
   \{\psi_i\colon H_1\supset U^i\to V^i\subset X_1\}_{i=1}^k
   \subset\Aa
$$
of a finite number $k$ of local parametrizations (homeomorphisms)
$\psi_i\colon U^i\to V^i$
whose images cover $x(\R)$ and such that the following is true.
\begin{itemize}\setlength\itemsep{0ex} 
\item[1)]
  There are times
  $
     t_1<t_2<\dots<t_{k-1}
  $
  with the following properties, firstly,
  \begin{equation*}
  \begin{split}
     (-\infty,t_1]&\subset x^{-1}(V^1),\\
     [t_1,t_2]&\subset x^{-1}(V^2),\\
     \vdots\quad &\;\;\vdots\quad\vdots\\
     [t_{k-2},t_{k-1}]&\subset x^{-1}(V^{k-1}),\\
     [t_{k-1},\infty)&\subset x^{-1}(V^k),
  \end{split}
  \end{equation*}
  and, secondly, $x(t_j)\not= x(t_{j+1})$ for every $j=1,\dots,k-2$.
\item[2)]
  It holds that
  $$
     d(\psi_{i+1}^{-1}\circ\psi_i)|_{\psi_i(x(t_i))}=\Id 
     ,\quad
     i=1,\dots,k-1.
  $$
\end{itemize}
\end{definition}

\begin{remark}[Case $k=1$]
There is just one map $\psi_1\colon U^1\to V^1$
and, by 1), we have $-\infty=:t_0<t_1:=\infty$ and
$x^{-1}(V^1)\supset (-\infty,t_1]\cup [t_0,\infty)=\R$.
Thus one coordinate patch $V$ already covers the entire path $x$.
Part 2) is void.
\end{remark}

\begin{figure}[h]
  \centering
  \includegraphics
                             {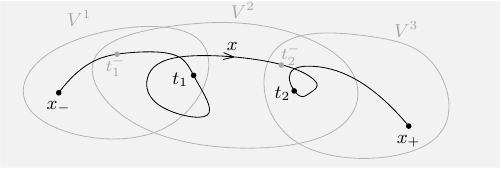}
  \caption{Basic covering of $x$ by $k=3$ local parametrizations
    $\psi_i\colon U^i\to V^i$}
   \label{fig:fig-basic-covering-4}
\end{figure}

\begin{lemma}\label{le:basic-cover-exist-4}
For each basic path $x$ a basic covering exists.
\end{lemma}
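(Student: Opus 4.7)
My plan is to combine a standard compactness argument producing a finite ordered chain of chart domains covering the path, together with a linear adjustment of each chart to normalize the derivatives of the transition maps at the specified transition points.

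\textbf{Chain construction.} The image $x(\R)=\{x_-\}\cup x([-T,T])\cup\{x_+\}$ is compact in $X_1$, so the open cover by chart domains $V$ of the maximal tame atlas $\Aa$ (Remark~\ref{rem:max-atlas}) admits a finite subcover. Starting from $-\infty$, I pick $\tilde\psi_1\colon U^1\to V^1$ containing $x_-$, let $t_1^\ast:=\sup\{s\in\R : x((-\infty,s])\subset V^1\}$, and if $t_1^\ast<+\infty$ I choose a transition time $t_1$ slightly below $t_1^\ast$. I then pick a chart $\tilde\psi_2$ from the finite subcover containing $x(t_1^\ast)$ and iterate; finiteness of the subcover forces termination after finitely many steps. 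To arrange $x(t_j)\ne x(t_{j+1})$, I observe that each $t_j$ is free to vary inside the open set $x^{-1}(V^j)\cap x^{-1}(V^{j+1})$, so a small generic perturbation of $t_j$ yields distinctness; in the degenerate case where $x$ is constant on an entire interval containing both $t_j$ and $t_{j+1}$, the segment $x([t_j,t_{j+1}])$ collapses to a single point lying in $V^j\cap V^{j+1}$, so the intermediate chart $V^{j+1}$ is redundant and may be dropped.

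\textbf{Derivative normalization.} From the chain $\{\tilde\psi_i\}_{i=1}^k$ produced above I recursively redefine $\psi_2,\ldots,\psi_k$. Having fixed $\psi_i$, the differential
\[
   A_i:=d(\tilde\psi_{i+1}^{-1}\circ\psi_i)\big|_{\psi_i(x(t_i))}\colon H_1\to H_1
\]
is a bounded linear isomorphism which, by condition~(i) of Definition~\ref{def:tame} applied to the tame transition map $\tilde\psi_{i+1}^{-1}\circ\psi_i$, restricts to a bounded linear isomorphism of $H_2$. I replace $\tilde\psi_{i+1}$ by $\psi_{i+1}:=\tilde\psi_{i+1}\circ A_i$. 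A direct chain rule computation then gives
\[
   d(\psi_{i+1}^{-1}\circ\psi_i)\big|_{\psi_i(x(t_i))}
   =A_i^{-1}\circ A_i=\Id ,
\]
which is condition~2).

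The main obstacle I anticipate is verifying that each redefined $\psi_{i+1}$ still belongs to the maximal tame atlas. This reduces to observing that any bounded linear isomorphism $A$ of $H_1$ which restricts to a bounded linear isomorphism of $H_2$ is automatically $(H_1,H_2)$-tame: indeed $dA=A$ is constant and $d^2 A=0$, so the tameness estimate~\eqref{eq:tame} holds trivially. Theorem~\ref{thm:tame} then guarantees that the transition map between $\psi_{i+1}=\tilde\psi_{i+1}\circ A_i$ and any other chart of $\Aa$ is a composition of tame maps, hence tame, so $\psi_{i+1}$ is compatible with the maximal tame atlas and the construction yields a genuine basic covering.
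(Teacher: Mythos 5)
Your proof takes essentially the same route as the paper: cover the compact image of the basic path by finitely many charts satisfying condition~1), then inductively compose each successive chart with the constant linear isomorphism $A_i=d(\tilde\psi_{i+1}^{-1}\circ\psi_i)\big|_{\psi_i^{-1}(x(t_i))}$ to force condition~2), observing that linear isomorphisms in $\Ll(H_1)\cap\Ll(H_2)$ are trivially tame (vanishing second differential) so that Theorem~\ref{thm:tame} keeps the modified chart inside the maximal tame atlas. You supply somewhat more detail than the paper on the chain-selection step and on arranging $x(t_j)\neq x(t_{j+1})$, but the core argument — normalize by post-composition with the differential of the transition map, verify tameness of the linear factor, invoke closure of tame maps under composition — is identical.
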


\begin{proof}
Choose a finite collection of local parametrizations
$\{\psi_i\colon H_1\supset U^i\to V^i\subset X_1\}_{i\in I}\subset\Aa$ 
which satisfies condition 1).
If $k=1$ we are done. Let $k\ge 2$.

To see that condition 2) can be achieved, too, we modify
our charts inductively as follows.
We replace $U^2$ by $\tilde U^2$ defined as
$$
   \tilde U^2
   := d(\psi_2^{-1}\circ\psi_1)|_{\psi_1(x(t_1))}^{-1} U^1
$$
and consider the modified chart $\tilde\psi_2\colon \tilde U^2\to V^2$
defined by
$$
   \tilde\psi_2
   :=\psi_2\circ
   d(\psi_2^{-1}\circ\psi_1)|_{\psi_1(x(t_1))} .
$$
Note that $\tilde\psi_2$ is indeed a chart, i.e. $\tilde\psi_2\in\Aa$.
To see this observe that by tameness the linear map
$d(\psi_2^{-1}\circ\psi_1)|_{\psi_1(x(t_1))}$ is element of $\Ll(H_1)\cap\Ll(H_2)$.
Since this map is linear its second derivative vanishes, hence
condition~(\ref{eq:tame}) is void. In particular, the composed map
$\tilde\psi_2$ is tame since by Theorem~\ref{thm:tame} tameness is
preserved under composition.
We compute
\begin{equation*}
\begin{split}
   d(\tilde\psi_2^{-1}\circ\psi_1)|_{\psi_1(x(t_1))}
   &=d\left(
   d(\psi_2^{-1}\circ\psi_1)|_{\psi_1(x(t_1))}^{-1}
   \psi_2^{-1}\circ\psi_1
   \right) |_{\psi_1(x(t_1))}
\\
   &=d(\psi_2^{-1}\circ\psi_1)|_{\psi_1(x(t_1))}^{-1}
   d(\psi_2^{-1}\circ\psi_1)|_{\psi_1(x(t_1))}
\\
   &=\Id .
\end{split}
\end{equation*}
Then we modify $U^3$ accordingly and so on.
\end{proof}

\boldmath
\subsubsection*{Convexity setup on target manifold $X_1$}
\unboldmath

Given a basic path $x\colon\R\to X_2$ from $x_-$ to $x_+$,
let $\{\psi_i\colon H_1\supset U^i\to V^i\subset X_1\}_{i=1}^k$
be a basic covering of $x$. In case $k\ge 2$
we construct for every overlap $V^j\cap V^{j+1}\subset X_1$
a subset which, under both coordinate charts $\psi_j^{-1}$ and
$\psi_{j+1}^{-1}$, has a convex image in $H_1$.
Convexity will be needed for convex interpolation~(\ref{eq:chi_j-4}).
In case $k=1$ there is no overlap and the following is void.

We use the following abbreviations.
For $j=1,\dots,k-1$ we define sets
$$
   U^j_+:=\psi_j^{-1}(V^j\cap V^{j+1})\subset U^j
   ,\qquad
   U^{j+1}_-:=\psi_{j+1}^{-1}(V^j\cap V^{j+1})\subset U^{j+1} ,
$$
as illustrated by Figure~\ref{fig:chart-local-convexity-4}.
\begin{figure}[h]
  \centering
  \includegraphics[width=11cm]    
                             {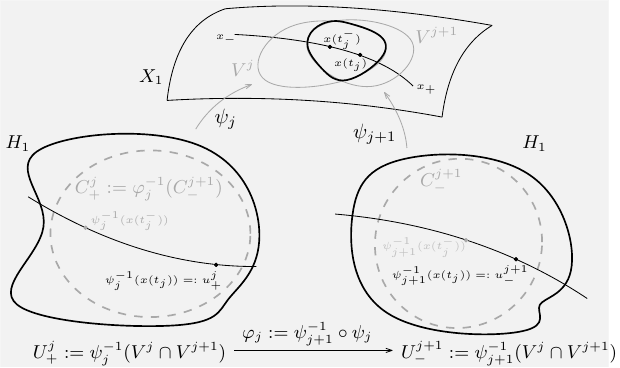}
  \caption{Manifold $X_1$ and convex parametrization domains
                $C^j_+,C^{j+1}_-\subset H_1$}
   \label{fig:chart-local-convexity-4}
\end{figure}

We define \textbf{basic covering transition maps}
(these are $C^2$ since $X_1$ is)
\begin{equation}\label{eq:psi-small-4}
   \varphi_j
   :=\psi_{j+1}^{-1}\circ\psi_j|_{U^j_+}
   \colon H_1\supset U^j_+\to U^{j+1}_-\subset H_1 ,
\end{equation}
and points
$$
   u^j_+:=\psi_j^{-1}(x(t_j))
   \in U^j_+\cap H_2
   ,\qquad
   u^{j+1}_-
   :=\psi_{j+1}^{-1}(x(t_j))
   \in U^{j+1}_-\cap H_2 .
$$
Note that basic paths lie in $X_2$, hence the intersection with $H_2$ above.
Note that by definition of a basic covering we have that
\begin{equation}\label{eq:7687gyuhu-4}
   d\varphi_j|_{u^j_+}
   =\Id .
\end{equation}

\begin{lemma}
For each $j=1,\dots,k-1$ there exists an open
\emph{ball} $C^{j+1}_-$ in $U^{j+1}_-\subset H_1$
centered at $u^{j+1}_-$
satisfying the following conditions
\begin{itemize}\setlength\itemsep{0ex} 
\item[a)]
  the pre-image $C^j_+:=\varphi_j^{-1}(C^{j+1}_-)\subset U^j_+\subset H_1$
  is convex;
\item[b)]
  at every point $q$ of the ball $C^{j+1}_-$ it holds the estimate
    \begin{equation}\label{eq:hgjkgf5322}
     \norm{d(\varphi_j^{-1})|_q-\Id}_{\Ll(H_1)}\le \tfrac12 ;
    \end{equation}
\item[c)]
  for $j\ge 2$ the closures of the subsets
  $C^{j}_-,C^{j}_+\subset U^j\subset H_1$ are disjoint;
\item[d)]
  there exists a constant $c_j>0$ such that
  for all $\eta\in H_2$
  and $y\in C^{j+1}_-\cap H_2$ there holds the inequality
  \begin{equation}\label{eq:(1)}  
     \abs{d\varphi_j^{-1}|_y\eta-\eta}_2
     \le\tfrac12\abs{\eta}_2+c_j \left(\abs{y}_2+1\right)\abs{\eta}_1 .
  \end{equation}
\end{itemize}
\end{lemma}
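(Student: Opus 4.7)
The plan is to take $C^{j+1}_-:=B^1_r(u^{j+1}_-)$, the open $H_1$-ball of radius $r$ around $u^{j+1}_-$, and shrink $r$ successively to enforce each of the four conditions. Throughout I use that $\varphi_j$ is a $C^2$ tame diffeomorphism with $d\varphi_j|_{u^j_+}=\Id$ by~(\ref{eq:7687gyuhu-4}), so that $\varphi_j^{-1}$ is $C^2$ with $d\varphi_j^{-1}|_{u^{j+1}_-}=\Id$ via the inverse function theorem on $H_1$. Item (b) is then immediate from continuity of $y\mapsto d\varphi_j^{-1}|_y\in\Ll(H_1)$, which fixes a first radius $r_b>0$.

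For (a), I consider the auxiliary function $f\colon U^j_+\to\R$, $f(x):=\tfrac12\abs{\varphi_j(x)-u^{j+1}_-}_1^2$. The chain rule at $u^j_+$ gives $f(u^j_+)=0$, $df|_{u^j_+}=0$, and
\[
   d^2 f|_{u^j_+}(h,h)=\Abs{d\varphi_j|_{u^j_+}h}_1^2=\abs{h}_1^2,
\]
so by continuity of $d^2 f$ there is a convex $H_1$-ball $N\subset U^j_+$ around $u^j_+$ on which $d^2 f|_x(h,h)\ge\tfrac12\abs{h}_1^2$ for all $h\in H_1$, making $f|_N$ strongly convex. Using (b) and the fundamental theorem of calculus, $C^j_+=\varphi_j^{-1}(B^1_r(u^{j+1}_-))\subset B^1_{3r/2}(u^j_+)$, which is contained in $N$ once $r\le r_a$ is sufficiently small. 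Since then $C^j_+=\{x\in N\colon f(x)<r^2/2\}$ is the sublevel set of a convex function on a convex set, convexity of $C^j_+$ follows.

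For (d), I first verify that $(H_1,H_2)$-tameness passes from $\varphi_j$ to $\varphi_j^{-1}$ on a neighborhood of $u^{j+1}_-$. Differentiating the identity $d\varphi_j|_{\varphi_j^{-1}(y)}\circ d\varphi_j^{-1}|_y=\Id$ yields
\[
   d^2\varphi_j^{-1}|_y(\eta_1,\eta_2)
   =-d\varphi_j^{-1}|_y\Bigl[d^2\varphi_j|_{\varphi_j^{-1}(y)}\bigl(d\varphi_j^{-1}|_y\eta_1,\,d\varphi_j^{-1}|_y\eta_2\bigr)\Bigr],
\]
and the taming inequality~(\ref{eq:tame}) for $\varphi_j^{-1}$ drops out of Corollary~\ref{cor:tame} applied to $\varphi_j$, using (b) together with the analogous $\Ll(H_2)$-bound. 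Writing
\[
   d\varphi_j^{-1}|_y\eta-\eta
   =\int_0^1 d^2\varphi_j^{-1}\big|_{u^{j+1}_-+t(y-u^{j+1}_-)}\bigl(y-u^{j+1}_-,\eta\bigr)\,dt,
\]
then applying the taming estimate just established, using $\abs{y-u^{j+1}_-}_1<r$, and the triangle inequality $\abs{u^{j+1}_-+t(y-u^{j+1}_-)}_2\le\abs{u^{j+1}_-}_2+\abs{y}_2$, I obtain a bound of the form $\kappa_j r\,\abs{\eta}_2+c_j(1+\abs{y}_2)\abs{\eta}_1$; shrinking further to $r\le r_d:=1/(2\kappa_j)$ reduces the first coefficient to $1/2$.

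For (c), when $j\ge 2$ the basic-covering condition $x(t_{j-1})\neq x(t_j)$ forces $u^j_-\neq u^j_+$ in $H_1$. Running the previous three steps symmetrically at the minus index produces $C^j_-$ which, together with $C^j_+$, shrinks to its centre as the radius decreases; a final joint shrinking to some $r\le r_c$ separates the closures $\overline{C^j_-}$ and $\overline{C^j_+}$. Taking $r:=\min\{r_a,r_b,r_c,r_d\}$ finishes the proof. The main technical hurdle is paragraph three: transferring $(H_1,H_2)$-tameness to the inverse, because the formula for $d^2\varphi_j^{-1}$ nests three factors of $d\varphi_j^{-1}$ inside a $d^2\varphi_j$, each of which must be estimated simultaneously in $\Ll(H_1)$ and $\Ll(H_2)$. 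The normalisation $d\varphi_j|_{u^j_+}=\Id$ is precisely what keeps $\norm{d\varphi_j^{-1}}_{\Ll(H_1)}$ close to $1$ on the relevant ball and makes the final taming constant $\kappa_j$ in (d) usable.
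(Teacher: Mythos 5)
Your arguments for (a)--(c) are sound, and the route you take for (a) --- building the strongly convex auxiliary function $f(x)=\tfrac12\abs{\varphi_j(x)-u^{j+1}_-}_1^2$ and exhibiting $C^j_+$ as its sublevel set --- is a nice self-contained alternative to the paper's one-line appeal to Polyak's convexity-preservation theorem.

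For item (d), however, your argument has a genuine gap, and one that is circular in a subtle way. You try to \emph{derive} tameness of $\varphi_j^{-1}$ from tameness of $\varphi_j$ via the identity
\[
   d^2\varphi_j^{-1}|_y(\eta_1,\eta_2)
   =-d\varphi_j^{-1}|_y\Bigl[d^2\varphi_j|_{\varphi_j^{-1}(y)}\bigl(d\varphi_j^{-1}|_y\eta_1,\,d\varphi_j^{-1}|_y\eta_2\bigr)\Bigr].
\]
To push the $H_2$-norm through the three nested factors of $d\varphi_j^{-1}|_y$ you need an $\Ll(H_2)$-bound on $d\varphi_j^{-1}|_y$ (and, because of the $\abs{\,\cdot\,}_2\abs{\xi}_1\abs{\eta}_1$ term in~(\ref{eq:tame}), also an estimate of $\abs{\varphi_j^{-1}(y)}_2$ by $\abs{y}_2$, i.e.\ the analogue of Lemma~\ref{le:tame-0}). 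Both of these are consequences of tameness of $\varphi_j^{-1}$ --- precisely what you are trying to establish. Calling the needed $\Ll(H_2)$-bound ``analogous'' to~(b) conflates the easy $\Ll(H_1)$ case (where $\varphi_j^{-1}$ being $C^1$ on $H_1$ suffices) with the genuinely hard $\Ll(H_2)$ case: the latter requires either a Fredholm argument exploiting the compact inclusion $H_2\hookrightarrow H_1$ (which the paper does carry out later, in Step~3 of the proof of Theorem~\ref{thm:Psi-diffeo-4}, for the interpolation map $(\Ss^j_s)^{-1}$), or an independent reason for $\varphi_j^{-1}$ to be tame. The independent reason is exactly what makes the paper's proof short: $\varphi_j^{-1}=\psi_j^{-1}\circ\psi_{j+1}$ is \emph{itself} a transition map of the tame atlas $\Aa$, hence tame for free. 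Once you grant that, there is no need for the second-derivative formula at all --- you simply apply the Differences Lemma~\ref{le:tame-3} to $\varphi_j^{-1}$ with $y_0=u^{j+1}_-$ and $y_1=y$, use~(\ref{eq:ggjk957}) to kill the identity term, shrink the radius so $\abs{y-u^{j+1}_-}_1<\eps$ with $\eps\kappa<1/2$, and read off~(\ref{eq:(1)}).
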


\begin{proof}
By choosing the ball $C^{j+1}_-$ small enough, condition a) (and c))
can always be achieved since $\varphi_j^{-1}$ is in $C^2$.
Therefore its derivative is in particular locally Lipschitz and,
moreover, since the derivative is invertible at every point, it is in
particular injective. Thus the two conditions
in~\cite[Thm.\,2.1]{Polyak:2001a} are satisfied
and $\varphi_j^{-1}$ maps a sufficiently small convex set to a convex set.
b) Note that from~(\ref{eq:7687gyuhu-4})
it follows that
\begin{equation}\label{eq:ggjk957}
   d(\varphi_j^{-1})|_{u^{j+1}_-}
   =\Id.
\end{equation}
Hence b) follows by continuity of $d(\varphi_j^{-1})$
possibly after shrinking the ball again.
\\
d) This follows from Lemma~\ref{le:tame-3} as follows.
Since $\varphi_j^{-1}\colon U_-^{j+1}\to U^j_+$ is tame
and using~(\ref{eq:ggjk957})
there exists an open neighborhood $W$ of $u^{j+1}_-$
and a constant $\kappa=\kappa(j)>0$ such that for all
$y\in W\cap H_2$ and $\eta\in H_2$ there holds the estimate
\begin{equation*}
\begin{split}
   \abs{d\varphi_j^{-1}|_y\eta-\eta}_2
   &\le\kappa \left(\abs{y-u_-^{j+1}}_1\abs{\eta}_2
   +\abs{y-u_-^{j+1}}_2\abs{\eta}_1\right)
\\
   &\quad+\tfrac{\kappa}{2}\left(\abs{y}_2+\abs{u_-^{j+1}}_2\right)
   \abs{y-u_-^{j+1}}_1\abs{\eta}_1 .
\end{split}
\end{equation*}
Maybe after shrinking the ball $C^{j+1}_-$ centered at $u^{j+1}_-$
again, we can assume that $C^{j+1}_-\subset W$.
Denote by $\eps$ the radius of the ball $C^{j+1}_-$.
Then for every $y\in C^{j+1}_-\cap H_2$,
and by the triangle inequality,
the above estimate simplifies to
\begin{equation*}
\begin{split}
   \abs{d\varphi_j^{-1}|_y\eta-\eta}_2
   &\le\kappa \eps\abs{\eta}_2
   +\kappa\abs{y}_2\abs{\eta}_1
   +\kappa \abs{u_-^{j+1}}_2\abs{\eta}_1
   +\eps\tfrac{\kappa}{2}\left(\abs{y}_2+\abs{u_-^{j+1}}_2\right)\abs{\eta}_1
\\
  &=\eps\kappa\abs{\eta}_2
   +\left(\kappa+\eps\tfrac{\kappa}{2}\right)\abs{y}_2\abs{\eta}_1
   +\kappa \abs{u_-^{j+1}}_2\left(1+\tfrac{\eps}{2}\right)\abs{\eta}_1.
\end{split}
\end{equation*}
Maybe after shrinking the ball $C^{j+1}_-$ a last time,
we can further assume that $\eps\kappa<1/2$.
Under this assumption the assertion~(\ref{eq:(1)}) follows from the
above inequality for $c_j=\frac{5}{4}\kappa\max\{1,\abs{u^{j+1}_-}_2\}$.
\end{proof}

\boldmath
\subsubsection*{Interpolation}
\unboldmath

Let $\{\psi_i\colon H_1\supset U^i\to V^i\subset X_1\}_{i=1}^k$
be a basic covering of a basic path $x$ from $x_-$ to $x_+$.
Our goal is to define, on a small neighborhood $\Uu$ of $0$, an
injection
\begin{equation}\label{eq:hjj56g6-4}
   \Psi
   \colon W^{1,2}_{H_1}\cap L^2_{H_2}\supset \Uu
   \to\Cc_{x_- x_+}
   ,\qquad
   \text{$\Psi=\Psi^x$, $\Uu=\Uu^x$,}
\end{equation}
which takes the zero map to the basic path $x$.
We abbreviate $x_s:=x(s)$.

\begin{definition}[Domain $\Uu^x$]\label{def:Uu-n-4}
Let $\{\psi_i\colon H_1\supset U^i\to V^i\subset X_1\}_{i=1}^k$
be a basic covering of a path $x$ from $x_-$ to $x_+$.
If $k=1$ set $t_0:=-\infty$ and $t_1^-:=\infty$.
If $k\ge 2$ choose times $t_j^-$ for $j=1,\dots,k-1$
(see Figure~\ref{fig:chart-local-convexity-4}) such that (i) it holds
$$
   \underbrace{-\infty}_{=: t_0}
   <-T<\overbrace{t_1^-<t_1}^{\text{interpolate}}
   <\overbrace{t_2^- <t_2}^{\text{interpolate}}
   <\dots
   <\overbrace{t_{k-1}^-<t_{k-1}}^{\text{interpolate}}
   <T
   <\underbrace{+\infty}_{=:t_k^-}
$$
and such that (ii) the path $x$ along any \textbf{interpolation interval}
$\overline{[t_j^-,t_j]}$ is via the local coordinate chart $\psi_j^{-1}$ taken
into the convex set $C^j_+\subset H_1$, in symbols
$$
   s\in [t_j^-,t_j]
   \qquad\Rightarrow\qquad
   \psi_j^{-1}(x_s)\in C^j_+ ,
$$
or equivalently 
{\color{gray}(since by definition $C^j_+=\psi_j^{-1}\circ\psi_{j+1}(C^{j+1}_-)$)}
$$
   s\in [t_j^-,t_j]
   \qquad\Rightarrow\qquad
   \psi_{j+1}^{-1}(x_s)\in C^{j+1}_- .
$$
(iii)~Let $B_R(y)$ be the open radius-$R$ ball in $H_1$ centered at $y$.
Fix $R>0$ with
\begin{equation}\label{eq:B_R-C_-}
   s\in [t_j^-,t_j]
   \qquad\Rightarrow\qquad
   B_R(\psi_{j+1}^{-1}(x_s))\subset C^{j+1}_- ,
\end{equation}
whenever $j\in\{1,\dots,k-1\}$ and
$$
   s\in [t_j,t_{j+1}^-]
   \qquad\Rightarrow\qquad
   B_R(\psi_{j+1}^{-1}(x_s))\subset U^{j+1} ,
$$
whenever $j\in\{0,\dots,k-1\}$.
Here use the convention $x(\pm\infty):=x_{\pm \infty}$.
Such $R$ exists since the time intervals are compact,
hence their images in $H_1$ under $\psi_{j+1}^{-1}\circ x$ are
compact, now we use an elementary consideration in set theoretic
topology.\footnote{
  For a compact subset $K$ of an open set $U$ there exists $R>0$ such
  that $B_R(x)\subset U$ for every $x\in K$.
  To see this pick $y\in U$, then by openness there exists $R_y>0$
  such that $B_{R_y}(y)\subset U$. Hence, by the triangle inequality, 
  $z\in B_{R_y/2}(y)$ $\Rightarrow$ $B_{R_y/2}(z)\subset U$.
  By compactness the open cover $\{B_{R_y/2}(y)\mid y\in K\}$ of $K$
  admits a finite subcover, let $R$ be the smallest of these radii.
  }
(Here we profit from our choice to work with basic paths, thus $x$ is constant
outside the \emph{compact} time interval $[-T,T]$.)

\smallskip
\noindent
(iv)~Using $R$ from (iii) we define the domain of the local
parametrization by
\begin{equation}\label{eq:Uu_R}
\boxed{
   \Uu^x=\Uu_R^x:=\left\{\text{$\xi\in W^{1,2}_{H_1}\cap L^2_{H_2}$ such that
   $\abs{\xi_s}_1<R$ $\forall s\in\R$}\right\} .
}
\end{equation}
By definition of $R$ for any $\xi\in\Uu^x$ the following is true
\begin{itemize}\setlength\itemsep{0ex} 
\item[1.]
  for $s\in[t_j^-,t_j]$ we have
  $\psi_j^{-1}(x_s) +\xi_s\in C^j_+$\hfill{\small\color{gray} $j=1,\dots,k-1$}
\item[2.]
  for $s\in[t_j^-,t_j]$ we have
  $\psi_{j+1}^{-1}(x_s) +\xi_s\in C^{j+1}_-$\hfill{\small\color{gray} $j=1,\dots,k-1$}
\item[3.]
  for $s\in[t_j,t_{j+1}^-]$ we have
  $\psi_{j+1}^{-1}(x_s) +\xi_s\in U^{j+1}$\hfill{\small\color{gray} $j=0,\dots,k-1$}
\end{itemize}
\end{definition}
\begin{figure}[h]
  \centering
  \includegraphics
                             {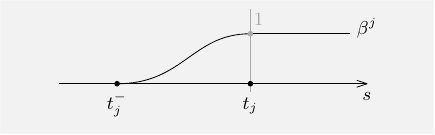}
  \caption{Cutoff function $\beta^j$ along interpolation
                 interval $[t_j^-,t_j]$}
   \label{fig:fig-chart-interpol-cutoff-4}
\end{figure}

\begin{remark}[Case $k=1$]
There is just one map $\psi_1\colon U^1\to V^1$. Only~3.
is non-void. It yields $\psi_1^{-1}(x_s)+\xi_s\in U^1$
$\forall s\in (-\infty,\infty)$.
\end{remark}

\begin{definition}[Local parametrization $\Psi^x$ near basic path $x$]
In Definition~\ref{def:Uu-n-4}
for each $j=1,\dots,k-1$ pick a monotone smooth cutoff function
$\beta^j\colon\R\to[0,1]$ such that $\beta^j\equiv 0$ on $(-\infty,t_j^-]$
and $\beta^j\equiv 1$ on $[t_j,\infty)$;
see Figure~\ref{fig:fig-chart-interpol-cutoff-4}.
For $\xi\in\Uu^x$ we define a
\textbf{local parametrization} 
\begin{equation}\label{eq:loc-param-Psi}
   \Psi\colon W^{1,2}_{H_1}\cap L^2_{H_2}\supset \Uu\to\Cc_{x_-x_+}
   ,\qquad
   \text{$\Psi=\Psi^x$, $\Uu=\Uu^x$,}
\end{equation}
centered at the basic path $x$ from $x_-$ to $x_+$ by
\begin{equation}\label{eq:loc-par-form-4}
\boxed{
   \left(\Psi(\xi)\right)(s)
   :=\begin{cases}
      \chi_j(s;\xi)&\text{, $s\in[t_j^-,t_j]$, {\color{gray}\small$j=1,\dots,k-1$}}
   \\
      \psi_{j+1}\bigl(\psi_{j+1}^{-1}(x_s)+\xi_s\bigr)
         &\text{, $s\in [t_j,t_{j+1}^-]$, {\color{gray}\small$j=0,\dots,k-1$}}
   \\
  \end{cases}
}
\end{equation}
for every $s\in\R$.
Here the map $\chi_j(s;\xi)$, for $j=1,\dots,k-1$,
is defined by convex interpolation in the convex sets $C^j_+$ and $C^{j+1}_-$
and for $s\in\R$ as follows
\begin{equation}\label{eq:chi_j-4}
\begin{split}
   &\chi_j(s;\xi)\\
   :&=\psi_j\biggl(
   (1-\beta^j_s)\Bigl(\underbrace{\psi_j^{-1}(x_s)+\xi_s}_{\in C^j_+}\Bigr)
   +\beta^j_s\underbrace{
   \overbrace{\psi_j^{-1}\circ\psi_{j+1}}^{\stackrel{\text{(\ref{eq:psi-small-4})}}{=}\varphi_j^{-1}}
    \Bigl(\overbrace{\psi_{j+1}^{-1}(x_s)+\xi_s}^{\in C^{j+1}_-}\Bigr)}
      _{\in\varphi_j^{-1}(C^{j+1}_-)\subset C^j_+}
   \biggr)\\
   &=\psi_j\circ \Ss_s^j (\xi_s)
\end{split}
\end{equation}
where we abbreviated the map~(\ref{eq:A.3.2-S}) for
$\varphi=\varphi_j^{-1}\in C^2$ and $x_0=\psi^{-1}_{j+1}(x_s)$ by
\begin{equation}\label{eq:def-S-4}
   \Ss_s^j (\xi_s)
   :=\Ss^{\varphi^{-1}_j}_{\beta^j_s,\psi^{-1}_{j+1}(x_s)}(\xi_s) .
\end{equation}
\end{definition}

By Remark~\ref{rem:A.3.2.}, cf. Figure~\ref {fig:chart-no-exp-44},
the \textbf{interpolation map}
\begin{equation}\label{eq:def-S-44}
   \Ss_s^j\colon H_1\supset B_R(0)=:B_R\to H_1
\end{equation}
is a $C^2$ diffeomorphism onto its image,
in particular it is injective.

\begin{remark}[Case $k=1$]
There is just one map $\psi_1\colon U^1\to V^1$ and
$$
   \left(\Psi(\xi)\right)(s)
   =\psi_1\underbrace{\bigl(\psi_1^{-1}(x_s)+\xi_s\bigr)}_{\in U^1}\in V^1
   ,\quad
   \forall s\in (-\infty,\infty).
$$
\end{remark}

\begin{proposition}\label{prop:injective-4}
The map $\Psi\colon\Uu\to\Cc_{x_-x_+}$ is injective and $\Psi(0)=x$
reproduces the basic path $x$ used in the construction of $\Psi$.
\end{proposition}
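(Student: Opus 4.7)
The plan is to verify each of the two claims directly from the piecewise definition of $\Psi$ in~(\ref{eq:loc-par-form-4}), splitting the real line into the two kinds of intervals appearing there.

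For the statement $\Psi(0)=x$, I would evaluate both branches of~(\ref{eq:loc-par-form-4}) at $\xi=0$. On the ``single chart'' intervals $[t_j,t_{j+1}^-]$ the formula immediately reduces to $\psi_{j+1}(\psi_{j+1}^{-1}(x_s))=x_s$. On an interpolation interval $[t_j^-,t_j]$, I would substitute into~(\ref{eq:chi_j-4}) and observe that by the definition $\varphi_j=\psi_{j+1}^{-1}\circ\psi_j$ one has $\varphi_j^{-1}(\psi_{j+1}^{-1}(x_s))=\psi_j^{-1}(x_s)$, so both points entering the convex combination coincide with $\psi_j^{-1}(x_s)$. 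The combination therefore collapses to $\psi_j^{-1}(x_s)$, and applying $\psi_j$ yields $\chi_j(s;0)=x_s$.

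For injectivity, suppose $\xi,\eta\in\Uu$ satisfy $\Psi(\xi)=\Psi(\eta)$; I would compare them pointwise on each interval. On $[t_j,t_{j+1}^-]$ we get $\psi_{j+1}(\psi_{j+1}^{-1}(x_s)+\xi_s)=\psi_{j+1}(\psi_{j+1}^{-1}(x_s)+\eta_s)$, and since $\psi_{j+1}$ is a chart (hence a homeomorphism onto its image) we may invert it to obtain $\xi_s=\eta_s$. On the interpolation interval $[t_j^-,t_j]$ we have $\psi_j\circ \Ss_s^j(\xi_s)=\psi_j\circ \Ss_s^j(\eta_s)$; injectivity of $\psi_j$ together with the fact, recalled after~(\ref{eq:def-S-44}), that $\Ss_s^j\colon B_R\to H_1$ is a $C^2$ diffeomorphism onto its image (and in particular injective), yields $\xi_s=\eta_s$ once more. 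Since these intervals exhaust $\R$, the two paths $\xi,\eta$ agree everywhere and hence coincide as elements of $W^{1,2}_{H_1}\cap L^2_{H_2}$ (they are continuous by the Sobolev embedding).

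There is essentially no analytical obstacle here: everything is a bookkeeping exercise once one has noted (a) that each $\psi_i$ is by definition a homeomorphism onto its image, and (b) that the interpolation map $\Ss_s^j$ from~(\ref{eq:def-S-4}) has already been recorded as a $C^2$ diffeomorphism via Remark~\ref{rem:A.3.2.}. The only point to double-check is that the case distinction in~(\ref{eq:loc-par-form-4}) is consistent at the joins $s=t_j^-$ and $s=t_j$, so that the pointwise comparison above is unambiguous; this follows because $\beta^j_s=0$ for $s\le t_j^-$ and $\beta^j_s=1$ for $s\ge t_j$, which turns $\chi_j(s;\xi)$ into the corresponding single-chart expression at the endpoints.
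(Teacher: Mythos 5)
Your proof is correct and follows essentially the same route as the paper's: both verify $\Psi(0)=x$ by noting that the convex combination in~(\ref{eq:chi_j-4}) collapses to $\psi_j^{-1}(x_s)$ at $\xi=0$, and both reduce injectivity on the interpolation intervals to the injectivity of the chart map $\psi_j$ together with the fact that $\Ss_s^j$ is a diffeomorphism onto its image. Your remark on consistency at the endpoints $t_j^-$ and $t_j$ is a harmless extra check not spelled out in the paper.
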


\begin{proof}
We first check that $\Psi$ is injective.
Hence assume that there exist $\xi$ and $\tilde\xi$ such that
$\Psi(\xi)=\Psi(\tilde \xi)$.
In particular $\Psi(\xi)(s)=\Psi(\tilde \xi)(s)$ for every $s\in\R$.
We show that this implies $\xi(s)=\tilde \xi(s)$ for every $s\in\R$.
The only times where this is not obvious
is when $s$ lies in an interpolation interval $[t_j^-,t_j]$
for $j=1,\dots,k-1$. In this case by~(\ref{eq:chi_j-4}) there are the identities
$$
   \psi_j\circ \Ss_s^j(\xi_s)
   =\chi_j(s;\xi)
   =\Psi(\xi)(s)
   =\Psi(\tilde\xi)(s)
   =\chi_j(s;\tilde\xi)
   =\psi_j\circ \Ss_s^j(\tilde\xi_s) .
$$
But $\psi_j$ is injective, since it is a chart map,
and the map $\Ss_s^j$ is a diffeomorphism, as explained in
Appendix~\ref{sec:quantitative-IFT}.
Hence $\xi_s=\tilde\xi_s$ for every $s\in\R$,
that is $\xi=\tilde\xi$. This finishes the proof that $\Psi$
is injective.

Now we check that $\Psi(0)$ is a basic path.
We call $I=\cup_{j=1}^{k-1}[t_j^-,t_j]$ the \textbf{interpolation region}.
If $s\in\R\setminus\I$, i.e. $s$ is in the non-interpolation region,
then $\Psi(0)(s)=x(s)$ is immediate from~(\ref{eq:loc-par-form-4}).
If $s\in I$, i.e. there exists a $j\in\{1,\dots,k-1\}$ such that
$s\in[t_j^-,t_j]$, then by definition of $\Ss_s^j$,
see~(\ref{eq:chi_j-4}), we get
\begin{equation}\label{eq:Ss-0}
   \Ss_s^j(0)
   =\biggl((1-\beta^j_s)\psi_j^{-1}(x_s)
   +\beta^j_s\psi_j^{-1}\circ\psi_{j+1}\circ
    \psi_{j+1}^{-1}(x_s)\biggr)
   =\psi_j^{-1}(x_s) .
\end{equation}
Therefore the interpolation map for $\xi_s=0$ reproduces the basic
path, namely
\begin{equation*}
   \chi_j(s;0)
   =\psi_j\circ \Ss_s^j(0)
   =x_s
\end{equation*}
for every $s\in\R$.
This proves Proposition~\ref{prop:injective-4}.
\end{proof}

\boldmath
\subsubsection{Definition of path space}\label{sec:def-path-space}
\unboldmath

\begin{definition}
Given $x_-,x_+\in X_2$, we define the path space $\Pp_{x_-x_+}$
as the subset of $\Cc_{x_-x_+}$ consisting of the images
of all local parametrizations $\Psi$ constructed in~(\ref{eq:loc-param-Psi})
above. We denote the set of all local parametrizations centered at a
basic path $x\colon\R\to X_2$ from $x_-$ to $x_+$ by
\begin{equation}\label{eq:AP}
   \Aa\Pp_{x_-x_+}
   :=\{\text{local parametrizations
   $\Psi^x\colon W^{1,2}_{H_1}\cap L^2_{H_2}\supset\Uu^x\to\Cc_{x_-x_+}$}\} .
\end{equation}
Then the \textbf{space of paths} in $X=X_1$ from $x_-$ to $x_+$ is
defined by
$$
   \Pp_{x_-x_+}
   :=\bigcup_{\Psi^x\in \Aa\Pp_{x_-x_+}} \Psi(\Uu)
   ,\qquad
   \text{\color{gray}$\Psi=\Psi^x$, $\Uu=\Uu^x$.}
$$
Note that $\Pp_{x_-x_+}\subset\Cc_{x_-x_+}$.
We define a \textbf{topology} on the set $\Pp_{x_-x_+}$ as follows.
A subset $\Vv$ of $\Pp_{x_-x_+}$ is \textbf{open}
if and only if $\Psi^{-1}(\Vv\cap\Psi(\Uu))$ is open in $\Uu$
for all $\Psi\in\Aa\Pp_{x_-x_+}$.
\end{definition}

\begin{theorem}[$C^1$ atlas]\label{thm:path-mf-4}
The set of local parametrizations $\Aa\Pp_{x_-x_+}$ is a $C^1$ atlas
for the path space $\Pp_{x_-x_+}$, in particular $\Pp_{x_-x_+}$ is a $C^1$
Hilbert manifold modeled on the Hilbert space $W^{1,2}_{H_1}\cap L^2_{H_2}$
with inner product
$$
   \langle\langle\langle
   \xi,\eta
   \rangle \rangle \rangle
   :=\int_{-\infty}^\infty \INNER{\dot\xi(s)}{\dot\eta(s)}_{H_1} ds
   +\int_{-\infty}^\infty \INNER{\xi(s)}{\eta(s)}_{H_2} ds .
$$
\end{theorem}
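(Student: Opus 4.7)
The plan is to verify the defining axioms of a $C^1$ atlas: openness of each chart domain, injectivity and continuity of each local parametrization, and $C^1$-smoothness of the transition maps. The first two points are essentially already in place. Openness of $\Uu^x$ in~(\ref{eq:Uu_R}) follows because the condition $\sup_s\abs{\xi_s}_1<R$ is open with respect to the $W^{1,2}_{H_1}$-norm via the continuous embedding $W^{1,2}_{H_1}\INTO L^\infty_{H_1}$ with constant $1$ from~(\ref{eq:Sob-estimate}); and Proposition~\ref{prop:injective-4} gives injectivity of each $\Psi^x$. The topology on $\Pp_{x_-x_+}$ is defined precisely so that each $\Psi^x$ is a homeomorphism onto its image, making the compatibility of the topology automatic.

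The substantive work is to show that the transition maps $(\Psi^y)^{-1}\circ \Psi^x$ between any two overlapping charts are $C^1$ diffeomorphisms between open subsets of $W^{1,2}_{H_1}\cap L^2_{H_2}$; this is what the subsequent Section~\ref{sec:transition-maps} addresses. I would proceed interval by interval on the real line, decomposing the domain of the transition map according to the partition induced by the two sets of intervals $[t_j^-,t_j]$ and $[t_j,t_{j+1}^-]$ associated to the basic coverings of $x$ and $y$. On intervals lying outside every interpolation region, the transition map acts fiberwise as $\xi_s\mapsto \phi_s(\xi_s)$, where $\phi_s$ is assembled from a tame chart transition $\tilde\psi_j^{-1}\circ\psi_i$ and translations by $\psi_i^{-1}(x_s)$ and $\tilde\psi_j^{-1}(y_s)$. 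Since $x,y$ are basic paths, $\phi_s$ is asymptotically constant, and Theorem~\ref{thm:tame} shows it remains tame; the parametrized Theorem~\ref{thm:B-Phi-parametrized} (and its weak tangent version Theorem~\ref{thm:B-TPhi-parametrized}) then delivers $C^1$-smoothness on this portion.

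The main obstacle is handling intervals that overlap an interpolation region of one of the charts: there the transition map involves the convex interpolation map $\Ss_s^j$ from~(\ref{eq:def-S-4}). To apply the parametrized Theorem~\ref{thm:B-TPhi-parametrized} I would need to show that $\Ss_s^j$, and its composition with tame chart transitions, is parametrized tame in the sense of Definition~\ref{def:tame-parametrized}. The key ingredients are the convexity setup, inequality~(\ref{eq:(1)}) which controls $d\varphi_j^{-1}-\Id$ with the critical factor $\tfrac12$ in front of $\abs{\eta}_2$, and the quantitative inverse function theorem from Appendix~\ref{sec:IFT} applied uniformly along the compact interpolation interval $[t_j^-,t_j]$. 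The compact inclusion $H_2\INTO H_1$ enters precisely here, allowing pointwise control by $\abs{y}_2$ to be upgraded to uniform-in-$s$ estimates as $s$ varies over the compact interval. Once parametrized tameness of the interpolation is established, Theorem~\ref{thm:tame} preserves it under composition with the adjacent chart transitions, and Theorem~\ref{thm:B-TPhi-parametrized} concludes.

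It remains to record that the model Hilbert space $W^{1,2}_{H_1}\cap L^2_{H_2}$ with the stated inner product is indeed a Hilbert space (standard), that $\Pp_{x_-x_+}$ is Hausdorff (inherited from the ambient $C^0(\R,X_1)$ topology on $\Cc_{x_-x_+}$), and that it is second countable (from separability of $H_1,H_2$ and hence of the model space). Combining these with the $C^1$ atlas above produces the desired $C^1$ Hilbert manifold structure.
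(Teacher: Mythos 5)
Your plan tracks the paper's argument: Theorem~\ref{thm:path-mf-4} is reduced to showing the transition maps are $C^1$ diffeomorphisms, i.e.\ to Theorem~\ref{thm:Psi-diffeo-4}, whose proof decomposes $\R$ according to the interpolation regions of the two basic coverings, writes the transition map fiberwise as $\xi_s\mapsto\varphi_s(\xi_s)$ in~(\ref{eq:Psi-map-proof-4}), proves that $\varphi$ is asymptotically constant parametrized tame, and then invokes Theorem~\ref{thm:B-Phi-parametrized} (and~\ref{thm:B-TPhi-parametrized}). Openness of $\Uu^x$ via the embedding~(\ref{eq:Sob-estimate}), injectivity via Proposition~\ref{prop:injective-4}, and the identification of the interpolation region as the delicate part are all correct.

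However, your account of where the compact inclusion $H_2\INTO H_1$ enters is wrong, and that misidentifies the heart of Step~3 of the proof of Theorem~\ref{thm:Psi-diffeo-4}. It is not used to upgrade pointwise to uniform-in-$s$ bounds -- uniformity along the compact interval $[t_j^-,t_j]$ follows from continuity alone, as the choice of the constant $C$ in the proof of Lemma~\ref{le:Fredholm} shows. Rather, tameness of the inverse interpolation map $(\Ss_s^j)^{-1}$ forces one to show that the restriction $d\Ss_s^j|_v|_{H_2}\colon H_2\to H_2$ is an isomorphism. The only available inequality on the $H_2$-level is the a~priori estimate~(\ref{eq:semi-F}), $\abs{\eta}_2\le 2\abs{d\Ss_s^j|_v\eta}_2+2C(\abs{v}_2+1)\abs{\eta}_1$, and compactness of $H_2\INTO H_1$ is exactly what turns this into a semi-Fredholm estimate. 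A deformation $T_\beta$ to the identity together with local constancy of the semi-Fredholm index gives index zero, and injectivity (inherited from invertibility on $H_1$) then yields bijectivity and the key bound~(\ref{eq:(3)}). Without the compact inclusion the $H_2$-level inverse need not exist, and the whole derivation of parametrized tameness of $\varphi$ collapses. You also omit the inductive estimate $\abs{v}_2\le K(\abs{\Ss_s^j(v)}_2+1)$ in~(\ref{eq:K_s-1}), which is needed to convert~(\ref{eq:7ggf7775gh}) (a bound in terms of $\abs{v}_2$) into the genuine tameness inequality~(\ref{eq:K_s-2}) (a bound in terms of $\abs{\Ss_s^j(v)}_2$); this is a further non-trivial step your sketch passes over.
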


\begin{proof}
To prove Theorem~\ref{thm:path-mf-4} we need to show that the transition
maps are $C^1$ diffeomorphisms.
This will be carried out in the next section and
Theorem~\ref{thm:path-mf-4} follows from Theorem~\ref{thm:Psi-diffeo-4}.
\end{proof}

\boldmath
\subsubsection{Transition maps}\label{sec:transition-maps}
\unboldmath

We show that transition maps are $C^1$ diffeomorphisms
on the Hilbert space $W^{1,2}_{H_1}\cap L^2_{H_2}$.
Assume that $x$ and $\tilde x$ are basic paths from $x_-$ to $x_+$.
Let
$$
   T
   :=\max\{T_x, T_{\tilde x}\} >0
$$
be the maximum of the two times
that come with the basic paths $x$ and $\tilde x$, respectively; see
Definition~\ref{def:basic-path-4}.

Pick a basic covering of $x$,
notation $\{\psi_i\colon H_1\supset U^i\to V^i\subset X_1\}_{i=1}^k$,
and a basic covering of $\tilde x$, notation $\{\tilde \psi_i\colon
H_1\supset \tilde U^i\to \tilde V^i\subset X_1\}_{i=1}^{\tilde k}$.

For the basic coverings of $x$ and $\tilde x$
choose $t_1,\dots,t_{k-1}$, respectively $\tilde t_1,\dots,\tilde t_{\tilde k-1}$
according to 1) and 2) in Definition~\ref{def:basic-covering-4}
for the common $T$.
For $j=1,\dots,k-1$, respectively $\tilde j=1,\dots,\tilde k-1$,
choose open neighborhoods $C^j_+$ of $u^j_+$ in $U^j_+$,
respectively $\tilde C^{\tilde j}_+$ of $\tilde u^{\tilde j}_+$ in
$\tilde U^{\tilde j}_+$,
satisfying a) b) c) right after~(\ref{eq:7687gyuhu-4}).
In addition, choose disjoint interpolation intervals
$[t_1^-,t_1]$, $\dots$ , $[t_{k-1}^-,t_{k-1}]$, respectively
$[\tilde t_1^-,\tilde t_1]$, $\dots$ , $[\tilde t_{\tilde k-1}^-,\tilde t_{\tilde k-1}]$,
satisfying (i) and (ii) in Definition~\ref{def:Uu-n-4}.
Now choose open subsets $\Uu=\Uu_R^x$, respectively
$\tilde\Uu=\Uu_{\tilde R}^{\tilde x}$,
of $W^{1,2}_{H_1}\cap L^2_{H_2}$ as in~(\ref{eq:Uu_R}).

\smallskip
After choosing cutoff functions
$\beta^1,\dots,\beta^{k-1}$, respectively
$\tilde\beta^1,\dots,\tilde\beta^{\tilde k-1}$,
for the interpolation intervals, see
Figure~\ref{fig:fig-chart-interpol-cutoff-4},
we define local parametrizations
\begin{equation*}\label{eq:hjj5hjhjk6g6-4}
   \Psi=\Psi^x\colon\Uu\to\Pp_{x_- x_+}
   ,\quad
   \tilde\Psi=\Psi^{\tilde x}\colon\tilde\Uu\to\Pp_{x_- x_+}
   ,\quad
   0\in\Uu,\tilde\Uu\subset W^{1,2}_{H_1}\cap L^2_{H_2} ,
\end{equation*}
by formula~(\ref{eq:loc-par-form-4}) for these choices.
In particular, definition~(\ref{eq:loc-par-form-4}) for
$\Psi^{\tilde x}$
involves $\tilde\chi_{\tilde j}=\chi_{\tilde j}^{\tilde x}$ defined by~(\ref{eq:chi_j-4}) with
$\sim$\,-\,quantities on the right hand side,~e.g.
\begin{equation}\label{eq:Ss-tilde}
   \tilde{\Ss}_s^{\tilde j}
   :=\Ss^{\tilde\varphi^{-1}_{\tilde j}}_{\tilde \beta^{\tilde j}_s,
   \tilde\psi^{-1}_{\tilde j+1}(\tilde x_s)}
   ,\quad \text{with $\Ss$ as in~(\ref{eq:A.3.2-S}).}
\end{equation}
\begin{definition}[Path space transition map]
We abbreviate
$\Vv:=\Psi(\Uu)$ and $\tilde \Vv:=\tilde \Psi(\tilde \Uu)$
and $\Uu_0:=\Psi^{-1}(\Vv\cap\tilde\Vv)$
and $\tilde \Uu_0:=\tilde \Psi^{-1}(\Vv\cap\tilde\Vv)$.
As illustrated by Figure~\ref{fig:chart-no-exp-4}, we define
the corresponding \textbf{path space transition map} by
\begin{equation}\label{eq:Psi-def-4}
\boxed{
   \Phi:=\tilde\Psi^{-1}\circ\Psi|_{\Uu_0}\colon\Uu_0\to\tilde\Uu_0 ,
}
   \qquad
   \Uu_0, \tilde \Uu_0 \subset W^{1,2}_{H_1}\cap L^2_{H_2}\ .
\end{equation}
In view of Proposition~\ref{prop:injective-4}
the map $\Phi\colon\Uu_0\to\tilde\Uu_0$ is a bijection
with inverse
\begin{equation}\label{eq:Phi-inverse}
\boxed{
   \Phi^{-1}=\Psi^{-1}\circ\tilde
   \Psi|_{\Uu_0}\colon\tilde\Uu_0\to\Uu_0 .
}
\end{equation}
\end{definition}
\begin{figure}[h]
  \centering
  \includegraphics
                             [height=4.5cm]
                             {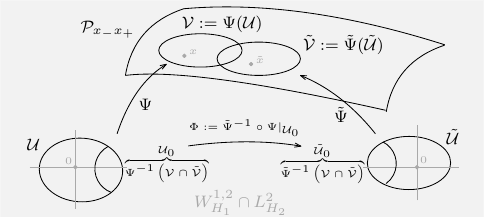}
  \caption{Transition map $\Phi$ for path space $\Pp_{x_-x_+}$ modeled
                 on $W^{1,2}_{H_1}\cap L^2_{H_2}$}
   \label{fig:chart-no-exp-4}
\end{figure}

\begin{theorem}\label{thm:Psi-diffeo-4}
The transition map $\Phi\colon \Uu_0\to \tilde \Uu_0$ is a $C^1$
diffeomorphism.
\end{theorem}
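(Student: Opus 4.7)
The strategy is to reduce the theorem to an application of the parametrized base--component Theorem~\ref{thm:B-Phi-parametrized}, after first showing that on $\Uu_0$ the transition map admits the form $\Phi(\xi)(s)=\phi(s,\xi_s)$ for a single parametrized tame and asymptotically constant $\phi\colon O_1\to H_1$, in the sense of Definitions~\ref{def:tame-parametrized} and~\ref{def:tame-asy-const}. Once this representation is established, Theorem~\ref{thm:B-Phi-parametrized} yields $\Phi\in C^1$, and since $\Phi^{-1}$ has the completely symmetric structure~(\ref{eq:Phi-inverse}) with the roles of $x$ and $\tilde x$ swapped, the same argument gives $\Phi^{-1}\in C^1$, proving that $\Phi$ is a $C^1$ diffeomorphism.

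To construct $\phi$, I would first form the common refinement of the two time partitions arising from the basic coverings of $x$ and $\tilde x$, producing finitely many intervals $I_1,\ldots,I_N$. On each $I_\ell$ both $\Psi$ and $\tilde\Psi$ are given by a single prescription from~(\ref{eq:loc-par-form-4}), namely $\Psi(\xi)(s)=\psi_a(\sigma^{\Psi}_s(\xi_s))$ and $\tilde\Psi(\eta)(s)=\tilde\psi_b(\sigma^{\tilde\Psi}_s(\eta_s))$ for fixed chart indices $a,b$ and $s$-dependent maps of one of two types: either a translation $u\mapsto \psi_a^{-1}(x_s)+u$, or the interpolation $\Ss_s^j$ from~(\ref{eq:def-S-4})--(\ref{eq:chi_j-4}). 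Solving $\tilde\Psi(\Phi(\xi))=\Psi(\xi)$ on $I_\ell$ gives $\Phi(\xi)(s)=(\sigma^{\tilde\Psi}_s)^{-1}\circ\tilde\psi_b^{-1}\circ\psi_a\circ\sigma^{\Psi}_s(\xi_s)$. The chart transition $\tilde\psi_b^{-1}\circ\psi_a$ is tame by membership in the maximal tame atlas (Remark~\ref{rem:max-atlas}), translations are trivially tame, and the pieces match across the partition endpoints because the cutoffs $\beta^j$, $\tilde\beta^{\tilde j}$ are locally constant there and the values agree via the identity $\psi_j\circ\varphi_j^{-1}=\psi_{j+1}$ used in~(\ref{eq:chi_j-4}).

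The main obstacle will be showing that the interpolation map $\Ss_s^j$ and, more importantly, its local inverse $(\tilde\Ss_s^{\tilde j})^{-1}$ are parametrized tame. For $\Ss_s^j$ itself the convex-combination formula~(\ref{eq:chi_j-4}) reduces the question to tameness of $\varphi_j^{-1}$ (inherited from the atlas via Theorem~\ref{thm:tame}), together with the fact that $s\mapsto \psi_{j+1}^{-1}(x_s)$ is a $C^2$ path with precompact image in $H_2$, which is precisely why basic paths were required to lie in $X_2$. For the inverse I would invoke the quantitative implicit function theorem of Appendix~\ref{sec:IFT}, whose contraction hypothesis is provided by~(\ref{eq:hgjkgf5322}); then differentiating the identity $\tilde\Ss_s^{\tilde j}\circ(\tilde\Ss_s^{\tilde j})^{-1}=\Id$ twice and substituting the tame estimate~(\ref{eq:(1)}) for $d\varphi_j^{-1}$ produces the second-derivative bound~(\ref{eq:tame-parametrized}) required for parametrized tameness of $(\tilde\Ss_s^{\tilde j})^{-1}$. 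Composing these parametrized tame building blocks across the $I_\ell$ and applying Theorem~\ref{thm:tame} piecewise yields a single parametrized tame $\phi$ on $O_1$.

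Finally, the asymptotic-constancy condition of Definition~\ref{def:tame-asy-const} is automatic because outside $[-T,T]$ both $x$ and $\tilde x$ are the common constant paths $x_\mp$, so $\phi_s$ becomes $s$-independent and sends $0$ to $0$ there by construction. The compact inclusion $H_2\INTO H_1$ enters essentially when $k>1$: it yields the uniform-in-$s$ tameness constants along the compact image of the basic path in $H_1$, which is needed to pass from the pointwise tameness estimates of Corollary~\ref{cor:tame} to uniform ones usable along the entire interpolation region. I expect the production of a quantitative $H_2$-second-derivative estimate for $(\tilde\Ss_s^{\tilde j})^{-1}$ via the IFT, with constants uniform in $s$, to be the principal technical hurdle.
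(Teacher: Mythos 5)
Your high-level strategy is exactly the one the paper uses: write $\Phi(\xi)(s)=\varphi(s,\xi_s)$ by case-analysis over the common refinement of the two time partitions, observe that the $s$-slices are compositions of chart transitions, translations, interpolation maps $\Ss_s^j$ and their inverses, prove parametrized tameness and asymptotic constancy of $\varphi$, apply Theorem~\ref{thm:B-Phi-parametrized}, and obtain the inverse by symmetry. You also correctly locate the main hurdle in the tameness of $(\tilde\Ss_s^{\tilde j})^{-1}$. However, your plan for that hurdle has two concrete gaps, and you misattribute where the compact inclusion is used.

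First, the quantitative implicit function theorem together with estimate~(\ref{eq:hgjkgf5322}) only gives invertibility of $d\Ss_s^j|_v$ as a map $H_1\to H_1$ and $C^2$-ness of the inverse on level~$1$. To run the tameness estimate you need the restricted operator $d\Ss_s^j|_v|_{H_2}\colon H_2\to H_2$ to be an isomorphism with a \emph{quantitative} inverse bound of the form $\abs{(d\Ss_s^j|_v)^{-1}\xi}_2\le\mu(\abs{\xi}_2+\abs{v}_2\abs{\xi}_1)$, which does not follow from the IFT alone. The paper obtains this (Lemma~\ref{le:Fredholm}) by first deriving a semi-Fredholm estimate from~(\ref{eq:(1)}), then invoking \emph{compactness} of the inclusion $H_2\hookrightarrow H_1$ to conclude that $d\Ss_s^j|_v|_{H_2}$ is upper semi-Fredholm, and finally a homotopy $\beta\mapsto T_\beta$ together with local constancy of the semi-Fredholm index to get index~$0$, hence surjectivity. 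This is where the compact inclusion enters, not where you placed it: the uniform-in-$s$ constants along the interpolation region come from compactness of $\{(s,x_s):s\in[-T,T]\}$ in $\R\times X_2$ (continuity of the basic path, compactness of $[-T,T]$), not from compactness of the inclusion.

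Second, even with the bound from Lemma~\ref{le:Fredholm} and the tameness of $d^2\Ss_s^j$, substituting into the differentiated identity~(\ref{eq:sec-deriv}) yields a bound of the type
\begin{equation*}
  \Abs{d^2(\Ss_s^j)^{-1}|_{\Ss_s^j(v)}(\xi,\eta)}_2
  \le\kappa_*\bigl(\abs{\xi}_1\abs{\eta}_2+\abs{\xi}_2\abs{\eta}_1
  +\abs{v}_2\abs{\xi}_1\abs{\eta}_1\bigr),
\end{equation*}
which has $\abs{v}_2$ on the right-hand side, whereas the tameness estimate for the \emph{inverse} map must be expressed in terms of $\abs{\Ss_s^j(v)}_2$ at the point where the second derivative is evaluated. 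Converting one to the other is not automatic: the paper proves the auxiliary inequality $\abs{v}_2\le C(\abs{\Ss_s^j(v)}_2+1)$ by an iterated fundamental-theorem-of-calculus argument with a geometric-series tail, using~(\ref{eq:(3)}) at each stage. Without both of these ingredients, ``differentiating the identity twice and substituting~(\ref{eq:(1)})'' does not close the argument, so your Step 3 as written leaves a genuine gap.
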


\begin{proof}
The proof has ten steps.

Denote the \textbf{interpolation region} for $x$, respectively $\tilde x$, by
$$
   I=\bigcup_{j=1}^{k-1}[t_j^-,t_j]
   ,\qquad
   \tilde I=\bigcup_{\tilde j=1}^{\tilde k-1}[\tilde t_{\tilde j}^-,\tilde t_{\tilde j}] .
$$
As in Definition~\ref{def:Uu-n-4} we use the 
conventions
$t_0:=-\infty=:\tilde t_0$ and $t_k^-:=\infty=:\tilde t_{\tilde k}^-$.

\medskip\noindent
\textbf{Step 1.}
Pick $\xi\in\Uu_0$. Set $\eta:=\Phi(\xi)$.
By definition of $\Phi$ we have the identity
\begin{equation}\label{eq:Phi-4}
   \tilde\Psi(\eta)(s)=\Psi(\xi)(s)
\end{equation}
for every $s\in\R$.
We solve~(\ref{eq:Phi-4}) for
$\eta_s:=\eta(s)=\Phi(\xi)(s)$ considering four cases.

\begin{proof}
To prove Step~1 we consider four cases. 

\smallskip\noindent
\textbf{Case 1.}
Suppose $s\in \R\setminus(I\cup \tilde I)$,
that is $s$ is a non-interpolation time for both.
\newline
Then there is $j\in\{1,\dots,k\}$ such that $s$ lies in the
non-interpolation interval $(t_{j-1},t_j^-)$.
Moreover,  there is $\tilde j\in\{1,\dots,\tilde k\}$ such that $s$ lies in the
non-interpolation interval $(\tilde t_{\tilde j-1},\tilde t_{\tilde j}^-)$.
The two sides of~(\ref{eq:Phi-4}) take on the form
$$
   \tilde\psi_{\tilde j}\left(\tilde\psi_{\tilde j}^{-1}(\tilde x_s)+\eta_s\right)
   =\psi_j\left(\psi_j^{-1}(x_s)+\xi_s\right)
$$
which we resolve for
$$
   \eta_s
   =\tilde\psi_{\tilde j}^{-1}\circ\psi_j\left(\psi_j^{-1}(x_s)+\xi_s\right)
   -\tilde\psi_{\tilde j}^{-1}(\tilde x_s).
$$
%
\textbf{Case 2.}
Suppose $s\in I\cap (\R\setminus\tilde I)$.
\newline
There is $j\in\{1,\dots,k-1\}$ such that $s$ is in the
interpolation interval $[t_j^-,t_j]$. There is 
$\tilde j\in\{1,\dots,\tilde k\}$ such that $s$ lies in the
non-interpolation interval $(\tilde t_{\tilde j-1},\tilde t_{\tilde j}^-)$.
The two sides of~(\ref{eq:Phi-4}), using~(\ref{eq:def-S-4}) in the end, take on the form
\begin{equation*}
\begin{split}
   &\tilde\psi_{\tilde j}\left(\tilde\psi_{\tilde j}^{-1}(\tilde x_s)+\eta_s\right)\\
   &=\chi_j(s;\xi)\\
   :&=\psi_j\biggl(
   (1-\beta^j_s)\Bigl(\psi_j^{-1}(x_s)+\xi_s\Bigr)
   +\beta^j_s\underline{\psi_j^{-1}\circ\psi_{j+1}}
    \Bigl(\underline{\psi_{j+1}^{-1}}(x_s)+\xi_s\Bigr)
   \biggr)\\
   &=\psi_j\biggl(
   (1-\beta^j_s)\Bigl(\psi_j^{-1}(x_s)+\xi_s\Bigr)
   +\beta^j_s\varphi_j^{-1}
    \Bigl(\varphi_j\circ\psi_j^{-1}(x_s)+\xi_s\Bigr)
   \biggr)\\
   &=\psi_j\circ \Ss_s^j(\xi_s) .
\end{split}
\end{equation*}
Observe that $\psi_j^{-1}\psi_{j+1}=\varphi_j^{-1}$ is the
basic covering transition map~(\ref{eq:psi-small-4}).
Then
$$
   \eta_s
   =\tilde\psi_{\tilde j}^{-1}\circ\psi_j\circ\Ss_s^j(\xi_s)
   -\tilde\psi_{\tilde j}^{-1}(\tilde x_s) .
$$
%
\textbf{Case 3.}
Suppose $s\in (\R\setminus I)\cap \tilde I$.
\newline
There is 
$j\in\{1,\dots,k\}$ such that $s$ lies in the
non-interpolation interval $(t_{j-1}, t_j^-)$. There is 
$\tilde j\in\{1,\dots,\tilde k-1\}$ such that $s$ lies in the
interpolation interval $[\tilde t_{\tilde j}^-,\tilde t_{\tilde j}]$.
The two sides of~(\ref{eq:Phi-4}) take on the form
\begin{equation*}
\begin{split}
   \tilde\psi_{\tilde j}\biggl(
   (1-\tilde\beta^{\tilde j}_s)\Bigl(\tilde\psi_{\tilde j}^{-1}(\tilde x_s)+\eta_s\Bigr)
   +\tilde\beta^{\tilde j}_s \tilde\varphi_{\tilde j}^{-1}
   \Bigl(
      \tilde\varphi_{\tilde j}\circ\tilde\psi_{\tilde j}^{-1}(\tilde x_s)+\eta_s
   \Bigr)
   \biggr)
   &=:\tilde \chi_{\tilde j}(s;\eta)\\
   &=\psi_j\left(\psi_j^{-1}(x_s)+\xi_s\right)
\end{split}
\end{equation*}
where $\tilde\varphi_{\tilde j}$ is the basic covering transition map
from~(\ref{eq:psi-small-4}). Equivalently we have
\begin{equation*}
\begin{split}
   (1-\tilde\beta^{\tilde j}_s)\Bigl(\tilde\psi_{\tilde j}^{-1}(\tilde x_s)+\eta_s\Bigr)
   +\tilde\beta^{\tilde j}_s \tilde\varphi_{\tilde j}^{-1}
    \Bigl(
       \tilde\varphi_{\tilde j}\circ\tilde \psi_{\tilde j}^{-1}(\tilde x_s)+\eta_s
    \Bigr)
   &=\tilde\psi_{\tilde j}^{-1}\circ\psi_j\left(\psi_j^{-1}(x_s)+\xi_s\right).
\end{split}
\end{equation*}
We wish to resolve for $\eta_s$. With $\tilde\Ss_s^{\tilde j}$
as in~(\ref{eq:Ss-tilde}) we obtain the identity
\begin{equation*}
   \tilde\Ss_s^{\tilde j} (\eta_s)
   =\tilde\psi_{\tilde j}^{-1}\circ \psi_j\left(\psi_j^{-1}(x_s)+\xi_s\right).
\end{equation*}
But $\tilde\Ss_s^{\tilde j}$ is invertible, by Remark~\ref{rem:A.3.2.},
and we obtain the formula
$$
   \eta_s=(\tilde\Ss_s^{\tilde j})^{-1}
   \left(
   \tilde\psi_{\tilde j}^{-1}\circ \psi_j\left(\psi_j^{-1}(x_s)+\xi_s\right)
   \right).
$$
\smallskip\noindent
\textbf{Case 4.}
Suppose $s\in I\cap \tilde I$.
\newline
There is 
$j\in\{1,\dots,k-1\}$ such that $s$ lies in the
interpolation interval $[t_j^-,t_j]$.
There is  $\tilde j\in\{1,\dots,\tilde k-1\}$ such that $s$ lies in the
interpolation interval $[\tilde t_{\tilde j}^-,\tilde t_{\tilde j}]$.
The two sides of~(\ref{eq:Phi-4}) take on the form
\begin{equation*}
\begin{split}
   &\tilde\psi_{\tilde j}\biggl(
   (1-\tilde\beta^{\tilde j}_s)\Bigl(\tilde\psi_{\tilde j}^{-1}(\tilde x_s)+\eta_s\Bigr)
   +\tilde\beta^{\tilde j}_s \tilde\varphi_{\tilde j}^{-1}
    \Bigl(\tilde\varphi_{\tilde j}\circ\tilde \psi_{\tilde j}^{-1}(\tilde x_s)+\eta_s\Bigr)
   \biggr)\\
   &=:\tilde \chi_{\tilde j}(s;\eta)\\
   &=\chi_j(s;\xi)\\
   :&=\psi_j\biggl(
   (1-\beta^j_s)\Bigl(\psi_j^{-1}(x_s)+\xi_s\Bigr)
   +\beta^j_s\varphi_j^{-1}
    \Bigl(\varphi_j\circ\psi_j^{-1}(x_s)+\xi_s\Bigr)
   \biggr) .
\end{split}
\end{equation*}
Similarly as in Case~3 with $\tilde\Ss_s^{\tilde j}$ as in~(\ref{eq:Ss-tilde})
we resolve for $\eta_s$, namely
$$
   \tilde\Ss_s^{\tilde j} (\eta_s)
   =\tilde\psi_{\tilde j}^{-1}\circ\psi_j\circ
   \Ss_s^j (\xi_s)
   ,\qquad
   \eta_s
   =(\tilde\Ss_s^{\tilde j})^{-1}\circ \tilde\psi_{\tilde j}^{-1}\circ\psi_j\circ
   \Ss_s^j (\xi_s) .
$$
This concludes the proof of Step~1.
\end{proof}

\smallskip
\noindent
\textbf{Step 2.}
We find a subset $O\subset\R\times H_1$ and a map
$
   \varphi\colon\R\times H_1\supset O\to H_1
$
such that we can write
\begin{equation}\label{eq:Psi-map-proof-4}
\boxed{
   (\Phi(\xi))(s)
   =\varphi(s,\xi(s))=:\varphi_s(\xi(s))
}
\end{equation}
for all $s\in\R$ and $\xi\in\Uu_0$, see Figure~\ref{fig:chart-no-exp-4}.

\begin{proof}
To prove Step~2 we first describe the slices $U^s$ of $O$ such that
$$
   O=\bigcup_{s\in\R}\left(\{s\}\times U^s\right) .
$$
We consider the same four cases as in Step~1.
The map $\Ss_s^j$ in the composition
$$
   H_1\supset B_R:=B_R(0)\stackrel{\Ss_s^j}{\longrightarrow}
   U^j \stackrel{\psi^j}{\longrightarrow}
   V^j\subset X_1
$$
is defined in~(\ref{eq:def-S-44}).

\smallskip\noindent
\textbf{Case 1.}
For $s\in \R\setminus(I\cup \tilde I)$ let
$$
   U^s
   :=\psi_j^{-1}
   \left(
   \psi_j(B_R(u_s^j))\cap\tilde{\psi}^{\tilde j}(B_{\tilde R}(\tilde{u}_s^{\tilde j}))
   \right)
   -u_s^j
   ,\qquad
\boxed{
   u_s^j=\psi_j^{-1}(x_s) .
}
$$

\smallskip\noindent
\textbf{Case 2.}
For $s\in I\cap (\R\setminus\tilde I)$ let
$$
   U^s
   :=(\Ss_s^j)^{-1}\circ\psi_j^{-1}
   \left(
   \left(\psi_j\circ\Ss_s^j\right)(B_R)
   \cap
   \tilde{\psi}^{\tilde j}(B_{\tilde R}(\tilde{u}_s^{\tilde j})
   \right) .
$$

\smallskip\noindent
\textbf{Case 3.}
For $s\in (\R\setminus I)\cap \tilde I$ let
$$
   U^s
   :=\psi_j^{-1}
   \left(
   \psi_j(B_R(u_s^j))
   \cap
   \bigl(\tilde\psi_{\tilde j}\circ\tilde\Ss_s^{\tilde j}\bigr)(B_{\tilde R})
   \right)
   -u_s^j .
$$

\smallskip\noindent
\textbf{Case 4.}
For $s\in I\cap \tilde I$ let
$$
   U^s:=
   (\Ss_s^j)^{-1}\circ\psi_j^{-1}
   \left(
   \left(\psi_j\circ\Ss_s^j\right)(B_R)
   \cap
   \bigl(\tilde\psi_{\tilde j}\circ\tilde\Ss_s^{\tilde j}\bigr)(B_{\tilde R})
   \right) .
$$

\smallskip\noindent
For $s\in\R$ and $v\in U^s$
we define (juxtaposition means composition)
\begin{equation}\label{eq:rho-4}
\boxed{
   \varphi(s,v)
   :=\begin{cases}
      \tilde\psi_{\tilde j}^{-1}\psi_j\left(\psi_j^{-1}(x_s)+v\right)
      -\tilde\psi_{\tilde j}^{-1}(\tilde x_s)
         &\text{\small, $s\in (t_{j-1},t_j^-)\cap (\tilde t_{\tilde j-1},\tilde t_{\tilde j}^-)$,}
   \\
      \tilde\psi_{\tilde j}^{-1}\psi_j\Ss_s^j(v)
      -\tilde\psi_{\tilde j}^{-1}(\tilde x_s)
         &\text{\small, $s\in [t_j^-,t_j]\cap (\tilde t_{\tilde j-1},\tilde t_{\tilde j}^-)$,}
   \\
      (\tilde\Ss_s^{\tilde j})^{-1}
      \left(
      \tilde\psi_{\tilde j}^{-1} \psi_j\left(\psi_j^{-1}(x_s)+v\right)
      \right)
         &\text{\small, $s\in (t_{j-1}, t_j^-)\cap [\tilde t_{\tilde j}^-,\tilde t_{\tilde j}]$,}
   \\
      (\tilde\Ss_s^{\tilde j})^{-1} \tilde\psi_{\tilde j}^{-1}\psi_j
      \Ss_s^j (v)
         &\text{\small, $s\in[t_j^-,t_j]\cap [\tilde t_{\tilde j}^-,\tilde t_{\tilde j}]$.}
  \end{cases}
}
\end{equation}
For $s\le-T$ it holds $s\in (t_0,t_1^-)\cap (\tilde t_0,\tilde t_1^-)$,
so by the first case of~(\ref{eq:rho-4}) we get
\begin{equation}\label{eq:as-cnst_-}
   \varphi(s,v)
   =\tilde\psi_1^{-1}\psi_1\left(\psi_1^{-1}(x_-)+v\right)
      -\tilde\psi_1^{-1}(x_-)
   ,\qquad
   \varphi(s,0)=0 .
\end{equation}
Similarly for $s\ge T$ we get
\begin{equation}\label{eq:as-cnst_+}
   \varphi(s,v)
   =\tilde\psi_{\tilde k}^{-1}\psi_k\left(\psi_k^{-1}(x_+)+v\right)
      -\tilde\psi_{\tilde k}^{-1}(x_+)
   ,\qquad
   \varphi(s,0)=0 .
\end{equation}
Figure~\ref{fig:chart-no-exp-44} illustrates
the definition of $\varphi_s:=\varphi(s,\cdot)$ in case 2. The other three cases are
similar, in the figure one just needs to interchange the interpolation
maps $\Ss_s^j$ and the translation maps $T$.

By construction of the map $\varphi$ the identity~(\ref{eq:Psi-map-proof-4}) holds.
This concludes the proof of Step~2.
\end{proof}
\begin{figure}[h]
  \centering
  \includegraphics
                             [height=6.5cm]
                             {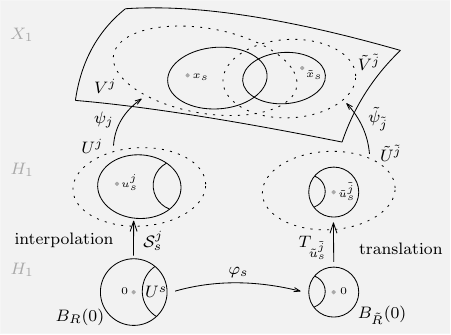}
  \caption{The map $\varphi_s\colon H_1\supset U^s\to H_1$
                 in~(\ref{eq:rho-4}) in case 2}
   \label{fig:chart-no-exp-44}
\end{figure}

\smallskip\noindent
\textbf{Outlook on the next few steps.}
In the next few steps we show that $\varphi$ is parametrized tame,
see Definition~\ref{def:tame-parametrized}.
This then allows us to conclude with the help
of Theorem~\ref{thm:B-Phi-parametrized}
that $\Phi$ in~(\ref{eq:Psi-def-4}) is $C^1$.

\medskip\noindent
\textbf{Step 3 (Tameness).}
The interpolation map $\Ss_s^j\colon H_1\supset B_R\to H_1$,
see~(\ref{eq:def-S-44}) and Figure~\ref{fig:chart-no-exp-44},
is tame and has a tame inverse $(\Ss_s^j)^{-1}\colon \Ss_s(B_R)\to B_R$,
$\forall j$.

\begin{proof}
The proof of Step~3 is a rather lengthy undertaking.

\smallskip\noindent
\textbf{Tameness of \boldmath$\Ss_s^j$.}
Since $\varphi_j^{-1}$ is tame, the map $\Ss_s^j$ is tame as well,
in particular of class $C^2$ on $H_1$ and on $H_2$;
for the definition of $\Ss_s^j$ see~(\ref{eq:def-S-4}) and~(\ref{eq:A.3.2-S}).

\smallskip\noindent
\textbf{Tameness of \boldmath$(\Ss_s^j)^{-1}$.}
Let $s$ be in the interpolation interval $[t_j^-,t_j]$
and let $v\in H_1$ be in the ball $B_R\subset H_1$ 
centered at $0$.

\begin{lemma}\label{le:Fredholm}
The restriction to $H_2$ of the linearized interpolation map
$$
   T:=d\Ss_s^j|_v|_{H_2}\colon H_2\to H_2
$$
at $v\in B_R\cap H_2$
is an isomorphism on $H_2$ and there is a constant
$\mu>0$ such~that
\begin{equation}\label{eq:(3)}
\begin{split}
   \abs{(d\Ss_s^j|_v)^{-1} \xi}_2
   \le \mu\left(\abs{\xi}_2+\abs{v}_2 \abs{\xi}_1\right)
\end {split}
\end{equation}
for every $\xi\in H_2$.
\end{lemma}

\begin{proof}[Proof of Lemma~\ref{le:Fredholm}]
Pick $v\in B_R\cap H_2$.
By~(\ref{eq:B_R-C_-}) it follows that $v+\psi^{-1}_{j+1}(x_s)\in C_-^{j+1}$.
By definition~(\ref{eq:def-S-44}) of $\Ss_s^j$
and by identity~(\ref{eq:hfgjk7799}) we obtain
\begin{equation}\label{eq:vfghvh326}
\begin{split}
   d\Ss_s^j|_v-\Id
   &=d \Ss^{\varphi^{-1}_j}_{\beta^j_s,\psi^{-1}_{j+1}(x_s)}|_v-\Id
\\
   &=\beta^j_s\left(d\varphi_j^{-1}|_{\psi^{-1}_{j+1}(x_s)+v}-\Id\right).
\end {split}
\end {equation}
$\bullet$
A first consequence of this identity, using~(\ref{eq:hgjkgf5322}) and
$\beta_s^j\le 1$, is the estimate
$$
   \norm{d\Ss_s^j|_v-\Id}_{\Ll(H_1)}
   \le\tfrac12.
$$
By the Neumann series,
see e.g.~\cite[Rmk.\,B.5]{Frauenfelder:2024e},
we get the uniform bound
\begin{equation}\label{eq:jghhj567u}
  \norm{(d\Ss_s^j|_v)^{-1}}_{\Ll(H_1)}
   \le\sum_{k=0}^\infty \norm{\Id-d\Ss_s^j|_v}_{\Ll(H_1)}^k
   \le\sum_{k=0}^\infty\tfrac{1}{2^k}
   =\tfrac{1}{1-\frac12}
   = 2 .
\end {equation}
$\bullet$
A second consequence of~(\ref {eq:vfghvh326}) is the following.
Assume that $v\in B_R\cap H_2$. Since basic paths are mapped to $H_2$,
it follows that $\psi^{-1}_{j+1}(x_s)+v \in C_-^{j+1}\cap H_2$.
Hence, by~(\ref{eq:(1)}) and using that $\beta_s^j\le 1$,
we obtain for every $\eta\in H_2$ the estimate
\begin{equation*}
\begin{split}
   \abs{d \Ss_s^j|_v\eta-\eta}_2
   &\le\tfrac12\abs{\eta}_2
   +c_j \left(\abs{\psi^{-1}_{j+1}(x_s)+v}_2+1\right)\abs{\eta}_1
\\
   &\le\tfrac12\abs{\eta}_2
   +c_j
   \left(\abs{v}_2+\abs{\psi^{-1}_{j+1}(x_s)}_2+1\right)\abs{\eta}_1 .
\end {split}
\end {equation*}
Since the basic path $x$ is continuous as a map to $H_2$, the
following maximum 
$$
\boxed{
   C:=\max_{j=1,\dots,k-1}
   \max_{s\in[t_j^-,t_j]}
   \{c_j(1+\abs{\psi^{-1}_{j+1}(x_s)}_2)\}
}
$$
is well defined. Therefore we obtain the inequality
\begin{equation*}
\begin{split}
   \abs{d \Ss_s^j|_v\eta-\eta}_2
   &\le\tfrac12\abs{\eta}_2
   +C \left(\abs{v}_2+1\right)\abs{\eta}_1 .
\end{split}
\end {equation*}
With this inequality we estimate
\begin{equation*}
\begin{split}
   \abs{d \Ss_s^j|_v\eta}_2=\abs{\eta+(d\Ss_s^j|_v -\Id)\eta}_2
   &\ge \abs{\eta}_2-\abs{(d \Ss_s^j|_v -\Id)\eta}_2\\
   &\ge \abs{\eta}_2-\tfrac12\abs{\eta}_2
   -C \left(\abs{v}_2+1\right)\abs{\eta}_1\\
   &=\tfrac12\abs{\eta}_2 -C \left(\abs{v}_2+1\right)\abs{\eta}_1 .
\end{split}
\end{equation*}
We rewrite this inequality as
\begin{equation}\label{eq:semi-F}
   \abs{\eta}_2
   \le 2\abs{d\Ss_s^j|_v\eta}_2
   +2C \left(\abs{v}_2+1\right)\abs{\eta}_1 .
\end{equation}
Note that since the restriction of the linearized transition map
$d\varphi_j^{-1}|_{\psi^{-1}_{j+1}(x_s)+v}$ to $H_2$
takes values in $H_2$, by ~(\ref {eq:vfghvh326}) so does the
restriction of $d\Ss_s^j|_v$ to $H_2$.
The level map
$
   d\Ss_s^j|_v\colon H_2\to H_2
$
is bounded linear again by~(\ref {eq:vfghvh326}).
Using that the inclusion $\iota\colon H_2\INTO H_1$ is compact,
estimate~(\ref{eq:semi-F}) implies that the level map 
$$
   T:=d\Ss_s^j|_v|_{H_2}\colon H_2\to H_2
$$
is a semi-Fredholm operator; see\cite[Le.\,A.1.1]{mcduff:2004a}
for $D=d\Ss_s^j|_v$, $K=\iota$, $X=Y=H_2$, and $Z=H_1$.
In the terminology of~\cite[\S 16]{Muller:2007a}
it is an upper semi-Fredholm operator.

\smallskip\noindent
$\bullet$
Next we compute the semi-Fredholm index of $T$
as an element of $\Z\cup\{-\infty\}$.

\smallskip\noindent
For this purpose we define the bounded linear operator
$$
   T_\beta
   :=(1-\beta)\Id+\beta d\varphi_j^{-1}|_{\psi^{-1}_{j+1}(x_s)+v}|_{H_2}
   \colon H_2\to H_2.
$$
for $\beta\in\R$. In view of~(\ref{eq:vfghvh326})
we have $T_{\beta_s^j}=T$ and $T_0=\Id$.
If $\beta\in[0,1]$ the same arguments which
established~(\ref{eq:semi-F}) show that
\begin{equation*}
   \abs{\eta}_2
   \le 2\abs{T_\beta\eta}_2
   +2C \left(\abs{v}_2+1\right)\abs{\eta}_1
\end{equation*}
for every $\eta\in H_2$.
Indeed note that in the derivation of~(\ref{eq:semi-F})
we only used that $\beta_s^j\le 1$.
In particular $T_\beta$ is a semi-Fredholm operator
for any $\beta\in[0,1]$.
Since $T_0=\Id$ we have $\INDEX(T_0)=0$.
Since the semi-Fredholm index is locally constant,
see~\cite[\S18 Cor.\,3]{Muller:2007a},
follows that $\INDEX(T_\beta)=0$ for any $\beta\in[0,1]$.
In particular $0=\INDEX(T):=\dim\ker T-\dim\coker T$.
Since $d\Ss_s^j|_v\colon H_1\to H_1$ is an isomorphism,
it has trivial kernel, hence its restriction $T$ has trivial kernel as well.
Since the index of $T$ vanishes it follows that its cokernel is
trivial as well, in particular $T$ is bijective and therefore, by the
open mapping theorem, it is an isomorphism from $H_2$ to $H_2$.

In particular, the operator $(d\Ss_s^j|_v)^{-1}\colon H_1\to H_1$
restricts to a bounded linear isomorphism
\begin{equation}\label{eq:(4565)}
   T^{-1}=(d\Ss_s^j|_v)^{-1}|_{H_2}\colon H_2\stackrel{\cong}{\longrightarrow} H_2 .
\end{equation}
Consequently for any $\xi\in H_2$ there exists $\eta\in H_2$
such that $\xi=d\Ss_s^j|_v\eta$.
Using~(\ref{eq:semi-F}) we compute
\begin{equation*}
\begin{split}
   \abs{(d\Ss_s^j|_v)^{-1} \xi}_2
   &\le 2\abs{\xi}_2
   +2C \left(\abs{v}_2+1\right)\abs{(d\Ss_s^j|_v)^{-1}\xi}_1
\\
   &\le 2\abs{\xi}_2
   +2C
   \left(\abs{v}_2+1\right)\norm{(d\Ss_s^j|_v)^{-1}}_{\Ll(H_1)}\abs{\xi}_1
\\
   &\le 2\abs{\xi}_2
   +4C\left(\abs{v}_2+1\right)\abs{\xi}_1
\\
   &\le \mu\left(\abs{\xi}_2+\abs{v}_2 \abs{\xi}_1\right)
   ,\qquad \boxed{\mu:= 2(1+2C)>2}
\end {split}
\end{equation*}
for every $\xi\in H_2$. Here step three is
by~(\ref{eq:jghhj567u}), the final step by~(\ref{eq:12}).

This proves~(\ref{eq:(3)}) and concludes the proof of Lemma~\ref{le:Fredholm}.
\end{proof}

We continue the proof of Step~3,
more precisely the proof of tameness of the inverse $(\Ss_s^j)^{-1}$
of the interpolation map.

\smallskip\noindent
$\bullet$
Continuity of the second derivative of $(\Ss_s^j)^{-1}$
with respect to the $C^2$ topology.

\smallskip\noindent
The map $\Ss_s^j\colon B_R\cap H_2 \to \Ss_s^j(B_R)\cap H_2$ is $C^2$
since it is tame.
It follows from~(\ref{eq:(4565)}) by the implicit function theorem,
see e.g.~\cite[Thm.\,A.3.1]{mcduff:2004a}
that $(\Ss_s^j)^{-1}\colon \Ss_s^j(B_R)\cap H_2 \to B_R\cap H_2$
is $C^2$ as well.

\smallskip\noindent
$\bullet$
Recall from~(\ref{eq:def-S-44}) that
$\Ss_s^j\colon H_1\supset B_R\to H_1$
is a $C^2$ diffeomorphism onto its image.
Linearize the identity $\id=(\Ss_s^j)^{-1}\circ\Ss_s^j$
at $v\in B_R\subset H_1$ to get
\begin{equation}\label{eq:777trghhbu77}
   \Id_{H_1}=d (\Ss_s^j)^{-1}|_{\Ss_s^j(v)} d \Ss_s^j|_v .
\end{equation}
Linearizing the displayed identity, we resolve that
\begin{equation}\label{eq:sec-deriv}
   d^2(\Ss_s^j)^{-1}|_{\Ss_s^j(v)}\left(\xi,\eta\right)
   =-\left(d\Ss_s^j|_v\right)^{-1}
   d^2\Ss_s^j|_v
   \left(\left(d\Ss_s^j|_v\right)^{-1}\xi, \left(d\Ss_s^j|_v\right)^{-1}\eta\right)
\end{equation}
for all $\xi,\eta\in H_1$.

\smallskip\noindent
$\bullet$
We show $(\Ss_s^j)^{-1}\colon H_1\supset \Ss_s^j(B_R)\to B_R$
satisfies the tameness estimate~(\ref{eq:tame}).

\smallskip\noindent
By~(\ref{eq:(4565)}) and since $\Ss_s^j$ maps $B_R\cap H_2$ to
$H_2$ it follows that $(\Ss_s^j)^{-1}|_{\Ss_s^j(B_R)\cap H_2}$
maps $\Ss_s^j(B_R)\cap H_2$ onto $B_R\cap H_2$.
Hence if $\Ss_s^j(v_0)$ is in $\Ss_s^j(B_R)\cap H_2$
it follows that $v_0\in B_R\cap H_2$.
Since $\Ss_s^j$ is tame there exists an $H_1$-open neighborhood
of $v_0$ and a constant $\kappa>0$, notation
\begin{equation}\label{eq:V}
   V\subset B_R
   ,\qquad
\boxed{
   \kappa>0,
}
\end{equation}
such that for every $v\in V\cap H_2$ and all $\xi,\eta\in H_2$
it holds the estimate
\begin{equation}\label{eq:tamehgfhh}
\begin{split}
   \Abs{d^2 \Ss_s^j|_v (\xi,\eta)}_2 
   &\le \kappa\bigl(\abs{\xi}_1 \abs{\eta}_2+\abs{\xi}_2\abs{\eta}_1
   +\abs{v}_2 \abs{\xi}_1 \abs{\eta}_1\bigr) .
\end{split}
\end{equation}
Since $\Ss_s^j$ is $C^2$ on $B_R$, maybe after shrinking $V$ and
enlarging $\kappa$, we can additionally assume that
\begin{equation}\label{eq:bhhjhjhgj}
   \Abs{d^2 \Ss_s^j|_v (\xi,\eta)}_1\le \kappa \abs{\xi}_1
   \abs{\eta}_1 .
\end{equation}
Hence for $\Ss_s^j(v)\in \Ss_s^j(V)\cap H_2$ we estimate
\begin{equation*}
\begin{split}
   &\Abs{d^2 (\Ss_s^j)^{-1}|_{\Ss_s^j(v)} (\xi,\eta)}_2
\\
   &=\Abs{\left(d\Ss_s^j|_v\right)^{-1}
   d^2\Ss_s^j|_v
   \left(\left(d\Ss_s^j|_v\right)^{-1}\xi, \left(d\Ss_s^j|_v\right)^{-1}\eta\right)
   }_2
\\
   &\le \mu \Abs{d^2\Ss_s^j|_v
   \left(\left(d\Ss_s^j|_v\right)^{-1}\xi, \left(d\Ss_s^j|_v\right)^{-1}\eta\right)
   }_2
   \\
   &\quad+\mu\abs{v}_2 \Abs{d^2\Ss_s^j|_v
   \left(\left(d\Ss_s^j|_v\right)^{-1}\xi, \left(d\Ss_s^j|_v\right)^{-1}\eta\right)
   }_1
\\
   &\le\mu \kappa
   \Abs{\left(d\Ss_s^j|_v\right)^{-1}\xi}_1\Abs{\left(d\Ss_s^j|_v\right)^{-1}\eta}_2
   +\mu \kappa
   \Abs{\left(d\Ss_s^j|_v\right)^{-1}\xi}_2\Abs{\left(d\Ss_s^j|_v\right)^{-1}\eta}_1
   \\
   &\quad+2\mu \kappa\abs{v}_2
   \Abs{\left(d\Ss_s^j|_v\right)^{-1}\xi}_1\Abs{\left(d\Ss_s^j|_v\right)^{-1}\eta}_1
\\
   &\le 2\mu^2\kappa\abs{\xi}_1
   \left(\abs{\eta}_2+\abs{v}_2\abs{\eta}_1\right)
   +2\mu^2\kappa\abs{\eta}_1
   \left(\abs{\xi}_2+\abs{v}_2\abs{\xi}_1\right)
   +8\mu\kappa\abs{v}_2\abs{\xi}_1\abs{\eta}_1
\\
   &\le 4\mu\kappa(\mu+2)\left(
   \abs{\xi}_1\abs{\eta}_2+\abs{\xi}_2\abs{\eta}_1
   +\abs{v}_2\abs{\xi}_1\abs{\eta}_1\right) .
\end{split}
\end{equation*}
Here step 1 is by~(\ref{eq:sec-deriv})
and step 2 by~(\ref{eq:(3)}).
Step 3 follows by~(\ref{eq:tamehgfhh}) and~(\ref{eq:bhhjhjhgj}).
Step 4 is by~(\ref{eq:jghhj567u}) and~(\ref{eq:(3)}) again.
\newline
To summarize, with 
$$
\boxed{
   \kappa_*:=4\mu\kappa(\mu+2)>32\kappa
}
$$
and whenever $v\in V\cap H_2$ we have
\begin{equation}\label{eq:7ggf7775gh}
\begin{split}
   \Abs{d^2 (\Ss_s^j)^{-1}|_{\Ss_s^j(v)} (\xi,\eta)}_2
   &\le \kappa_*\left(
   \abs{\xi}_1\abs{\eta}_2+\abs{\xi}_2\abs{\eta}_1
   +\abs{v}_2\abs{\xi}_1\abs{\eta}_1\right)
\end{split}
\end{equation}
for all $\xi,\eta\in H_2$.
This estimate is not yet the tameness estimate~(\ref{eq:tame}).
The reason is that in place of $\abs{v}_2$ we need
$\abs{\Ss_s^j(v)}_2$.
To this end we show that, maybe after shrinking $V$, there exists
a constant $C$ such that
\begin{equation}\label{eq:7ggf7775gffghh}
\boxed{
    \abs{v}_2\le C\left(\abs{\Ss_s^j(v)}_2 +1\right)
}
\end{equation}
whenever $v\in V\cap H_2$.
For that purpose we use~(\ref{eq:(3)}) inductively.

\smallskip\noindent
\textbf{Special case.}
To simplify the computation we first derive~(\ref{eq:7ggf7775gffghh})
in the special case where $v_0=0$ and $\Ss_s^j(0)=0$.
Maybe after shrinking $V$ we can assume that
$\Ss_s^j(V)$ is convex and contained in the radius-$\frac{1}{2\mu}$ ball in
$H_1$ about the origin.
For $v\in V\cap H_2$ we estimate (juxtaposition means composition)
\begin{equation*}
\begin{split}
   \abs{v}_2
   &=\abs{(\Ss_s^j)^{-1}\Ss_s^j (v)}_2
\\
   &\stackrel{2}{=}\Bigl|\int_0^1 
   \frac{d}{dt_1}(\Ss_s^j)^{-1}\left( t_1\Ss_s^j (v)\right)
   dt_1
   \Bigr|_2
\\
   &\stackrel{3}{=}\Bigl|\int_0^1 
   d(\Ss_s^j)^{-1}|_{ t_1\Ss_s^j (v)} \Ss_s^j(v)
   \, dt_1
   \Bigr|_2
\\
   &\stackrel{4}{=}\Bigl|\int_0^1 
   (d\Ss_s^j|_{(\Ss_s^j)^{-1}(t_1\Ss_s^j (v))})^{-1} \Ss_s^j (v)
   \, dt_1 
   \Bigr|_2
\\
   &\stackrel{5}{\le}
   \int_0^1\Abs{(d\Ss_s^j|_{(\Ss_s^j)^{-1}(t_1\Ss_s^j (v))})^{-1}\Ss_s^j (v)}_2 dt_1
\\
   &\stackrel{6}{\le}
   \int_0^1\mu\Bigl(\Abs{\Ss_s^j (v)}_2
   +\Abs{(\Ss_s^j)^{-1}(t_1\Ss_s^j (v))}_2
   \underbrace{\Abs{\Ss_s^j(v)}_1}_{\le 1/2\mu} \Bigr)\, dt_1
\\
   &\le 
   \mu \Abs{\Ss_s^j (v)}_2
   +\tfrac12 \int_0^1 \Abs{(\Ss_s^j)^{-1}(t_1\Ss_s^j (v))}_2 dt_1 .
\end{split}
\end{equation*}
Step 2 is by the fundamental theorem of calculus.
Step 3 is by the chain rule.
Step 4 is by~(\ref{eq:sec-deriv}).
Step 5 is by monotonicity of the integral.
Step 6 is by~(\ref{eq:(3)}).

The integrand in the second summand we similarly estimate further
\begin{equation*}
\begin{split}
   &\Abs{(\Ss_s^j)^{-1}(t_1\Ss_s^j (v))}_2
\\
   &=\Bigl|\int_0^1 
   \underbrace{\overbrace{d(\Ss_s^j)^{-1}|_{ t_1t_2\Ss_s^j (v)} t_1\Ss_s^j(v)}
      ^{\tfrac{d}{dt_2}(\Ss_s^j)^{-1}\left( t_1t_2\Ss_s^j (v)\right) }
      }_{(d\Ss_s^j|_{(\Ss_s^j)^{-1}(t_1t_2\Ss_s^j (v))})^{-1} t_1\Ss_s^j (v)}
   \, dt_2
   \Bigr|_2
\\
   &\stackrel{2}{\le}
   \int_0^1\Bigl|(d\Ss_s^j|_{(\Ss_s^j)^{-1}(t_1t_2\Ss_s^j (v))})^{-1}
   \underbrace{t_1}_{\le1}\Ss_s^j (v)\Bigr|_2 dt_2
\\
   &\stackrel{3}{\le}
   \int_0^1\mu\Bigl(\Abs{\Ss_s^j (v)}_2
   +\Abs{(\Ss_s^j)^{-1}(t_1t_2\Ss_s^j (v))}_2
   \underbrace{\Abs{\Ss_s^j(v)}_1}_{\le 1/2\mu} \Bigr)\, dt_2
\\
   &\le 
   \mu \Abs{\Ss_s^j (v)}_2
   +\tfrac12 \int_0^1 \Abs{(\Ss_s^j)^{-1}(t_1t_2\Ss_s^j (v))}_2 dt_2 .
\end{split}
\end{equation*}
Step 2 is by monotonicity of the integral.
Step 3 is by~(\ref{eq:(3)}).

Insert this latter estimate into the previous estimate to obtain
\begin{equation*}
\begin{split}
   \abs{v}_2
   &\le\left(1+\tfrac12\right)\mu \Abs{\Ss_s^j (v)}_2
   +\tfrac{1}{4} \int_0^1 \int_0^1 \Abs{(\Ss_s^j)^{-1}(t_1t_2\Ss_s^j(v))}_2 
   dt_2\, dt_1 .
\end{split}
\end{equation*}
Estimating the integrand in the second term inductively with the help
of~(\ref{eq:(3)}) we obtain for every $n\in\N$ the following estimate
\begin{equation*}
\begin{split}
   \abs{v}_2
   &\le
   \Bigl(\sum_{j=0}^{n-1} \frac{1}{2^j} \Bigr) \mu \Abs{\Ss_s^j(v)}_2
   +\frac{1}{2^n}\int_0^1\dots\int_0^1
   \Abs{(\Ss_s^j)^{-1}(t_1\dots t_n\Ss_s^j(v))}_2 dt_n\dots dt_1 .
\end{split}
\end{equation*}
We consider the limit $n\to\infty$.
First note that there exists a constant $c_V$ depending on $V$, but not
on $n$, such that $\abs{(\Ss_s^j)^{-1}(t\Ss_s^j(v))}_2\le c_V$ for
every $t\in[0,1]$. Hence we can estimate the second summand from above
by $c_V/2^n$. Therefore in the limit, as $n\to\infty$, the second summand vanishes.
Using that $\sum_{j=0}^{n-1} \frac{1}{2^j} =2$ we
get~(\ref{eq:7ggf7775gffghh}) with $C=2\mu$, namely
$$
   \Abs{v}_2
   \le 2 \mu \Abs{\Ss_s^j(v)}_2 .
$$

\noindent
\textbf{General case.}
Maybe after shrinking $V$ we can assume that
$\Ss_s^j(V)$ is convex and contained in the radius-$\frac{1}{2\mu}$ ball in
$H_1$ around $\Ss_s^j (v_0)$. For $v\in V\cap H_2$ we estimate
by the same techniques as in the special case above
\begin{equation*}
\begin{split}
   \abs{v}_2
   &=\abs{v_0+v-v_0}_2
\\
   &\le
   \abs{v_0}_2+
   \abs{(\Ss_s^j)^{-1}\Ss_s^j (v)-(\Ss_s^j)^{-1}\Ss_s^j (v_0)}_2
\\
   &= 
   \abs{v_0}_2+
   \Bigl|\int_0^1 
   \underbrace{\overbrace{d(\Ss_s^j)^{-1}|_{ t_1\Ss_s^j (v)+(1-t_1)\Ss_s^j (v_0)}
                        \left(\Ss_s^j(v)-\Ss_s^j(v_0)\right)}
      ^{\tfrac{d}{dt_1}(\Ss_s^j)^{-1}\left( t_1\Ss_s^j (v)+(1-t_1)\Ss_s^j (v_0)\right)}
      }_{(d\Ss_s^j|_{(\Ss_s^j)^{-1}(t_1\Ss_s^j (v)+(1-t_1)\Ss_s^j (v_0))})^{-1} (\Ss_s^j (v)-\Ss_s^j (v_0))}
   \, dt_1 
   \Bigr|_2
\\
   &\le 
   \abs{v_0}_2+
   \int_0^1\Abs{(d\Ss_s^j|_{(\Ss_s^j)^{-1}(t_1\Ss_s^j (v)+(1-t_1)\Ss_s^j (v_0))})^{-1}
   (\Ss_s^j (v)-\Ss_s^j (v_0))}_2dt_1
\\
   &\stackrel{5}{\le}
   \abs{v_0}_2+
   \int_0^1\mu(\Abs{\Ss_s^j (v)-\Ss_s^j (v_0)}_2 dt_1
   \\
   &\quad
   +\int_0^1\mu\Abs{(\Ss_s^j)^{-1}(t_1\Ss_s^j (v)+(1-t_1)\Ss_s^j (v_0))}_2
   \underbrace{\Abs{\Ss_s^j(v) -\Ss_s^j (v_0}_1}_{\le 1/2\mu} \, dt_1
\\
   &\le 
   \abs{v_0}_2+
   \mu \Abs{\Ss_s^j (v)-\Ss_s^j (v_0)}_2
   +\tfrac12 \int_0^1 \Abs{(\Ss_s^j)^{-1}(t_1\Ss_s^j (v) +(1-t_1)\Ss_s^j (v_0))}_2 dt_1
   .
\end{split}
\end{equation*}
Step 5 is by~(\ref{eq:(3)}).
The integrand in the second summand we estimate further
\begin{equation*}
\begin{split}
   &\Abs{(\Ss_s^j)^{-1}(t_1\Ss_s^j (v) +(1-t_1)\Ss_s^j (v_0))-v_0+v_0}_2
\\
   &=\Bigl|\int_0^1 
   \underbrace{\overbrace{d(\Ss_s^j)^{-1}|_{ t_1t_2\Ss_s^j (v) +(1-t_1t_2)\Ss_s^j (v_0)}
      t_1\left(\Ss_s^j(v)-\Ss_s^j(v_0)\right)}
      ^{\tfrac{d}{dt_2}(\Ss_s^j)^{-1}\left( t_1t_2\Ss_s^j (v)+ (1-t_1t_2)\Ss_s^j (v_0))\right) }
      }_{(d\Ss_s^j|_{(\Ss_s^j)^{-1}(t_1t_2\Ss_s^j (v) +(1-t_1t_2)\Ss_s^j (v_0))})^{-1}
         t_1\left(\Ss_s^j(v)-\Ss_s^j(v_0)\right)}
   \, dt_2 +v_0
   \Bigr|_2
\\
   &\stackrel{2}{\le}
   \abs{v_0}_2
   +\int_0^1\Bigl|(d\Ss_s^j|_{(\Ss_s^j)^{-1}(t_1t_2\Ss_s^j (v) + (1-t_1t_2)\Ss_s^j (v_0))})^{-1}
   \underbrace{t_1}_{\le1}\left(\Ss_s^j(v)-\Ss_s^j(v_0)\right)\Bigr|_2 dt_2
\\
   &\stackrel{3}{\le} 
   \abs{v_0}_2
   +\int_0^1\mu\biggl(\Abs{\Ss_s^j (v) -\Ss_s^j(v_0)}_2
   \\
   &\qquad\qquad\qquad
   +\Abs{(\Ss_s^j)^{-1}(t_1t_2\Ss_s^j (v) + (1-t_1t_2)\Ss_s^j (v_0))}_2
   \underbrace{\Abs{\Ss_s^j(v) -\Ss_s^j(v_0)}_1}_{\le 1/2\mu} \biggr)\, dt_2
\\
   &\le 
   \abs{v_0}_2
   +\mu \Abs{\Ss_s^j (v) -\Ss_s^j(v_0)}_2
   +\tfrac12\int_0^1\Abs{(\Ss_s^j)^{-1}(t_1t_2\Ss_s^j (v)+(1-t_1t_2)\Ss_s^j (v_0))}_2
   dt_2 .
\end{split}
\end{equation*}
Step 2 is by the triangle inequality and monotonicity of the integral.
Step 3 is by~(\ref{eq:(3)}).
Insert this latter estimate into the previous one to obtain
\begin{equation*}
\begin{split}
   \abs{v}_2
   &\le\left(1+\tfrac12\right)\abs{v_0}_2
   +\left(1+\tfrac12\right)\mu \Abs{\Ss_s^j (v) -\Ss_s^j(v_0)}_2
   \\
   &\quad
   +\tfrac{1}{4} \int_0^1 \int_0^1
   \Abs{(\Ss_s^j)^{-1}(t_1t_2\Ss_s^j (v)+(1-t_1t_2)\Ss_s^j (v_0))}_2 
   dt_2\, dt_1 .
\end{split}
\end{equation*}
Estimating the integrand in the second term inductively with the help
of~(\ref{eq:(3)}) we obtain for every $n\in\N$ the following estimate
\begin{equation*}
\begin{split}
   \abs{v}_2
   &\le \Bigl(\sum_{j=0}^{n-1} \frac{1}{2^j} \Bigr)\abs{v_0}_2
   +\Bigl(\sum_{j=0}^{n-1} \frac{1}{2^j} \Bigr) \mu \Abs{\Ss_s^j(v) -\Ss_s^j(v_0)}_2
   \\
   &\quad
   +\frac{1}{2^n}\int_0^1\dots\int_0^1
   \Abs{(\Ss_s^j)^{-1}(t_1\dots t_n\Ss_s^j (v)+(1-t_1\dots t_n)\Ss_s^j (v_0))}_2 
   dt_n\dots dt_1 .
\end{split}
\end{equation*}
We consider the limit $n\to\infty$.
First note that there exists a constant $C_V$ depending on $V$, but not
on $n$, such that $\abs{(\Ss_s^j)^{-1}(t\Ss_s^j(v)+(1-t) \Ss_s^j(v_0))}_2\le C_V$
for every $t\in[0,1]$.
Hence we can estimate the final (third) summand from above
by $c_V/2^n$. Therefore in the limit, as $n\to\infty$, the final summand vanishes.
Using that $\sum_{j=0}^{n-1} \frac{1}{2^j} =2$ we estimate
\begin{equation}\label{eq:K_s-1}
\begin{split}
   \Abs{v}_2
   &\le 2\Abs{v_0}_2+2 \mu \Abs{\Ss_s^j(v)-\Ss_s^j (v_0)}_2\\
   &\le 2 \mu \Abs{\Ss_s^j(v)}_2
   +2 \mu \Abs{\Ss_s^j(v_0)}_2
   +2\Abs{v_0}_2\\
   &\le K\left(\abs{\Ss_s^j(v)}_2 +1\right),
   \quad\boxed{K(s):=2\max\{\mu,\mu\abs{\Ss_s^j (v_0)}_2+\abs{v}_2\}.}
\end{split}
\end{equation}
This proves the claim~(\ref{eq:7ggf7775gffghh}).

\noindent
\textsc{Conclusion.}
We are now in position to prove the tameness estimate~(\ref{eq:tame})
for the inverse $(\Ss_s^j)^{-1}\colon H_1\supset \Ss_s^j(B_R)\to B_R$.
Plugging~(\ref{eq:7ggf7775gffghh}) into~(\ref{eq:7ggf7775gh}) leads to
\begin{equation}\label{eq:K_s-2}
\begin{split}
   &\Abs{d^2 (\Ss_s^j)^{-1}|_{\Ss_s^j(v)} (\xi,\eta)}_2
\\
   &\le \kappa_*\bigl(
   \abs{\xi}_1\abs{\eta}_2+\abs{\xi}_2\abs{\eta}_1
   +K\left(\abs{\Ss_s^j(v)}_2 +1\right)\abs{\xi}_1\abs{\eta}_1\bigr)
\\
   &\stackrel{2}{\le}\kappa_*\Bigl(\abs{\xi}_1\abs{\eta}_2+\abs{\xi}_2\abs{\eta}_1
   +K \abs{\Ss_s^j(v)}_2 \abs{\xi}_1\abs{\eta}_1
   +\tfrac{K}{2}\abs{\xi}_2\abs{\eta}_1
   +\tfrac{K}{2}\abs{\xi}_1\abs{\eta}_2
   \Bigr)
\\
   &=
   \kappa_*\left(\tfrac{K}{2}+1\right) \abs{\xi}_1\abs{\eta}_2
   +\kappa_*\left(\tfrac{K}{2}+1\right) \abs{\xi}_2\abs{\eta}_1
   +\kappa_* K \abs{\Ss_s^j(v)}_2 \abs{\xi}_1\abs{\eta}_1
\\
   &\le
   C_*\bigl(
   \abs{\xi}_1\abs{\eta}_2+\abs{\xi}_2\abs{\eta}_1
   +\abs{\Ss_s^j(v)}_2 \abs{\xi}_1\abs{\eta}_1
   \bigr)
\end{split}
\end{equation}
where the final step holds for $C_*(s):=\kappa_*\max\{K_s, \tfrac{K_s}{2}+1\}$.
Step 2 is by~(\ref{eq:12}).

\smallskip
This proves tameness of $(\Ss_s^j)^{-1}$ and completes the proof of Step~3.
\end{proof}

\noindent
\textbf{Step 4.}
For any $s\in\R$ the map $\varphi_s\colon \{s\}\times U^s\to H_1$
defined by~(\ref{eq:rho-4}) is tame.

\begin{proof}
In~(\ref{eq:rho-4}) the maps
$$
   \tilde\psi_{\tilde j}^{-1}\circ\psi_j\colon
   \psi_j^{-1}(V_j\cap\tilde V_{\tilde {j}})
   \to\tilde\psi_{\tilde j}^{-1}(V_j\cap\tilde V_{\tilde {j}})
$$
are transition maps of a tame atlas and therefore tame.
Because by Theorem~\ref{thm:tame}
the composition of tame maps is tame,
tameness of $\varphi_s$ follows by~(\ref{eq:rho-4})
in view of Step~3.
This proves Step~4.
\end{proof}

\noindent
\textbf{Step 5.}
The interpolation map,
see~(\ref{eq:def-S-44}) and Figure~\ref{fig:chart-no-exp-44},
$$
   \Ss^j\colon [t_j^-,t_j]\times B_R\to H_1
   ,\quad
   (s,v)\mapsto \Ss^j_s(v)
$$
is parametrized tame.

\begin{proof}
We need to check two conditions.
1)~The map $\Ss^j$ is $C^2$ and its restriction,
$$
   \Ss^j|_{[t_j^-,t_j]\times (B_R\cap H_2)}
   \colon [t_j^-,t_j]\times \left(B_R\cap H_2\right) \to H_2
$$
is $C^2$ as well.
Using that $\Ss_s^j$ is in~(\ref{eq:def-S-4}) defined by
$$
   \Ss_s^j
   :=\Ss^{\varphi^{-1}_j}_{\beta^j_s,\psi^{-1}_{j+1}(x_s)}
$$
together with smoothness of the cutoff function $\beta^j_s$ in the
$s$-variable and the fact that our basic path is a
$C^2$-map $s\mapsto x_s$ to the second manifold level $X_2$,
both $C^2$-claims follow directly from~(\ref{eq:A.3.2-S}).
\\
2)~The parametrized tameness estimate~(\ref{eq:tame-parametrized})
follows from tameness of $\phi_j^{-1}$
by noting that $\phi_j^{-1}$ does not depend on $s$.
This proves 2) and Step~5.
\end{proof}

\noindent
\textbf{Step 6.}
The inverted interpolation map family
$$
   \Ss^j_-
   \colon
   \{(s,v)\in [t_j^-,t_j]\times H_1\mid v\in \Ss^j_s(B_R)\}
   \to B_R
   ,\quad
   (s,v)\mapsto (\Ss^j_s)^{-1}(v)
$$
is parametrized tame.

\begin{proof}
1)~That $\Ss^j_-$ is $C^2$ on level 1 and level 2 follows
from the fact that this is true for $\Ss^j$ by Step~5
and a version of the implicit function theorem as explained
in Proposition~\ref{prop:inversion} for $\Gg=\Ss^j_-$.
\\
2)~The tameness estimate~(\ref{eq:tame-parametrized})
follows from the tameness estimate~(\ref{eq:K_s-2})
by noting that the only $s$-dependence of the constant $C_*$
comes from the constant $K(s)$ as introduced
in~(\ref{eq:K_s-1}), but this constant can be chosen uniformly
in a little neighborhood of $s\in\R$
by using that $H_2$-norm of $\Ss_s^j(v_0)$
depends continuously on $s$. This proves Step~6.
\end{proof}

\noindent
\textbf{Step 7.}
The map $\varphi\colon \R\times H_1\supset O\to H_1$
defined by~(\ref{eq:rho-4}) is asymptotically constant parametrized tame.

\begin{proof}
By Steps~5 and~6 the map $\varphi$ is a composition of parametrized
tame maps.
The same argument as in the proof of Theorem~\ref{thm:tame}
shows that the composition of parametrized tame maps
is again parametrized tame.
The map $\varphi$ is asymptotically constant
by~(\ref{eq:as-cnst_-}) and~(\ref{eq:as-cnst_+}).
This proves Step~7.
\end{proof}

\noindent
\textbf{Step 8.}
The map $\Phi\colon\Uu_0\to\tilde \Uu_0$
defined by~(\ref{eq:Psi-def-4}) is $C^1$.

\begin{proof}
This follows from Step~7 and Theorem~\ref{thm:B-Phi-parametrized}.
\end{proof}

\noindent
\textbf{Step 9.}
The inverse $\Phi^{-1}\colon\tilde\Uu_0\to\Uu_0$,
see~(\ref{eq:Phi-inverse}), is $C^1$.

\begin{proof}
In Step~8 interchange the roles of the charts $\Uu$
and $\tilde\Uu$.
\end{proof}

\noindent
\textbf{Step 10.}
We prove Theorem~\ref{thm:Psi-diffeo-4}.

\begin{proof}
Theorem~\ref{thm:Psi-diffeo-4} follows by Step~8 and Step~9.
\end{proof}

The proof Theorem~\ref{thm:Psi-diffeo-4} is complete.
\end{proof}

\begin{corollary}\label{cor:T-Psi-diffeo}
The weak tangent map
$T\Phi\colon \Uu_0\times L^2_{H_1}\to \tilde \Uu_0\times L^2_{H_1}$
defined by~(\ref{eq:weak-tangent-map}) is a $C^1$ diffeomorphism.
\end{corollary}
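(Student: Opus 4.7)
The plan is to combine Theorem~\ref{thm:B-TPhi-parametrized} with the pointwise representation of $\Phi$ that was produced in Steps~2 and~7 of the preceding proof. There it was shown that the base component of the path-space transition is given pointwise by an asymptotically constant parametrized tame map $\varphi\colon O\to H_1$ via $(\Phi(\xi))(s)=\varphi_s(\xi_s)$. Consequently the weak tangent map has precisely the form
\[
   T\Phi(\xi,\eta)(s)
   =\bigl(\varphi_s(\xi_s),\, d\varphi_s|_{\xi_s}\eta_s\bigr) ,
\]
to which Theorem~\ref{thm:B-TPhi-parametrized} applies \emph{verbatim}. Invoking that theorem directly yields that $T\Phi$ is well defined and continuously differentiable as a map into $(W^{1,2}_{H_1}\cap L^2_{H_2})\times L^2_{H_1}$, and Theorem~\ref{thm:Psi-diffeo-4} guarantees that the first coordinate in fact lands in $\tilde\Uu_0$.

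To handle the inverse I would interchange the roles of the two charts $\Uu$ and $\tilde\Uu$. The map $\Phi^{-1}\colon\tilde\Uu_0\to\Uu_0$ from~(\ref{eq:Phi-inverse}) is itself a path-space transition between two basic coverings, so rerunning the construction of $\varphi$ with the roles of $\{\psi_i\}$ and $\{\tilde\psi_j\}$ swapped produces an asymptotically constant parametrized tame $\tilde\varphi$ with $(\Phi^{-1}(\eta))(s)=\tilde\varphi_s(\eta_s)$. A second application of Theorem~\ref{thm:B-TPhi-parametrized} then gives that the corresponding weak tangent map, call it $T(\Phi^{-1})$, is $C^1$ as well, with analogous target and domain conditions.

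It remains to identify $T(\Phi^{-1})$ with the set-theoretic inverse of $T\Phi$. This is a pointwise consequence of the chain rule: for every $s\in\R$ the linear map $d\tilde\varphi_s|_{\varphi_s(\xi_s)}$ inverts $d\varphi_s|_{\xi_s}$ on $H_1$, hence
\[
   T(\Phi^{-1})\bigl(T\Phi(\xi,\eta)\bigr)(s)
   =\bigl(\xi_s,\, d\tilde\varphi_s|_{\varphi_s(\xi_s)} d\varphi_s|_{\xi_s}\eta_s\bigr)
   =(\xi_s,\eta_s) ,
\]
and the analogous identity holds with $\Phi$ and $\Phi^{-1}$ interchanged. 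Bijectivity then gives that $T\Phi$ is a $C^1$ diffeomorphism.

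I do not foresee a genuine obstacle here. Once the parametrized tame representations of both $\Phi$ and $\Phi^{-1}$ are in hand from the proof of Theorem~\ref{thm:Psi-diffeo-4}, this corollary is essentially bookkeeping; the one point that merits a line of care is checking that the $L^2_{H_1}$ target of the second coordinate is preserved under the candidate inverse, but this is automatic from Theorem~\ref{thm:B-TPhi-parametrized} applied to $\tilde\varphi$.
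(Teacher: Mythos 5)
Your proposal is correct and follows essentially the same route as the paper: apply Step~7 of the proof of Theorem~\ref{thm:Psi-diffeo-4} to obtain the asymptotically constant parametrized tame representation of $\Phi$, invoke Theorem~\ref{thm:B-TPhi-parametrized} to get $C^1$ of $T\Phi$, and interchange the roles of the two charts to handle the inverse. The added chain-rule verification that $T(\Phi^{-1})$ is the set-theoretic inverse of $T\Phi$ is correct and just makes explicit what the paper leaves implicit.
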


\begin{proof}
By Step~7 in the proof of Theorem~\ref{thm:Psi-diffeo-4} 
the map $\varphi$ is asymptotically constant parametrized tame.
So by Theorem~\ref{thm:B-TPhi-parametrized}
the weak tangent map $T\Phi$ is $C^1$.
And interchanging the roles of $\Uu_0$ and $\tilde \Uu_0$ its inverse
is $C^1$ as well.
This proves Corollary~\ref{cor:T-Psi-diffeo}.
\end{proof}

\subsection{Weak tangent bundles}\label{sec:weak-tangent bundle}

We are now in position to define precisely what the weak tangent bundle is.
For a local parametrization $\Psi\in\Aa\Pp_{x_-x_+}$, see~(\ref{eq:AP}),
we denote by $\Uu_\Psi\subset W^{1,2}_{H_1}\cap L^2_{H_2}$
the domain of $\Psi$.
Given two local parametrizations $\Psi,\tilde\Psi\in\Aa\Pp_{x_-x_+}$,
the corresponding \textbf{weak tangent bundle transition map}
is defined by
$$
   \Phi_{\Psi\tilde\Psi}
   :={\tilde\Psi}^{-1}\circ\Psi|_{\Uu_{\Psi\tilde\Psi}}
   \colon
   \Uu_{\Psi\tilde\Psi}
   \to
   \Uu_{\tilde\Psi\Psi}
$$
where $\Uu_{\Psi\tilde\Psi}$ and $\Uu_{\tilde\Psi\Psi}$ are open
subsets of the Hilbert space
$\bigl(W^{1,2}_{H_1}\cap L^2_{H_2}\bigr)\times L^2_{H_1}$ and,
as illustrated by Figure~\ref{fig:chart-weak-tangent}, they
are defined by
$$
   \Uu_{\Psi\tilde\Psi}
   :=\Psi^{-1}\left(\Psi(\Uu_\Psi)\cap \tilde\Psi(\Uu_{\tilde\Psi})\right)
   ,\qquad
   \Uu_{\tilde\Psi\Psi}
   :=\tilde{\Psi}^{-1}\left(\Psi(\Uu_\Psi)\cap \tilde\Psi(\Uu_{\tilde\Psi})\right).
$$
By definition the total space of the \textbf{weak tangent bundle} is
the quotient space
$$
   \Ee=\Ee_{x_-x_+}
   :=\biggl(\bigcup_{\Psi\in\Aa\Pp_{x_-x_+}}
   \left(\Uu_\Psi\times L^2_{H_1}\right)
   \biggr)
   / \sim
$$
where two points $(x,v)\in\Uu_\Psi\times L^2_{H_1}$ and
$(\tilde x,\tilde v)\in \Uu_{\tilde\Psi}\times L^2_{H_1}$ are \textbf{equivalent}
$$
   (x,v)\sim (\tilde x,\tilde v)
   \quad:\Leftrightarrow\quad
   T\Phi_{\Psi\tilde\Psi}(x,v)
   =(\tilde x,\tilde v).
$$
It follows from Corollary~\ref{cor:T-Psi-diffeo}
that the transition maps are $C^1$. Hence $\Ee_{x_-x_+}$ is a
$C^1$ manifold.
\begin{figure}[h]
  \centering
  \includegraphics
                             [height=4.5cm]
                             {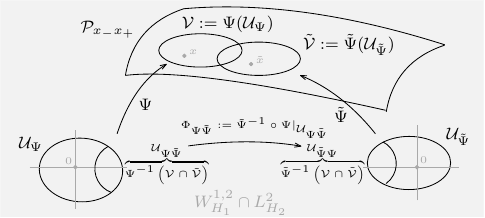}
  \caption{Transition map $\Phi_{\Psi\tilde\Psi}$
                 for path space $\Pp_{x_-x_+}$ modeled
                 on $W^{1,2}_{H_1}\cap L^2_{H_2}$}
   \label{fig:chart-weak-tangent}
\end{figure}

\begin{theorem}\label{thm:weak-TP}
The weak tangent bundle $\Ee_{x_-x_+}$ is a $C^1$ manifold modeled on
the Hilbert space $\bigl(W^{1,2}_{H_1}\cap L^2_{H_2}\bigr)\times L^2_{H_1}$.
\end{theorem}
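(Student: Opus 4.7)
The plan is to exhibit an explicit $C^1$ atlas on the quotient space $\Ee_{x_-x_+}$ whose transition maps are precisely the weak tangent maps $T\Phi_{\Psi\tilde\Psi}$, and then invoke Corollary~\ref{cor:T-Psi-diffeo} to conclude that these transitions are $C^1$ diffeomorphisms on open subsets of the model Hilbert space $\bigl(W^{1,2}_{H_1}\cap L^2_{H_2}\bigr)\times L^2_{H_1}$.

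First I would fix, for each $\Psi\in\Aa\Pp_{x_-x_+}$, the natural map
\[
   q_\Psi\colon \Uu_\Psi\times L^2_{H_1}\to\Ee_{x_-x_+},
   \qquad
   (\xi,v)\mapsto [(\xi,v)]_\sim,
\]
induced by the quotient projection. The equivalence relation is defined via the \emph{bijections} $T\Phi_{\Psi\tilde\Psi}$, and Corollary~\ref{cor:T-Psi-diffeo} tells us these are $C^1$ diffeomorphisms between the open subsets $\Uu_{\Psi\tilde\Psi}\times L^2_{H_1}$ and $\Uu_{\tilde\Psi\Psi}\times L^2_{H_1}$ of $\bigl(W^{1,2}_{H_1}\cap L^2_{H_2}\bigr)\times L^2_{H_1}$. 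A short verification shows that each $q_\Psi$ is injective: if $q_\Psi(\xi,v)=q_\Psi(\xi',v')$, then $T\Phi_{\Psi\Psi}(\xi,v)=(\xi',v')$, and since $\Phi_{\Psi\Psi}=\id$ one gets $(\xi,v)=(\xi',v')$. Thus each $q_\Psi$ is a bijection onto its image, and the images $q_\Psi(\Uu_\Psi\times L^2_{H_1})$ cover $\Ee_{x_-x_+}$ by construction.

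Next I would equip $\Ee_{x_-x_+}$ with the final topology induced by the family $\{q_\Psi\}_{\Psi\in\Aa\Pp_{x_-x_+}}$, declaring a subset $\Vv\subset\Ee_{x_-x_+}$ open if and only if $q_\Psi^{-1}(\Vv)$ is open in $\Uu_\Psi\times L^2_{H_1}$ for every $\Psi$. I would then check that with this topology each $q_\Psi$ is a homeomorphism onto an open subset, which reduces to checking that for any $\tilde\Psi$ the preimage
\[
   q_{\tilde\Psi}^{-1}\bigl(q_\Psi(\Uu_{\Psi\tilde\Psi}\times L^2_{H_1})\bigr)
   =\Uu_{\tilde\Psi\Psi}\times L^2_{H_1}
\]
is open, which follows from Theorem~\ref{thm:Psi-diffeo-4} (openness of $\Uu_{\tilde\Psi\Psi}$) combined with the continuity of $T\Phi_{\Psi\tilde\Psi}$. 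Hausdorffness can be verified by separating two equivalence classes either within a single chart domain (immediate) or by means of disjoint chart domains (using that $\Pp_{x_-x_+}$ is already Hausdorff by Theorem~\ref{thm:path-mf-4} and that the fiber component is Hausdorff).

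Finally, the family $\bigl\{(q_\Psi(\Uu_\Psi\times L^2_{H_1}),q_\Psi^{-1})\bigr\}_{\Psi\in\Aa\Pp_{x_-x_+}}$ constitutes the desired atlas: its transition map from the $\Psi$-chart to the $\tilde\Psi$-chart is, by the very definition of $\sim$, the restriction of $T\Phi_{\Psi\tilde\Psi}$ to $\Uu_{\Psi\tilde\Psi}\times L^2_{H_1}$, which by Corollary~\ref{cor:T-Psi-diffeo} is a $C^1$ diffeomorphism. I do not expect any substantial analytic obstacle here, since the heavy lifting has already been done in Corollary~\ref{cor:T-Psi-diffeo} (which in turn rests on Theorem~\ref{thm:B-TPhi-parametrized} and the parametrized tameness of the path space transition maps established in Step~7 of Theorem~\ref{thm:Psi-diffeo-4}); the only mildly delicate point is the bookkeeping needed to verify Hausdorffness and the openness of chart overlaps, and this follows mechanically from the corresponding properties on $\Pp_{x_-x_+}$.
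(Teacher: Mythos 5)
Your proposal is correct and takes essentially the same route as the paper: the paper's entire proof is the single observation that the transition maps of the quotient construction are $C^1$ by Corollary~\ref{cor:T-Psi-diffeo}. Your additional bookkeeping (injectivity of the chart maps, the final topology, openness of overlaps, Hausdorffness) is sound and merely makes explicit the standard verifications the paper leaves implicit.
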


\begin{proof}
By Corollary~\ref{cor:T-Psi-diffeo} the transition maps are $C^1$.
\end{proof}

\newpage
\appendix
\section{Hilbert space valued Sobolev spaces}\label{sec:Bochner-Sobolev}

In this appendix we collect and detail some results on 
Hilbert space valued Sobolev spaces, scattered throughout the
literature, which we used throughout this paper.
As required by the theorem of Pettis we assume that $H$ is a
\emph{separable} Hilbert space. We denote the induced norm
on $H$ by $\abs{\cdot}$.

\subsection{Measurability}\label{sec:measurability}

\begin{definition}\label{def:simple}
(i)~Let $X$ be a topological space and
$B(X)$ the \textbf{\boldmath Borel $\sigma$-algebra},
the smallest $\sigma$-algebra that contains the open sets;
cf.~Definition~\ref{def:Borel}.
\\
(ii)~A \textbf{measurable space} is a pair
$(A,\Aa)$ where $A$ is a set and $\Aa$ is a $\sigma$-algebra in $A$.
The elements of a $\sigma$-algebra are called \textbf{measurable sets}.
A map $f\colon(A,\Aa)\to(X,\Bb(X))$ 
is called a \textbf{measurable map}
if pre-images of measurable sets are measurable.
\\
(iii)
Let $(A,\Aa)$ be a measurable space.
A map $s\colon A\to H$ is called \textbf{simple}
if it is of the form $s=\sum_{k=1}^N \chi_{A_k} x_k$
with $A_k\in\Aa$ and $x_k\in H$. Here $\chi_{A_k}$ is the
\textbf{characteristic function} of the set $A_k$,
i.e. $\chi_{A_k}\equiv1$ on $A_k$ and $\chi_{A_k}\equiv0$ else.
\end{definition}

\begin{remark}\label{rem:meas}
(i)~Simple maps $f\colon A\to H$ are measurable.
(ii)~By the minimality of the Borel $\sigma$-algebra,
a map $f\colon(A,\Aa)\to(H,\Bb(H))$ is measurable iff
$f^{-1}(U)\in\Aa$ for all open sets $U\subset H$;
see e.g.~\cite[Thm.\,1.20]{Salamon:2016a}.
\end{remark}

\begin{definition}
A map $f\colon A\to H$ is \textbf{strongly measurable}
if there is a sequence of simple functions $s_k\colon A\to H$
with $\lim_{k\to\infty} s_k=f$ pointwise on $A$.
\end{definition}

\begin{theorem}[{Pettis~\cite{Pettis:1938a}}]\label{thm:Pettis-Neerven}
Let $H$ be a separable Hilbert space
and $(A,\Aa)$ a measurable space.
For a Hilbert space valued map $f: A\to H$
the following assertions are equivalent.
\begin{itemize}
\item[(1)]
  Every function $\inner{f}{x}\colon( A,\Aa)\to(\R,\Bb(\R))$, where $x\in H$,
  is measurable.
\item[(2)]
  The map $f \colon( A,\Aa)\to(H,\Bb(H))$ is measurable.
\item[(3)]
  The map $f \colon( A,\Aa)\to(H,\Bb(H))$ is strongly measurable.
\end{itemize}
\end{theorem}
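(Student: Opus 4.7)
The plan is to prove the chain $(3) \Rightarrow (2) \Rightarrow (1) \Rightarrow (3)$, with the first two implications being essentially formal consequences of the definitions and the last being the substantive content where separability of $H$ enters.

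For $(3) \Rightarrow (2)$, I would use the standard fact that a pointwise limit of measurable maps into a metric space is measurable: for any closed set $F \subset H$ and its open $\tfrac1n$-neighborhoods $F_{1/n}$, one has $f^{-1}(F) = \bigcap_n \bigcup_K \bigcap_{k \geq K} s_k^{-1}(F_{1/n})$, and the right-hand side is measurable because each simple function $s_k$ is measurable (Remark~\ref{rem:meas}(i)) and we have countable operations. Since the Borel $\sigma$-algebra is generated by closed sets, this gives measurability of $f$. For $(2) \Rightarrow (1)$, each linear functional $\langle \cdot, x \rangle \colon H \to \R$ is continuous, hence Borel measurable, so its composition with the Borel measurable $f$ is measurable.

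The main work is $(1) \Rightarrow (3)$. First I would show that for every $y \in H$ the function $a \mapsto |f(a) - y|$ is measurable. Fix a countable dense subset $D = \{x_n\}_{n \in \N}$ of the unit sphere of $H$; by the Riesz representation of the norm one has $|v| = \sup_{n} |\langle v, x_n \rangle|$ for every $v \in H$, so $|f(\cdot) - y| = \sup_n |\langle f(\cdot), x_n \rangle - \langle y, x_n \rangle|$ is a countable supremum of measurable functions (using hypothesis (1) with the $x_n$), hence measurable. Next, choose a countable dense subset $\{y_n\}_{n \in \N}$ of $H$ and define, for each $k \in \N$, a function $s_k \colon A \to H$ by $s_k(a) := y_{n(a,k)}$, where $n(a,k)$ is the smallest index $n \in \{1,\dots,k\}$ minimizing $|f(a) - y_n|$. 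The sets
\[
   A_{n,k} := \{a \in A : |f(a) - y_n| \leq |f(a) - y_m|\ \forall m \leq k,\ \text{and } |f(a) - y_n| < |f(a) - y_m|\ \forall m < n\}
\]
are measurable by the previous step, partition $A$ for each fixed $k$, and $s_k = \sum_{n=1}^{k} \chi_{A_{n,k}} y_n$ is a simple function. Finally, $s_k(a) \to f(a)$ pointwise: given $\eps > 0$ pick $N$ with $|f(a) - y_N| < \eps$ by density, then for all $k \geq N$ one has $|f(a) - s_k(a)| \leq |f(a) - y_N| < \eps$ by construction of $s_k$. This produces the required approximating sequence of simple functions.

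The main obstacle is the measurability statement for $a \mapsto |f(a) - y|$, which is where the separability of $H$ is indispensable: without a countable dense set one cannot reduce the norm to a countable supremum of measurable functions and the argument collapses. The remaining construction is then a routine ``nearest point among the first $k$ candidates'' scheme which is standard once measurability of these distance functions has been secured.
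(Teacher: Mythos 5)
Your proof is correct and follows essentially the same route as the paper: the $(1)\Rightarrow(3)$ direction uses the identical strategy of reducing the norm $|f(\cdot)-y|$ to a countable supremum via a dense sequence in the unit sphere, then constructing the simple approximants by a nearest-point-among-the-first-$k$-candidates scheme; and $(2)\Rightarrow(1)$ is the same continuity argument. The only cosmetic deviation is in $(3)\Rightarrow(2)$, where you work with closed sets $F$ and their open $\tfrac1n$-enlargements $F_{1/n}$, whereas the paper works dually with open sets $U$ and the shrunken sets $U_r=\{x\in U: \mathrm{dist}(x,U^{\mathrm c})>r\}$; both give the same kind of countable union/intersection formula and both suffice to generate the Borel $\sigma$-algebra.
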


\begin{proof}
We closely follow the arguments of Neerven~\cite{Neerven:2007a}.
\\
Let $f: A\to H$ be a Hilbert space valued map.

\smallskip\noindent
(1)$\Rightarrow$(3):
As $H$ is separable, so is the unit sphere, hence we can pick a dense
sequence $(e_n)_{n=1}^\infty$ in the unit sphere of $H$.
For any $x\in H$ the function
\begin{equation}\label{eq:meas-1}
   F_x\colon( A,\Aa)\to ([0,\infty),\Bb),\quad
   t\mapsto \abs{f(t)-x}
\end{equation}
is measurable. To prove this we write
\begin{equation*}
\begin{split}
   \abs{f(t)-x}
   =
   \sup_{\abs{e}=1} \abs{\INNER{f(t)-x}{e}}
   &=\sup_{n\in\N} \abs{\INNER{f(t)-x}{e_n}}\\
   &=\sup_{n\in\N} \abs{\INNER{f(t)}{e_n}-\INNER{x}{e_n}} .
\end{split}
\end{equation*}
The function $t\mapsto \INNER{f(t)}{e_n}$ is measurable by~(1).
Adding to a measurable function a constant term, here
$\INNER{-x}{e_n}$, preserves measurability.
Absolute value $\abs{\cdot}\colon\R\to[0,\infty)$ is continuous, thus
Borel measurable $\abs{\cdot}\colon(\R,\Bb)\to([0,\infty),\Bb)$.
Measurability is preserved under composition, hence
$$
   ( A,\Aa)\to (\R,\Bb)\stackrel{\abs{\cdot}}{\to} ([0,\infty),\Bb),\quad
   t\mapsto \abs{\INNER{f(t)}{e_n}-\INNER{x}{e_n}} 
$$
is measurable.
Taking the supremum of countably many measurable functions
produces a measurable function;
see e.g.~\cite[Thm.\,1.24\,(ii) and Def.\,1.17\,(ii)]{Salamon:2016a}.
This proves that the function~(\ref{eq:meas-1}) is measurable.

\smallskip
As $H$ is separable we can pick a dense
sequence $(x_n)_{n=1}^\infty$ in $H$.
Define a sequence of functions
$$
   d_n\colon H\to\{x_1,\dots,x_n\}
   ,\quad
   y\mapsto d_n(y):=x_{k(n,y)}
$$
as follows. For each $y\in H$ let $k(n,y)$ be the least integer
$k\in \{1,\dots,n\}$ which minimizes the distance to $y$, that is
$$
   \abs{y-x_k}=\min_{j\in\{1,\dots,n\}}\abs{y-x_j}
   ,\qquad
   \forall\ell<k\colon \abs{y-x_\ell}>\abs{y-x_k} .
$$
Observe that
\begin{equation}\label{eq:Pettis-1-3}
   \forall y \in H\colon\quad
   \lim_{n\to\infty}\abs{d_n(y)-y}=0
\end{equation}
since $(x_n)_{n=1}^\infty$ is dense in $H$.
Define a sequence of functions
\begin{equation*}
\begin{split}
   f_n\colon A&\to \{x_1,\dots,x_n\}\subset H
\\
   t&\mapsto d_n(f(t))=x_{k(n,f(t))} .
\end{split}
\end{equation*}
We show that these functions are a) simple and b) converge pointwise
to $f$. To prove a) observe that
for any $k\in\{1,\dots,n\}$ there is the identity
\begin{equation*}
\begin{split}
   A_k^n:
    &=
   f_n^{-1}(x_k)
\\
   &=\{t\in A\mid x_{k(n,f(t))}=x_k\}
\\
   &=\biggl\{t\in A\;\Big|\;
   \abs{f(t)-x_k}
   =\min_{j\in\{1,\dots,n\}}\abs{f(t)-x_j}\biggr\}
   \\
   &\quad\bigcap
   \biggl\{t\in A\;\Big|\;
   \underbrace{\abs{f(t)-x_\ell}}_{=:F_{x_\ell}(t)}
   >\abs{f(t)-x_k}
   \text{ for $\ell=1,\dots,k-1$}\biggr\}
\\
   &=\biggl( F_{x_k} - \min_{j\in\{1,\dots,n\}} F_{x_j}\biggr)^{-1}(0)
   \quad\cap \bigcap_{\ell=1}^{k-1}
   \biggl( F_{x_\ell} - F_{x_k}\biggr)^{-1}(0,\infty) .
\end{split}
\end{equation*}
Note that the sets on the right hand side are in the Lebesgue
$\sigma$-algebra, in symbols $A_k^n\in \Aa$:
Indeed functions of the form~(\ref{eq:meas-1}) are measurable.
Moreover, minima and differences of measurable functions are measurable,
see e.g.~\cite[Thm.\,1.24]{Salamon:2016a},
and the complement of $\{0\}$ as well as $(0,\infty)$ are open sets,
thus Borel measurable sets.
This proves that each function $f_n$ is simple.
To prove b) note that for each $t\in A$ we have
$$
   \lim_{n\to\infty}\abs{f_n(t)-f(t)}
   =\lim_{n\to\infty}\abs{d_n(f(t))-f(t)}
   \stackrel{\text{(\ref{eq:Pettis-1-3})}}{=}0 .
$$
This concludes the proof of assertion~(3) in the theorem.

\smallskip\noindent
(3)$\Rightarrow$(2):
To prove that $f^{-1}(B)\in \Aa$ for every $B\in \Bb(H)$
it suffices to show that $f^{-1}(U)\in\Aa$ for every open set $U$ in $H$;
see Remark~\ref{rem:meas}.
To this end let $U\subset H$ be open and, by~(3), choose a sequence
of simple functions $s_n\colon A\to H$ converging pointwise to $f$.
For $r>0$ define $U_r:=\{x\in U\mid \mathrm{dist}(x,U^{\rm c})>r\}$,
where the closed set $U^{\rm c}:=H\setminus U$ is the complement of $U$.
Then $s_n^{-1}(U_r)\in\Aa$ for each $n\ge1$, since $U_r$
is open in $H$ and simple functions are measurable. The equality
$$
   f^{-1}(U)
   =\bigcup_{m\ge1}\bigcup_{n\ge1}\bigcap_{k\ge n}
   \underbrace{s_k^{-1}(U_{\frac{1}{m}})}_{\in\Aa} ,
$$
implies that $f^{-1}(U)$ lies in $\Aa$ since the right hand side does:
Indeed $\sigma$-algebras are closed under countable unions
and intersections, see e.g.~\cite[\S\,1.1]{Salamon:2016a}.
It remains to prove the equality of sets.

`$\subset$' Let $t\in f^{-1}(U)$.
Since $U$ is open and $f(t)\in U$,
it holds that $d:=\mathrm{dist}(f(t),U^{\rm c})>0$.
Since $s_k(t)$ converges to $f(t)$, as $k\to\infty$, there exists $n_t\in\N$
such that $\mathrm{dist}(f(t),s_k(t))<\frac{d}{2}$
for every $k\ge n_t$. Choose $m_t$ such that $\frac{1}{m_t}<\frac{d}{2}$.
We claim that $s_k(t)\in U_{\frac{1}{m_t}}$ for every $k\ge n_t$.
To see this pick $x\in U^{\rm c}$ and let $k\ge n_t$.
Then by the triangle inequality
$$
   d
   \le \mathrm{dist}(f(t),x)
   \le \mathrm{dist}(f(t),s_k(t))+\mathrm{dist}(s_k(t),x)
   <\tfrac{d}{2}+\mathrm{dist}(s_k(t),x).
$$
Thus $\mathrm{dist}(s_k(t),x)>\frac{d}{2}$.
Since $x\in U^{\rm c}$ was arbitrary
$\mathrm{dist}(s_k(t),U^{\rm c})\ge \frac{d}{2}>\frac{1}{m_t}$.
Hence $s_k(t)\in U_{\frac{1}{m_t}}$ for every $k\ge n_t$.
In other words, the element $t$ lies in
$$
   t\in\bigcap_{k\ge n_t} s_k^{-1}(U_{\frac{1}{m_t}})
$$
for the particular $m_t$ and $n_t$,
therefore it lies in the union over all $m$ and $n$, in symbols
$$
   t\in \bigcup_{m\ge1}\bigcup_{n\ge1}\bigcap_{k\ge n}
   s_k^{-1}(U_{\frac{1}{m}})
$$
`$\supset$'
Let $t$ be element of the right hand side.
Then there exist $m_t$ and $n_t$ with
$$
   t\in\bigcap_{k\ge n_t} s_k^{-1}(U_{\frac{1}{m_t}}) .
$$
This means that $\mathrm{dist}(s_k(t),U^{\rm c})>\frac{1}{m_t}$
whenever $k\ge n_t$.
Since $\lim_{k\to\infty}s_k(t)=f(t)$ it follows that
$\mathrm{dist}(f(t),U^{\rm c})\ge\frac{1}{m_t}$.
But this means that $f(t)\in U$.

\smallskip\noindent
(2)$\Rightarrow$(1): 
True by two facts in measure theory:
(i) Continuous maps are Borel measurable,
here $\INNER{\cdot}{x}\colon (H,\Bb)\to (\R,\Bb)$.
(ii) The composition of a measurable map
with a measurable map, here with $f:(A,\Aa)\to(H,\Bb(H))$,
is measurable.

\smallskip
The proof of Theorem~\ref{thm:Pettis-Neerven} is complete.
\end{proof}

\subsection{Lebesgue measure and really simple functions}
\label{sec:Lebesgue}

\subsubsection*{Lebesgue measure}

\begin{definition}\label{def:Borel}
Let $\Ii$ be the set of all open intervals $(a,b)\subset \R$
with $a<b\in\R$.
The smallest $\sigma$-algebra containing $\Ii$ is the \textbf{Borel
\boldmath$\sigma$-algebra $\Bb=\Bb(\R)$},
in symbols $\Sigma_\Ii=\Bb$; see e.g.~\cite[Thm.\,IX.1.11]{amann:2009a}.
The Lebesgue measure $\lambda$ on the Borel $\sigma$-algebra $\Bb$ is
the unique translation invariant measure that satisfies
$\lambda((a,b)):=b-a$ whenever $a<b$ are real numbers;
see e.g.~\cite[Thm.\,2.1]{Salamon:2016a}.
\end{definition}

\begin{definition}\label{def:Lebesgue}
A subset $A$ of $\R$ is called \textbf{Lebesgue measurable}
if there exist $A_- \subset A\subset A_+$ where $A_-,A_+\in\Bb$
such that $\lambda(A_+\setminus A_-)=0$.
For a Lebesgue measurable set $A$ one defines its Lebesgue measure
$\lambda(A):=\lambda(A_-){\color{gray}\,=\lambda(A_+)}$.

The family $\Aa$ which consists of all Lebesgue measurable sets
is a $\sigma$-algebra on $\R$ which contains the Borel $\sigma$-algebra
$\Bb$. It is called the \textbf{Lebesgue \boldmath$\sigma$-algebra}.
The map $\lambda\colon\Aa\to[0,\infty]$ is a measure in the sense that
$
   \lambda(\cup_{i=1}^\infty A_i)
   =\sum_{i=1}^\infty \mu(A_i)
$
whenever $A_i$ is a sequence of pairwise disjoint Lebesgue measurable sets.
One calls $\lambda$ the \textbf{Lebesgue measure on \boldmath$\R$}.
\end{definition}

\subsubsection*{Really simple functions}

Let $H$ be a separable Hilbert space with induced norm $\abs{\cdot}$.
A function $r\colon\R\to H$ is called \textbf{really simple}
if it is a finite sum of the form
$$
   r(x)=\sum_{k=1}^N \chi_{I_k} x_k
   ,\qquad
   I_k=(a_k,b_k),\quad x_k\in H,
   \quad k=1,\dots,N .
$$

Note that $\nu(I_k)=b_k-a_k$ is finite.
Since intervals are Lebesgue measurable
$r$ is in particular a simple function with respect to Lebesgue measurable
space $(\R,\Aa)$.
On the other hand, any simple function $s\colon(a,b)\to H$
(see Definition~\ref{def:simple} for $A=(a,b)$ and $\Aa_{(a,b)}:=\Aa\cap(a,b)$)
can be arbitrarily well approximated in norm by really simple functions.

\begin{theorem}\label{thm:really-simple}
For every simple function $s\colon(a,b)\to H$ and every $\eps>0$ there
exists a really simple function $r_\eps\colon(a,b)\to H$ such that
\begin{equation}\label{eq:r-s}
   \int_a^b\abs{s(t)-r_\eps(t)}\, dt
   <\eps.
\end{equation}
\end{theorem}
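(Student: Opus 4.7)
The plan is to reduce Theorem~\ref{thm:really-simple} to a classical measure-theoretic regularity statement: every Lebesgue measurable set $A\subset(a,b)$ admits, for each $\delta>0$, a finite disjoint union $J=\bigsqcup_{i=1}^{p}I_i$ of open intervals $I_i\subset(a,b)$ with $\lambda(A\,\triangle\, J)<\delta$. Given this, the theorem falls out from a short $L^1$ estimate.

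Starting from the representation $s=\sum_{k=1}^N\chi_{A_k}x_k$ and setting $M:=1+\max_k\abs{x_k}$, for a given $\eps>0$ I would apply the regularity statement to each $A_k\cap(a,b)$ with tolerance $\delta:=\eps/(NM)$, obtaining finite disjoint unions of open intervals $J_k=\bigsqcup_{i=1}^{p_k}I_{k,i}\subset(a,b)$ with $\lambda(A_k\,\triangle\, J_k)<\delta$. Because the intervals $I_{k,i}$ are disjoint for each fixed $k$, the function
$$
   r_\eps:=\sum_{k=1}^N\sum_{i=1}^{p_k}\chi_{I_{k,i}}\,x_k
$$
is really simple in the sense of Section~\ref{sec:Lebesgue} and coincides pointwise with $\sum_{k=1}^N\chi_{J_k}x_k$. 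The triangle inequality then yields $\abs{s(t)-r_\eps(t)}\le\sum_{k=1}^N\abs{x_k}\,\abs{\chi_{A_k}(t)-\chi_{J_k}(t)}$ for every $t\in(a,b)$, regardless of any overlaps amongst the $A_k$ or the $J_k$ across different $k$. Integrating and using $\int_a^b\abs{\chi_{A_k}(t)-\chi_{J_k}(t)}\,dt=\lambda(A_k\,\triangle\, J_k)$ gives $\int_a^b\abs{s(t)-r_\eps(t)}\,dt<NM\delta=\eps$, as required.

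The only step requiring any genuine work is the regularity statement, which follows straight from the outer-measure definition of $\lambda$. Given a measurable $A\subset(a,b)$, choose a countable open-interval cover $\{I_n\}_{n\ge1}$ of $A$ with $\sum_{n\ge1}\lambda(I_n)\le\lambda(A)+\delta/2$, then truncate to a finite subfamily $J^0:=\bigcup_{n=1}^{Q}I_n$ for $Q$ large enough that $\sum_{n>Q}\lambda(I_n)<\delta/2$. The bounds $\lambda(A\setminus J^0)\le\sum_{n>Q}\lambda(I_n)<\delta/2$ and $\lambda(J^0\setminus A)\le\sum_n\lambda(I_n)-\lambda(A)<\delta/2$ combine to $\lambda(A\,\triangle\, J^0)<\delta$. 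Rewriting the open set $J^0\cap(a,b)$ as the disjoint union of its (finitely many) connected components, each an open interval, then produces the desired $J$. Thus the ``main obstacle'' is purely measure-theoretic bookkeeping; no deeper analytic input enters.
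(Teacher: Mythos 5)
Your proof is correct and follows the same underlying idea as the paper: approximate each level set $A_k$ in symmetric difference by a finite union of open intervals, and sum the resulting $L^1$ errors weighted by $\abs{x_k}$. The paper simply cites Lieb--Loss and remarks that the real-valued argument carries over because a simple function has finite range; you have unfolded that citation into a self-contained argument (outer regularity of Lebesgue measure, truncation of a countable cover, passage to connected components), which is worth doing given the appendix's declared aim of being self-contained. Two tiny cosmetic points: the bound $\lambda(J^0\setminus A)\le\sum_n\lambda(I_n)-\lambda(A)$ is only $\le\delta/2$, not $<\delta/2$, with the cover as you chose it, but choosing $\delta/3$ tolerances fixes this; and the paper defines $\lambda$ by sandwiching between Borel sets rather than as an outer measure, so you should invoke outer regularity as a known \emph{property} of $\lambda$ rather than as its definition.
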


\begin{proof}
See e.g.~\cite[Thm.\,1.18]{lieb:2001a}
for real valued functions. There simple functions are of the form
$\sum_{k=1}^N \chi_{A_k} x_k$ and the sets $A_k$
are required to be of finite measure.
We choose as total space $A=(a,b)$, and not $A=\R$,
so that this condition is satisfied automatically.
Indeed $\lambda(A_k)\le\lambda((a,b))=b-a<\infty$.
Since simple functions take on only finitely many values,
the proof for Hilbert valued functions remains the same.
\end{proof}

\subsection{Bochner integral}\label{sec:Bochner}

In this section $H$ is a separable Hilbert space
and $I=(a,b)$ a bounded interval.
By $\Aa$ we denote the $\sigma$-algebra
of Lebesgue measurable subsets of $\R$
and $\Aa_{(a,b)}:=\Aa\cap(a,b)$.
By $\abs{\cdot}$ we denote the norm in $H$ induced by the inner product.

\begin{definition}\label{def:Bochner-int}
A map $f\colon\R\to H$ is called \textbf{Bochner integrable}
if it has the following two properties:
\begin{itemize}\setlength\itemsep{0ex} 
\item[(i)]
  $\forall x\in H$ the function $\inner{f}{x}\colon(\R,\Aa)\to(\R,\Bb(\R))$
  is measurable.
\item[(ii)]
  The integral $\int_\R \abs{f(t)}\, dt$ is finite.
\end{itemize}
\end{definition}

\begin{remark}
The map $f\colon(\R,\Aa)\to(H,\Bb(H))$ is measurable,
by property (i) and Theorem~\ref{thm:Pettis-Neerven}.
Since $\abs{\cdot}\colon H\to\R$ is continuous,
the composition
$$
   \abs{f(\cdot)} \colon
   (\R,\Aa)
   \stackrel{f}{\to} (H,\Bb(H))
   \stackrel{\abs{\cdot}}{\to} (\R,\Bb(\R))
$$
is measurable as well.
Therefore the usual scalar Lebesgue integral
$$
   \int_{-\infty}^\infty \abs{f(t)}\, dt
   \in [0,\infty)
$$
is well defined.
\end{remark}

In this section we explain how to define the
\textbf{Bochner integral \boldmath$\dashint$}
$$
   \dashint_I f\, dt \in H
   ,\quad
   I=(a,b)
$$
over bounded intervals ($a<b\in\R$) and show the estimate
$
   \abs{\dashint_I f\, dt}
   \le \int_I \abs{f(t)}\, dt
$.
The symbol $\dashint$ is merely meant to distinguish
the Hilbert valued Bochner integral from the usual real valued integral.

\begin{definition}[Bochner integral of simple functions]
\label{def:Bochner-s}
Consider a simple function
$s=\sum_{k=1}^N \chi_{A_k} x_k\colon\R\to H$ where $A_k\in \Aa$
and $x_k\in H$.
Its Bochner integral over a bounded interval $I=(a,b)$ is defined by
\begin{equation}\label{eq:Bochner-simple-T}
   \dashint_I s\, dt
   :=\dashint_\R \chi_I s\, dt
   :=\sum_{k=1}^N \lambda(A_k\cap I) x_k \in H .
\end{equation}
Here $\lambda$ is the Lebesgue measure on $\R$
and $\mu(A_k\cap I)\le\mu(I)=b-a$ is finite.
It is routine to show that the Bochner integral
of simple functions
\begin{itemize}\setlength\itemsep{0ex} 
\item[a)]
  is independent of the representation of the simple function~$s$;
\item[b)]
  is linear on the real vector space of simple functions;
\item[c)]
  satisfies the inequality
  $\Abs{\dashint_I s\, dt}
  \le \int_I \abs{s(t)}\, dt$.
\end{itemize}
Proofs are given e.g. in~\cite[p.\,81 Rmk.\,X.2.1\,(b,c,e)]{amann:2009a}.
\end{definition}

\begin{definition}[Bochner integral]\label{def:Bochner-f}
Let $f\colon\R\to H$ be Bochner integrable.
Given a bounded interval $I=(a,b)$, by
Theorem~\ref{thm:Pettis-Neerven} (1)$\Leftrightarrow$(3) for
$A=(a,b)$, there is a sequence of simple functions $s_n\colon(a,b)\to H$
such that $\lim_{n\to\infty} s_n=f$ pointwise. 
By Proposition~\ref{prop:Bochner-1} below the limit
\begin{equation}\label{eq:Bochner-int}
   \dashint_a^b f\, dt =\dashint_I f\, dt 
   :=\lim_{n\to\infty} \dashint_I s_n\, dt \in H
\end{equation}
exists in $H$ and is independent of the approximating simple function
sequence for $f$. It is the \textbf{Bochner integral of \boldmath$f$}
with respect to Lebesgue measure.
\end{definition}

\begin{proposition}\label{prop:Bochner-1}
The limit~(\ref{eq:Bochner-int}) exists in $H$ and
it is independent of the approximating
sequence $(s_n)$ for $f$.
\end{proposition}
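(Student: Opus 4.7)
The plan is to reduce both assertions — existence and independence of the limit~(\ref{eq:Bochner-int}) — to the scalar dominated convergence theorem combined with the single structural estimate
\[
   \Abs{\dashint_I s\, dt - \dashint_I r\, dt}
   \le \int_I \abs{s(t) - r(t)}\, dt
\]
for simple functions $s,r$, which is Definition~\ref{def:Bochner-s}\,(c) applied to $s-r$ and uses linearity on the left.

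First I would refine the approximating sequence produced by Theorem~\ref{thm:Pettis-Neerven}\,(1)$\Rightarrow$(3). Using the freedom to choose the dense sequence $(x_n)$ in its proof, I arrange $x_1 := 0$. Then the nearest-neighbor construction $s_n(t) = x_{k(n,f(t))}$ satisfies $\abs{f(t) - s_n(t)} \le \abs{f(t) - x_1} = \abs{f(t)}$, hence the pointwise domination
\[
   \abs{s_n(t)} \le 2 \abs{f(t)}, \qquad t\in I.
\]
Since $\int_I \abs{f(t)}\, dt < \infty$ by Definition~\ref{def:Bochner-int}\,(ii), the scalar functions $\abs{s_n(t) - f(t)}$ converge pointwise to $0$ and are dominated by $3\abs{f(t)}$. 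Scalar dominated convergence then yields $\int_I \abs{s_n - f}\, dt \to 0$ as $n\to\infty$.

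From here the proof is short. For existence, I estimate
\[
   \Abs{\dashint_I s_n\, dt - \dashint_I s_m\, dt}
   \le \int_I \abs{s_n - s_m}\, dt
   \le \int_I \abs{s_n - f}\, dt + \int_I \abs{f - s_m}\, dt
   \longrightarrow 0
\]
as $n,m\to\infty$, so the sequence is Cauchy in the complete space $H$ and converges. For independence, given another approximating sequence $\tilde s_n$ of the same type, the identical estimate applied between $s_n$ and $\tilde s_n$ gives $\abs{\dashint_I s_n - \dashint_I \tilde s_n} \to 0$, so the two limits coincide.

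The hard part is conceptual rather than technical: for a \emph{merely} pointwise-convergent approximation of $f$ by simple functions, the real sequence $\int_I \abs{s_n - s_m}\, dt$ need not tend to $0$, so $(\dashint_I s_n)_n$ need not even be Cauchy. Hence the statement ``independent of the approximating sequence'' has to be interpreted as ranging over sequences of the Pettis-construction type above (equivalently, those with $\int_I \abs{s_n - f} \to 0$); handling this interpretive point — either by baking $x_1 = 0$ into the Pettis construction once and for all, or by restricting the class of admissible approximations in Definition~\ref{def:Bochner-f} — is the only subtle ingredient of the argument.
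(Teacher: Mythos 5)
Your proof is correct and runs along essentially the same lines as the paper's: both arguments reduce existence and independence to the Cauchy estimate
$\abs{\dashint_I s_n\,dt-\dashint_I s_m\,dt}\le\int_I\abs{s_n-f}\,dt+\int_I\abs{f-s_m}\,dt$
combined with the $L^1$ convergence $\int_I\abs{s_n-f}\,dt\to 0$, which in turn comes from scalar dominated convergence with a dominating function that is a multiple of $\abs{f}$. The only real difference is where the domination comes from: the paper (Proposition~\ref{prop:Bochner-2}, implication (ii)$\Rightarrow$(iii)) truncates an \emph{arbitrary} pointwise-approximating sequence to $\sigma_n:=\chi_{\{\abs{s_n}\le 2\abs{f}\}}s_n$, whereas you normalize the Pettis construction of Theorem~\ref{thm:Pettis-Neerven} by taking $x_1=0$, so that $\abs{s_n}\le 2\abs{f}$ holds from the outset. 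Your closing caveat is well taken and in fact points at a weakness of the paper rather than of your argument: for a merely pointwise approximation, assertion (iii) of Proposition~\ref{prop:Bochner-2} can fail (take $f\equiv 0$ and $s_n=n\chi_{(a,a+1/n)}x_0$ with $x_0\ne 0$; then $s_n\to f$ pointwise but $\dashint_a^b s_n\,dt=x_0\ne 0$ for every $n$), and the paper's proof of (ii)$\Rightarrow$(iii) only establishes $\int_I\abs{\sigma_n-f}\,dt\to 0$ for the truncated sequence. So the independence claim must indeed be read as ranging over approximating sequences satisfying $\int_I\abs{s_n-f}\,dt\to 0$ — precisely the restriction you make explicit.
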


\begin{remark}
As a consequence of Theorem~\ref{thm:really-simple}
the Bochner integral of a Bochner integrable
function $f\colon\R\to H$ over bounded intervals $I$,
is given as the limit of Bochner integrals of really simple
functions $r_n$, in symbols
\begin{equation}\label{eq:Bochner-int-r-R}
   \dashint_{-T}^T f\, dt
   =\lim_{n\to\infty} \dashint_{-T}^T r_n\, dt \in H .
\end{equation}
\end{remark}

The proof of Proposition~\ref{prop:Bochner-1}
is based on the following proposition.

\begin{proposition}\label{prop:Bochner-2}
In the presence of the strong measurability property
\begin{itemize}
\item[\rm (i)]
  $f\colon(a,b)\to H$ is a map and
  $s_n\colon(a,b)\to H$ is a sequence of simple functions
  such that $\lim_{n\to\infty} s_n=f$ pointwise
\end{itemize}
the following two properties are equivalent
$$
   \text{\rm (ii) } \int_a^b \abs{f(t)}\, dt <\infty
   \quad\Longleftrightarrow\quad
   \text{\rm (iii) }
   \lim_{n\to\infty}\int_a^b \abs{s_n(t)-f(t)}\, dt=0
$$
and in either case
\begin{equation}\label{eq:Bochner-inequality}
 \biggl| \dashint_a^b f\, dt\biggr|
   \le \int_a^b \abs{f(t)}\, dt .
\end{equation}
\end{proposition}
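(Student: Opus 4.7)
The plan is to prove the two implications separately and then pass to the limit for the inequality. Throughout, I use that both $|f|$ and each $|s_n|$ are measurable real-valued functions (the former by Theorem~\ref{thm:Pettis-Neerven} together with continuity of the norm, the latter since $s_n$ is simple), so all scalar integrals appearing below are well defined.

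I would first dispatch the easy direction (iii)$\Rightarrow$(ii): the triangle inequality gives $|f(t)| \le |f(t) - s_n(t)| + |s_n(t)|$; integrating over $(a,b)$ and using that every simple function is integrable on a bounded interval yields $\int_a^b |f|\,dt \le \int_a^b |s_n - f|\,dt + \int_a^b |s_n|\,dt < \infty$ for any sufficiently large $n$.

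For the substantive direction (ii)$\Rightarrow$(iii), Lebesgue's dominated convergence cannot be applied to $|s_n - f|$ directly since pointwise convergence alone provides no majorant (witness the spikes $s_n = n\chi_{(0,1/n)}$ with $f \equiv 0$). My plan is the standard truncation trick: set
\[
   \tilde s_n(t) := s_n(t) \cdot \chi_{\{|s_n| \le 2|f|\}}(t),
\]
which is again a simple function since the truncation set is Lebesgue measurable. Two routine verifications follow: (a) $\tilde s_n \to f$ pointwise, because if $f(t) \ne 0$ then $|s_n(t) - f(t)| \to 0$ forces $|s_n(t)| \to |f(t)|$, so eventually $|s_n(t)| \le 2|f(t)|$ and hence $\tilde s_n(t) = s_n(t)$, while the case $f(t)=0$ is immediate from the definition of $\tilde s_n$; (b) the pointwise majoration $|\tilde s_n - f| \le |\tilde s_n| + |f| \le 3|f|$ produces an $L^1$ dominating function by (ii). Scalar dominated convergence then gives $\int_a^b |\tilde s_n - f|\,dt \to 0$. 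To pass from $\tilde s_n$ back to $s_n$, I would invoke Proposition~\ref{prop:Bochner-1} (independence of the Bochner integral from the choice of approximating sequence), reducing to the canonical distance-minimizing sequence constructed in the proof of Theorem~\ref{thm:Pettis-Neerven}; with $0$ included in the dense sequence $(x_k)$ this sequence automatically satisfies $|s_n - f| \le |f|$, so $\tilde s_n \equiv s_n$ and (iii) holds for $s_n$ itself.

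Finally, the inequality~\eqref{eq:Bochner-inequality} follows by passing to the limit in the corresponding inequality for simple functions (Definition~\ref{def:Bochner-s}\,c)), namely $\bigl|\dashint_a^b s_n\,dt\bigr| \le \int_a^b |s_n|\,dt$. The left-hand side converges to $\bigl|\dashint_a^b f\,dt\bigr|$ by~\eqref{eq:Bochner-int} and continuity of the norm, while the reverse triangle inequality $\bigl||s_n|-|f|\bigr| \le |s_n - f|$ combined with (iii) shows that the right-hand side converges to $\int_a^b |f|\,dt$. The main obstacle is precisely the truncation-versus-original-sequence step in the (ii)$\Rightarrow$(iii) argument: for a completely arbitrary pointwise-approximating simple sequence the conclusion (iii) can fail (as the spike example shows), so the proof must either invoke Proposition~\ref{prop:Bochner-1} or restrict from the outset to the canonical Pettis construction.
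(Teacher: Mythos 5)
You follow the same scheme as the paper: the triangle inequality for (iii)$\Rightarrow$(ii), the truncation $\tilde s_n := s_n\,\chi_{\{|s_n|\le2|f|\}}$ plus scalar dominated convergence for (ii)$\Rightarrow$(iii), and passage to the limit in the simple-function estimate for the inequality. Your execution is actually more careful than the paper's: you dominate $|\tilde s_n - f|$ explicitly by $3|f|$ (the paper sets up dominated convergence for $F_n:=|\sigma_n|$ against $2|f|$, then asserts (iii) without supplying the majorant for $|\sigma_n-f|$), and you separate the case $f(t)=0$ when checking $\tilde s_n\to f$ pointwise. You are also right to flag the gap you do: the paper establishes $L^1$-convergence for the truncated sequence $\sigma_n$ and then claims this ``yields (iii)'', but (iii) concerns $s_n$, not $\sigma_n$, and the spike example $s_n=n\chi_{(0,1/n)}$, $f\equiv0$ shows (iii) is genuinely false for an arbitrary pointwise-approximating simple sequence even when (ii) holds, so no argument can pass back from $\sigma_n$ to $s_n$ in general.

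Your first proposed repair, however, does not close this gap. Invoking Proposition~\ref{prop:Bochner-1} is circular: Step~1 of the paper's proof of Proposition~\ref{prop:Bochner-1} explicitly relies on part~(iii) of Proposition~\ref{prop:Bochner-2}. And even setting circularity aside, Proposition~\ref{prop:Bochner-1} only asserts independence of the Bochner integral from the choice of approximating sequence; it cannot upgrade $L^1$-convergence of $\sigma_n$ to that of $s_n$, because, as your own spike example shows, no theorem could. Your second repair is the correct one and should be promoted from aside to fix: take $s_n$ to be the canonical Pettis sequence with $0$ inserted into the dense set $(x_k)$, so that $|s_n|\le2|f|$ holds automatically and the truncation is trivial. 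In effect this amounts to reading (ii)$\Rightarrow$(iii) as an existence statement --- there exists a pointwise-approximating simple sequence with $\lim_n\int_a^b|s_n-f|\,dt=0$ --- which is precisely what the downstream material (Proposition~\ref{prop:Bochner-1}, Definition~\ref{def:Bochner-f}) actually needs.
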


\begin{proof}
\smallskip\noindent
(i,iii)$\Rightarrow$(ii):
Add zero and use the triangle inequality and~(iii) to obtain
$$
   \int_a^b \abs{f(t)}\, dt
   \le \int_a^b \abs{f(t)-s_n(t)}\, dt
   +\int_a^b \underbrace{\abs{s_n(t)}}_{\le 1}\, dt
   \stackrel{n\to\infty}{\longrightarrow}
   b-a <\infty.
$$
(i,ii)$\Rightarrow$(iii): 
The function defined by
$$
   \sigma_n
   :=\chi_{\{\abs{s_n}\le2\abs{f}\}} s_n
$$
is simple. Given $t$, for large $n$ one has
$\sigma_n(t)=s_n(t)$. Consequently $\sigma_n\to f$ pointwise.
Furthermore, there is the pointwise inequality
$$
   F_n(t)
   :=\abs{\sigma_n(t)}
   \le 2\abs{f(t)}=: G(t)
$$
between integrable (here (ii) enters) scalar functions.
Continuity of the norm yields that
$F_n(t)\to F(t):=\abs{f(t)}$ pointwise, as $n\to\infty$.
Thus the Lebesgue dominated convergence theorem,
see e.g.~\cite[Thm.~X.3.12]{amann:2009a}, applies and yields~(iii) as
well as the following identity to be used right below
$$
   \lim_{n\to\infty}\int_a^b \abs{\sigma_n(t)}\, dt
   =\int_a^b \abs{f(t)}\, dt .
$$

\smallskip\noindent
Final estimate: Suppose~(i,ii).
Step one in what follows is by~(\ref{eq:Bochner-int}),
step two is by continuity of the norm,
for the final inequality see Definition~\ref{def:Bochner-s} c)
\begin{equation*}
\begin{split}
   \biggl| \dashint_a^b f\, dt\biggr|
   &=\biggl|\lim_{n\to\infty}\dashint_a^b \sigma_n\, dt\biggr|
\\
   &=\lim_{n\to\infty}\biggl|\dashint_a^b \sigma_n\, dt\biggr|
\\
   &\stackrel{3}{\le}\lim_{n\to\infty}\int_a^b \abs{\sigma_n(t)}\, dt .
\end{split}
\end{equation*}
The previous displayed identity proves the estimate and
Proposition~\ref{prop:Bochner-2}.
\end{proof}

\begin{proof}[Proof of Proposition~\ref{prop:Bochner-1}]
The proof has two steps.

\medskip\noindent
\textbf{Step 1.}
  The elements $y_{s_n}:=\dashint_a^b s_n\, dt$
  form a Cauchy sequence in $H$.

\smallskip\noindent
To see this observe that
\begin{equation*}
\begin{split}
   \abs{y_{s_k}-y_{s_\ell}}
   &=\biggl|\dashint_a^b \left( s_k-s_\ell\right)  dt\biggr|
\\
   &\stackrel{\rm c)}{\le}\int_a^b \abs{s_k(t)-s_\ell(t)}\, dt
\\
   &\stackrel{\Delta}{\le}\int_a^b \abs{s_k(t)-f(t)}\, dt
   +\int_a^b \abs{f(t)-s_\ell(t)}\, dt
\end{split}
\end{equation*}
The assertion of Step~1 now follows by part~(iii) of
Proposition~\ref{prop:Bochner-2}.

\medskip\noindent
\textbf{Step 2.}
Let $s_n$ and $\sigma_n$ be sequences of
simple functions both converging pointwise to $f$.
Then both sequences of Bochner integrals
$y_{s_n}$ and $z_{\sigma_n}$ converge in $H$ and have the same limit.

\smallskip\noindent
By Step~1 there are elements $y$ and $z$ of $H$ such that
$y_{s_n}\to y$ and $z_{\sigma_n}\to z$, as $n\to\infty$.
Observe that by continuity of the norm we get inequality one
\begin{equation*}
\begin{split}
   \abs{y-z}
   &\le\lim_{n\to\infty}
   \biggl|\dashint_a^b s_n\, dt-\dashint_a^b \sigma_n\, dt\biggr|
\\
   &\stackrel{\text{c)}}{\le}\lim_{n\to\infty}\int_a^b
   \abs{s_n(t)-\sigma_n(t)} \, dt
\\
   &\stackrel{\Delta}{\le}
   \lim_{n\to\infty}\int_a^b\abs{s_n(t)-f(t)} \, dt
   +\lim_{n\to\infty}\int_a^b\abs{f(t)-\sigma_n(t)} \, dt
\\
   &\stackrel{\text{(iii)}}{=}0 .
\end{split}
\end{equation*}
The penultimate step is by the triangle inequality after adding zero
and by linearity of the real-valued integral.
The ultimate step is by Proposition~\ref{prop:Bochner-2}.
This proves Step~2 and Proposition~\ref{prop:Bochner-1}.
\end{proof}

\begin{lemma}\label{le:hhbhj6478}
Let $f\colon \R\to H$ be Bochner integrable, $x\in H$, and $a<b\in\R$.
Then
$$
   \INNER{\dashint_a^b f \, dt}{x}
   =\int_a^b\INNER{f(t)}{x} dt .
$$
\end{lemma}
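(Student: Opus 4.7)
The plan is to reduce the identity to the case of simple functions by using the defining approximation of the Bochner integral. First I would verify the lemma directly for a simple function $s=\sum_{k=1}^N \chi_{A_k} z_k$. By Definition~\ref{def:Bochner-s} and bilinearity of the inner product,
$$
   \INNER{\dashint_a^b s\, dt}{x}
   =\INNER{\sum_{k=1}^N \lambda(A_k\cap(a,b)) z_k}{x}
   =\sum_{k=1}^N \lambda(A_k\cap(a,b))\INNER{z_k}{x}.
$$
The right-hand side coincides with $\int_a^b\INNER{s(t)}{x}\,dt$, since $\INNER{s(\cdot)}{x}=\sum_k \chi_{A_k}\INNER{z_k}{x}$ is a real-valued simple function whose ordinary Lebesgue integral is exactly this finite sum.

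Next I would pass to the general case. Since $f$ is Bochner integrable, Theorem~\ref{thm:Pettis-Neerven} yields a sequence of simple functions $s_n\colon(a,b)\to H$ with $s_n\to f$ pointwise, and Proposition~\ref{prop:Bochner-2} gives $\int_a^b\abs{s_n(t)-f(t)}\,dt\to 0$. By Definition~\ref{def:Bochner-f} we have $\dashint_a^b s_n\,dt\to \dashint_a^b f\,dt$ in $H$, so by continuity of the bounded linear functional $\INNER{\cdot}{x}\colon H\to\R$,
$$
   \INNER{\dashint_a^b s_n\,dt}{x}
   \longrightarrow
   \INNER{\dashint_a^b f\,dt}{x}.
$$

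For the right-hand side of the claimed identity, the Cauchy--Schwarz estimate $\abs{\INNER{s_n(t)-f(t)}{x}}\le\abs{s_n(t)-f(t)}\abs{x}$ combined with $\int_a^b\abs{s_n(t)-f(t)}\,dt\to 0$ yields
$$
   \int_a^b\INNER{s_n(t)}{x}\,dt
   \longrightarrow
   \int_a^b\INNER{f(t)}{x}\,dt.
$$
Applying the simple-function case to each $s_n$ and sending $n\to\infty$ then gives the desired equality. No step presents a real obstacle; the only point requiring attention is confirming that $\INNER{f(\cdot)}{x}$ is genuinely Lebesgue integrable on $(a,b)$, which follows from Bochner integrability of $f$ together with $\abs{\INNER{f(t)}{x}}\le\abs{f(t)}\abs{x}$.
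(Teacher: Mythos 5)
Your proof is correct and follows the same route as the paper's (one-line) argument: verify the identity for simple functions via bilinearity, then pass to the limit using the approximating sequence from Pettis' theorem, continuity of $\INNER{\cdot}{x}$ on the left, and the $L^1$ convergence from Proposition~\ref{prop:Bochner-2} with Cauchy--Schwarz on the right. You have simply written out the details the paper leaves implicit.
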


\begin{proof}
By linearity of the inner product this is true for simple functions,
hence the Lemma follows by approximating $f$ by simple functions
in view of the continuity of the inner product.
\end{proof}

\subsection{Hilbert space valued Lebesgue and Sobolev spaces}
\label{sec:Lebesgue-Sobolev}

\begin{definition}
Let $p\in[1,\infty)$.
The space $L^p=L^p(\R,H)$ consists of equivalence classes $[f]$ of
functions $f\colon\R\to H$ satisfying the following two conditions.
\begin{itemize}\setlength\itemsep{0ex} 
\item[(i)]
  Every function $\inner{f}{x}\colon(\R,\Aa)\to(\R,\Bb)$, where $x\in H$,
  is measurable.
\item[(ii)]
  The \textbf{\boldmath$L^p$-norm}
  $\norm{f}_p:=\norm{f}_{L^p(\R,H)}:=\left(\int_\R\abs{f(t)}^p dt\right)^{1/p}<\infty$
  is finite.
\end{itemize}
Two functions $f$ and $g$ satisfying (i) and (ii) are called
\textbf{equivalent} if they coincide outside a set of Lebesgue measure zero.
To simplify notation we write the equivalence class $[f]$ still as $f$.
On $L^2(\R,H)$ there is the inner product
$$
   \INNER{f}{g}_{L^2(\R,H)}
   :=\int_\R \INNER{f(t)}{g(t)} \, dt .
$$
The vector space $L^2(\R,H)$ endowed with this inner product becomes a
Hilbert space and the $(L^p,\norm{\cdot}_p)$ are Banach spaces;
see e.g.~\cite[App.\,A.2]{Frauenfelder:2021b}.
\end{definition}

\begin{lemma}
For $p\in[1,\infty)$ every $f\in L^p(\R,H)$ restricted to a bounded
interval $I=(a,b)$ is Bochner integrable along $(a,b)$.
\end{lemma}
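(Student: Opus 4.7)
The plan is to verify directly the two defining conditions of Bochner integrability from Definition~\ref{def:Bochner-int} for the restriction $f|_I$ of $f\in L^p(\R,H)$ to the bounded interval $I=(a,b)$.

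First I would dispatch the measurability condition (i). By the very definition of $L^p(\R,H)$, for every $x\in H$ the scalar function $\langle f(\cdot),x\rangle\colon(\R,\Aa)\to(\R,\Bb)$ is measurable. Restriction to the Lebesgue measurable set $(a,b)$ preserves measurability (pre-images of Borel sets are intersections of measurable sets with $(a,b)$), so condition~(i) holds for $f|_I$ on $((a,b),\Aa_{(a,b)})$.

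Second, for the integrability condition (ii), I would apply H\"older's inequality against the constant function $1$, which lies in $L^q(a,b)$ for every exponent $q\in[1,\infty]$ since $(a,b)$ is bounded. For $p>1$ let $q$ be the conjugate exponent, $1/p+1/q=1$. Then
\begin{equation*}
   \int_a^b \abs{f(t)}\, dt
   =\int_a^b \abs{f(t)}\cdot 1\, dt
   \le \Bigl(\int_a^b \abs{f(t)}^p dt\Bigr)^{1/p} (b-a)^{1/q}
   \le\norm{f}_{L^p(\R,H)}\,(b-a)^{1/q}<\infty.
\end{equation*}
For $p=1$ the bound is immediate: $\int_a^b\abs{f(t)}\,dt\le\int_\R\abs{f(t)}\,dt=\norm{f}_1<\infty$.

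This verifies both properties in Definition~\ref{def:Bochner-int} for $f|_I$, so $f|_I$ is Bochner integrable on $(a,b)$. There is no genuine obstacle here; the only subtlety is recognizing that H\"older's inequality (which for scalar $L^p$ spaces is standard and carries over to the Hilbert valued setting using measurability of $\abs{f(\cdot)}\colon(\R,\Aa)\to(\R,\Bb)$ established via Theorem~\ref{thm:Pettis-Neerven}) is exactly the tool that converts a finite $L^p$ norm into a finite $L^1$ norm on a bounded interval.
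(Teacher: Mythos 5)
Your proof is correct and takes essentially the same route as the paper: the paper's one‑line proof is exactly the H\"older estimate $\int_a^b\abs{f(t)}\,dt\le (b-a)^{1-1/p}\norm{f}_p$, and your version just spells out $(b-a)^{1/q}=(b-a)^{1-1/p}$ and adds the (routine, but worth noting) check of the measurability condition, which the paper leaves implicit.
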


\begin{proof}
By H\"older
$
   \int_\R \abs{f(t)\chi_{(a,b)}(t)}\, dt
   =\int_a^b \abs{f(t)}\, dt
   \le (b-a)^{1-\frac{1}{p}} \norm{f}_p
   <\infty
$.
\end{proof}

\begin{definition}[Sobolev space]
Let $W^{1,2}(\R,H)$ be the vector
space of all $f\in L^2(\R,H)$ for which there exists
an element $v\in L^2(\R,H)$ such that
\begin{equation}\label{eq:weak-derivative}
     \int_\R\INNER{f(t)}{\dot \varphi(t)} dt
     =-\int_\R\INNER{v(t)}{\varphi(t)} dt
\end{equation}
for every $\varphi\in C^\infty_{\rm c}(\R,H)$.
If such a map $v$ exists, then it is unique and called
the \textbf{weak derivative} of $f$.
We denote $v$ by the symbol $\dot f$ or $f^\prime$.
\\
The vector space $W^{1,2}(\R,H)$ is endowed with the
\textbf{\boldmath$W^{1,2}$-inner product}
\begin{equation}\label{eq:W12-inner}
   \INNER{f}{g}_{W^{1,2}(\R,H)}
   :=\inner{f}{g}_{L^2(\R,H)}
   +\inner{\dot f}{\dot g}_{L^2(\R,H)} .
\end{equation}
and the induced \textbf{\boldmath$W^{1,2}$-norm},
notation
$\norm{f}_{W^{1,2}(\R,H)}:=(\INNER{f}{g}_{W^{1,2}(\R,H)})^{\frac12}$.
\\
The vector space $W^{1,2}(\R,H)$ endowed with this inner product
is a Hilbert space; see e.g.~\cite[Prop.\,A.8]{Frauenfelder:2021b}.
\end{definition}

\subsubsection*{The Banach space \boldmath$L^\infty(\R,H)$}

\begin{definition}[Essentially bounded functions]
Let $\Ll^\infty(\R,H)$ be the set of all functions $f\colon\R\to H$
which have the following two properties:
\begin{itemize}\setlength\itemsep{0ex} 
\item[(a)]
  $\forall x\in H$ the function $\inner{f}{x}\colon(\R,\Aa)\to(\R,\Bb(\R))$
  is measurable.
\item[(b)]
  There exists $r\in[0,\infty)$ such that
  $\{\norm{f}>r\}$ is of Lebesgue measure zero.
\end{itemize}
The functions $f\in \Ll^\infty(\R,H)$
are called \textbf{essentially bounded}.
\end{definition}

\begin{proposition}\label{prop:Ll-infty}
The space $\Ll^\infty(\R,H)$ is linear and complete with respect to the
semi-norm $\norm{f}_\infty=\norm{f}_{\Ll^\infty(\R,H)}$ defined as the
infimum of essential bounds
$$
   \norm{f}_\infty
   :=\inf\bigl\{r\ge0\mid
   \text{the set $\{\norm{f}>r\}$ has Lebesgue measure zero}\bigr\}
   .
$$
\end{proposition}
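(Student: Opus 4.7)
The plan is to verify three things in turn: that $\Ll^\infty(\R,H)$ is a vector space, that $\norm{\cdot}_\infty$ is a semi-norm, and that it is complete under this semi-norm. The first two are immediate from the elementary set inclusions
\[
   \{\abs{f+g} > r+s\} \subset \{\abs{f}>r\} \cup \{\abs{g}>s\}
   ,\qquad
   \{\abs{\lambda f} > \abs{\lambda} r\} = \{\abs{f} > r\}
\]
for $\lambda\ne 0$: the first one, combined with the fact that countable unions of null sets are null and that sums of measurable scalar functions $\inner{f}{x}+\inner{g}{x}=\inner{f+g}{x}$ are measurable, gives closure under addition for both (a) and (b); the second one, together with measurability of $\lambda\inner{f}{x}=\inner{\lambda f}{x}$, gives closure under scalar multiplication. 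Taking infima of the bounds $r$, $s$ in these inclusions yields the triangle inequality, and positive homogeneity of $\norm{\cdot}_\infty$ is immediate.

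For completeness, I would use the standard rapid subsequence trick. Let $(f_n)$ be Cauchy in $\Ll^\infty(\R,H)$ and extract a subsequence $(f_{n_k})$ with $\norm{f_{n_{k+1}} - f_{n_k}}_\infty < 2^{-k}$. By definition of the semi-norm, the set
\[
   A_k := \{t\in\R \mid \abs{f_{n_{k+1}}(t) - f_{n_k}(t)} > 2^{-k}\}
\]
has Lebesgue measure zero for every $k$, hence so does $A := \bigcup_{k\ge 1} A_k$. For each $t\in \R\setminus A$ the telescoping estimate $\abs{f_{n_{k+j}}(t) - f_{n_k}(t)} \le \sum_{i\ge k} 2^{-i} = 2^{1-k}$ shows that $(f_{n_k}(t))_k$ is Cauchy in $H$; by completeness of $H$ it converges to some $f(t)\in H$, and I extend $f$ by setting $f(t):=0$ on $A$. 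To check $f\in \Ll^\infty(\R,H)$: for every $x\in H$ the scalar functions $\inner{f_{n_k}(\cdot)}{x}$ are Lebesgue measurable by assumption on the $f_n$ and converge pointwise a.e.\ to $\inner{f(\cdot)}{x}$, which is therefore measurable, verifying (a). Finally, given $\eps>0$, pick $N_\eps$ with $\norm{f_n - f_m}_\infty < \eps$ for $n,m\ge N_\eps$; for each such $n$, on the full-measure complement of the corresponding exceptional null sets (one for each $m$ along the subsequence), passing $m=n_k \to\infty$ in $\abs{f_n(t) - f_{n_k}(t)} \le \eps$ gives $\abs{f_n(t) - f(t)} \le \eps$ almost everywhere. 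This establishes both essential boundedness of $f_n - f$ (whence of $f$) and the convergence $\norm{f_n - f}_\infty \to 0$.

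The main obstacle is bookkeeping of null sets: the proof juggles three families of them (the $A_k$ coming from the rapid subsequence, the exceptional sets coming from each pair $\norm{f_n - f_m}_\infty < \eps$, and the redefinition set $A$ for $f$ itself) and one needs to ensure that all pointwise statements used in passing to limits hold on the complement of a \emph{single} null set so as to translate cleanly back into semi-norm statements. A secondary subtlety worth flagging is that measurability of the limit is not obtained by applying Pettis's theorem (Theorem~\ref{thm:Pettis-Neerven}) directly but rather by the a.e.\ pointwise limit argument, which is why redefining $f$ arbitrarily on $A$ causes no harm and why we can only expect a semi-norm (not a norm) on $\Ll^\infty(\R,H)$.
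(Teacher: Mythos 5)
Your proof is correct, and the approach — elementary set inclusions for the vector-space and semi-norm claims, then the rapid-subsequence Cauchy argument with a.e.\ pointwise limits for completeness — is the standard one. The paper itself only sketches the linearity/semi-norm part and defers both the triangle inequality and completeness to Amann \cite[Rmk.\,X.4.1\,(c), Thm.\,X.4.6]{amann:2009a}, so you have in effect supplied the details that the paper delegates to the reference; your null-set bookkeeping is handled correctly, and your observation that measurability of the limit comes from completeness of the Lebesgue $\sigma$-algebra (a.e.\ pointwise limit of measurable functions) rather than from a direct invocation of Pettis is the right one.
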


\begin{proof}
To see that $\Ll^\infty(\R,H)$ is a real vector space
note that property~(a) follows from the fact that the space of measurable
functions $\inner{f}{x}\colon(\R,\Aa)\to(\R,\Bb)$ is a vector space.
Concerning property~(b), given $\alpha\in\R$ and $f,g\in \Ll^\infty(\R,H)$, then
$\norm{\alpha f+g}_\infty\le\Abs{\alpha}\norm{f}_\infty+\norm{g}_\infty$;
see e.g.~\cite[Rmk.\,X.4.1\,(c)]{amann:2009a}.
For a proof that $\Ll^\infty(\R,H)$ is complete
see e.g.~\cite[Thm.\,X.4.6]{amann:2009a}.
\end{proof}

On $\Ll^\infty(\R,H)$ consider the equivalence relation where $f\sim g$ if the two
maps are equal outside a set of Lebesgue measure zero. On the quotient space
$$
   L^\infty(\R,H):=\Ll^\infty(\R,H)/\sim
$$
the semi-norm $\norm{\cdot}_\infty$ is a norm.
Hence $L^\infty(\R,H)$ is a Banach space by Proposition~\ref{prop:Ll-infty}.
To ease notation we still denote the elements of $L^\infty(\R,H)$ by~$f$.

\subsection{Differentiable compactly supported approximation}
\label{sec:diff-app}

\subsubsection*{Convolution}

Fix a smooth function $\rho\colon\R\to[0,\infty)$
which is supported in $[-1,1]$, is symmetric $\rho(\cdot)=\rho(-\cdot)$,
and satisfies $\int_\R\rho(t)\, dt =1$;
see e.g.~\cite[Ex.\,X.7.12\,b)]{amann:2009a}.
Define
$$
   \rho_\mu(t):=\tfrac{1}{\mu}\rho(\tfrac{s}{\mu})
$$
for $\mu>0$. This function
is supported in $[-\mu,\mu]$ and has the properties that
\begin{equation}\label{eq:rho-mu}
   \int_\R\rho_\mu(t)\, dt
   =\norm{\rho_\mu}_{L^1(\R)}
   =\norm{\rho}_{L^1(\R)}
   =1
\end{equation}
and that $\int_{\R\setminus(-r,r)}\rho_\mu(t)\, dt\to 0$,
as $\mu\to 0$, for any given $r>0$.

\begin{definition}
Let $f\colon \R\to H$ be a function which is Bochner integrable.
The \textbf{convolution} of $f$
by $\rho_\mu$ at time $t\in\R$ is the Bochner integral
\begin{equation}\label{eq:convolution}
   f_\mu(t)
   :=(f*\rho_\mu)(t)
   :=\dashint_{-\mu}^\mu f(t-s) \rho_\mu(s) \, ds
   =\dashint_{t-\mu}^{t+\mu} f(s) \rho_\mu(t-s) \, ds .
\end{equation}
\end{definition}

\begin{lemma}
The Bochner integral~(\ref{eq:convolution}), thus convolution, is well defined.
\end{lemma}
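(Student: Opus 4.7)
The plan is to verify, for each fixed $t\in\R$, that the integrand $s\mapsto f(s)\rho_\mu(t-s)$ satisfies the two defining conditions of Bochner integrability given in Definition~\ref{def:Bochner-int}; then the Bochner integral~(\ref{eq:convolution}) exists as an element of $H$ by Definition~\ref{def:Bochner-f} (applied to the bounded interval $(t-\mu,t+\mu)$, outside of which $\rho_\mu$ vanishes identically).

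First I would check measurability, namely condition~(i) of Definition~\ref{def:Bochner-int}. For any $x\in H$ we have
\[
   \INNER{f(s)\rho_\mu(t-s)}{x}
   =\rho_\mu(t-s)\,\INNER{f(s)}{x}.
\]
The first factor is continuous in $s$ (since $\rho_\mu\in C^\infty$), hence Borel measurable; the second factor is measurable by hypothesis, since $f$ is Bochner integrable. Products of measurable scalar functions are measurable (see e.g.~\cite[Thm.\,1.24]{Salamon:2016a}), so condition~(i) holds.

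Next I would verify the finite-integral condition~(ii). Since $\rho_\mu$ is supported in $[-\mu,\mu]$ and bounded, say by $\norm{\rho_\mu}_\infty<\infty$, we estimate
\[
   \int_\R \Abs{f(s)\rho_\mu(t-s)}\, ds
   =\int_{t-\mu}^{t+\mu} \abs{f(s)}\,\rho_\mu(t-s)\, ds
   \le \norm{\rho_\mu}_\infty \int_{t-\mu}^{t+\mu} \abs{f(s)}\, ds,
\]
which is finite since $f$ is Bochner integrable (so $\int_\R\abs{f(s)}\, ds<\infty$, and in particular the integral over any bounded interval is finite).

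Hence the map $s\mapsto f(s)\rho_\mu(t-s)$ is Bochner integrable on $\R$, and by Definition~\ref{def:Bochner-f} its Bochner integral over the bounded interval $(t-\mu,t+\mu)$ (equivalently over $\R$, since the integrand vanishes outside) defines a unique element $f_\mu(t)\in H$. There is no serious obstacle here: the only point requiring attention is that the measurability condition in Definition~\ref{def:Bochner-int} is stated in the Pettis sense (via the scalar functions $\inner{f}{x}$), so one must resist the temptation to argue by strong measurability directly and instead verify the scalar measurability; however, this is immediate from the product structure above.
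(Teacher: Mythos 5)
Your proposal is correct and follows essentially the same two-step argument as the paper: Pettis-style scalar measurability of the integrand via the product $\rho_\mu(t-s)\INNER{f(s)}{x}$, followed by finiteness of the norm integral. The only (harmless) difference is in the second step: the paper invokes Young's inequality to bound $\norm{\rho_\mu*f}_{L^1(\R,H)}$, whereas you give a direct pointwise bound $\norm{\rho_\mu}_\infty\int_{t-\mu}^{t+\mu}\abs{f(s)}\,ds$ valid for \emph{every} $t$, which is if anything slightly better suited to well-definedness of $f_\mu(t)$ at each fixed $t$.
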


\begin{proof}
(i)~The integrand is measurable.
Indeed, given $t\in\R$ and $x\in H$, then the map
$(\R,\Aa)\to(\R,\Bb)$, $s\mapsto \INNER{\rho_\mu(s)  f(t-s)}{x}
=\rho_\mu(s) \INNER{f(t-s)}{x}$,
is measurable since $\rho_\mu$ is continuous, and therefore
measurable, the second term is measurable by assumption that $f$ is
Bochner integrable, see Definition~\ref{def:Bochner-int},
and the product of measurable functions is measurable.
(ii)~By Young's inequality for the convolution of real valued
functions the norm
$
   \norm{\rho_\mu*f}_{L^1(\R,H)}
   \le\norm{\rho_\mu}_{L^1(\R,H)} \norm{f}_{L^1(\R,H)}
   =\norm{f}_{L^1(\R,H)}
$
is finite.
\end{proof}

\begin{lemma}\label{le:gfhjhgf475}
For $f\in L^2(\R,H)$ the convolution
$f_\mu:=f*\rho_\mu$ lies in $C^1(\R,H)$ and the derivative
is $\frac{d}{dt} (f*\rho_\mu)=f*\frac{d}{dt}\rho_\mu$ whenever $\mu>0$.
\end{lemma}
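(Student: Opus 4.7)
The plan is to reduce the differentiation under the Bochner integral to a scalar-valued computation, exploiting that $\rho_\mu\in C^\infty_{\rm c}(\R)$ and that $f$ enters only via the linear Bochner integral. First I would verify that the candidate derivative $(f\ast\tfrac{d}{dt}\rho_\mu)(t)$ is itself well defined and continuous in $t$: since $\tfrac{d}{dt}\rho_\mu$ is smooth and supported in $[-\mu,\mu]$, and since $f\in L^2(\R,H)$ is Bochner integrable on any bounded interval (the previous lemma in the excerpt), the integral $\dashint_{t-\mu}^{t+\mu}f(s)\tfrac{d}{dt}\rho_\mu(t-s)\,ds$ exists, and its continuity in $t$ will follow from the same dominated-convergence argument used below for the difference quotient.

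Next I would write the difference quotient using linearity of the Bochner integral (as in Definition~\ref{def:Bochner-s} extended by approximation to general integrands, together with Lemma~\ref{le:hhbhj6478}):
\begin{equation*}
   \frac{f_\mu(t+h)-f_\mu(t)}{h}
   =\dashint_\R f(s)\,
   \underbrace{\frac{\rho_\mu(t+h-s)-\rho_\mu(t-s)}{h}}_{=:Q_h(t,s)}\, ds.
\end{equation*}
Since $\rho_\mu$ is smooth and scalar-valued, the fundamental theorem of calculus gives
$
   Q_h(t,s)=\int_0^1 \rho_\mu'\bigl(t+\tau h-s\bigr)\, d\tau,
$
and hence $Q_h(t,s)\to \rho_\mu'(t-s)$ uniformly in $s$ as $h\to 0$, by uniform continuity of the compactly supported smooth function $\rho_\mu'$.

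To pass to the limit under the integral, I would apply the Bochner integral estimate~(\ref{eq:Bochner-inequality}) to obtain
\begin{equation*}
   \biggl|\frac{f_\mu(t+h)-f_\mu(t)}{h}-(f\ast\rho_\mu')(t)\biggr|
   \le\int_\R \abs{f(s)}\cdot\abs{Q_h(t,s)-\rho_\mu'(t-s)}\, ds.
\end{equation*}
For $\abs{h}\le 1$ the integrand is supported in the fixed compact interval $K_t:=[t-\mu-1,t+\mu+1]$ and is dominated by $2\norm{\rho_\mu'}_\infty\abs{f(s)}\chi_{K_t}(s)$, which by the H\"older inequality lies in $L^1(\R)$ since $f\in L^2(\R,H)$ and $K_t$ has finite measure. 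Together with the pointwise (in fact uniform) convergence of the scalar factor, the scalar Lebesgue dominated convergence theorem yields the desired identity $\tfrac{d}{dt}f_\mu(t)=(f\ast\rho_\mu')(t)$. Continuity of this derivative in $t$ follows from the same dominated convergence scheme applied to $(f\ast\rho_\mu')(t+h)-(f\ast\rho_\mu')(t)$, now using uniform continuity of $\rho_\mu'$ directly.

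The only subtle point is the passage of the limit through the Bochner integral, but the pointwise estimate~(\ref{eq:Bochner-inequality}) reduces this issue entirely to a scalar-valued dominated convergence argument, which is standard. Iterating the same scheme with $\rho_\mu$ replaced by $\rho_\mu^{(k)}$ would in fact show $f_\mu\in C^\infty(\R,H)$, but for the statement $C^1$ suffices.
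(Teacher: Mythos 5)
Your proposal is correct and follows essentially the same route as the paper's proof: the fundamental theorem of calculus plus uniform continuity of the compactly supported $\dot\rho_\mu$ to control the difference quotient of the kernel, then the Bochner norm inequality~(\ref{eq:Bochner-inequality}) and H\"older on a bounded interval to transfer this to $f_\mu$, and the same scheme again for continuity of the derivative. The only cosmetic difference is that you invoke dominated convergence, which is redundant here since your uniform convergence of $Q_h(t,\cdot)$ already gives the direct estimate $\eps\cdot\sqrt{2(\mu+1)}\,\norm{f}_{L^2}$ that the paper uses.
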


\begin{proof}
To see this let $\mu>0$. 
Since $\rho$, and therefore $\rho_\mu$, is $C^1$ and compactly
supported its derivative $\dot\rho_\mu$ is uniformly continuous,
so the following is true: Given $\eps>0$, there exists $\delta_\eps>0$
such that if $\abs{\sigma-\tilde\sigma}<\delta_\eps$, then 
\begin{equation}\label{eq:gh77ff}
   \abs{\dot\rho_\mu(\sigma)-\dot\rho_\mu(\tilde\sigma)}<\eps.
\end{equation}
Choose a nonzero number $h\in(-\delta_\eps,\delta_\eps)$.
For $t\in\R$ we calculate
\begin{equation*}
\begin{split}
    \Abs{\tfrac{\rho_\mu(t+h)-\rho_\mu(t)}{h}
   -\dot\rho_\mu(t)}
   &=\biggl|\frac{1}{h} \int_0^1
   \underbrace{\tfrac{d}{d\tau} \rho_\mu(t+\tau
     h)}_{\dot\rho_\mu(t+\tau h) \cdot h} d\tau
   -\dot\rho_\mu(t)\biggr|
\\
   &\le\int_0^1
   \underbrace{\Abs{\dot\rho_\mu(t+\tau h)-\dot\rho_\mu(t)}}
      _{\text{$<\eps$ by (\ref{eq:gh77ff})}}
   d\tau
\\
   &<\eps .
\end{split}
\end{equation*}
Use first estimate~(\ref{eq:Bochner-inequality})
and then the previous estimate to obtain
\begin{equation*}
\begin{split}
   &\Abs{
   \tfrac{f_\mu(t+h)-f_\mu(t)}{h}-f*\dot\rho_\mu(t)}
\\
   &=\Abs{
   \dashint_{t-\mu-\delta}^{t+\mu+\delta} f(s)\left(\tfrac{\rho_\mu(t+h-s)-\rho_\mu(t-s)}{h}
   -\dot\rho_\mu(t-s)\right) ds}
\\
   &\le
   \int_{t-\mu-\delta}^{t+\mu+\delta}\Abs{f(s)}
   \underbrace{
   \Abs{\tfrac{\rho_\mu(t+h-s)-\rho_\mu(t-s)}{h}-\dot\rho_\mu(t-s)}}_{<\eps}
   ds
\\
   &<\eps \sqrt{2(\mu+\delta)} \norm{f}_{L^2(\R,H)} .
\end{split}
\end{equation*}
In the last step we used H\"older's inequality for the product integrand
$1\cdot\abs{f(s)}$.
This proves that the derivative of $f_\mu$ is given by $f*\dot\rho_\mu$.

It remains to show continuity of the derivative.
Given $t\in\R$ and $h\in(-\delta_\eps,\delta_\eps)$ as above,
we calculate
\begin{equation*}
\begin{split}
   &\Abs{\dot f_\mu(t+h)-\dot f_\mu(t)}
\\
   &=\Abs{\dashint_{t-\mu-\delta}^{t+\mu+\delta}
   f(s)\bigl(\dot\rho_\mu(t+h-s)-\dot\rho_\mu(t-s)\bigr) ds
   }
\\
   &\le \int_{t-\mu-\delta}^{t+\mu+\delta}\Abs{f(s)}
   \underbrace{\Abs{\dot\rho_\mu(t+h-s)-\dot\rho_\mu(t-s)}}
      _{\text{$<\eps$ by (\ref{eq:gh77ff})}}
   ds
\\
   &<\eps \sqrt{2(\mu+\delta)} \norm{f}_{L^2(\R,H)} .
\end{split}
\end{equation*}
This proves continuity of $\dot f_\mu$ and concludes the proof of Claim~1.
\end{proof}

\subsubsection*{Lebesgue space}

\begin{theorem}[$C^1_{\rm c}$ approximation of $L^2$]\label{thm:Leb-dense}
For any $f\in L^2(\R,H)$ it holds that
\begin{equation}\label{eq:h66fgh34}
   \lim_{\mu\to 0}\norm{f_\mu-f}_{L^2(\R,H)}=0
   ,\qquad
   f_\mu\stackrel{\text{\rm(\ref{eq:convolution})}}{:=}f*\rho_\mu
   \in C^1(\R,H) .
\end{equation}
More is true, namely $C^1_{\rm c}(\R,H)$
is a dense subset of $L^2(\R,H)$.
\end{theorem}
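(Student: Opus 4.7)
The plan is to prove two claims: the $L^2$-convergence $\norm{f_\mu - f}_{L^2(\R,H)} \to 0$ and the density of $C^1_{\rm c}(\R,H)$ in $L^2(\R,H)$. The $C^1$-regularity of $f_\mu$ is already supplied by Lemma~\ref{le:gfhjhgf475}, so what remains is a purely $L^2$-approximation statement, whose workhorse will be continuity of translation in $L^2$.

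First I would establish
\begin{equation*}
\lim_{h\to 0}\norm{\tau_h f - f}_{L^2(\R,H)} = 0,
\qquad (\tau_h f)(t) := f(t-h),
\end{equation*}
for every $f\in L^2(\R,H)$. On $C^0_{\rm c}(\R,H)$ this is immediate: a compactly supported continuous map from $\R$ into the metric space $H$ is uniformly continuous, and dominated convergence lifts this to the $L^2$-norm. The extension to general $f\in L^2$ is then a standard three-$\varepsilon$ argument, provided one knows that $C^0_{\rm c}(\R,H)$ is $L^2$-dense; this latter density comes from Theorem~\ref{thm:really-simple} (upgraded to the $L^2$-norm by the same partition-and-truncate strategy, exploiting that a simple function takes only finitely many values) together with the elementary observation that each building block $\chi_{(a,b)} x$ can be smoothed into a continuous bump at arbitrarily small $L^2$-cost in $H$. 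Once translation continuity is in hand, Jensen's inequality applied pointwise with the probability measure $\rho_\mu(s)\,ds$ and the convex function $\abs{\cdot}^2$ yields
\begin{equation*}
\abs{f_\mu(t)-f(t)}^2 \le \int_\R \abs{f(t-s)-f(t)}^2\,\rho_\mu(s)\,ds,
\end{equation*}
and Tonelli's theorem then gives
\begin{equation*}
\norm{f_\mu - f}_{L^2}^2 \le \int_\R \rho_\mu(s)\,\norm{\tau_s f - f}_{L^2}^2\,ds,
\end{equation*}
which tends to $0$ as $\mu\to 0$ because $\rho_\mu$ concentrates at the origin and $s\mapsto\norm{\tau_s f - f}_{L^2}^2$ is bounded and continuous at $0$ with value $0$.

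For the density of $C^1_{\rm c}(\R,H)$, given $\varepsilon>0$ I would choose $R>0$ with $\norm{f\cdot \chi_{\R\setminus[-R,R]}}_{L^2} < \varepsilon$ (possible by dominated convergence on $\abs{f}^2$), so that $g := \chi_{[-R,R]} f$ satisfies $\norm{f - g}_{L^2} < \varepsilon$. Then $g_\mu := g*\rho_\mu \in C^1(\R,H)$ by Lemma~\ref{le:gfhjhgf475}, and its support is contained in $[-R-\mu,R+\mu]$ since $\supp\rho_\mu\subset[-\mu,\mu]$, so $g_\mu \in C^1_{\rm c}(\R,H)$. Applying the first part of the theorem to $g$ produces $\mu_0>0$ such that $\norm{g-g_\mu}_{L^2}<\varepsilon$ for $\mu<\mu_0$, and the triangle inequality delivers $\norm{f-g_\mu}_{L^2}<2\varepsilon$. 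The main obstacle is the supporting density step: $L^2$-density of $C^0_{\rm c}(\R,H)$ in the Hilbert-valued setting. The scalar case is classical, but here it must be carefully derived from Pettis' Theorem~\ref{thm:Pettis-Neerven} and an $L^2$-analogue of Theorem~\ref{thm:really-simple}; once that scaffolding is in place, the convolution argument proceeds exactly as in the scalar case.
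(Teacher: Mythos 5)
Your proof is correct, but it follows a genuinely different route from the paper's. The paper first reduces the $L^2$ statement to an $L^1$ statement by truncating $f$ at height $h$ and using $\norm{g}_{L^2}^2\le\norm{g}_{L^\infty}\norm{g}_{L^1}$, then proves the $L^1$ approximation by splitting $\R$ into a tail region (handled by absolute continuity of the integral and Young's inequality) and a compact region, on which $f$ is replaced by a really simple function $r=\sum\chi_{I_k}x_k$ and the convergence $r_\mu\to r$ is verified by a direct computation on characteristic functions of intervals; compact support is then obtained with a cutoff sequence. You instead prove continuity of translation in $L^2(\R,H)$, apply the Bochner-integral triangle inequality together with Cauchy--Schwarz (your ``Jensen'' step) against the probability measure $\rho_\mu(s)\,ds$, and integrate with scalar Tonelli to get the clean quantitative bound $\norm{f_\mu-f}_{L^2}\le\sup_{\abs{s}\le\mu}\norm{\tau_sf-f}_{L^2}$; your compact-support step (truncate first, then mollify) is also simpler than the paper's Step~6. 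The trade-off is that your route front-loads the work into the supporting density statement: $L^2$-density of $C^0_{\rm c}(\R,H)$, which you correctly identify as the main obstacle and which itself requires essentially the same Pettis/really-simple-function scaffolding the paper deploys (pointwise-approximating simple functions via Theorem~\ref{thm:Pettis-Neerven}, dominated convergence with dominating function $4\abs{f}^2$ as in Proposition~\ref{prop:Bochner-2}, the $L^1$-to-$L^2$ upgrade via uniform boundedness, and smoothing of $\chi_{(a,b)}x$). Two minor points you should make explicit if you write this up: the joint measurability of $(s,t)\mapsto\abs{f(t-s)-f(t)}^2$ needed for Tonelli (it follows from strong measurability of $f$), and that the ``Jensen'' inequality for the $H$-valued integral is really estimate~(\ref{eq:Bochner-inequality}) followed by scalar Cauchy--Schwarz, so no vector-valued Jensen inequality needs to be invoked.
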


One can replace $C^1_{\rm c}(\R,H)$ by $C^\infty_{\rm c}(\R,H)$ with
some more work, but for our purposes this is not needed.

\begin{proof}
The proof has six steps.
For $I\subset\R$ we often abbreviate $L^1_I:=L^1(I,H)$.

\smallskip
\noindent
\textbf{Step~1 (Reduction to {\boldmath$L^1$}).}
It suffices to prove approximation~(\ref{eq:h66fgh34}) in $L^1$.

\smallskip
\noindent
To see this pick $f\in L^2(\R,H)$ and $\eps>0$.
Then there exists $h=h_\eps>0$ such that
$\norm{f}_{L^2(\{\abs{f}> h\})}<\frac{\eps}{4}$.
For the truncated function $f^h:=\chi_{\norm{f}<h} f$ we get
\begin{equation*}
\begin{split}
   \norm{f^h-f}_{L^2_\R}
   &=\left(\int_{\{\abs{f}> h\}} f^2(t)\, dt\right)^{1/2}
   <\frac{\eps}{4} .
\end{split}
\end{equation*} 
By assumption approximation holds in $L^1$, so there exists
$\mu=\mu_\eps>0$ such that
$$
   \norm{(f^h)_\mu-f^h}_1<\frac{\eps^2}{8h} .
$$
Note that $\norm{f^h}_\infty\le h$. Now use this estimate
to get for every $t\in\R$ the estimate
$
   \abs{(\rho_\mu*f^h)(t)}
   \le\int_{-\mu}^\mu \rho_\mu(s)\abs{f^h(t-s)} \, ds
   \le h \norm{\rho_\mu}_1=h
$.
With this we get
\begin{equation*}
\begin{split}
   \norm{(f^h)_\mu-f^h}_2^2
   &=\int_\R \abs{(f^h)_\mu(t)-f^h(t)}^2\, dt
\\
   &\le \left(\norm{(f^h)_\mu}_\infty + \norm{f^h}_\infty\right)
   \int_\R \abs{(f^h)_\mu(t)-f^h(t)}\, dt
\\
   &\le 2h \norm{(f^h)_\mu-f^h}_1 .
\end{split}
\end{equation*}
Using the above three displayed inequalities we estimate
\begin{equation*}
\begin{split}
   \norm{f_\mu-f}_2
   &\le\norm{\rho_\mu*(f-f^h)}_2
   +\norm{(f^h)_\mu-f^h}_2
   +\norm{f^h-f}_2
\\
   &\le\norm{\rho_\mu}_1 \norm{f-f^h}_2
   +\sqrt{2h\norm{(f^h)_\mu-f^h}_1}
   +\norm{f^h-f}_2
\\
   &\le\tfrac{\eps}{2}+\sqrt{\tfrac{\eps^2}{4}}
\\
   &=\eps .
\end{split}
\end{equation*}
In step two we used Young's inequality
$\norm{gh}_r\le\norm{g}_p\norm{h}_q$ 
where $1+\frac{1}{r}=\frac{1}{p}+\frac{1}{q}$ for $r=q=2$ and $p=1$.
In step three we used~(\ref{eq:rho-mu}).

\smallskip
\noindent
\textbf{Step~2 (\boldmath$C^1$ approximation near {\boldmath$\infty$}).}
Pick $f\in L^2(\R,H)$ and $\eps>0$.
There exists $T=T(\eps)>1$ such that
$
   \norm{f_\mu-f}_{L^1_{\R\setminus[-T,T]}}<\tfrac{\eps}{2}
$
whenever $\mu\in(0,1]$.

\smallskip
\noindent
Since $f\in L^1$ there exists $T>1$ such that the integral near infinity
is small
\begin{equation}\label{eq:hfgh34974}
   \norm{f}_{L^1_{\R\setminus[-T+1,T-1]}}<\tfrac{\eps}{4}.
\end{equation}
We estimate
\begin{equation*}
\begin{split}
   \norm{f_\mu}_{L^1_{\R\setminus[-T,T]}}
   &=\norm{\rho_\mu*f}_{L^1_{\R\setminus[-T,T]}}
\\
   &=\norm{\rho_\mu*f\chi_{\R\setminus[-T+1,T-1]}}_{L^1_{\R\setminus[-T,T]}}
\\
   &\le \norm{\rho_\mu*f\chi_{\R\setminus[-T+1,T-1]}}_{L^1_{\R}}
\\
   &\le\norm{\rho_\mu}_{L^1_{\R}} \norm{f\chi_{\R\setminus[-T+1,T-1]}}_{L^1_{\R}}
\\
   &=\norm{f}_{L^1_{\R\setminus[-T+1,T-1]}}
\\
   &<\tfrac{\eps}{4}.
\end{split}
\end{equation*}
Step two uses that $\mu\le 1$ and we multiplied by a characteristic
function which is identically $1$ on that domain.
Step three is by monotonicity of the integral.
Step four is by Young's inequality.
Step five uses that $\norm{\rho_\mu}_{L^1(\R)}=1$.
The final step six is by~(\ref{eq:hfgh34974}).

To conclude the proof of Step~2 we estimate
\begin{equation*}
\begin{split}
   \norm{f_\mu-f}_{L^1_{\R\setminus[-T,T]}}
   \le\norm{f_\mu}_{L^1_{\R\setminus[-T,T]}}+\norm{f}_{L^1_{\R\setminus[-T,T]}}
   < \tfrac{\eps}{4} +\tfrac{\eps}{4}=\tfrac{\eps}{2}.
\end{split}
\end{equation*}
Here we used~(\ref{eq:hfgh34974}) and the
subsequent estimate

\smallskip
\noindent
\textbf{Step~3 (Approximate \boldmath$f$ by really simple function $r$).}
There is a really simple function $r\colon[-T-1,T+1]\to H$
such that $\norm{f-r}_{L^1([-T-1,T+1],H)}<\tfrac{\eps}{6}$.

\smallskip
\noindent
By Proposition~\ref{prop:Bochner-2}~(iii)
we approximate $f$ by a simple function $s$
which, by~(\ref{eq:r-s}), we approximate by a really simple function $r$.

\smallskip
\noindent
\textbf{Step~4 (\boldmath$C^1$ approximation of $r$).}
There exists a constant $\mu_\eps\in(0,1]$
such that $\norm{r_\mu-r}_{L^1(\R,H)}<\tfrac{\eps}{6}$
whenever $\mu\in(0,\mu_\eps]$.

\smallskip
\noindent

A really simple function
$r\colon [-T,T]\to H$ is of the form
$r=\sum_{k=1}^N \chi_{I_k} x_k$
where each $I_k$ is an interval.
Note that $\abs{(\rho_\mu*\chi_{I_k})-\chi_{I_k}}\le 1$
and that for every $t$ outside of the intervals $(a_k-\mu,a_k+\mu)$ and
$(b_k-\mu,b_k+\mu)$ the function
$\abs{(\rho_\mu*\chi_{I_k})(t)-\chi_{I_k}(t)}=0$ vanishes.
Consequently
$
   \int_{-\infty}^\infty
   \abs{(\rho_\mu*\chi_{I_k})-\chi_{I_k}}\, dt
   \le 4\mu 
$
and therefore $\norm{r_\mu-r}_{L^1(\R,H)}\le 4\mu N \kappa$
where $\kappa:=\max\{\norm{x_1},\dots, \norm{x_k}$.
Hence Step~4 follows by choosing $\mu_\eps<\eps/(24 N\kappa)$.

\smallskip
\noindent
\textbf{Step~5 (\boldmath$C^1$ approximation of $f$).}
Given $\mu\in(0,\mu_\eps]\subset(0,1]$,
then we have $\norm{f_\mu-f}_{L^1(\R,H)}<\eps$.
Equivalently, this proves~(\ref{eq:h66fgh34}).

\smallskip
\noindent
To prove this we estimate
\begin{equation}\label{eq:hfgh34}
\begin{split}
   \norm{f_\mu-r_\mu}_{L^1_{[-T,T]}}
   &=\norm{\rho_\mu*(f-r)}_{L^1_{[-T,T]}}
\\
   &=\norm{\rho_\mu*(f-r)\chi_{[-T-1,T+1]}}_{L^1_{[-T,T]}}
\\
   &\le \norm{\rho_\mu*(f-r)\chi_{[-T-1,T+1]}}_{L^1_\R}
\\
   &\le\norm{\rho_\mu}_1 \norm{(f-r)\chi_{[-T-1,T+1]}}_{L^1_\R}
\\
   &=\norm{f-r}_{L^1_{[-T-1,T+1]}}
\\
   &<\tfrac{\eps}{6}.
\end{split}
\end{equation}
Step two uses that $\supp \rho_\mu\subset [-\mu,\mu]\subset[-1,1]$
and we multiplied by a characteristic function which is identically
$1$ on that domain.
Step four is by Young's inequality.
Step five uses that $\norm{\rho_\mu}_1=\norm{\rho}_1=1$.
The final step six is by Step~3.

Now we decompose $\R=\R\setminus[-T,T]\cup [-T,T]$
and use Step~2 to obtain
\begin{equation*}
\begin{split}
   \norm{f_\mu-f}_{L^1(\R,H)}
   &=\norm{f_\mu-f}_{L^1_{\R\setminus[-T,T]}}
   +\norm{f_\mu-f}_{L^1_{[-T,T]}}
\\
   &\le\tfrac{\eps}{2}
   +\norm{f_\mu-r_\mu}_{L^1_{[-T,T]}}
   +\norm{r_\mu-r}_{L^1_{[-T,T]}}
   +\norm{r-f}_{L^1_{[-T,T]}}
\\
   &<\tfrac{\eps}{2}+\tfrac{\eps}{6}+\tfrac{\eps}{6}+\tfrac{\eps}{6}
   =\eps.
\end{split}
\end{equation*}
The final inequality is by~(\ref{eq:hfgh34})
and Steps three and four.

\smallskip
\noindent
Since $\eps>0$ was arbitrary we get
$\lim_{\mu\to0}\norm{f_\mu-f}_{L^1(\R,H)}=0$.
This also proves $L^2$ convergence by Step~1.
The proof of Step~5 is complete.

\smallskip
\noindent
\textbf{Step~6 (Compact support).}
For any $f\in L^{1,2}(\R,H)$ the sequence
$F_k\in C^1_0(\R,H)$ defined prior to~(\ref{eq:hgfd35g})
converges to $f$ in $L^2$.

\smallskip
\noindent
In~(\ref{eq:hgfd35g}) replace the $W^{1,2}$ norm by the $L^2$ norm.

\smallskip
\noindent
This concludes the proof of Theorem~\ref{thm:Leb-dense}.
\end{proof}

\subsubsection*{Sobolev space}

\begin{theorem}[$C^1$ approximation of $W^{1,2}$]\label{thm:Sob-dense}
The set $C^1_{\rm c}(\R,H)$ of smooth compactly supported maps
is a dense subset of the Hilbert space $W^{1,2}(\R,H)$.
\end{theorem}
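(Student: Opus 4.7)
The plan is to combine the $L^{2}$ approximation theorem (Theorem~\ref{thm:Leb-dense}) with a commutation property of convolution with the weak derivative, and then truncate to obtain compact support.

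First I would prove the key identity that if $f\in W^{1,2}(\R,H)$ has weak derivative $\dot f\in L^{2}(\R,H)$, then the mollification $f_\mu:=f*\rho_\mu$ lies in $C^{1}(\R,H)$ with classical derivative equal to $(\dot f)_\mu$. By Lemma~\ref{le:gfhjhgf475} we already know $f_\mu\in C^{1}(\R,H)$ with derivative $f*\dot\rho_\mu$. For any $x\in H$ and $t\in\R$, using Lemma~\ref{le:hhbhj6478} and the fact that $s\mapsto\rho_\mu(t-s)$ is a bona fide test function in $C^{\infty}_{\mathrm c}(\R,\R)$, I would compute
\begin{equation*}
\bigl\langle (f*\dot\rho_\mu)(t),x\bigr\rangle
=\int_\R \bigl\langle f(s),x\bigr\rangle\, \dot\rho_\mu(t-s)\, ds
=\int_\R \bigl\langle \dot f(s),x\bigr\rangle\, \rho_\mu(t-s)\, ds
=\bigl\langle ((\dot f)_\mu)(t),x\bigr\rangle,
\end{equation*}
the middle step being the defining property~(\ref{eq:weak-derivative}) of the weak derivative applied to the scalar test function $s\mapsto\rho_\mu(t-s)\cdot x$ (which is justified because $(\rho_\mu(t-\cdot))\, x\in C^{\infty}_{\mathrm c}(\R,H)$ and pairing with $x$ commutes with the Bochner integral). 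Since $x\in H$ is arbitrary and $H$ is a Hilbert space this yields $\dot f_\mu=(\dot f)_\mu$ pointwise.

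Once the commutation is in place, $W^{1,2}$ convergence is immediate: apply Theorem~\ref{thm:Leb-dense} separately to $f$ and to $\dot f$, getting $f_\mu\to f$ and $\dot f_\mu=(\dot f)_\mu\to\dot f$ in $L^{2}(\R,H)$; by the definition of the $W^{1,2}$ inner product~(\ref{eq:W12-inner}) this gives $f_\mu\to f$ in $W^{1,2}(\R,H)$.

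Finally, to obtain compact support, I would fix a cutoff family $\chi_k\in C^{\infty}_{\mathrm c}(\R,[0,1])$ with $\chi_k\equiv 1$ on $[-k,k]$, $\supp\chi_k\subset[-k-1,k+1]$, and $\sup_{k,t}|\dot\chi_k(t)|\le M$ for a constant $M$ independent of $k$. For $g\in C^{1}(\R,H)\cap W^{1,2}(\R,H)$ set $G_k:=\chi_k g\in C^{1}_{\mathrm c}(\R,H)$; then $\dot G_k=\dot\chi_k\, g+\chi_k\, \dot g$, and I would estimate
\begin{equation*}
\norm{G_k-g}_{W^{1,2}}^{2}
\le (1+M^{2})\norm{g}_{L^{2}(\R\setminus[-k,k],H)}^{2}
+\norm{\dot g}_{L^{2}(\R\setminus[-k,k],H)}^{2},
\end{equation*}
which tends to $0$ as $k\to\infty$ by absolute continuity of the Lebesgue integral. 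A standard diagonal selection in $\mu$ and $k$ then produces a sequence in $C^{1}_{\mathrm c}(\R,H)$ converging to $f$ in $W^{1,2}(\R,H)$.

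The main technical obstacle is the commutation step $\dot f_\mu=(\dot f)_\mu$, because the weak derivative is defined only against test functions in $C^{\infty}_{\mathrm c}(\R,H)$ and one must carefully pair with $x\in H$ (using Lemma~\ref{le:hhbhj6478} and Pettis' theorem~\ref{thm:Pettis-Neerven}) in order to reduce the Hilbert space valued identity to the scalar statement where~(\ref{eq:weak-derivative}) can be applied directly. Everything else is a routine combination of $L^{2}$ density (Theorem~\ref{thm:Leb-dense}) with a dominated-convergence style cutoff argument.
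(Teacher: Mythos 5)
Your proposal is correct and follows the same two-stage strategy as the paper: first mollify (using Lemma~\ref{le:gfhjhgf475} together with a commutation identity equivalent to Lemma~\ref{le:conv-deriv}), then cut off to obtain compact support. The only cosmetic difference is that you establish $\dot f_\mu=(\dot f)_\mu$ pointwise by pairing with a fixed $x\in H$ and feeding the test function $s\mapsto\rho_\mu(t-s)x$ into~(\ref{eq:weak-derivative}), whereas the paper's Lemma~\ref{le:conv-deriv} obtains it as a weak-derivative identity by testing against arbitrary $\varphi\in C^\infty_{\rm c}(\R,H)$ and invoking Fubini; the two derivations are interchangeable.
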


The proof of the theorem uses the following lemma.

\begin{lemma}\label{le:conv-deriv}
For $f\in W^{1,2}(\R,H)$ it holds
$(f*\rho_\mu)^\prime=f^\prime*\rho_\mu$ whenever $\mu>0$.
\end{lemma}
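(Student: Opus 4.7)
The plan is to combine Lemma~\ref{le:gfhjhgf475}, which puts the derivative on the mollifier, with the defining property of the weak derivative applied to a suitable test function. Concretely, since $f \in W^{1,2}(\R,H) \subset L^2(\R,H)$, Lemma~\ref{le:gfhjhgf475} already yields $(f*\rho_\mu)^\prime(t) = (f*\dot\rho_\mu)(t)$, so the claim reduces to showing the pointwise identity
\[
   (f*\dot\rho_\mu)(t) \;=\; (f^\prime*\rho_\mu)(t) \qquad \forall t\in\R.
\]

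To verify this, I would fix $t \in \R$ and $x \in H$, and introduce the scalar-times-vector test function $\varphi_{t,x}(s) := \rho_\mu(t-s)\,x$, which lies in $C^\infty_{\rm c}(\R,H)$ because $\rho_\mu$ is smooth and compactly supported. Its derivative is $\dot\varphi_{t,x}(s) = -\dot\rho_\mu(t-s)\,x$. Plugging $\varphi_{t,x}$ into the weak-derivative identity~(\ref{eq:weak-derivative}) gives
\[
   -\int_\R \INNER{f(s)}{x}\,\dot\rho_\mu(t-s)\,ds
   \;=\; -\int_\R \INNER{f^\prime(s)}{x}\,\rho_\mu(t-s)\,ds,
\]
and by Lemma~\ref{le:hhbhj6478}, pulling $x$ inside each Bochner integral, the two sides are respectively $\INNER{(f*\dot\rho_\mu)(t)}{x}$ and $\INNER{(f^\prime*\rho_\mu)(t)}{x}$. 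Since $x\in H$ is arbitrary, the vectors agree, proving the desired identity.

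Finally, I would combine this with Lemma~\ref{le:gfhjhgf475} to conclude $(f*\rho_\mu)^\prime = f*\dot\rho_\mu = f^\prime*\rho_\mu$.

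I do not anticipate any serious obstacle here: the only subtlety is that the paper's weak-derivative convention uses test functions valued in $H$, so one must choose the test function of the form $g(s)\,x$ in order to extract a scalar pairing that matches the classical integration-by-parts computation. Once one observes that separating variables reduces the problem to the scalar case and that Lemma~\ref{le:hhbhj6478} lets one move $x$ freely in and out of Bochner integrals, everything falls into place without further measure-theoretic fuss.
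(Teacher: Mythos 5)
Your proof is correct, but it takes a genuinely different route from the paper's. The paper tests $f^\prime*\rho_\mu$ against an \emph{arbitrary} $\varphi\in C^\infty_{\rm c}(\R,H)$, swaps the order of integration by Fubini (after checking absolute integrability), uses the symmetry $\rho(\cdot)=\rho(-\cdot)$ to recognize the inner integral as $\rho_\mu*\varphi$, applies the weak-derivative identity of $f$ to the mollified test function, and then runs Fubini backwards --- thereby establishing directly that $f^\prime*\rho_\mu$ is the \emph{weak} derivative of $f*\rho_\mu$. You instead localize: for fixed $(t,x)$ the single test function $\varphi_{t,x}(s)=\rho_\mu(t-s)\,x$ turns the weak-derivative identity~(\ref{eq:weak-derivative}) into the pointwise equality $(f*\dot\rho_\mu)(t)=(f^\prime*\rho_\mu)(t)$ via Lemma~\ref{le:hhbhj6478}, and Lemma~\ref{le:gfhjhgf475} converts this into a statement about the classical derivative. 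Your version avoids Fubini entirely and does not use the symmetry of $\rho$, so it is shorter and more elementary; the paper's version has the small advantage of producing the weak-derivative identity in exactly the form it is consumed in the proof of Theorem~\ref{thm:Sob-dense}, whereas you rely on the (standard, and implicitly used by the paper as well) fact that for a $C^1$ map the classical derivative is also the weak derivative. Both arguments are sound.
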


\begin{proof}
Pick $\varphi\in C^\infty_{\rm c}(\R,H)$.
Fix $T>0$ such that $\supp\varphi\subset[-T,T]$.
Abbreviate $I:=[-T-\mu,T+\mu]$.
We compute by definition~(\ref{eq:convolution}) of convolution
\begin{equation*}
\begin{split}
   \int_{\R_t}\INNER{(f^\prime*\rho_\mu)(t)}{\varphi(t)} dt
   &\stackrel{1}{=}\int_{I_t}\int_{I_s}\INNER{f^\prime(s)\rho_\mu(t-s)}{\varphi(t)} dsdt
\\
   &\stackrel{2}{=}\int_{I_s}\int_{I_t}\INNER{f^\prime(s)}{\rho_\mu(t-s)\varphi(t)} dtds
\\
   &\stackrel{3}{=}\int_{I_s}\INNER{f^\prime(s)}{(\rho_\mu*\varphi)(s)} ds
\\
   &\stackrel{4}{=}-\int_{I_s}\INNER{f(s)}
   {\underbrace{(\rho_\mu*\varphi)^\prime(s)}_{(\rho_\mu*\dot \varphi)(s)}} ds
\\
   &\stackrel{5}{=}-\int_{I_s}\int_{I_t}
   \underbrace{\INNER{f(s)}{\rho_\mu(s-t)\dot\varphi(t)}}
   _{\INNER{f(s) \rho_\mu(s-t)}{\dot\varphi(t)}} dtds
\\
   &\stackrel{6}{=}-\int_{\R_t}\INNER{(f*\rho_\mu)(t)}{\dot\varphi(t)} dt .
\end{split}
\end{equation*}
Steps 1, 3, 5, and 6 are by Lemma~\ref{le:hhbhj6478}.
Step 2 is by the Theorem of Fubini, see e.g.~\cite[Thm.\,X.6.16]{amann:2009a},
which applies since the integrand is absolutely integrable: Indeed the integral
\begin{equation*}
\begin{split}
   \int_{I_t}\int_{I_s}\Abs{\INNER{f^\prime(s)\rho_\mu(t-s)}{\varphi(t)}} dsdt
   &\le2(T+\mu)\tfrac{1}{\mu} \norm{\varphi}_\infty\int_{I_s}\Abs{f^\prime(s)}  \, ds
\\
   &\le (2T+2\mu)^{\frac{3}{2}}\tfrac{1}{\mu} \norm{\varphi}_\infty\norm{f^\prime}_2
\end{split}
\end{equation*}
is finite.
Step 3 also uses that $\rho(t)=\rho(-t)$ has been chosen symmetric.
Step 4 is by definition~(\ref{eq:weak-derivative}) of weak derivative.
We then used Lemma~\ref{le:gfhjhgf475} for $H=\R$.
Since $\varphi$ was an arbitrary test function this proves
Lemma~\ref{le:conv-deriv}.
\end{proof}

\begin{proof}[Proof of Theorem~\ref{thm:Sob-dense}]
The proof has two steps.

\smallskip
\noindent
\textbf{Step~1 (\boldmath$C^1$ approximation).}
$C^1(\R,H)\cap W^{1,2}(\R,H)$ is dense in $W^{1,2}(\R,H)$.

\medskip
\noindent
To prove this pick $f\in W^{1,2}(\R,H)$.
In particular $f$ and its weak derivative $f^\prime$ are in $L^2(\R,H)$.
Hence, applying twice Theorem~\ref{thm:Leb-dense}, we have convergence
$$
   f_\mu\stackrel{L^2}{\longrightarrow} f
   ,\qquad
   (f^\prime)_\mu\stackrel{L^2}{\longrightarrow}  f^\prime
   ,\qquad \text{as $\mu\to 0$}.
$$
Since $(f^\prime)_\mu =(f_\mu)^\prime$,
by Lemma~\ref{le:conv-deriv},
this shows that 
\begin{equation}\label{eq:h68776fgh34}
   \lim_{\mu\to 0}\norm{f_\mu-f}_{W^{1,2}(\R,H)}=0
   ,\qquad
   f_\mu\stackrel{\text{\rm(\ref{eq:convolution})}}{:=}f*\rho_\mu
   \in C^1(\R,H) .
\end{equation}

\smallskip
\noindent
\textbf{Step~2 (\boldmath$C^1$ approximation with compact support).}
For any $f\in W^{1,2}(\R,H)$ there is a sequence
$F_k\in C^1_0(\R,H)$ which converges to $f$ in $W^{1,2}$.

\smallskip
\noindent
To see this pick $\phi\in C^\infty_{\rm c}(\R,[0,1])$ supported in
$[-2,2]$ and with $\phi\equiv 1$ on $[-1,1]$.
For $k\in\N$ set $\phi_k(t):=\phi(\tfrac{t}{k})$ and $g_k:=\phi_k f$.
We claim that $g_k\to f$ in $W^{1,2}(\R,H)$. Indeed since
$\phi_k\equiv1$ on $\{\abs{t}\le k\}$
the integral domain reduces to
\begin{equation*}
\begin{split}
   \norm{f-g_k}_2^2
   &=\int_{\abs{t}> k}
   \underbrace{(1-\phi_k(t))}_{\le 1}(1-\phi_k(t))\cdot \abs{f(t)}^2 dt
\\
   &\le \int_{\abs{t}> k}\abs{f(t)}^2 dt
   \to 0 \text{, as $k\to 0$,}
\end{split}
\end{equation*}
and since $\dot g_k(t)=\phi_k(t)\dot f(t)+k\dot\phi(k t) f(t)$ we get
\begin{equation*}
\begin{split}
   \norm{\dot f-\dot g_k}_2^2
   &=\int_\R
   \abs{(1-\phi(\tfrac{t}{k}))\dot f(t)-\tfrac{1}{k}\dot\phi(\tfrac{t}{k}) f(t)}^2 dt
\\
   &\le2\int_{\abs{t}> k}
   \abs{\dot f(t)}^2 dt
   +\frac{2}{k^2} \norm{\dot\phi}_\infty\norm{f}_2^2
   \to 0 \text{, as $k\to \infty$.}
\end{split}
\end{equation*}
The sequence of compactly supported smooth functions
$
    F_k
   :=\phi_k (f*\rho_k)
$
approximates $f$ in $W^{1,2}$, indeed
\begin{equation}\label{eq:hgfd35g}
\begin{split}
   \norm{f-F_k}_{1,2}
   &=\norm{f-\phi_k f +\phi_k(f-f*\rho_k)}_{1,2}
\\
   &\le
   \underbrace{\norm{f-g_k}_{1,2}}_{\to 0  \text{ shown above}}
   +\underbrace{\norm{f-f*\rho_k}_{1,2}}_{\to 0  \text{ by~(\ref{eq:h68776fgh34})}} .
\end{split}
\end{equation}
In step~1 we added zero and step~2 uses the triangle inequality and
that $\phi_k\le 1$.
\\
This proves Step~2 and concludes the proof of Theorem~\ref{thm:Sob-dense}.
\end{proof}

\subsection{Sobolev embedding}\label{sec:Sobolev}

\begin{theorem}\label{thm:Sob-estimate}
Any element $v\in W^{1,2}(\R,H)$ satisfies the estimate
\begin{equation}\label{eq:Sob-estimate}
   \norm{v}_\infty\le \norm{v}_{1,2} .
\end{equation}
\end{theorem}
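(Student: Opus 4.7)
The plan is to reduce to the case of compactly supported $C^1$ maps by density, prove the estimate pointwise there via the fundamental theorem of calculus and Cauchy--Schwarz, and then pass to the limit.

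First I would assume $v \in C^1_{\mathrm{c}}(\R, H)$. Since $v$ has compact support, $\abs{v(t)}^2 \to 0$ as $t \to -\infty$, so the fundamental theorem of calculus applied to the scalar function $s \mapsto \abs{v(s)}^2 = \INNER{v(s)}{v(s)}$ yields
\begin{equation*}
   \abs{v(t)}^2
   = \int_{-\infty}^t \tfrac{d}{ds}\INNER{v(s)}{v(s)}\, ds
   = 2 \int_{-\infty}^t \INNER{v(s)}{\dot v(s)}\, ds
\end{equation*}
for every $t \in \R$. Applying Cauchy--Schwarz in $H$ pointwise and then in $L^2(\R)$, followed by the elementary inequality $2ab \le a^2 + b^2$, gives
\begin{equation*}
   \abs{v(t)}^2
   \le 2 \int_{-\infty}^\infty \abs{v(s)}\abs{\dot v(s)}\, ds
   \le 2 \norm{v}_{L^2_H} \norm{\dot v}_{L^2_H}
   \le \norm{v}_{L^2_H}^2 + \norm{\dot v}_{L^2_H}^2
   = \norm{v}_{1,2}^2 .
\end{equation*}
Taking the supremum over $t \in \R$ proves the estimate in the smooth compactly supported case.

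For general $v \in W^{1,2}(\R,H)$, by Theorem~\ref{thm:Sob-dense} pick a sequence $F_k \in C^1_{\mathrm{c}}(\R,H)$ with $\norm{F_k - v}_{1,2} \to 0$. Applying the previous step to each difference $F_k - F_\ell \in C^1_{\mathrm{c}}(\R,H)$ yields
\begin{equation*}
   \norm{F_k - F_\ell}_\infty \le \norm{F_k - F_\ell}_{1,2} ,
\end{equation*}
so $(F_k)$ is Cauchy in the sup norm and hence converges uniformly to a continuous map $\tilde v \colon \R \to H$. Uniform convergence on $\R$ implies $L^2$-convergence on every bounded interval, and $F_k \to v$ in $L^2_H$, so $\tilde v = v$ almost everywhere; in particular $\norm{v}_\infty = \norm{\tilde v}_\infty$. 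Passing to the limit in the inequality $\norm{F_k}_\infty \le \norm{F_k}_{1,2}$ then yields
\begin{equation*}
   \norm{v}_\infty = \lim_{k \to \infty} \norm{F_k}_\infty
   \le \lim_{k \to \infty} \norm{F_k}_{1,2} = \norm{v}_{1,2} ,
\end{equation*}
which is the claim.

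No step here presents a serious obstacle; the only subtlety is the bookkeeping in passing from the pointwise bound for smooth representatives to the essential-sup bound for the equivalence class, which is handled by noting that the approximating sequence converges uniformly to a continuous representative of $v$.
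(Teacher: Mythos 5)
Your proof is correct and follows essentially the same route as the paper: reduce to $v\in C^1_{\rm c}(\R,H)$ via Theorem~\ref{thm:Sob-dense}, apply the fundamental theorem of calculus to $\abs{v(t)}^2$, and conclude with Cauchy--Schwarz and Young's inequality. The only difference is that you spell out the limiting argument (uniform Cauchyness of the approximating sequence and identification of the continuous representative) which the paper compresses into ``without loss of generality''; this extra care is welcome but not a different approach.
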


\begin{proof}
By Theorem~\ref{thm:Sob-dense} we can assume without loss of
generality that $v\in C^1_{\rm c}(\R,H)$.
By the fundamental theorem of calculus in combination with compact
support at any time $t\in\R$ we estimate
\begin{equation*}
\begin{split}
   \abs{v(t)}^2
   &=\int_{-\infty}^t \underbrace{\tfrac{d}{d\sigma}\abs{v(\sigma)}^2}
      _{=2\INNER{v^\prime(\sigma)}{v(\sigma)}} \, d\sigma
\\
   &\stackrel{2}{\le}\int_{-\infty}^t \left(\abs{v^\prime(\sigma)}^2
   +\abs{v(\sigma)}^2\right) \, d\sigma
\\
   &\stackrel{3}{\le}\int_{-\infty}^\infty \left(\abs{v^\prime(\sigma)}^2
   +\abs{v(\sigma)}^2\right) \, d\sigma
\\
   &=\norm{v}_{W^{1,2}(\R,H)}^2 .
\end{split}
\end{equation*}
In step 2 we used Cauchy-Schwarz and then Young's inequality $ab\le
a^2/2+b^2/2$ whenever $a,b\ge 0$.
Since $t\in\R$ was arbitrary, estimate~(\ref{thm:Sob-estimate}) follows.
\end{proof}

\begin{remark}\label{rem:Sob-estimate}
The proof shows that if $\tau\in\R$
and $v\in W^{1,2}((-\infty,\tau),H)$, then
instead of estimate~(\ref{thm:Sob-estimate}) we have
$$
   \norm{v}_{L^\infty((-\infty,\tau),H)}
   \le \norm{v}_{W^{1,2}((-\infty,\tau),H)} .
$$
To see this in step 3 of the estimate just replace
$\int_{-\infty}^\infty$ by $\int_{-\infty}^\tau$
Similarly, if $v\in W^{1,2}((\tau,\infty),H)$, then we obtain
$$
   \norm{v}_{L^\infty((\tau,\infty),H)}
   \le \norm{v}_{W^{1,2}((\tau,\infty),H)} .
$$
\end{remark}

\section{Implicit Function Theorem}\label{sec:IFT}

\subsection{Quantitative}
\label{sec:quantitative-IFT}

We denote by $B_r(x;X)$ the open ball of radius $r$ centered at $x$ in
a Banach space $X$. We often abbreviate $B_r(x):=B_r(x;X)$ and
$B_r:=B_r(0;X)$.

\begin{lemma}[{\hspace{-.001cm}\cite[Le.\,A.3.2]{mcduff:2004a}}]
\label{le:A.3.2}
Let $\gamma<1$ and $R$ be positive real numbers.
Let $X$ be a Banach space, $x_0\in X$, and $\varphi\colon B_R(x_0)\to X$
be a continuously differentiable map such that
\[
   \norm{\Id-d\varphi(x)}\le\gamma
\]
for every $x\in B_R(x_0)$.
Then the following holds.
The map $\varphi$ is injective and $\varphi$ maps $B_R(x_0)$
into an open set in $X$ such that
\begin{equation}\label{eq:psi-incl}
   B_{R(1-\gamma)}(\varphi(x_0))
   \subset\varphi(B_R(x_0))
   \subset B_{R(1+\gamma)}(x_0) .
\end{equation}
The inverse $\varphi^{-1}\colon\varphi(B_R(x_0))\to B_R(x_0)$
is continuously differentiable and
\begin{equation}\label{eq:psi-inverse}
   d(\varphi^{-1})|_y
   =(d\varphi|_{\varphi^{-1}(y)})^{-1} .
\end{equation}
\end{lemma}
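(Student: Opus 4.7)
The plan is to reduce everything to the contraction mapping principle applied to $g := \id - \varphi$. Since $\|dg(x)\| = \|\Id - d\varphi(x)\| \le \gamma < 1$ on the convex set $B_R(x_0)$, the mean value inequality gives
\[
   \|g(x)-g(x')\| \le \gamma \|x-x'\|, \qquad x,x'\in B_R(x_0).
\]
From this, injectivity of $\varphi$ falls out immediately: if $\varphi(x)=\varphi(x')$ then $x-x' = g(x)-g(x')$, so $\|x-x'\|\le\gamma\|x-x'\|$ forces $x=x'$. The upper inclusion in~(\ref{eq:psi-incl}) (read with $\varphi(x_0)$ in place of $x_0$ on the right-hand side, as is forced by the dimensions of the estimate) is the computation $\|\varphi(x)-\varphi(x_0)\| \le \|x-x_0\| + \|g(x)-g(x_0)\| \le (1+\gamma)\|x-x_0\| < (1+\gamma)R$.

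For the harder lower inclusion in~(\ref{eq:psi-incl}), I would fix $y\in B_{R(1-\gamma)}(\varphi(x_0))$ and look for a preimage by solving $\varphi(x)=y$, i.e.\ $x=T(x)$ where $T(x):=x-\varphi(x)+y = g(x)+y$. The estimate
\[
   \|T(x)-x_0\| \le \|g(x)-g(x_0)\| + \|g(x_0)+y-x_0\|
   \le \gamma\|x-x_0\| + \|y-\varphi(x_0)\|
\]
shows that, for a suitable closed ball $\overline{B_r(x_0)} \subset B_R(x_0)$ with $r<R$ chosen so that $\gamma r + \|y-\varphi(x_0)\| \le r$ (possible precisely because $\|y-\varphi(x_0)\|<R(1-\gamma)$), the map $T$ sends $\overline{B_r(x_0)}$ into itself and is a $\gamma$-contraction. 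Banach's fixed point theorem then yields the required preimage $x=\varphi^{-1}(y)\in B_R(x_0)$. Openness of $\varphi(B_R(x_0))$ follows by applying the same surjectivity argument locally: at any $x\in B_R(x_0)$ pick $r$ with $B_r(x)\subset B_R(x_0)$, and conclude $B_{r(1-\gamma)}(\varphi(x)) \subset \varphi(B_r(x)) \subset \varphi(B_R(x_0))$.

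Next I would handle $\varphi^{-1}$. Lipschitz continuity with constant $(1-\gamma)^{-1}$ comes directly from
\[
   \|y-y'\| = \|\varphi(x)-\varphi(x')\| \ge \|x-x'\|-\|g(x)-g(x')\| \ge (1-\gamma)\|x-x'\|.
\]
For differentiability, note that $d\varphi(x) = \Id - dg(x)$ with $\|dg(x)\|\le\gamma<1$, so by the Neumann series $d\varphi(x)$ is invertible with $\|d\varphi(x)^{-1}\|\le (1-\gamma)^{-1}$. Writing $x=\varphi^{-1}(y)$, $x'=\varphi^{-1}(y')$ and using differentiability of $\varphi$ at $x$,
\[
   y'-y = \varphi(x')-\varphi(x) = d\varphi(x)(x'-x) + o(\|x'-x\|),
\]
one applies $d\varphi(x)^{-1}$ and uses the Lipschitz bound $\|x'-x\|\le(1-\gamma)^{-1}\|y'-y\|$ to convert the error term to $o(\|y'-y\|)$, yielding the formula~(\ref{eq:psi-inverse}). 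Continuity of $x\mapsto d\varphi(x)^{-1}$, hence $C^1$-regularity of $\varphi^{-1}$, follows from continuity of $d\varphi$ together with continuity of inversion in the Banach algebra of bounded linear operators on $X$.

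The main obstacle is really just the careful choice of the auxiliary radius $r<R$ in the surjectivity step, since $T$ need not preserve the open ball $B_R(x_0)$ itself; apart from this, everything is a routine consequence of the contraction principle and the Neumann series.
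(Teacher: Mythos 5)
Your proof is correct and complete. The paper does not supply its own argument for this lemma---it cites McDuff--Salamon, Lemma~A.3.2---and what you wrote is a faithful reconstruction of the standard argument there: set $g=\id-\varphi$, use the mean value inequality on the convex ball to get the Lipschitz bound $\|g(x)-g(x')\|\le\gamma\|x-x'\|$, deduce injectivity and the Lipschitz bound $\|\varphi^{-1}(y)-\varphi^{-1}(y')\|\le(1-\gamma)^{-1}\|y-y'\|$, prove surjectivity onto the inner ball by Banach's fixed point theorem applied to $T(x)=g(x)+y$ on a well-chosen closed ball $\overline{B_r(x_0)}$ with $r<R$, localize the same argument to get openness of the image, invert $d\varphi(x)=\Id-dg(x)$ via the Neumann series, and convert the $o(\|x'-x\|)$ error to $o(\|y'-y\|)$ using the Lipschitz bound. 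Each of these steps is carried out correctly, including the choice of $r\in\bigl[\|y-\varphi(x_0)\|/(1-\gamma),R\bigr)$ in the fixed-point step. You also correctly caught the typo in~(\ref{eq:psi-incl}): the outer ball must be centered at $\varphi(x_0)$ rather than at $x_0$; as literally written the inclusion fails already for a translation $\varphi=\id+c$ with $|c|$ large, and your estimate $\|\varphi(x)-\varphi(x_0)\|\le(1+\gamma)\|x-x_0\|$ is the one that actually holds.
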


\begin{corollary}[Higher differentiability $C^k$]\label{cor:C2}
Under the assumption of Lemma~\ref{le:A.3.2}
assume in addition that $\varphi$ is $C^k$.
Then the inverse is $C^k$ as well.
\end{corollary}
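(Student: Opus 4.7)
The plan is to argue by induction on $k$, bootstrapping the regularity of $\varphi^{-1}$ using formula~(\ref{eq:psi-inverse}).

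The base case $k=1$ is exactly the content of Lemma~\ref{le:A.3.2}: the inverse exists, is continuously differentiable on the open set $\varphi(B_R(x_0))$, and satisfies
\[
   d(\varphi^{-1})|_y=(d\varphi|_{\varphi^{-1}(y)})^{-1} .
\]
For the inductive step, suppose the statement is established up to $k-1$, and assume $\varphi$ is $C^k$. By the inductive hypothesis applied to the $C^{k-1}$ map $\varphi$ (which still satisfies the same hypothesis on $\norm{\Id-d\varphi}$), the inverse $\varphi^{-1}$ is at least $C^{k-1}$. To upgrade this to $C^k$, I would read off the derivative of $\varphi^{-1}$ from~(\ref{eq:psi-inverse}) as a composition
\[
   y\;\longmapsto\;\varphi^{-1}(y)
   \;\longmapsto\;d\varphi|_{\varphi^{-1}(y)}
   \;\longmapsto\;\bigl(d\varphi|_{\varphi^{-1}(y)}\bigr)^{-1} .
\]
The first arrow is $C^{k-1}$ by the inductive hypothesis; the second arrow is $d\varphi$, which is $C^{k-1}$ since $\varphi$ is $C^k$; and the third arrow is the operator inversion map $\mathrm{Inv}\colon\mathrm{GL}(X)\to\mathrm{GL}(X)$, $A\mapsto A^{-1}$, which is smooth (in fact real-analytic) on the open subset of invertible operators in $\Ll(X)$, as one sees from the Neumann series.

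Since the composition of $C^{k-1}$ maps is $C^{k-1}$, the map $y\mapsto d(\varphi^{-1})|_y$ is $C^{k-1}$, whence $\varphi^{-1}$ is $C^k$. The induction closes and the corollary follows. I do not expect any genuine obstacle here; the only point worth flagging is the need to know that inversion on $\mathrm{GL}(X)$ is smooth, which is standard and can be taken as an input from the theory of bounded operators on Banach spaces.
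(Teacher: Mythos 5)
Your proof is correct and follows exactly the same strategy the paper uses: the paper's proof is the one-line remark that the corollary "follows inductively by the chain rule from~(\ref{eq:psi-inverse})", and your write-up simply spells out that induction, including the (standard) fact that operator inversion is smooth on $\mathrm{GL}(X)$.
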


\begin{proof}
This follows inductively by the chain rule from~(\ref{eq:psi-inverse}).
\end{proof}

\begin{lemma}[Family version]\label{le:A.3.2-family}
Let $\gamma<1$ and $R$ be positive real numbers.
Let $X$ be a Banach space and $x_0\in X$.
Assume that there exist\ $\eps>0$ and a $C^k$ map
$\varphi\colon(-\eps,\eps)\times B_R(x_0)\to X$
such that for every $s\in(-\eps,\eps)$ the map
$\varphi_s:=\varphi(s,\cdot)\colon B_R(x_0)\to X$
satisfies the estimate
\[
   \norm{\Id-d\varphi_s(x)}\le\gamma
\]
at every point $x\in B_R(x_0)$.
Then the following holds.
The $C^k$ map defined by
$$
   \Phi\colon (-\eps,\eps)\times B_R(x_0)\to (-\eps,\eps)\times X
   ,\quad
   (s,x)\mapsto (s,\varphi_s(x))
$$
has an inverse and $\Phi^{-1}$ is of class $C^k$ as well.
\end{lemma}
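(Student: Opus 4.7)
The plan is to deduce the lemma from Lemma~\ref{le:A.3.2} applied slice by slice in $s$, together with one application of the classical Banach inverse function theorem to the full map $\Phi$.

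First I would establish injectivity of $\Phi$: if $\Phi(s,x)=\Phi(s',x')$, matching first coordinates forces $s=s'$, and then $\varphi_s(x)=\varphi_s(x')$ combined with the injectivity assertion of Lemma~\ref{le:A.3.2} forces $x=x'$. The same slice-wise application of Lemma~\ref{le:A.3.2} together with Corollary~\ref{cor:C2} guarantees that for each fixed $s\in(-\eps,\eps)$ the map $\varphi_s$ is a $C^k$ diffeomorphism from $B_R(x_0)$ onto an open subset of $X$ containing $B_{R(1-\gamma)}(\varphi_s(x_0))$. This already exhibits $\Phi^{-1}$ as a set-theoretic bijection from the image $\Phi((-\eps,\eps)\times B_R(x_0))$ onto $(-\eps,\eps)\times B_R(x_0)$ via the explicit formula
\begin{equation*}
   \Phi^{-1}(s,y)=(s,\varphi_s^{-1}(y)).
\end{equation*}

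For the joint $C^k$ regularity of $\Phi^{-1}$ the key observation is that the differential
\begin{equation*}
   d\Phi|_{(s,x)}(\sigma,\xi)
   =\bigl(\sigma,\;\sigma\,\partial_s\varphi(s,x)+d\varphi_s|_x\xi\bigr)
\end{equation*}
has an upper-triangular block form with diagonal blocks $1$ and $d\varphi_s|_x$. The hypothesis $\norm{\Id-d\varphi_s|_x}\le\gamma<1$, together with the Neumann series, makes each $d\varphi_s|_x\in\Ll(X)$ a continuous isomorphism of $X$ with $\norm{(d\varphi_s|_x)^{-1}}\le 1/(1-\gamma)$. The triangular structure then makes $d\Phi|_{(s,x)}$ itself a continuous linear isomorphism of $\R\times X$ at every point $(s,x)$ of the domain. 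Since $\Phi$ is $C^k$ jointly in $(s,x)$ by hypothesis, the classical $C^k$ inverse function theorem in Banach spaces yields that $\Phi$ is a local $C^k$ diffeomorphism near each point; combined with the global injectivity already established, this upgrades $\Phi$ to a global $C^k$ diffeomorphism onto its image, which is open in $(-\eps,\eps)\times X$.

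The only potential obstacle is bookkeeping: one must confirm that the joint $C^k$ hypothesis on $\varphi$ is genuinely used when invoking the Banach inverse function theorem, namely that $(s,x)\mapsto\partial_s\varphi(s,x)$ and $(s,x)\mapsto d\varphi_s|_x$ depend $C^{k-1}$-jointly on their arguments. This is automatic from $\varphi\in C^k$, so no additional analytic work is required beyond verifying the Neumann-series bound on $(d\varphi_s|_x)^{-1}$ already used in the unparametrized Lemma~\ref{le:A.3.2}.
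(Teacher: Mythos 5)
Your proof is correct but follows a genuinely different route from the paper's. The paper constructs $\Phi^{-1}$ explicitly via Lemma~\ref{le:A.3.2} applied slice-wise, writes down the resulting block-triangular formula for $d\Phi^{-1}$, and then argues that $\Phi^{-1}$ is $C^k$ by applying the chain rule inductively to that formula; this leaves the initial joint differentiability of $(s,y)\mapsto\varphi_s^{-1}(y)$ somewhat implicit. You instead compute $d\Phi$ directly, observe that the block-triangular structure together with the Neumann-series bound $\norm{(d\varphi_s|_x)^{-1}}\le 1/(1-\gamma)$ makes $d\Phi|_{(s,x)}$ a linear isomorphism of $\R\times X$ at every point, and invoke the classical $C^k$ inverse function theorem in Banach spaces; combining the resulting local diffeomorphism property with the global injectivity inherited from Lemma~\ref{le:A.3.2} upgrades $\Phi$ to a global $C^k$ diffeomorphism onto its open image. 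Your abstract route handles the joint-regularity question more cleanly and with less bookkeeping; the paper's explicit route has the side benefit of producing the closed-form expression for $d\Phi^{-1}$, which the authors reuse shortly afterwards in Lemma~\ref{le:dGg} when they need the first and second derivatives of the family-inverse map $\Gg$.
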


\begin{proof}
For fixed $s\in(-\eps,\eps)$ the map $\varphi_s:=\varphi(s,\cdot)$
is invertible by Lemma~\ref{le:A.3.2}.
The inverse of $\Phi$ is then given by
$$
   \Phi^{-1}\colon \Im(\Phi)\to (-\eps,\eps)\times B_R(x_0)
   ,\quad
   (s,y)\mapsto (s,{\varphi_s}^{-1}(y))
$$
and its derivative by
$$
   d\Phi^{-1}|_{(s,y)}
   \begin{bmatrix}\hat s\\\hat y\end{bmatrix}
   =
   \begin{bmatrix}1&0\\
     -\bigl(d\varphi_s|_{\varphi_s^{-1}(y)}\bigr)^{-1}\dot\varphi_s|_{\varphi_s^{-1}(y)}
      &d(\varphi_s)^{-1}|_y\end{bmatrix}
   \begin{bmatrix}\hat s\\\hat y\end{bmatrix}
$$
where $\dot\varphi:=\p_1\varphi(\cdot,\cdot)$ is the $s$-derivative.
If $\varphi$, hence $\Phi$, is $C^k$, then using inductively the chain
rule on the above formula shows that $\Phi^{-1}$ is $C^k$ as well.
\end{proof}

\begin{remark}[Corollary to Lemma~\ref{le:A.3.2}]
\label{rem:A.3.2.}
Let $\gamma<1$ and $R$ be positive real numbers.
Let $X$ be a Banach space, $x_0\in X$, and
$
   \varphi\colon B_R(x_0)\to X
$
be a $C^k$ map with
$$
   \norm{d\varphi|_x-\Id}\le \gamma
$$
for every $x\in B_R(x_0)$.
In particular, by Corollary~\ref{cor:C2},
the map $\varphi$ is a $C^k$ diffeomorphism onto its image.
For $\beta\in[0,1]$ we define a map
\begin{equation}\label{eq:A.3.2-S}
   \Ss^\varphi_{\beta,x_0}\colon X\supset B_R \to X
   ,\quad
   \eta\mapsto (1-\beta)(\varphi(x_0)+\eta)+\beta\varphi(x_0+\eta) .
\end{equation}
The derivative at $\eta\in B_R$ is given by
$$
   d \Ss^\varphi_{\beta,x_0}|_\eta \colon X\to X,\quad
   \hat\eta\mapsto (1-\beta)\hat\eta+\beta d\varphi|_{x_0+\eta} \hat\eta.
$$
Hence
\begin{equation}\label{eq:hfgjk7799}
   d \Ss^\varphi_{\beta,x_0}|_\eta-\Id
   =\beta\left(d\varphi|_{x_0+\eta} -\Id\right)\colon X\to X.
\end{equation}
Since $\beta\le 1$ we obtain for the operator norm
$$
   \norm{d \Ss^\varphi_{\beta,x_0}|_\eta-\Id}
   \le \gamma
$$
whenever $\eta\in B_R$.
Therefore, by Lemma~\ref{le:A.3.2}, all maps
$$
   \Ss^\varphi_{\beta,x_0}\colon X\supset B_R \to X
$$
are $C^k$ diffeomorphisms onto the image.
\end{remark}

\subsection{Qualitative -- family inversion}
\label{sec:qualitative-IFT}  

\begin{proposition}\label{prop:inversion}
Let $H$ be a Hilbert space.
Consider a family of maps $\Ff\colon\R\times H\to H$
of class $C^2$ such that for each $s\in\R$ the map
$$
   \Ff_s:=\Ff(s,\cdot)\colon H\to H
$$
is a $C^2$-diffeomorphism.
Then the map defined by
\begin{equation}\label{eq:Gg}
   \Gg\colon\R\times H\to H
   ,\quad
   (s,y)\mapsto {\Ff_s}^{-1}(y)
\end{equation}
is of class $C^2$ as well.
\end{proposition}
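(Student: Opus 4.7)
My plan is to apply the classical implicit function theorem (in Banach spaces) to the map
\[
   F\colon\R\times H\times H\to H
   ,\qquad
   F(s,x,y):=\Ff(s,x)-y,
\]
which is $C^2$ since $\Ff$ is. The defining relation for $\Gg$ in~(\ref{eq:Gg}) is precisely $F(s,\Gg(s,y),y)=0$, so once I have a local $C^2$ solution of $F=0$ in the variable $x$ it must coincide with $\Gg$ by uniqueness.

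The key point to verify in order to invoke the implicit function theorem is that the partial derivative
\[
   D_xF(s_0,x_0,y_0)=d\Ff_{s_0}|_{x_0}\colon H\to H
\]
is a bounded linear isomorphism at each point $(s_0,x_0,y_0)$ with $x_0=\Ff_{s_0}^{-1}(y_0)$. This is where the hypothesis that each $\Ff_s$ is a $C^2$-diffeomorphism is used: differentiating the identity $\Ff_s^{-1}\circ\Ff_s=\id_H$ at $x_0$ and using that $\Ff_s^{-1}$ is $C^1$ gives $d\Ff_s^{-1}|_{y_0}\circ d\Ff_s|_{x_0}=\Id_H$, and symmetrically on the other side, so $d\Ff_{s_0}|_{x_0}$ is invertible with bounded inverse.

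With this checked, the implicit function theorem (in the form applicable to $C^2$ maps between Banach spaces, treating $(s,y)$ as the parameter and $x$ as the unknown) yields an open neighborhood $U$ of $(s_0,y_0)$, an open neighborhood $V$ of $x_0$, and a $C^2$ map $g\colon U\to V$ such that $g(s_0,y_0)=x_0$ and $F(s,g(s,y),y)=0$ for every $(s,y)\in U$. Since $\Gg(s,y)=\Ff_s^{-1}(y)$ is, by construction, the unique solution $x$ of $\Ff(s,x)=y$ in all of $H$, it agrees with $g$ on $U$. Hence $\Gg$ is $C^2$ on $U$, and since $(s_0,y_0)$ was arbitrary, $\Gg$ is $C^2$ on $\R\times H$.

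I expect no real obstacle here: the proposition is essentially a packaging of the implicit function theorem with parameters, and the only mild subtlety is noting that the $C^2$-diffeomorphism assumption delivers invertibility of $d\Ff_s|_x$ with bounded inverse without having to invoke the open mapping theorem. The conclusion $C^2$ (rather than $C^1$) comes for free from the $C^2$ regularity of $F$, since the implicit function theorem preserves the regularity class of the input.
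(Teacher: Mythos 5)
Your argument is correct and follows essentially the same route as the paper: apply the implicit function theorem to the auxiliary map $\Ff(s,\cdot)-(\text{data})$, using the $C^2$-diffeomorphism hypothesis to get invertibility of the partial derivative, and then identify the implicitly defined local solution with $\Gg$ by uniqueness. The only cosmetic difference is the labeling of which slot is the unknown versus the parameter, and that you spell out the local-to-global patching step which the paper leaves implicit.
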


\begin{proof}
The idea is to apply the Implicit Function Theorem,
see e.g. the book by Lang~\cite[Thm.\,5.9]{lang:2001a},
to the following map
$$
   f\colon\R \times H\times H\to H
   ,\quad
   (s,x,y)\mapsto \Ff(s,y)-x .
$$
This map is of class $C^2$ by our assumptions.
Abbreviate $z=(s,x)$, then in the notation
of~\cite[Thm.\,5.9]{lang:2001a} it holds that
$$
   D_2f(z,y)=d\Ff_s(y)\colon H\to H
$$
and since according to our assumptions
$\Ff_s$ is a $C^2$-diffeomorphism
it follows, that $D_2f(z,y)$ is an isomorphism.

According to the Implicit Function
Theorem~\cite[Thm.\,5.9]{lang:2001a},
there then exists a $C^2$ map
$
   g: \R \times H \to H
$
such that
\begin{equation}\label{eq:Em2-(1)}
   \qquad \qquad \qquad \qquad
   f(z,g(z))=0  \qquad \qquad\qquad \qquad\qquad            (1)
\end{equation}
for every $z \in \R \times H$.
Substituting now $z=(s,x)$ in the definition of $f$ we obtain
\begin{equation}\label{eq:Em2-(2)}
   f(z,g(z))=f(s,x,g(s,x))=F(s,g(s,x))-x .
\end{equation}
By~(\ref{eq:Em2-(1)}) and~(\ref{eq:Em2-(2)}) it follows
$
   x=\Ff(s,g(s,x))=\Ff_s(g(s,x))
$ and therefore
$$
   g(s,x)={\Ff_s}^{-1}(x) .
$$
Since $g$ is of class $C^2$ this proves Proposition~\ref{prop:inversion}.
\end{proof}

\begin{lemma}\label{le:dGg}
The first two derivatives of $\Gg$ in~(\ref{eq:Gg})
are~(\ref{eq:dGg}) and~(\ref{eq:d2Gg}).
\end{lemma}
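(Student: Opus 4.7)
The plan is to obtain (\ref{eq:dGg}) and (\ref{eq:d2Gg}) by implicit differentiation of the defining identity
\[
   \Ff(s,\Gg(s,y)) = y
   ,\qquad (s,y)\in\R\times H,
\]
which is just the restatement that $\Gg(s,\cdot)={\Ff_s}^{-1}$. Since Proposition~\ref{prop:inversion} already tells us that $\Gg$ is of class $C^2$, everything that follows is a legitimate application of the chain rule, and only the explicit form of the derivatives remains to be verified.

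For the first derivative, I would apply $d$ at a point $(s,y)$ to both sides of the identity and evaluate on a tangent vector $(\hat s,\hat y)\in\R\times H$. The chain rule, writing $\dot\Ff_s:=\p_s\Ff(s,\cdot)$ and $d\Ff_s$ for the differential in the second variable, yields
\[
   \dot\Ff_s|_{\Gg(s,y)}\,\hat s
   +d\Ff_s|_{\Gg(s,y)}\bigl(d\Gg|_{(s,y)}(\hat s,\hat y)\bigr)
   =\hat y .
\]
By hypothesis $\Ff_s$ is a $C^2$-diffeomorphism, so $d\Ff_s|_{\Gg(s,y)}$ is invertible and solving for $d\Gg|_{(s,y)}(\hat s,\hat y)$ produces the formula~(\ref{eq:dGg}).

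For the second derivative, I would differentiate this first-order identity once more. Two sources of second-order contributions appear: one from differentiating the operator-valued map $(s,y)\mapsto(d\Ff_s|_{\Gg(s,y)})^{-1}$, for which I would use the inversion rule $d(A\mapsto A^{-1})|_{A_0}B=-A_0^{-1}BA_0^{-1}$ together with the chain rule applied through $\Gg$, and one from differentiating $(s,y)\mapsto\dot\Ff_s|_{\Gg(s,y)}$ analogously. Collecting these terms and then using~(\ref{eq:dGg}) to eliminate the occurrences of $d\Gg$ that appear on the right hand side should rearrange to give~(\ref{eq:d2Gg}).

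The only real obstacle here is bookkeeping: the final expression for $d^2\Gg|_{(s,y)}$ combines first and second $s$-derivatives of $\Ff$, the second differential $d^2\Ff_s|_{\Gg(s,y)}$, and several factors of $(d\Ff_s|_{\Gg(s,y)})^{-1}$, so care is needed to present the resulting symmetric bilinear form in the precise shape of~(\ref{eq:d2Gg}). No new analytic input beyond Proposition~\ref{prop:inversion} and elementary Banach-space calculus is required.
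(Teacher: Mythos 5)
Your proposal is correct and follows essentially the same route as the paper: both proofs obtain~(\ref{eq:dGg}) and~(\ref{eq:d2Gg}) by implicit differentiation of the identity $\Ff_s\circ\Ff_s^{-1}=\id$ (equivalently $\Ff(s,\Gg(s,y))=y$), with Proposition~\ref{prop:inversion} supplying the $C^2$ regularity of $\Gg$ that licenses the chain rule. The paper organizes the second-derivative computation by separately computing the three blocks $A$, $B$, $D$ of the Hessian (differentiating the unsolved identity~(\ref{eq:Ff_s-inv-ident}) twice in $s$ for $A$, differentiating $\Id=d(\Ff_s^{-1})|_{\Ff_s(v)}d\Ff_s|_v$ for $D$, and differentiating $(d\Ff_s|_x)^{-1}$ in $s$ for $B$), whereas you propose differentiating the already solved-for first-derivative formula~(\ref{eq:dGg}) once more and collecting terms; these are algebraically equivalent reorganizations of the same computation, and your remark that the remaining work is bookkeeping is accurate.
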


\begin{proof}
The proof has two steps.

\smallskip
\noindent
\textbf{Step~1.}
The first derivative of $\Gg$ for
$s,\hat s\in\R$ and $y,\hat y\in\R\times H$ is given by
\begin{equation}\label{eq:dGg}
\boxed{
\begin{aligned}
   d\Gg|_{(s,y)}(\hat s,\hat y)
   &=- (d\Ff_s|_{{\Ff_s}^{-1}(y)})^{-1} \dot{\Ff}_s|_{{\Ff_s}^{-1}(y)} \hat s
   +(d\Ff_s|_{{\Ff_s}^{-1}(y)})^{-1}\hat y
\\
   &=
   \begin{bmatrix}
      - (d\Ff_s|_{{\Ff_s}^{-1}(y)})^{-1} \dot{\Ff}_s|_{{\Ff_s}^{-1}(y)}&(d\Ff_s|_{{\Ff_s}^{-1}(y)})^{-1}
   \end{bmatrix}
   \begin{bmatrix}
      \hat s\\\hat y
   \end{bmatrix}
\end{aligned}
}
\end{equation}
in any direction $(\hat s,\hat y)\in\R\times H$

\smallskip
\noindent
Observe that
\begin{equation}\label{eq:Gg_s}
   \Gg_s={\Ff_s}^{-1}\colon H\to H
   ,\qquad
   \Gg_s\circ\Ff_s
   ={\Ff_s}^{-1}\circ\Ff_s=\id_H .
\end{equation}
Hence we get
\begin{equation}\label{eq:dGg-y}
\begin{split}
   d\Gg|_{(s,y)}(0,\hat y)
   =d(\Ff_s)^{-1}|_y \hat y
   =(d\Ff_s|_{{\Ff_s}^{-1}(y)})^{-1}\hat y
\end{split}
\end{equation}
for every $\hat y\in H$.
Abbreviating
$$
   \dot{\Ff}_s(\cdot):=\p_s \Ff(s,\cdot) ,
$$
then at
$(s,x)\in\R\times H$ we get
\begin{equation*}
\begin{split}
   d\Ff|_{(s,x)}(\hat s,0)
   &=\dot{\Ff}_s(x) \, \hat s
\end{split}
\end{equation*}
for every $\hat s\in \R$.
We take the $s$-derivative of $\Ff_s\circ{\Ff_s}^{-1}(y)=y$
to obtain
\begin{equation}\label{eq:Ff_s-inv-ident}
   \dot{\Ff}_s|_{{\Ff_s}^{-1}(y)}
   +d\Ff_s|_{{\Ff_s}^{-1}(y)} \dot{({\Ff_s}^{-1})}|_y
   =0
\end{equation}
for every $y\in H$ and thus
\begin{equation}\label{eq:Ff_s-inv}
   \dot{({\Ff_s}^{-1})}(y)
   =-(d\Ff_s|_{{\Ff_s}^{-1}(y)})^{-1} \dot{\Ff}_s|_{{\Ff_s}^{-1}(y)} .
\end{equation}
Therefore we obtain
\begin{equation*}
\begin{split}
   d\Gg|_{(s,y)}(\hat s,0)
   &=\dot{\Gg}_s|_y \hat s
\\
   &\stackrel{2}{=}\dot{({\Ff_s}^{-1})}|_y \hat s
\\
   &\stackrel{3}{=} - (d\Ff_s|_{{\Ff_s}^{-1}(y)})^{-1} \dot{\Ff}_s|_{{\Ff_s}^{-1}(y)} \hat s
\end{split}
\end{equation*}
where step 2 is by~(\ref{eq:Gg_s})
and step 3 by~(\ref{eq:Ff_s-inv}).
Together with~(\ref{eq:dGg-y}) 
this proves~(\ref{eq:dGg}).

\smallskip
\noindent
\textbf{Step~2.}
We calculate the second derivative of $\Gg$ at a
point $(s,y)\in\R\times H$.

\smallskip
\noindent
The second derivative of $\Gg$ at a
point $(s,y)\in\R\times H$ is of the form
\begin{equation}\label{eq:d2Gg}
\boxed{
   d^2\Gg|_{(s,y)}(\hat s_1,\hat y_1; \hat s_2,\hat y_2)
   =
   \left(
   \begin{bmatrix}
      A&B\\B&D
   \end{bmatrix}
   \begin{bmatrix}
      \hat s_1\\\hat y_1
   \end{bmatrix}
   \right)^t
   \begin{bmatrix}
      \hat s_2\\\hat y_2
   \end{bmatrix}
}
\end{equation}
whenever $y_1,\hat y_2\in H$ and $\hat s_1,\hat s_2 \in\R$.
The terms $A$, $B$, and $D$ are as follows.

\smallskip
\noindent
\textsc{Term~$A$.}
Take the $s$-derivative of~(\ref{eq:Ff_s-inv-ident}) to obtain
\begin{equation*}
\begin{split}
   0
   &=\underline{\p_s\left(\dot{\Ff}_s|_{{\Ff_s}^{-1}(y)}\right)}
   +\p_s\left( d\Ff_s|_{{\Ff_s}^{-1}(y)} \dot{({\Ff_s}^{-1})}|_y\right)
\\
   &=\underline{\ddot{\Ff}_s|_{{\Ff_s}^{-1}(y)}+d\dot{\Ff}_s|_{{\Ff_s}^{-1}(y)}
   \dot{({\Ff_s}^{-1})}|_y}
   +d\dot{\Ff}_s|_{{\Ff_s}^{-1}(y)} \dot{({\Ff_s}^{-1})}|_y
   \\
   &\quad
   +d^2\Ff_s|_{{\Ff_s}^{-1}(y)}\left(\dot{({\Ff_s}^{-1})}|_y,\dot{({\Ff_s}^{-1})}|_y\right)
   +d\Ff_s|_{{\Ff_s}^{-1}(y)} \ddot{({\Ff_s}^{-1})}|_y .
\end{split}
\end{equation*}
We use this identity in step 3 of the following calculation
\begin{equation*}
\begin{split}
   &d^2\Gg|_{(s,y)}(\hat s_1,0;\hat s_2,0)
\\
   &=
   \ddot{\Gg}_s|_y \hat s_2\hat s_2
\\
   &\stackrel{2}{=}
   \ddot{({\Ff_s}^{-1})}|_y \hat s_2\hat s_2
\\
   &\stackrel{3}{=}
   - \hat s_2\hat s_2 (d\Ff_s|_{{\Ff_s}^{-1}(y)})^{-1}
   \biggl(
   \ddot{\Ff}_s|_{{\Ff_s}^{-1}(y)} 
   +2 d\dot{\Ff}_s|_{{\Ff_s}^{-1}(y)} \underline{\dot{({\Ff_s}^{-1})}|_y}
   \\
   &\quad
   +d^2\Ff_s|_{{\Ff_s}^{-1}(y)}\left(\underline{\dot{({\Ff_s}^{-1})}|_y},
   \underline{\dot{({\Ff_s}^{-1})}|_y}\right)
   \biggr)
\\
   &\stackrel{4}{=}
   -\hat s_2\hat s_2 (d\Ff_s|_{{\Ff_s}^{-1}(y)})^{-1}
   \biggl(
   \ddot{\Ff}_s|_{{\Ff_s}^{-1}(y)}
   -2d\dot{\Ff}_s|_{{\Ff_s}^{-1}(y)} 
   \underline{(d\Ff_s|_{{\Ff_s}^{-1}(y)})^{-1} \dot{\Ff}_s|_{{\Ff_s}^{-1}(y)}}
   \\
   &\quad
   +d^2\Ff_s|_{{\Ff_s}^{-1}(y)}
   \left(
   \underline{(d\Ff_s|_{{\Ff_s}^{-1}(y)})^{-1} \dot{\Ff}_s|_{{\Ff_s}^{-1}(y)}}
   ,
   \underline{(d\Ff_s|_{{\Ff_s}^{-1}(y)})^{-1} \dot{\Ff}_s|_{{\Ff_s}^{-1}(y)}}
   \right)
   \biggr)
\end{split}
\end{equation*}
where step 2 is by~(\ref{eq:Gg_s})
and step 4 by~(\ref{eq:Ff_s-inv}).

\smallskip
\noindent
\textsc{Term~$D$.}
Let $s\in\R$ and $y,\hat y_1,\hat y_2\in H$. Set $x:={\Ff_s}^{-1}(y)$.
Then we have
\begin{equation*}
\begin{split}
   d^2\Gg|_{(s,y)}(0,\hat y_1;0,\hat y_2)
   &\stackrel{1}{=}
   d^2(\Ff_s)^{-1}|_y (\hat y_1,\hat y_2)
\\
   &\stackrel{2}{=}
   -(d\Ff_s|_x)^{-1}
   d^2\Ff_s|_x
   \left(
   (d\Ff_s|_x)^{-1}\hat y_1
   ,
   (d\Ff_s|_x)^{-1}\hat y_2
   \right)
\end{split}
\end{equation*}
Step~1 is by~(\ref{eq:dGg-y}).
Step~2 is analogous to~(\ref{eq:sec-deriv}).

\smallskip
\noindent
\textsc{Term~$B$.}
Let $s,\hat s\in\R$ and $y,\hat y\in H$. Set $x:={\Ff_s}^{-1}(y)$.
Then we have
\begin{equation*}
\begin{split}
   d^2\Gg|_{(s,y)}(0,\hat y;\hat s,0)
   &\stackrel{1}{=}
   \p_s \left(
   (d\Ff_s|_x)^{-1}\hat y
   \right) \hat s
\\
   &\stackrel{2}{=}
   -(d\Ff_s|_x)^{-1} d\dot{\Ff}_s|_x (d\Ff_s|_x)^{-1}\hat y \hat s
\end{split}
\end{equation*}
where step~1 is by~(\ref {eq:dGg-y})
and step~2 by the following consideration.
For $x,\xi\in H$ take the $s$-derivative of
$\xi=(d\Ff_s|_x)^{-1} d\Ff_s|_x \xi$ to obtain
\begin{equation*}
\begin{split}
   0
   &=\left(\p_s(d\Ff_s|_x)^{-1}\right) d\Ff_s|_x\xi
   +(d\Ff_s|_x)^{-1} \p_s (d\Ff_s|_x)
\\
   &=\left(\p_s(d\Ff_s|_x)^{-1}\right) d\Ff_s|_x+(d\Ff_s|_x)^{-1} d\dot{\Ff}_s|_x
\end{split}
\end{equation*}
where we used that derivatives commute by the Theorem of Schwarz.
Hence
$$
   \p_s(d\Ff_s|_x)^{-1}
   =-(d\Ff_s|_x)^{-1} d\dot{\Ff}_s|_x (d\Ff_s|_x)^{-1}
$$
for all $x,\xi \in H$.
This concludes the proof of Step~2
and Lemma~\ref{le:dGg}.
\end{proof}

\newpage 
\section{Hilbert manifold structure for the path space
of finite dimensional manifolds with the help of the exponential map}
\label{sec:Hmf-of-paths}

Finite dimensional manifolds are automatically tame
and therefore Theorem~\ref{thm:A} in particular implies that the space of
$W^{1,2}$ paths on a finite dimensional manifold is a $C^1$ Hilbert manifold.
In this appendix we show how this can as well be
deduced more traditionally with the help of the exponential map.
\\
If one uses the exponential map on a $C^2$ manifold,
the finite dimensional version of the parametrized version of
Theorem~\ref{thm:B} is not quite sufficient since in general one
will not have two continuous derivatives in both the parameter $s$ and
the space variable.
We therefore establish as a technical tool
Theorem~\ref{thm:mf-of-paths-finite-model}
which allows us to deal with this complication.

\smallskip
Let $M$ be a $C^2$ manifold of finite dimension $n$.
Pick two points $x_-,x_+\in M$.
Manifolds of maps $N\to M$ between manifolds
have been constructed with the use of an exponential map
on the target side.
While El{\u{\i}}asson~\cite{Eliasson:1967a} assumes compactness of the
domain $N$ but allows infinite dimension of the target,
Schwarz~\cite{schwarz:1993a} deals with maps $\R\to M$ and uses smooth
maps $x\colon\R\to M$ which reach $x_\mp$ only asymptotically, i.e. in
infinite time $\mp \infty$, as the fundamental paths
around which coordinate charts are being built.

As in the construction of Theorem~\ref{thm:A} in
\S\ref{sec:coordinate-charts}, but in contrast to
Schwarz~\cite{schwarz:1993a} 
we build our charts around \emph{basic paths}, i.e. paths which reach 
$x_\mp$ already in finite time $\mp T$.
In Subsection~\ref{sec:exp-map}
we recall the elegant construction of coordinate charts of path space using the
exponential map associated to the choice of a Riemannian metric
on $M$. Here we already use finite time basic paths.

\begin{definition}\label{def:basic-path}
Consider a $C^2$ path $x\colon\R\to M$ with the property that
$$
   x(s)=
   \begin{cases}
      x_-&\text{, $s\le -T$,}
      \\
      x_+&\text{, $s\ge T$,}
   \end{cases}
$$
for some $x_-,x_+\in M$ and some $T>0$.
Such paths are called \textbf{basic paths}.
\end{definition}

We call two Hilbert spaces \textbf{equivalent}
if they coincide as vector spaces and their inner products
are equivalent. When we refer to an \textbf{equivalence class} of Hilbert spaces
we mean equivalence with respect to this equivalence relation.
For a basic path $x$ we consider the
\textbf{equivalence class of Hilbert spaces}
\begin{equation}\label{eq:H^1_x}
   H^1_x
   :=W^{1,2}(\R,x^*TM) .
\end{equation}
In fact, to choose an inner product in $H^1_x$ we have to choose
a trivialization $\Tt^x$ of the bundle $x^*TM\to\R$.
As we will explain below, Proposition~\ref{prop:change-of-triv},
choosing different trivializations gives rise to equivalent inner
products on $H^1_x$.

\subsection{Technical Tool}\label{sec:tec-tool-n}

The following
theorem will be the base to prove Theorem~\ref{thm:mf-of-paths-finite}
(transition maps are $C^1$ diffeomorphisms).

\begin{theorem}\label{thm:mf-of-paths-finite-model}
Let $\varphi\colon\R\times \R^n\to\R^n$ be a $C^1$ map
with the following properties.
\begin{enumerate}\setlength\itemsep{0ex} 
  \item[\rm (i)]
  For each $s\in\R$ the map $\varphi_s:=\varphi(s,\cdot)\colon\R^n\to\R^n$
  is $C^2$.
\item[\rm (ii)]
  The map $\dot\varphi_s\colon\R^n\to \R^n$ is $C^1$
  where  $\dot \varphi:=\p_1\varphi(\cdot,\cdot)$ is the $s$-derivative.
\item[\rm (iii)]
  The map $\R\times\R^n\to\Ll(\R^n,\R^n;\R^n)$, $(s,x)\mapsto d^2\varphi_s|_x$,
  is continuous.
\item[\rm (iv)]
  The map $\R\times\R^n\mapsto \Ll(\R^n)$, $(s,x)\mapsto d\dot\varphi_s|_x$,
  is continuous.
\item[\rm (v)]
  There exist $T>0$ such that $\varphi_s=\varphi_\pm$
  whenever $\pm s>T$ and
  $\varphi_\pm\colon\R^n\to\R^n$ is of class $C^2$ and maps $0$ to $0$.
\end{enumerate}
Then the map
\begin{equation}\label{eq:Psi-map}
\begin{split}
   \Phi\colon W^{1,2}(\R,\R^n)&\to W^{1,2}(\R,\R^n)
   \\
   \xi&\mapsto [s\mapsto (\Phi(\xi))(s):=\varphi(s,\xi(s))]
\end{split}
\end{equation}
is well defined and $C^1$.
\end{theorem}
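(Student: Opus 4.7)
My plan is to mimic the four-step scheme of Theorem~\ref{thm:B-Phi-parametrized}, specialized to the single-scale setting $H_1=H_2=\R^n$. In finite dimension tameness is vacuous, so the delicate two-level estimates reduce to uniform bounds on compact subsets of $\R\times\R^n$, obtained from hypotheses~(i)--(iv). Hypothesis~(v) plays the role of asymptotic constancy: outside $[-T,T]$, $\dot\varphi_s$ and $d\dot\varphi_s$ vanish identically and $\varphi_\pm(0)=0$, giving $L^2$-integrability of the ``new'' parametrized $s$-derivative terms on all of~$\R$.

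First I would verify well-definedness. Pick $\xi\in W^{1,2}(\R,\R^n)$. By~(\ref{eq:Sob-estimate}), $\xi$ is continuous on~$\R$ with compact image in $\R^n$ and $\xi(s)\to0$ as $|s|\to\infty$. For $|s|\ge T$ one has $\varphi_s=\varphi_\pm$ with $\varphi_\pm(0)=0$, so the mean value theorem gives $|\varphi(s,\xi(s))|\le c|\xi(s)|$ with $c$ a bound for $d\varphi_\pm$ on the compact image of $\xi$; combined with continuity of $(s,x)\mapsto \varphi(s,x)$ on $[-T,T]\times\overline{\xi(\R)}$, this places $\Phi(\xi)\in L^2(\R,\R^n)$. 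For the $s$-derivative one computes
\begin{equation*}
   \tfrac{d}{ds}\Phi(\xi)(s) = \dot\varphi_s(\xi(s)) + d\varphi_s|_{\xi(s)}\dot\xi(s).
\end{equation*}
The first summand is supported in $[-T,T]$ by hypothesis~(v) and bounded there by continuity of $\dot\varphi$ (hypothesis~(ii)) on a compact set, while the second is dominated pointwise by $c|\dot\xi(s)|$ with $c$ a uniform bound on $\|d\varphi_s|_x\|$ over the compact set swept by $(s,\xi(s))$ together with the constant tails; both are $L^2$, so $\Phi(\xi)\in W^{1,2}(\R,\R^n)$.

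Next, for differentiability, I would identify the candidate derivative $(d\Phi|_\xi\hat\xi)(s):=d\varphi_s|_{\xi(s)}\hat\xi(s)$. Its $s$-derivative
\begin{equation*}
   d\dot\varphi_s|_{\xi(s)}\hat\xi(s)
   +d^2\varphi_s|_{\xi(s)}(\dot\xi(s),\hat\xi(s))
   +d\varphi_s|_{\xi(s)}\dot{\hat\xi}(s)
\end{equation*}
is $L^2$ using hypothesis~(v) (first summand compactly supported in $s$), the Sobolev bound $\|\hat\xi\|_\infty\le\|\hat\xi\|_{1,2}$ from~(\ref{eq:Sob-estimate}) applied to the middle summand, and uniform boundedness of $d\varphi_s$ for the last. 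Together they yield $\|d\Phi|_\xi\|_{\Ll(W^{1,2})}\le C_\xi$. To show the candidate is actually the Fr\'echet derivative, I would write, by the fundamental theorem of calculus,
\begin{equation*}
   \Phi(\xi+h\hat\xi)(s)-\Phi(\xi)(s)-h\,d\varphi_s|_{\xi_s}\hat\xi_s
   =h\int_0^1\bigl(d\varphi_s|_{\xi_s+th\hat\xi_s}-d\varphi_s|_{\xi_s}\bigr)\hat\xi_s\,dt,
\end{equation*}
divide by $h$, and pass to the $L^2$-norm; uniform continuity of $d\varphi$ on compacts drives it to zero. Differentiating in $s$ produces three defect terms involving $d^2\varphi_s|_{\xi+th\hat\xi}-d^2\varphi_s|_\xi$, $d\varphi_s|_{\xi+h\hat\xi}-d\varphi_s|_\xi$ and $d\dot\varphi_s|_{\xi+th\hat\xi}-d\dot\varphi_s|_\xi$, each treated exactly as in Step~3 of Theorems~\ref{thm:B-Phi} and~\ref{thm:B-Phi-parametrized} via uniform continuity on compacts supplied by hypotheses~(iii),(iv).

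Finally, continuity $\xi\mapsto d\Phi|_\xi$ is handled as in Step~4 of Theorem~\ref{thm:B-Phi-parametrized}: for $\tilde\xi$ near $\xi$ in $W^{1,2}$, the difference $d\Phi|_\xi\hat\xi-d\Phi|_{\tilde\xi}\hat\xi$ and its $s$-derivative split into four summands, each dominated by a supremum of $\|d\varphi_s|_{\xi_s}-d\varphi_s|_{\tilde\xi_s}\|$, $\|d^2\varphi_s|_{\xi_s}-d^2\varphi_s|_{\tilde\xi_s}\|$ or $\|d\dot\varphi_s|_{\xi_s}-d\dot\varphi_s|_{\tilde\xi_s}\|$; hypotheses~(iii) and~(iv), together with the fact that the set $\{(s,\xi_s),(s,\tilde\xi_s)\}$ has compact closure, turn these into $\eps$-small bounds. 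The main obstacle, such as it is, is purely bookkeeping: distinguishing the space-derivative contributions (which persist for all $s\in\R$ through the asymptotic maps $\varphi_\pm$) from the genuinely new $s$-derivative contributions (confined to $[-T,T]$ by hypothesis~(v)), and matching each to the correct uniform-continuity input. No new analytic difficulty beyond what already appears in the two-level parametrized case is encountered.
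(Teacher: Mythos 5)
Your proposal follows essentially the same route as the paper's proof: well-definedness via the splitting of $\R$ into $[-T,T]$ and the constant tails (using $\varphi_\pm(0)=0$ for the tails), identification of the candidate derivative $d\varphi_s|_{\xi_s}\hat\xi_s$, the fundamental-theorem-of-calculus estimate for the difference quotient with the three defect terms in the $s$-derivative controlled by uniform continuity of $d\varphi$, $d^2\varphi$, $d\dot\varphi$ on compacts, and the term-by-term continuity argument for $\xi\mapsto d\Phi|_\xi$. The argument is correct and matches the paper's Steps 1--3.
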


\begin{corollary}\label{cor:psi}
Let $\varphi\colon\R\times \R^n\to\R^n$ be a $C^1$ map
linear in the second variable, i.e.
$\varphi_s:=\varphi(s,\cdot)\in\Ll(\R^n)$ for any $s\in\R$,
and such that there exists $T>0$ with $\varphi_s=\id_{\R^n}$ whenever
$\abs{s}>T$. Then $\Phi$ in~(\ref{eq:Psi-map}) is a bounded linear map.
\end{corollary}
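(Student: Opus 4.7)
The plan is to derive Corollary~\ref{cor:psi} directly from Theorem~\ref{thm:mf-of-paths-finite-model} combined with the elementary observation that a continuous linear map between Banach spaces is automatically bounded. So I would not re-prove any $W^{1,2}$ estimates from scratch; the analytic work has already been done in the theorem.

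First I would verify that the five hypotheses (i)--(v) of Theorem~\ref{thm:mf-of-paths-finite-model} hold for $\varphi$ under the stronger assumption of Corollary~\ref{cor:psi}. Since each $\varphi_s\in\Ll(\R^n)$ is linear in the space variable, it is automatically $C^\infty$ in that variable with $d^2\varphi_s|_x\equiv 0$, giving (i) and (iii). Because $\varphi$ itself is $C^1$, the partial derivative $\dot\varphi_s$ exists, and viewed as a linear map in $x$ it is again an element of $\Ll(\R^n)$; hence $\dot\varphi_s$ is $C^\infty$ in $x$, which is (ii), and $d\dot\varphi_s|_x=\dot\varphi_s$ depends continuously on $(s,x)$ by the $C^1$ hypothesis on $\varphi$, which is (iv). Condition (v) is immediate with $\varphi_\pm=\id_{\R^n}$, which is smooth and maps $0$ to $0$. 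Theorem~\ref{thm:mf-of-paths-finite-model} therefore yields that
\[
   \Phi\colon W^{1,2}(\R,\R^n)\to W^{1,2}(\R,\R^n)
\]
is a well defined map of class $C^1$.

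Next I would record that $\Phi$ is linear: for $\xi,\eta\in W^{1,2}(\R,\R^n)$ and $\lambda\in\R$ the identity
\[
   \Phi(\lambda\xi+\eta)(s)
   =\varphi_s\bigl(\lambda\xi(s)+\eta(s)\bigr)
   =\lambda\,\varphi_s\xi(s)+\varphi_s\eta(s)
   =\bigl(\lambda\Phi(\xi)+\Phi(\eta)\bigr)(s)
\]
holds pointwise by pointwise linearity of $\varphi_s\in\Ll(\R^n)$, hence as elements of $W^{1,2}(\R,\R^n)$. Since every $C^1$ map is continuous, $\Phi$ is continuous, and a continuous linear map between Banach spaces is bounded. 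No explicit bound on the operator norm of $\Phi$ needs to be computed.

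There is no real obstacle here; the point is that Theorem~\ref{thm:mf-of-paths-finite-model} has already absorbed the analytical content and linearity plus $C^1$ regularity upgrades ``well defined and $C^1$'' to ``bounded linear''. If one preferred a self-contained argument bypassing the theorem, the only mild bookkeeping would be to check via the product rule that the weak derivative of $s\mapsto\varphi_s\xi(s)$ equals $\dot\varphi_s\xi(s)+\varphi_s\dot\xi(s)$ and to bound this in $L^2$ using that $\sup_{s\in\R}\|\varphi_s\|_{\Ll(\R^n)}$ and $\sup_{s\in\R}\|\dot\varphi_s\|_{\Ll(\R^n)}$ are finite (the former because $\varphi_s$ is continuous in $s$ and equal to $\id$ outside the compact interval $[-T,T]$, the latter because $\dot\varphi_s$ is continuous in $s$ and vanishes outside $[-T,T]$ by hypothesis~(v)). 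Either route gives the claim, and appealing to the preceding theorem makes the proof a one-line deduction.
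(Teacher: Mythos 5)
Your proof is correct and follows essentially the same route as the paper: verify hypotheses (i)--(v) of Theorem~\ref{thm:mf-of-paths-finite-model}, using that a fibrewise linear $\varphi_s$ makes $d^2\varphi_s$ vanish identically, and then invoke that theorem. You are slightly more explicit than the paper in recording that $\Phi$ is linear and that a continuous linear map between Banach spaces is automatically bounded, a step the paper leaves implicit.
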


\begin{proof}
We check that $\varphi$ satisfies the conditions of
Theorem~\ref{thm:mf-of-paths-finite-model}.
Since $\varphi_s$ and $\dot\varphi_s$ are linear maps
they are in particular arbitrarily often differentiable.
In particular (i) and (ii) hold.
Too see that condition (iii) holds consider the map
$$
   \R\times\R^n\to\Ll(\R^n,\R^n;\R^n)
   ,\quad
   (s,x)\mapsto d^2\varphi_s|_x .
$$
Observe that $(v,w)\mapsto d^2\varphi_s|_x(v,w)=\varphi_s w$ is independent
of $v$ and linear in~$w$.
Hence (iii) holds.
Observe that $d\dot\varphi_s|_xw=\dot\varphi_s w$
and therefore condition (iv) holds as well.
Since $\varphi_s=\id_{\R^n}$
whenever $s>T$
condition (v) is also satisfied.
Thus, by Theorem~\ref{thm:mf-of-paths-finite-model},
the map $\Phi$ is a well defined bounded linear map
on $W^{1,2}$.
\end{proof}

\begin{proof}[Proof of Theorem~\ref{thm:mf-of-paths-finite-model}]
The proof is in three steps.
Hypothesis (i) and (ii) serve to formulate (iii) and (iv).
We abbreviate $\xi_s:=\xi(s)$ and $\dot\xi_s:=\frac{d}{ds}\xi_s$.

\medskip\noindent
\textbf{Step~1.} $\Phi$ takes values in $W^{1,2}$ (well defined).

\begin{proof}
Pick $\xi \in W^{1,2}(\R,\R^n)$.
\\
a) We show that $\Phi(\xi)\in L^2(\R,\R^n)$.
To see this note that since $\xi\in W^{1,2}$
it is continuous and has to decay asymptotically.
Therefore there exists a constant $c$ such that
$\abs{\xi_s}\le c$ for every $s\in\R$.
By continuity of $\varphi$, there is a constant
$\kappa=\kappa(c)>0$ such that
$
   \norm{\varphi|_{[-T,T]\times B_c}}_\infty\le \kappa
$
where $B_c\subset\R^n$ is the radius $c$ ball about the origin.
Hence $\abs{(\Phi(\xi))(s)}\le\kappa$ for every $s\in[-T,T]$.

Let $\varphi_\pm$ be the maps provided by (v).
We next show that there exists a constant $C(c)$ such that
\begin{equation}\label{eq:c_kappa}
   \norm{\varphi_\pm (v)}\le C \abs{v}
\end{equation}
whenever $\abs{v}\le c$.
Indeed, since $d\varphi_\pm$ is $C^1$, there is a constant $C$ such that
\begin{equation*}
   \norm{d\varphi_\pm(x)}\le C
\end{equation*}
whenever $\abs{x}\le c$.
Now use $\varphi_\pm(0)=0$ to estimate
\begin{equation*}
\begin{split}
   \abs{\varphi_\pm(v)}
   &=\Bigl|
   \int_0^1 \tfrac{d}{dt}\varphi_\pm(tv)\, dt
   \Bigr|
\\
   &\le\int_0^1\norm{d\varphi_\pm(tv)}\cdot\abs{v}\, dt
\\
   &\le
   C\abs{v} .
\end{split}
\end{equation*}
This proves~(\ref{eq:c_kappa}).
Using~(\ref{eq:c_kappa}) in equality 3, we can estimate
\begin{equation*}
\begin{split}
   &\norm{\varphi(\cdot,\xi(\cdot))}_{L^2(\R,\R^n)}^2
\\
   &=\norm{\Phi(\xi)}_{L^2(\R,\R^n)}^2
\\
   &=\norm{\Phi(\xi)}_{L^2((-\infty,-T),\R^n)}^2
   +\norm{\Phi(\xi)}_{L^2([-T,T],\R^n)}^2
   +\norm{\Phi(\xi)}_{L^2((T,\infty),\R^n)}^2
\\
   &\stackrel{3}{=}
   C^2\norm{\xi}_{L^2((-\infty,-T),\R^n)}^2
   +\norm{\Phi(\xi)}_{L^2([-T,T],\R^n)}^2
   +C^2\norm{\xi}_{L^2((T,\infty),\R^n)}^2
\\
   &\le C^2\norm{\xi}_{W^{1,2}(\R,\R^n)}^2
   +2T\kappa^2<\infty.
\end{split}
\end{equation*}
b) We show that $\Phi(\xi)\in W^{1,2}(\R,\R^n)$.
So see this we calculate
$$
   \p_s(\Phi(\xi))(s)
   =\underbrace{\dot\varphi(s,\xi_s)}_{\text{$=0$, $\abs{s}> T$}}
   +\underbrace{d\varphi_s|_{\xi_s}}_{\text{$\stackrel{\text{(v)}}{=}d\varphi_\pm|_{\xi_s}$, $\abs{s}> T$}} \dot \xi_s .
$$
We show that both summands are in $L^2(\R,\R^n)$.
\\
Summand one: Let the constant $c$ be as in a).
Since $\varphi$ is $C^1$ it follows that $\dot\varphi$ is continuous.
Thus there is a constant
$\kappa_2=\kappa_2(c)>0$ such that it holds
$
   \norm{\dot \varphi|_{[-T,T]\times B_c}}_\infty\le \kappa_2 .
$
Hence $\abs{\dot \varphi(s,\xi(s))}\le\kappa_2$ for any $s\in[-T,T]$. We estimate
\begin{equation*}
\begin{split}
   \norm{\dot\varphi(\cdot,\xi(\cdot))}_{L^2(\R,\R^n)}
   &=\norm{\dot\varphi(\cdot,\xi(\cdot))}_{L^2([-T,T],\R^n)}\\
   &\le \kappa_2\sqrt{2T} <\infty
\end{split}
\end{equation*}
where we used in the identity that $\dot\varphi(s,\cdot)=0$ whenever $\abs{s}>T$.
\\
Summand two: Since $\xi$ is in $W^{1,2}$ it is in particular
continuous and decays asymptotically.
So the operator norm of $d\varphi_s|_{\xi_s}$
is uniformly bounded when $s$ lies in compact set $[-T,T]$.
Since outside of $[-T,T]$ we have $d\varphi_s|_{\xi_s}=d\varphi_\pm|_{\xi_s}$ by (v),
the operator norm is actually uniformly bounded on the whole of
$\R$, say by a constant $\kappa_3$. Hence
\begin{equation*}
\begin{split}
   \norm{d\varphi_\cdot|_{\xi(\cdot)} \dot\xi_s}_{L^2(\R,\R^n)}
   &\le \kappa_3 \norm{\dot\xi_s}_{L^2(\R,\R^n)}
   <\infty .
\end{split}
\end{equation*}
This proves b) and Step~1.
\end{proof}

\noindent
\textbf{Step~2.} $\Phi$ is differentiable with derivative at $\xi\in
W^{1,2}(\R,\R^n)$ given by
$$
   \left(d\Phi|_\xi \eta\right)(s)
   =d\varphi_s|_{\xi_s}\eta_s
$$
for every $\eta\in W^{1,2}(\R,\R^n)$ and every $s\in\R$.

\begin{proof}
We need to show that our candidate for the derivative of $\Phi$ at $\xi$ in
direction $\eta$, namely the map
$s\mapsto d\varphi_s|_{\xi_s}\eta_s$, lies in $W^{1,2}$.
That this map lies in $L^2$ follows by the argument
in the proof of Step~1 b) for the second summand
by using continuity of $d\varphi_s$.
It remains to show that the $s$-derivative lies in $L^2$.
This $s$-derivative is given by
\begin{equation*}
\begin{split}
   \p_s \left(d\Phi|_\xi \eta\right)(s)
   &=d\dot\varphi_s|_{\xi_s}\eta_s
   +d^2\varphi_s|_{\xi_s}\left(\dot\xi_s,\eta_s\right)
   +d\varphi_s|_{\xi_s}\dot\eta_s .
\end{split}
\end{equation*}
That the third summand is in $L^2$ is the argument
in Step~1~b) for summand two.
That the first summand is in $L^2$ follows by the same argument,
but now using continuity of $d\dot\varphi_s$, which holds by hypothesis
(iv), and since $d\dot\varphi_s=0$ for $\abs{s}>T$.
It remains to discuss the second term.
Observe that $d^2\varphi_s(\cdot)=d^2\varphi_\pm(\cdot)$ whenever $\abs{s}>T$.
By compactness of $[-T,T]$ and by continuity of the map
$d^2\varphi$, by (iii),
as well as continuity and asymptotic decay in $s$ of the maps $\xi_s$ and $\eta_s$, the operator norm
$$
   \norm{v\mapsto d^2\varphi_s|_{\xi_s}\left(v,\eta_s\right)}_{\Ll(\R^n)}
    \le c_2 \norm{\eta}_\infty
   \le c_3 \norm{\eta}_{W^{1,2}(\R,\R^n)}
$$
is bounded uniformly by a constant $c_2=c_2(T)>0$ for every $s\in\R$.
Here $c_3$ is the constant of the embedding $W^{1,2}\INTO C^0$.
We obtain
$$
   \norm{d^2\varphi_\cdot|_{\xi}(\dot\xi,\eta)}_{L^2(\R,\R^n)}
   \le c_3 \norm{\eta}_{W^{1,2}(\R,\R^n)} \norm{\dot\xi}_{L^2(\R,\R^n)}
   <\infty .
$$
This proves that our derivative candidate
$d\varphi_\cdot|_{\xi(\cdot)}\eta(\cdot)$ is element of $\Ll(W^{1,2})$.

\smallskip
Let us abbreviate
$\norm{\cdot}_2:=\norm{\cdot}_{L^2(\R,\R^n)}$ and
$\norm{\cdot}_{1,2}:=\norm{\cdot}_{W^{1,2}(\R,\R^n)}$.
To see that our candidate actually is the derivative we show that the
limit
\begin{equation*}
\begin{split}
   &\lim_{h\to 0}\sup_{\norm{\eta}_{1,2}\le 1}
   \frac{\norm{\Phi(\xi+h\eta)-\Phi(\xi)-h\, d\varphi_\cdot|_\xi\eta}_{W^{1,2}}^2}{h^2}
\\
   &=\lim_{h\to 0}\sup_{\norm{\eta}_{1,2}\le 1}
   \frac{\int_\R\Abs{
   \varphi_s (\xi_s+h\eta_s)-\varphi_s (\xi_s)
   -h\, d\varphi_s|_{\xi_s} \eta_s}^2ds}{h^2}
   \\
   &\quad+\lim_{h\to 0}\sup_{\norm{\eta}_{1,2}\le 1}
   \frac{\int_\R\Abs{\tfrac{d}{ds}\left(\varphi_s (\xi_s+h\eta_s)
   -\varphi_s (\xi_s)-h\, d\varphi_s|_{\xi_s} \eta_s\right)}^2ds}{h^2}
\end{split}
\end{equation*}
exists and vanishes. To see that summand one vanishes we use the fundamental
theorem of calculus to write it in the form
\begin{equation}\label{eq:hbjhbhj576567}
\begin{split}
   &\lim_{h\to 0}\sup_{\norm{\eta}_{1,2}\le 1}\tfrac{1}{h^2}\int_\R
   \biggl|\int_0^1\biggl(\underbrace{\tfrac{d}{dt}\varphi_s(\xi_s+th\eta_s)}
      _{=d\varphi_s|_{\xi_s+th\eta_s} h\eta_s}
   -h\, d\varphi_s|_{\xi_s}\eta_s\biggr) dt\biggr|_{\R^n}^2 ds
\\
   &=\lim_{h\to 0}
   \sup_{\norm{\eta}_{1,2}\le 1}
   \underbrace{
   \int_\R
   \biggl|\int_0^1\biggl(d\varphi_s|_{\xi_s+th\eta_s}
   -d\varphi_s|_{\xi_s}\biggr)\eta_s dt\biggr|_{\R^n}^2 ds
   }_{=:F(h,\eta)} .
\end{split}
\end{equation}
There exists a constant $c_4$ such that
$\abs{\eta_s}\le c_4$ for all $\eta$ in the unit $W^{1,2}$ ball and $s\in\R$.
Since $d\varphi$ is continuous and $\varphi_s=\varphi_\pm$ is independent of
$s$ for $\abs{s}>T$, it is in particular uniformly continuous
on compact sets, and therefore for any $\eps>0$ there exists an $h_0>0$
such that for every $h\in[0,h_0]$ there is the estimate
$$
   \norm{d\varphi_s|_{\xi_s+th\eta_s}-d\varphi_s|_{\xi_s}}_{\Ll(\R^n)}
   \le\eps
$$
whenever $s\in\R$ and $\norm{\eta}_{1,2}\le 1$ and $t\in[0,1]$.
Hence
$
   F(h,\eta)
   \le\eps^2\norm{\eta}_{2}^2
   \le\eps^2
$
and therefore summand one $\lim_{h\to0} \sup_{\norm{\eta}_{1,2}\le 1} F(h,\eta)=0$
vanishes.

To see that summand two vanishes as well, we abbreviate and compute
\begin{equation*}
\begin{split}
   G(s):
   &=\tfrac{d}{ds}\left(\varphi_s (\xi_s+h\eta_s)
   -\varphi_s (\xi_s)-h\, d\varphi_s|_{\xi_s} \eta_s\right)
\\
   &\stackrel{1}{=}\dot\varphi_s|_{\xi_s+h\eta_s}
   +d\varphi_s|_{\xi_s+h\eta_s}(\dot\xi_s+h\dot\eta_s)
   \\
   &\quad-\dot\varphi_s|_{\xi_s}-d\varphi_s|_{\xi_s} \dot\xi_s
   \\
   &\quad
   -h\, d\dot\varphi_s|_{\xi_s}\eta_s
   -h\, d^2\varphi_s|_{\xi_s}(\dot\xi_s,\eta_s)
   -h\, d\varphi_s|_{\xi_s} \dot\eta_s
\\
   &\stackrel{2}{=}\underbrace{\dot\varphi_s|_{\xi_s+h\eta_s}-\dot\varphi_s|_{\xi_s}}
      _{=\int_0^1\tfrac{d}{dt}\dot\varphi_s|_{\xi_s+th\eta_s} dt}
   -h\, d\dot\varphi_s|_{\xi_s}\eta_s
   \\
   &\quad +d\varphi_s|_{\xi_s+h\eta_s}\dot\xi_s
   -d\varphi_s|_{\xi_s} \dot\xi_s
   -h\, d^2\varphi_s|_{\xi_s}(\dot\xi_s,\eta_s)
   \\
   &\quad +d\varphi_s|_{\xi_s+h\eta_s} h\dot\eta_s -h\, d\varphi_s|_{\xi_s} \dot\eta_s
\\
   &\stackrel{3}{=}h\int_0^1\left(d\dot\varphi_s|_{\xi_s+th\eta_s}-d\dot\varphi_s|_{\xi_s}\right)\eta_s\,
   dt
   \\
   &\quad+h\int_0^1\left(d^2\varphi_s|_{\xi_s+th\eta_s}-d^2\varphi_s|_{\xi_s}\right)
   (\dot\xi_s,\eta_s)\, dt
   \\
   &\quad+h \left(d\varphi_s|_{\xi_s+h\eta_s} -d\varphi_s|_{\xi_s}
   \right) \dot \eta_s
\end{split}
\end{equation*}
for every $s\in\R$.
Square this identity and integrate to obtain
\begin{equation*}
\begin{split}
   &\sup_{\norm{\eta}_{1,2}\le 1}\tfrac{1}{h^2}\int_\R\abs{G(s)}_{\R^n}^2 ds
   \\
   &\le\sup_{\norm{\eta}_{1,2}\le 1}
   \int_{-T}^T 3\biggl|\int_0^1\left(d\dot\varphi_s|_{\xi_s+th\eta_s}
   -d\dot\varphi_s|_{\xi_s}\right)\eta_s\, dt \biggr|^2_{\R^n} ds
   \\
   &\quad+\sup_{\norm{\eta}_{1,2}\le 1}
   \underbrace{\int_\R
     3\biggl|\int_0^1\left(d^2\varphi_s|_{\xi_s+th\eta_s}-d^2\varphi_s|_{\xi_s}\right)
   (\dot\xi_s,\eta_s)\, dt \biggr|^2_{\R^n} ds}_{=:F_2(h,\eta,\dot\xi)}
   \\
   &\quad+\sup_{\norm{\eta}_{1,2}\le 1}
   \int_\R 3\biggl|\left(d\varphi_s|_{\xi_s+h\eta_s} -d\varphi_s|_{\xi_s}
   \right) \dot \eta_s \biggr|^2_{\R^n} ds .
\end{split}
\end{equation*}
Term 1.
The limit, as $h\to 0$, of the first of the three terms
vanishes by exactly the same argument as in~(\ref{eq:hbjhbhj576567})
by using continuity of $d\dot\varphi$ by (iv).
\\
Term 2.
For the second term note that as in~(\ref{eq:hbjhbhj576567}) by continuity
(iii) and~(v) for every $\eps>0$ there is $h_0>0$ such that for any
$h\in[0,h_0]$ there is the estimate
$$
   \norm{d^2\varphi_s|_{\xi_s+th\eta_s}-d^2\varphi_s|_{\xi_s}}_{\Ll(\R^n)}
   \le\eps
$$
whenever $s\in\R$ and $\norm{\eta}_{1,2}\le 1$ and $t\in[0,1]$.
There exists a constant $\kappa>0$ such that
$\Norm{\eta}_\infty\le\kappa_T\Norm{\eta}_{W^{1,2}}$.
So, for $h\in[0,h_0]$, we have
\begin{equation}\label{eq:hghjgyh5688}
\begin{split}
   F_2(h,\eta,\dot\xi)
   &\le 3\eps^2\norm{\abs{\dot\xi}\cdot\abs{\eta}}_2^2
\\
   &\le 3\eps^2\norm{\eta}_\infty^2
   \norm{\dot\xi}_2^2
\\
   &\le 3\eps^2\kappa^2 \norm{\eta}_{1,2}^2
   \norm{\xi}_{1,2}^2 .
\end{split}
\end{equation}
Therefore $\sup_{\norm{\eta}_{1,2}\le 1} F_2(h,\eta,\dot\xi)
\le 3\eps^2\kappa^2\norm{\xi}_{1,2}^2$.
Consequently the limit vanishes
$\lim_{h\to0} \sup_{\norm{\eta}_{1,2}\le 1} F_2(h,\eta,\dot\xi)=0$.
\\
Term 3.
This follows as in~(\ref{eq:hbjhbhj576567}) by noticing that
$\norm{\eta}_{L^2(\R,\R^n)}\le \norm{\eta}_{W^{1,2}(\R,\R^n)}$.

This concludes the proof of Step 2 ($\Phi$ differentiable).
\end{proof}

\noindent
\textbf{Step~3.} The differential $d\Phi\colon W^{1,2}\to\Ll(W^{1,2})$
is a continuous map.

\begin{proof}
For $\xi,\tilde\xi,\eta\in W^{1,2}=W^{1,2}(\R,\R^n)$ we estimate the difference
\begin{equation*}
\begin{split}
   &\norm{d\Phi|_\xi \eta-d\Phi|_{\tilde \xi} \eta}_{1,2}^2\\
   &=\int_\R \abs{\bigl(d\varphi_s|_{\xi_s}-d\varphi_s|_{\tilde\xi_s}\bigr)\eta_s}^2 ds
   +\int_\R
   \abs{\tfrac{d}{ds}\bigl[\bigl(d\varphi_s|_{\xi_s}-d\varphi_s|_{\tilde\xi_s}\bigr)\eta_s\bigr]}^2 ds
\\
   &\le\int_\R \abs{\bigl(d\varphi_s|_{\xi_s}-d\varphi_s|_{\tilde\xi_s}\bigr)\eta_s}^2 ds
   \\
   &\quad+\int_{-T}^T\abs{\bigl(d\dot\varphi_s|_{\xi_s}-d\dot\varphi_s|_{\tilde\xi_s}\bigr)
   \eta_s}^2ds
   \\
   &\quad+\int_\R\abs{\bigl(
   d^2\varphi_s|_{\xi_s}\dot\xi_s
{\color{gray}\;
   -\;d^2\varphi_s|_{\tilde\xi_s}\dot\xi_s
   +d^2\varphi_s|_{\tilde\xi_s}\dot\xi_s
}
   -d^2\varphi_s|_{\tilde\xi_s}\dot{\tilde\xi}_s\bigr)
   \eta_s}^2ds
   \\
   &\quad+\int_\R\abs{\bigl(d\varphi_s|_{\xi_s}-d\varphi_s|_{\tilde\xi_s}\bigr)
   \dot\eta_s}^2ds
\\
   &\le\int_\R \abs{\bigl(d\varphi_s|_{\xi_s}-d\varphi_s|_{\tilde\xi_s}\bigr)\eta_s}^2 ds
   \\
   &\quad+\int_{-T}^T\abs{\bigl(d\dot\varphi_s|_{\xi_s}-d\dot\varphi_s|_{\tilde\xi_s}\bigr)
   \eta_s}^2ds
   \\
   &\quad+\int_\R 2\abs{\bigl(
   d^2\varphi_s|_{\xi_s}\dot\xi_s
   -d^2\varphi_s|_{\tilde\xi_s}\dot\xi_s
   \bigr)
   \eta_s}^2ds
   \\
   &\quad+\int_\R 2\abs{
   d^2\varphi_s|_{\tilde\xi_s}
   \bigl(\dot\xi_s-\dot{\tilde\xi}_s,\eta_s\bigr)}^2ds
   \\
   &\quad+\int_\R \abs{\bigl(d\varphi_s|_{\xi_s}-d\varphi_s|_{\tilde\xi_s}\bigr)
   \dot\eta_s}^2ds .
\end{split}
\end{equation*}
Term 1.
Since the $W^{1,2}$ norm can be estimated by the $C^0$ norm,
the continuity of $d\varphi$, together with the fact that $\varphi_s$ is
independent of $s$ for $\abs{s}>T$,
implies the following:
For every $\eps>0$ there exists $\delta=\delta(\eps)>0$ such that
$$
   \norm{\tilde\xi-\xi}_{1,2}\le\delta
   \Rightarrow
   \norm{d\varphi_s|_{\xi_s}-d\varphi_s|_{\tilde\xi_s}}_{\Ll(\R^n)}
  \le\eps .
$$
In particular, for every $\tilde\xi$ in the $W^{1,2}$ $\delta$-ball
around $\xi$ it holds that
$$
   \sup_{\norm{\eta}_{1,2}\le 1}
   \int_\R \abs{\bigl(d\varphi_s|_{\xi_s}-d\varphi_s|_{\tilde\xi_s}\bigr)\eta_s}^2 ds
   \le\eps^2 .
$$
Term 2.
Maybe after shrinking $\delta>0$ the continuity (iv) of $d\dot\varphi$
implies (same argument as for Term1) the estimate
$$
   \sup_{\norm{\eta}_{1,2}\le 1}
   \int_{-T}^T \abs{\bigl(d\dot\varphi_s|_{\xi_s}-d\dot\varphi_s|_{\tilde\xi_s}\bigr)\eta_s}^2 ds
   \le\eps^2 .
$$
Term 3.
Maybe after shrinking $\delta>0$ again,
by continuity (iii) of the bi-linear valued map $d^2\varphi$
and by~(v), we have the implication
$$
   \norm{\tilde\xi-\xi}_{1,2}\le\delta
   \Rightarrow
   \norm{d^2\varphi_s|_{\xi_s}-d^2\varphi_s|_{\tilde\xi_s}}_{\Ll(R^n,\R^n;\R^n)}
  \le\eps .
$$
In particular, for every $\tilde\xi$ in the $W^{1,2}$ $\delta$-ball
around $\xi$ it holds that
\begin{equation*}
\begin{split}
   &\int_\R 2\abs{\bigl(
   d^2\varphi_s|_{\xi_s}\dot\xi_s
   -d^2\varphi_s|_{\tilde\xi_s}\dot\xi_s
   \bigr)
   \eta_s}^2ds
\\
   &\le2\eps^2
   \norm{\abs{\dot\xi}\cdot\abs{\eta}}_2^2
\\
   &\le
   2\eps^2\kappa^2\norm{\eta}_{1,2}^2\norm{\xi}_{1,2}^2
\end{split}
\end{equation*}
where in the last inequality we used estimate~(\ref{eq:hghjgyh5688}).
Therefore we get
$$
   \sup_{\norm{\eta}_{1,2}\le 1}   
   \int_\R 2\abs{\bigl(
   d^2\varphi_s|_{\xi_s}
   -d^2\varphi_s|_{\tilde\xi_s}
   \bigr)
   \bigl(\dot\xi_s,\eta_s\bigr)}^2ds
   \le 2\eps^2\kappa^2\norm{\xi}_{1,2}^2 .
$$
Term 4.
By continuity (iii) of $d^2\varphi$ and finite dimension of $\R^n$,
and by~(v) there exists a constant $c>0$, only
depending on $\xi$ but not $\tilde\xi$, such that
$$
   \norm{\tilde\xi-\xi}_{1,2}\le\delta
   \Rightarrow
   \norm{d^2\varphi_s|_{\tilde\xi_s}}_{\Ll(\R^n,\R^n;\R^n)}
  \le c .
$$
Hence we estimate term 4 by
\begin{equation*}
\begin{split}
   &\int_\R 2\abs{
   d^2\varphi_s|_{\tilde\xi_s}
   \bigl(\dot\xi_s-\dot{\tilde\xi}_s,\eta_s\bigr)}^2ds\\
   &\le 2c^2
   \norm{\abs{\dot\xi-\dot{\tilde\xi}}\cdot\abs{\eta}}_2^2\\
   &\le 2c^2\kappa^2 
   \norm{\dot\xi-\dot{\tilde\xi}}_{1,2}^2
   \norm{\eta}_{1,2}^2
\end{split}
\end{equation*}
where in the last inequality we used estimate~(\ref{eq:hghjgyh5688}).
Maybe after shrinking $\delta>0$ we can assume that $\delta<\eps$.
Taking the supremum we obtain the estimate
$$
   \sup_{\norm{\eta}_{1,2}\le 1}
   \int_\R 2\abs{
   d^2\varphi_s|_{\tilde\xi_s}
   \bigl(\dot\xi_s-\dot{\tilde\xi}_s,\eta_s\bigr)}^2ds
   \le 2c^2\kappa^2\eps^2 .
$$
Term 5.
By the argument from term 1 using that
$\norm{\dot\eta}_2^2\le \norm{\eta}_{1,2}^2\le 1$ we get
$$
   \sup_{\norm{\eta}_{1,2}\le 1}
   \int_\R \abs{\bigl(d\varphi_s|_{\xi_s}-d\varphi_s|_{\tilde\xi_s}\bigr)\dot\eta_s}^2 ds
   \le\eps^2 .
$$

\smallskip
The term by term analysis above shows that
for every $\eps>0$ there exists a $\delta>0$ such that
\begin{equation*}
\begin{split}
   \norm{d\Phi|_\xi-d\Phi|_{\tilde \xi} }_{\Ll(W^{1,2})}^2
   &=\sup_{\norm{\eta}_{1,2}\le 1}
   \norm{d\Phi|_\xi \eta-d\Phi|_{\tilde \xi} \eta}_{1,2}^2\\
   &\le \eps^2\left(
   3
   +2\kappa^2 \norm{\xi}_{W^{1,2}(\R,\R^n)}^2
   +2c^2\kappa^2
   \right) .
\end{split}
\end{equation*}
This concludes the proof of Step~3.
\end{proof}

This proves Theorem~\ref{thm:mf-of-paths-finite-model}.
\end{proof}

\subsection{Hilbert manifold structure via exponential map}
\label{sec:exp-map}

Let $M$ be a $C^2$ manifold of finite dimension $n$
and let $x_-,x_+\in M$.
We define the space $\Pp_{x_-x_+}$ of $W^{1,2}$-paths
$x\colon\R\to M$ from $x_-$ to $x_+$ and endow it with the structure of
a $C^1$ Hilbert manifold; see~\cite{Eliasson:1967a,schwarz:1993a}.
To define charts we pick a $C^2$ Riemannian metric $g$ on $M$.
Let $\iota_{x_0}>0$ be the injectivity radius at $x_0\in M$ of the
exponential map $\exp\colon T_{x_0} M\to M$.

For convenience of the reader we repeat the construction
of coordinate charts of path space using the exponential map,
but using already finite time basic paths as coordinate centers.
Given a manifold $M$ of finite dimension $n$,
pick a Riemannian metric $g$ on $M$.
The associated exponential map
$$
   \exp=\exp^g\colon TM\to M\times M
   ,\quad
   (q,v)\mapsto (q,\exp_q v)
$$
when defined on a sufficiently small neighborhood of the zero section
is a diffeomorphism onto its image. One defines $\exp_q v:=\gamma_v(1)$
where $\gamma_v\colon[0,1]\to M$ is the unique geodesic
($\Nabla{t}\dot\gamma_v\equiv 0$) with
$\gamma_v(0)=q$ and $\dot\gamma_v(0)=v$.

\boldmath
\subsubsection{Local parametrizations}\label{sec:fin-loc-par}
\unboldmath

\begin{definition}[Local parametrizations]\label{def:Uu_x}
Let $x$ be a basic path connecting two points $x_\mp\in M$.
Consider the open neighborhood 
$
   \Uu_x
$
of the zero section in the
space of $W^{1,2}$ vector fields $\xi$ along $x$
that consists of all $\xi$ such that $\abs{\xi(s)}_{g}<\iota_{x(s)}$ for
every $s\in\R$.
We define a \textbf{local chart} of $\Pp_{x_-x_+}$ about $x$
as the inverse of the \textbf{local parametrization \boldmath$\Psi_x$}
given by utilizing the exponential  map of $(M,g)$ pointwise for every
$s\in\R$, in symbols
\begin{equation*}
\boxed{
\begin{aligned}
   \Psi_x=\exp_x\colon H^1_x\supset \Uu_x
   &\to \exp_x\Uu_x=:\Vv_x\subset\Pp_{x_-x_+}
\\
   \xi&\mapsto \exp_x\xi:=[s\mapsto \exp_{x(s)}\xi(s)] .
\end{aligned}
}
\end{equation*}
These local parametrizations endow $\Pp_{x_-x_+}$ with the structure of
a \emph{topological} Hilbert manifold modelled on $W^{1,2}(\R,\R^n)$.
\end{definition}
\begin{figure}[h]
  \centering
  \includegraphics[width=10cm]    
                             {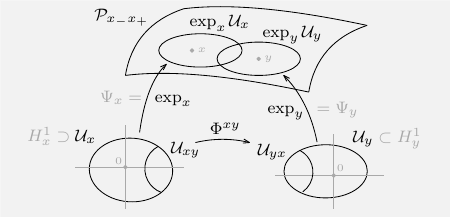}
  \caption{Exponential transition map
                 $\Phi^{xy}\colon\Uu_{xy}\to\Uu_{yx}$,
                 $H^1_x:=W^{1,2}(\SS^1,x^*TM)$}
   \label{fig:fig-chart-findim}
\end{figure}

\subsubsection{Transition maps and basic trivializations}
\label{sec:fin-transition-maps}

\begin{theorem}[Exponential parametrization transition maps]
\label{thm:mf-of-paths-finite}
Assume that $x$ and $y$ are two basic paths in $M$
connecting $x_-$ to $x_+$.
Consider the open Hilbert space subsets defined by
\begin{equation*}
\begin{split}
   \Uu_{xy}&:={\exp_x}^{-1}\left(\exp_x\Uu_x\CAP \exp_y\Uu_y\right)
   \subset H^1_x ,
\\
   \Uu_{yx}&:={\exp_y}^{-1}\left(\exp_x\Uu_x\CAP \exp_y\Uu_y\right)
   \subset H^1_y .
\end{split}
\end{equation*}
Then the chart transition map
$$
\boxed{
   \Phi^{xy}:={\Psi_y}^{-1}\circ\Psi_x|_{\Uu_{xy}}
   ={\exp_y}^{-1}\circ \exp_x|_{\Uu_{xy}}
   \colon\Uu_{xy}\to \Uu_{yx}
}
$$
is a $C^1$ diffeomorphism.
\end{theorem}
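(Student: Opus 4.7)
The plan is to reduce this statement to the technical tool Theorem~\ref{thm:mf-of-paths-finite-model} by trivializing the pullback bundles $x^*TM$ and $y^*TM$, and then applying it to the coordinate representative of $\Phi^{xy}$. Concretely, since $\R$ is contractible, both $x^*TM$ and $y^*TM$ admit $C^2$ trivializations
$$
   \Tt^x\colon \R\times\R^n\to x^*TM
   ,\qquad
   \Tt^y\colon \R\times\R^n\to y^*TM
$$
which can moreover be chosen to be $s$-independent outside the compact interval $[-T,T]$ containing the non-constant parts of both basic paths $x$ and $y$: indeed, for $|s|\ge T$ we have $x(s)=y(s)\in\{x_-,x_+\}$, and the fibres degenerate to the single vector spaces $T_{x_\mp}M$, so a trivialization adapted to some basis on those tangent spaces can be extended in a $C^2$ way. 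Under these trivializations, the map $\Phi^{xy}$ becomes
$$
   \xi\mapsto \bigl[s\mapsto \varphi_s(\xi(s))\bigr]
   ,\qquad
   \varphi_s(v):=(\Tt^y_s)^{-1}\bigl(\exp_{y(s)}^{-1}\exp_{x(s)}\Tt^x_s v\bigr) ,
$$
on the open subset of $W^{1,2}(\R,\R^n)$ corresponding to $\Uu_{xy}$.

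Next, I would verify conditions (i)--(v) of Theorem~\ref{thm:mf-of-paths-finite-model} for this $\varphi$. Conditions (i) and (iii) follow from the fact that the exponential map $\exp^g$ of a $C^2$ Riemannian metric is a $C^2$ map on a neighbourhood of the zero section of $TM$, together with the fact that the trivializations $\Tt^x_s,\Tt^y_s$ and the evaluations $x(s),y(s)$ are of class $C^2$ in $s$. Conditions (ii) and (iv) require only one joint derivative in $s$ and two derivatives in the fibre variable, which is precisely what the $C^2$ setup supplies. Condition (v) is the crux of using basic paths: for $|s|\ge T$ we have $x(s)=y(s)=x_\pm$ and $\Tt^x_s=\Tt^x_\pm$, $\Tt^y_s=\Tt^y_\pm$ are $s$-independent, so
$$
   \varphi_s(v)=\varphi_\pm(v)
   :=(\Tt^y_\pm)^{-1}\bigl(\exp_{x_\pm}^{-1}\exp_{x_\pm}\Tt^x_\pm v\bigr)
$$
is constant in $s$ and clearly sends $0$ to $0$. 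Theorem~\ref{thm:mf-of-paths-finite-model} then yields that the coordinate representative of $\Phi^{xy}$ is a well-defined $C^1$ map between the corresponding open sets of $W^{1,2}(\R,\R^n)$.

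To remove the dependence on the chosen trivializations, observe that the passage from the intrinsic Hilbert spaces $H^1_x,H^1_y$ to $W^{1,2}(\R,\R^n)$ via $\Tt^x,\Tt^y$ is a bounded linear isomorphism with bounded linear inverse: this is exactly Corollary~\ref{cor:psi} applied to the fibrewise linear, asymptotically constant changes of frame. Consequently $\Phi^{xy}\colon\Uu_{xy}\to\Uu_{yx}$ is $C^1$. Applying the same argument with the roles of $x$ and $y$ interchanged shows that $(\Phi^{xy})^{-1}=\Phi^{yx}$ is $C^1$ as well, so $\Phi^{xy}$ is a $C^1$ diffeomorphism.

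The main obstacle I anticipate is not the overall scheme but the careful bookkeeping of regularity: on a $C^2$ manifold with a $C^2$ metric, the exponential map has borderline regularity, and one has to check that the mixed conditions (iii) and (iv) — continuity of $d^2\varphi_s$ and of $d\dot\varphi_s$ on $\R\times\R^n$ — really do hold. This is ultimately why the technical tool Theorem~\ref{thm:mf-of-paths-finite-model} was engineered to avoid demanding two joint continuous derivatives: the asymmetric hypotheses (i)--(iv) are exactly what the exponential map of a $C^2$ metric provides, and the asymptotic constancy condition (v) is exactly what basic (as opposed to merely asymptotic) paths provide.
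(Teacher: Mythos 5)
Your proposal is correct and follows essentially the same route as the paper: trivialize $x^*TM$ and $y^*TM$ by basic (asymptotically constant) trivializations, write the trivialized transition map as $\xi\mapsto[s\mapsto\varphi_s(\xi(s))]$, verify hypotheses (i)--(v) of Theorem~\ref{thm:mf-of-paths-finite-model} exactly as you describe, and handle the trivialization-dependence via Corollary~\ref{cor:psi} before swapping the roles of $x$ and $y$ for the inverse. No substantive differences from the paper's argument.
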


\begin{definition}[Basic trivialization]\label{def:basic-triv}
Suppose that $x$ is a basic path such that
$x(s)=x_-$ for $s\le -T$ and
$x(s)=x_+$ for $s\ge T$.
To define an inner product on $H^1_x$
we choose a trivialization
$$
   \Tt\colon x^*TM\to\R\times\R^n
   ,\quad
   (s,v)\mapsto (s,\Tt_s v)
$$
depending continuously differentiable on $s$, i.e. being of class
$C^1$ in $s$, and with the property that there are linear isomorphisms
$$
   \Tt_\pm\colon T_{x_\pm}M\to\R^n
$$
such that
$$
   \Tt_s= \Tt_\pm
   ,\quad \text{whenever $\pm s\ge T$.}
$$
We refer to $\Tt$ as a \textbf{basic trivialization}.
\end{definition}

\smallskip
Given a basic trivialization $\Tt$, we define the inner product
of $\xi,\eta \in H^1_x$ by
$$
   \INNER{\xi}{\eta}
   :=\INNER{\Tt(\xi)}{\Tt(\eta)}_{W^{1,2}(\R,\R^n)} .
$$
If we choose a different basic trivialization, then we get an
equivalent inner product on $H^1_x$ as the following proposition shows.

\begin{proposition}\label{prop:change-of-triv}
Assume that $\Tt$ and $\tilde\Tt$ are two basic trivializations for a
basic path $x$, then
$$
   \tilde\Tt\circ\Tt^{-1}
   \colon W^{1,2}(\R,\R^n)\to W^{1,2}(\R,\R^n)
$$
is a linear isomorphism.
\end{proposition}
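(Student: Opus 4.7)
The plan is to realize $\tilde\Tt\circ\Tt^{-1}$ as pointwise multiplication by a $C^1$ family of linear isomorphisms of $\R^n$ that is constant outside a compact interval, and then to deduce the claim from Theorem~\ref{thm:mf-of-paths-finite-model}. Concretely, I would set
\begin{equation*}
   A_s:=\tilde\Tt_s\circ\Tt_s^{-1}\in\GL(\R^n),\qquad s\in\R.
\end{equation*}
Since each of $\Tt$ and $\tilde\Tt$ is $C^1$ in $s$ and agrees with fixed isomorphisms $\Tt_\pm$, $\tilde\Tt_\pm$ on $\pm s\ge T$, the family $s\mapsto A_s$ is $C^1$ and equals the fixed linear isomorphisms $A_\pm:=\tilde\Tt_\pm\circ\Tt_\pm^{-1}$ for $\pm s\ge T$. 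After the identifications $\Tt$ and $\tilde\Tt$, the map $\tilde\Tt\circ\Tt^{-1}$ becomes
\begin{equation*}
   \Phi\colon W^{1,2}(\R,\R^n)\to W^{1,2}(\R,\R^n),\qquad
   \xi\mapsto[s\mapsto A_s\xi(s)].
\end{equation*}

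Next I would verify that $\varphi(s,v):=A_sv$ fits the hypotheses of Theorem~\ref{thm:mf-of-paths-finite-model}. Since $\varphi_s$ is linear in $v$, it is automatically $C^2$ with $d^2\varphi_s\equiv 0$; this gives hypothesis (i) and makes hypothesis (iii) trivial. The $s$-derivative $\dot\varphi_s(v)=\dot A_sv$ is again linear, hence $C^\infty$ in $v$, giving (ii); moreover $d\dot\varphi_s|_v=\dot A_s$ is independent of $v$ and depends continuously on $s$ because $A$ is $C^1$, so (iv) holds. For (v) one just notes that outside $[-T,T]$ the map $\varphi_\pm(v)=A_\pm v$ is linear, hence $C^2$, and takes $0$ to $0$. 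Theorem~\ref{thm:mf-of-paths-finite-model} therefore guarantees that $\Phi$ is well defined and of class $C^1$. As $\Phi$ is $\R$-linear in $\xi$, it coincides with its derivative at the origin, so it is in particular bounded linear.

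Finally, to upgrade boundedness to being an isomorphism, I would apply exactly the same argument to the inverse family $A_s^{-1}=\Tt_s\circ\tilde\Tt_s^{-1}$, which is itself $C^1$ in $s$ and constant with values $A_\pm^{-1}$ outside $[-T,T]$. The resulting bounded linear operator on $W^{1,2}(\R,\R^n)$ is a two-sided inverse to $\Phi$, which proves the proposition. I do not expect a genuine obstacle here: the analytical heavy lifting has already been performed in Theorem~\ref{thm:mf-of-paths-finite-model} (and is of the same flavor as the proof of Corollary~\ref{cor:psi}), and linearity of each $\varphi_s$ makes every hypothesis trivially satisfied.
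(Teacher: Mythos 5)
Your proposal is correct and takes essentially the same route as the paper: the paper proves this proposition by applying Corollary~\ref{cor:psi} (which packages exactly the verification of (i)--(v) in Theorem~\ref{thm:mf-of-paths-finite-model} for a $C^1$ family of linear maps) to both $\tilde\Tt\circ\Tt^{-1}$ and its inverse $\Tt\circ\tilde\Tt^{-1}$. The one thing you do slightly more carefully is to invoke Theorem~\ref{thm:mf-of-paths-finite-model} directly: Corollary~\ref{cor:psi} as stated requires $\varphi_s=\id_{\R^n}$ for $\abs{s}>T$, whereas $A_s:=\tilde\Tt_s\circ\Tt_s^{-1}$ only becomes the fixed isomorphism $A_\pm$ asymptotically; your direct verification of hypothesis (v) --- which only requires $\varphi_\pm$ to be $C^2$ and send $0$ to $0$ --- sidesteps that small mismatch.
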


\begin{proof}
By Corollary~\ref{cor:psi} the map $\tilde\Tt\circ\Tt^{-1}$
is a well defined linear map from $W^{1,2}(\R,\R^n)$ to $W^{1,2}(\R,\R^n)$.
By the same corollary the inverse
$$
   (\tilde\Tt\circ\Tt^{-1})^{-1}
   =\Tt\circ\tilde\Tt^{-1}
$$
is as well a well defined linear map from $W^{1,2}(\R,\R^n)$ to
$W^{1,2}(\R,\R^n)$.
Therefore $\tilde\Tt\circ\Tt^{-1}$ is a linear isomorphism.
\end{proof}

\begin{proof}[Proof of Theorem~\ref{thm:mf-of-paths-finite}]
Let $x$ and $y$ be basic paths in the $n$-dimensional manifold $M$.
In order to apply Theorem~\ref{thm:mf-of-paths-finite-model}
to prove Theorem~\ref{thm:mf-of-paths-finite} we use
basic trivializations $\Tt^x$ and $\Tt^y$ of the vector bundles
$x^*TM\to\R$ and $y^*TM\to\R$, respectively.
The \textbf{trivialized transition map}
\begin{equation}\label{eq:Psi}
\boxed{
   \Phi:=\Tt^y\circ\Phi^{xy}\circ(\Tt^x)^{-1}\colon
   W^{1,2}(\R,\R^n)\to W^{1,2}(\R,\R^n)
}
\end{equation}
gives rise to a map $\varphi$ as in
Theorem~\ref{thm:mf-of-paths-finite-model}.
Indeed for $s\in\R$ we define
$$
   \varphi_s:=\varphi(s,\cdot)\colon
   \R^n
\overbrace{
   \stackrel{{\Tt^x_s}^{-1}}{\longrightarrow}
}^{\rm linear}
   T_{x(s)}M
\overbrace{
   \stackrel{\exp_{x(s)}}{\longrightarrow}  U_{x(s)}\cap U_{y(s)}
   \stackrel{\exp_{y(s)^{-1}}}{\longrightarrow}}^{=: \Phi^{xy}_s\in C^2}
   T_{y(s)}M
\overbrace{
   \stackrel{\Tt^y_s}{\longrightarrow}
}^{\rm linear}
   \R^n .
$$
The map $\varphi\colon\R\times\R^n\to\R^n$
is a composition of the three $C^1$ maps
$\Tt^x\colon x^*TM\to\R\times\R^n$, $\Tt^y$, and $\exp\colon TM
\to M$ on a neighborhood of the zero section. Therefore $\varphi$ is $C^1$.
We verify hypotheses (i--v) in Theorem~\ref{thm:mf-of-paths-finite-model}.
\\
(i)~As illustrated in the displayed diagram the map
$\varphi_s\colon\R^n\to\R^n$ is $C^2$.
\\
(ii)~The $s$-derivative of $\varphi_s:=\varphi (s,\cdot)$ is of the form
$$
   \dot\varphi_s
   =\p_s{\Tt^x_s}^{-1}\circ \underbrace{\Phi^{xy}_s}_{\in C^2}\circ \Tt^y_s
   +{\Tt^x_s}^{-1}\circ \underbrace{\p_s \Phi^{xy}_s}_{\in C^1}\circ \Tt^y_s
   +{\Tt^x_s}^{-1}\circ \underbrace{\Phi^{xy}_s}_{\in C^2}\circ \p_s \Tt^y_s
$$
where each summand starts and ends with a linear map.
Thus $\dot \varphi_s=\p_s\varphi_s\in C^1$.
\\
(iii)~and~(iv) follow from the last two displayed diagrams,
respectively, since the linear maps ${\Tt^x_s}^{-1}$ and $\Tt^y_s$
depend continuously on $s$.
\\
(v)~This follows since $x$ and $y$ are basic paths and $\Tt^x$ and
$\Tt^y$ are basic trivializations and therefore constant whenever
$\abs{s}$ is large enough.

\smallskip
It follows from Theorem~\ref{thm:mf-of-paths-finite-model}
that $\Phi$ defined by~(\ref{eq:Psi}) is $C^1$.
Interchanging the roles of $x$ and $y$ we get that the inverse map
$\Phi^{-1}=\Tt^x\circ\Phi^{yx}\circ(\Tt^y)^{-1}$
is $C^1$ as well.
Hence $\Phi$ is a diffeomorphism from $W^{1,2}(\R,\R^n)$ to
$W^{1,2}(\R,\R^n)$.

\smallskip
This proves Theorem~\ref{thm:mf-of-paths-finite}
in view of Proposition~\ref{prop:change-of-triv}.
\end{proof}

\subsubsection{Independence of choice of Riemannian metric}
\label{sec:fin-indep-metric}

As a consequence of Theorem~\ref{thm:mf-of-paths-finite}
the space $\Pp_{x_-x_+}$ carries the structure of a $C^1$ Hilbert manifold.
The atlas $\Aa=\Aa(g)$ we constructed depends on the choice
of a $C^2$ Riemannian metric $g$ on $M$.
However, the $C^1$ Hilbert manifold structure of $\Pp_{x_-x_+}$
does not depend on the choice of $C^2$ metric
as the next theorem shows.

\begin{theorem}\label{thm:indep-g}
If $g$ and $\tilde g$ are two $C^2$ Riemannian metrics on a $C^2$
manifold~$M$, then the atlases $\Aa(g)$ and $\Aa(\tilde g)$ are compatible.
\end{theorem}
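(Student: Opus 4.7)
My plan is to reduce to Theorem~\ref{thm:mf-of-paths-finite} by inserting an intermediate step. Given a basic path $x$ from $x_-$ to $x_+$, write the candidate transition map from a $g$-chart centered at $x$ to a $\tilde g$-chart centered at a basic path $y$ as the composition
$$
   (\Psi^{\tilde g}_y)^{-1}\circ\Psi^g_x
   =\bigl((\Psi^{\tilde g}_y)^{-1}\circ\Psi^{\tilde g}_x\bigr)\circ
      \bigl((\Psi^{\tilde g}_x)^{-1}\circ\Psi^g_x\bigr) .
$$
The first factor on the right is a $C^1$ diffeomorphism of open subsets of $H^1_x$ by Theorem~\ref{thm:mf-of-paths-finite} applied within the atlas $\Aa(\tilde g)$. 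So it suffices to show that for every basic path $x$ the \emph{same-center, different-metric} map
$
   (\Psi^{\tilde g}_x)^{-1}\circ\Psi^g_x
   =(\widetilde{\exp}_x)^{-1}\circ\exp_x
$
is a $C^1$ diffeomorphism between open neighborhoods of $0$ in $H^1_x$.

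To establish this I would run exactly the argument used for Theorem~\ref{thm:mf-of-paths-finite}, but with the two different exponential maps replacing the two different path centers. Fix a basic trivialization $\Tt$ of $x^*TM$ (Definition~\ref{def:basic-triv}) and define
$$
   \varphi(s,v):=\Tt_s\circ(\widetilde{\exp}_{x(s)})^{-1}\circ\exp_{x(s)}\circ\Tt_s^{-1}(v).
$$
The job is then to verify hypotheses (i)--(v) of Theorem~\ref{thm:mf-of-paths-finite-model} for this $\varphi$, because by Corollary~\ref{cor:psi} pre- and post-composition with the basic linear isomorphism $\Tt$ is a bounded linear isomorphism of $W^{1,2}(\R,\R^n)$.

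The five hypotheses are all mild. Hypothesis (v) is immediate: since $x$ is basic and $\Tt$ is basic, both $x(s)$ and $\Tt_s$ are $s$-independent for $\pm s\geq T$, so $\varphi_s=\varphi_\pm$ there, and each $\varphi_\pm$ is a $C^2$ germ at $0$ fixing $0$. For (i) the map $\varphi_s$ is a composition of the $C^2$ maps $\exp_{x(s)}$ and $(\widetilde{\exp}_{x(s)})^{-1}$ with linear trivializations, hence $C^2$ in $v$. For (ii), the $s$-derivative can be expanded via the product rule as in the proof of Theorem~\ref{thm:mf-of-paths-finite} into three summands, each involving at worst one $s$-derivative of $\Tt_s$, of $\exp_{x(s)}$, or of $(\widetilde{\exp}_{x(s)})^{-1}$; since $\Tt$ is $C^1$ in $s$ and $s\mapsto x(s)$ is $C^2$, and both exponentials are $C^2$ fiberwise, $\dot\varphi_s$ depends continuously differentiably on $v$, yielding $\dot\varphi_s\in C^1$. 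The joint continuity conditions (iii) and (iv) of the Hessian and of $d\dot\varphi_s$ follow from the same expansion together with the continuity of the second $v$-derivatives and of the $s$-derivatives, using compactness of $x([-T,T])$.

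A small technical step is to choose the domain of $\varphi$. Since $\exp^g$ and $\widetilde{\exp}^{\tilde g}$ are both local $C^2$ diffeomorphisms near the zero section of $TM$ and since the closed image $x(\R)=x([-T,T])\cup\{x_-,x_+\}$ is compact, there is a uniform $\rho>0$ such that $\exp_{x(s)}$ is a diffeomorphism of the $g$-ball $B^g_\rho(0)\subset T_{x(s)}M$ onto a set contained in the image of $\widetilde{\exp}_{x(s)}$, uniformly in $s\in\R$. This yields the open domain on which $\varphi$ is defined and where the restriction lies in the setting of Theorem~\ref{thm:mf-of-paths-finite-model}. Applying that theorem gives $C^1$ regularity of $(\widetilde{\exp}_x)^{-1}\circ\exp_x$, and interchanging the roles of $g$ and $\tilde g$ gives $C^1$ regularity of the inverse. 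I do not foresee a genuinely hard step; the most delicate verification is (iv), where one has to check that the mixed derivative $d\dot\varphi_s|_v$ is jointly continuous in $(s,v)$. This uses precisely the $C^2$-hypothesis on both metrics, which is exactly what is assumed.
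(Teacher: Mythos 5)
Your reduction to a same-center, different-metric transition map is the right idea and makes explicit what the paper's proof leaves implicit, and your verification of hypotheses (i)--(v) of Theorem~\ref{thm:mf-of-paths-finite-model} for $\varphi(s,v)=\Tt_s\circ(\widetilde{\exp}_{x(s)})^{-1}\circ\exp_{x(s)}\circ\Tt_s^{-1}(v)$ is the right mechanism, mirroring the proof of Theorem~\ref{thm:mf-of-paths-finite}. However, there is a genuine coverage gap in the factoring
$(\Psi^{\tilde g}_y)^{-1}\circ\Psi^g_x
   =\bigl((\Psi^{\tilde g}_y)^{-1}\circ\Psi^{\tilde g}_x\bigr)\circ
      \bigl((\Psi^{\tilde g}_x)^{-1}\circ\Psi^g_x\bigr)$.
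This identity holds only on the subset of the domain of $(\Psi^{\tilde g}_y)^{-1}\circ\Psi^g_x$ where $\Psi^g_x\xi$ lies in the image of $\Psi^{\tilde g}_x$ (indeed, you further shrink to a radius-$\rho$ ball in order to apply Theorem~\ref{thm:mf-of-paths-finite-model}, so the covered region is smaller still). In general the overlap $\Psi^g_x(\Uu_x)\cap\Psi^{\tilde g}_y(\tilde\Uu_y)$ contains points outside the image of your restricted $\Psi^{\tilde g}_x$: the domain $\Uu_x$ is cut out by the $g$-injectivity radius and $\tilde\Uu_x$ by the $\tilde g$-injectivity radius, and these need not be comparable. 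For such points your argument says nothing, so compatibility of the two charts is not established on the full overlap.

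This is exactly what the paper's closing appeal to density of basic paths repairs, and it is a step you cannot skip. Near any $u_0$ in the overlap, pick a basic path $z$ so close to $u_0$ (in $W^{1,2}$) that $u_0$ lies in both $\Psi^g_z(\Uu_z)$ and $\Psi^{\tilde g}_z(\tilde\Uu_z)$ with the small-$\rho$ restriction, then locally factor through $z$ rather than $x$:
$(\Psi^{\tilde g}_y)^{-1}\circ\Psi^g_x
   =\bigl((\Psi^{\tilde g}_y)^{-1}\circ\Psi^{\tilde g}_z\bigr)\circ
      \bigl((\Psi^{\tilde g}_z)^{-1}\circ\Psi^g_z\bigr)\circ
      \bigl((\Psi^g_z)^{-1}\circ\Psi^g_x\bigr)$.
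The outer factors are $C^1$ by Theorem~\ref{thm:mf-of-paths-finite}, the middle one by your same-center argument applied at $z$. The density also ensures that $\Aa(g)$ and $\Aa(\tilde g)$ cover the same underlying set $\Pp_{x_-x_+}$, a point compatibility tacitly requires and which your write-up does not address.
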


\begin{figure}[h]
  \centering
  \includegraphics[width=10cm]    
                             {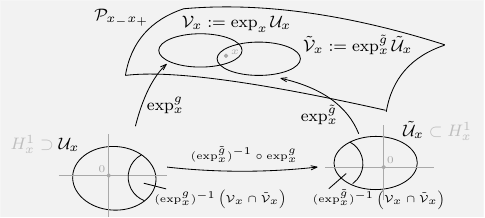}
  \caption{Charts about a basic path $x$ in the atlases $\Aa(g)$ and $\Aa(\tilde g)$}
   \label{fig:chart-indep-metric}
\end{figure}

\begin{proof}
First note that the notions of basic path and basic trivialization do
not depend on the Riemannian metric.
Suppose that $x$ is a basic path.
Let $\Uu_x$ and $\tilde\Uu_x$ be the open neighborhoods
from Definition~\ref{def:Uu_x} of the zero section with respect to $g$
and $\tilde g$, respectively.
Define the open image sets in $\Pp_{x_-x_+}$ by
$$
   \Vv_x:=\exp_x^g\Uu_x
   ,\qquad
   \tilde{\Vv}_x:=\exp_x^{\tilde g}\tilde{\Uu}_x .
$$
By the argument in the proof of Theorem~\ref{thm:mf-of-paths-finite}
it is a $C^1$ diffeomorphism the~map
$$
   (\exp_x^{\tilde g})^{-1}\circ\exp_x^g
   \colon (\exp_x^g)^{-1} \left(\Vv_x\CAP\tilde \Vv_x\right)
   \to (\exp_x^{\tilde g})^{-1} \left(\Vv_x\CAP\tilde \Vv_x\right) .
$$
As the set $C^2_0(\R,\R^n)$ of compactly supported $C^2$ maps is
dense in $W^{1,2}(\R,\R^n)$, basic paths are dense in $\Pp_{x_-x_+}$
and Theorem~\ref{thm:indep-g} follows.
\end{proof}

\newpage  
\bibliographystyle{alpha}
\addcontentsline{toc}{section}{References}
\bibliography{$HOME/Dropbox/0-Libraries+app-data/Bibdesk-BibFiles/library_math,$HOME/Dropbox/0-Libraries+app-data/Bibdesk-BibFiles/library_math_2020,$HOME/Dropbox/0-Libraries+app-data/Bibdesk-BibFiles/library_physics}{}

\end{document}